\renewcommand{\email}[2][]{%
  \ifx\emails\@empty\relax\else{\g@addto@macro\emails{,\space}}\fi%
  \@ifnotempty{#1}{\g@addto@macro\emails{\textrm{(#1)}\space}}%
  \g@addto@macro\emails{#2}%
}
\author{Julien Korinman}
\address{Department of Mathematics, Faculty of Science and Engineering, Waseda University,3-4-1 Ohkubo, Shinjuku-ku, Tokyo, 169-8555, Japan}
\email{julien.korinman@gmail.com}
\subjclass{$57$R$56$, $57$N$10$, $57$M$25$.}
\keywords{Stated skein algebras, Lattice gauge field theory.}
\newcommand{\quotient}[2]{{\raisebox{.2em}{$#1$}\left/\raisebox{-.2em}{$#2$}\right.}}
\newcommand{\sslash}{\mathbin{/\mkern-6mu/}}
\newcommand{\Hom}{\operatorname{Hom}}
\newcommand{\tr}{\operatorname{tr}}
\newcommand{\SL}{\operatorname{SL}}
\newcommand{\id}{id}
\newcommand{\heightexch}[3]{
	\begin{tikzpicture}[baseline=-0.4ex,scale=0.5, >=stealth]
	\draw [fill=gray!60,gray!45] (-.7,-.75)  rectangle (.4,.75)   ;
	\draw[#1] (0.4,-0.75) to (.4,.75);
	\draw[line width=1.2] (0.4,-0.3) to (-.7,-.3);
	\draw[line width=1.2] (0.4,0.3) to (-.7,.3);
	\draw (0.65,0.3) node {\scriptsize{$#2$}}; 
	\draw (0.65,-0.3) node {\scriptsize{$#3$}}; 
	\end{tikzpicture}
}
\newcommand{\heightcurve}{
\begin{tikzpicture}[baseline=-0.4ex,scale=0.5]
\draw [fill=gray!20,gray!45] (-.7,-.75)  rectangle (.4,.75)   ;
\draw[-] (0.4,-0.75) to (.4,.75);
\draw[line width=1.2] (-.7,-0.3) to (-.4,-.3);
\draw[line width=1.2] (-.7,0.3) to (-.4,.3);
\draw[line width=1.15] (-.4,0) ++(-90:.3) arc (-90:90:.3);
\end{tikzpicture}
}
\newcommand{\heightcurveright}{
\begin{tikzpicture}[baseline=-0.4ex,scale=0.5]
\draw [fill=gray!20,gray!45] (-.7,-.75)  rectangle (.4,.75)   ;
\draw[-] (-0.7,-0.75) to (-.7,.75);
\draw[line width=1.2] (0.1,-0.3) to (.4,-.3);
\draw[line width=1.2] (0.1,0.3) to (.4,.3);
\draw[line width=1.15] (.1,0) ++(90:.3) arc (90:270:.3);
\end{tikzpicture}
}
\newcommand{\heightexchright}[3]{
	\begin{tikzpicture}[baseline=-0.4ex,scale=0.5, >=stealth]
	\draw [fill=gray!60,gray!45] (-.7,-.75)  rectangle (.4,.75)   ;
	\draw[#1] (-0.7,-0.75) to (-0.7,.75);
	\draw[line width=1.2] (0.4,-0.3) to (-.7,-.3);
	\draw[line width=1.2] (0.4,0.3) to (-.7,.3);
	\draw (-1,0.3) node {\scriptsize{$#2$}}; 
	\draw (-1,-0.3) node {\scriptsize{$#3$}}; 
	\end{tikzpicture}
}
\newcommand{\bigonheightcurve}[2]{
\begin{tikzpicture}[baseline=-0.4ex,scale=0.5,>=stealth]
\draw [fill=gray!45,gray!45] (-.7,-.75)  rectangle (.4,.75)   ;
\draw[->] (0.4,-0.75) to (.4,.75);
\draw[->] (-0.7,-0.75) to (-.7,.75);
\draw[line width=1.2] (0.4,-0.3) to (0,-.3);
\draw[line width=1.2] (0.4,0.3) to (0,.3);
\draw[line width=1.1] (0,0) ++(90:.3) arc (90:270:.3);
\draw (0.65,0.3) node {\scriptsize{$#1$}}; 
\draw (0.65,-0.3) node {\scriptsize{$#2$}}; 
\end{tikzpicture}
}
\newcommand{\bigonheightcurveright}[2]{
\begin{tikzpicture}[baseline=-0.4ex,scale=0.5,>=stealth]
\draw [fill=gray!45,gray!45] (-.7,-.75)  rectangle (.4,.75)   ;
\draw[->] (0.4,-0.75) to (.4,.75);
\draw[->] (-0.7,-0.75) to (-.7,.75);
\draw[line width=1.2] (-.7,-0.3) to (-.4,-.3);
\draw[line width=1.2] (-.7,0.3) to (-.4,.3);
\draw[line width=1.15] (-.4,0) ++(-90:.3) arc (-90:90:.3);
\draw (-1,0.3) node {\scriptsize{$#1$}}; 
\draw (-1,-0.3) node {\scriptsize{$#2$}}; 
\end{tikzpicture}
}
\begin{document}

\theoremstyle{plain}
\newtheorem{theorem}{Theorem}[section]
\newtheorem{main_theorem}[theorem]{Main Theorem}
\newtheorem{proposition}[theorem]{Proposition}
\newtheorem{corollary}[theorem]{Corollary}
\newtheorem{corollaire}[theorem]{Corollaire}
\newtheorem{lemma}[theorem]{Lemma}
\theoremstyle{definition}
\newtheorem{notations}[theorem]{Notations}
\newtheorem{convention}[theorem]{Convention}
\newtheorem{problem}[theorem]{Problem}
\newtheorem*{notations*}{Notations}
\newtheorem{definition}[theorem]{Definition}
\newtheorem{Theorem-Definition}[theorem]{Theorem-Definition}
\theoremstyle{remark}
\newtheorem{remark}[theorem]{Remark}
\newtheorem{conjecture}[theorem]{Conjecture}
\newtheorem{example}[theorem]{Example}

\title[Finite presentations for stated skein algebras and LGFT]{Finite presentations for stated skein algebras and lattice gauge field theory}
%
%
%

\date{}
\maketitle


\begin{abstract} 
We provide finite presentations for stated skein algebras and deduce that those algebras are Koszul and that they are isomorphic to the quantum moduli algebras appearing in lattice gauge field theory, generalizing  previous results of Bullock, Frohman, Kania-Bartoszynska and Faitg.
\end{abstract}


\section{Introduction}

\paragraph{\textbf{Stated skein algebras and lattice gauge field theory}}

\par  A \textit{punctured surface} is a pair $\mathbf{\Sigma}=(\Sigma,\mathcal{P})$, where $\Sigma$ is a compact oriented surface and $\mathcal{P}$ is a (possibly empty) finite subset of $\Sigma$ which intersects non-trivially each boundary component.  We write $\Sigma_{\mathcal{P}}:= \Sigma \setminus \mathcal{P}$.
  The set $\partial \Sigma\setminus \mathcal{P}$ consists of a disjoint union of open arcs which we call \textit{boundary arcs}.
   \\ \textbf{Warning:} In this paper, the punctured surface $\mathbf{\Sigma}$ will be called open if the surface $\Sigma$ has non empty boundary and closed otherwise. This convention differs from the traditional one, where some authors refer to open surface a punctured surface $\mathbf{\Sigma}=(\Sigma, \mathcal{P})$ with $\Sigma$ closed and $\mathcal{P}\neq \emptyset$ (in which case $\Sigma_{\mathcal{P}}$ is not closed).

\vspace{2mm}
\par The \textit{Kauffman-bracket skein algebras} were introduced by Bullock and Turaev as a tool to study the $\mathrm{SU}(2)$ Witten-Reshetikhin-Turaev topological quantum field theories (\cite{Wi2, RT}). They are associative unitary algebras $\mathcal{S}_{\omega}(\mathbf{\Sigma})$ indexed by a closed punctured surface $\mathbf{\Sigma}$ and an invertible element $\omega \in \mathds{k}^{\times}$ in some commutative unital ring $\mathds{k}$. Bonahon-Wong  \cite{BonahonWongqTrace} and  L\^e \cite{LeStatedSkein} generalized  the notion of Kauffman-bracket skein algebras to open punctured surfaces, where in addition to closed curves the algebras are generated by arcs whose endpoints are endowed with a sign $\pm$ (a state). The motivation for the introduction of these so-called \textit{stated skein algebras} is their good behaviour for the operation of gluing two boundary arcs together. This property permitted the authors of \cite{BonahonWongqTrace} to define an embedding of the skein algebra into a quantum torus, named the quantum trace,  and offers new tools to study the representation theory of skein algebras. 
\vspace{2mm}
\par Except for genus $0$ and $1$ surfaces (\cite{BullockPrzytycki_00}), no finite presentation for the Kauffman-bracket skein algebras is known, though a conjecture in that direction was formulated in \cite[Conjecture $1.2$]{SantharoubaneSkeinGenerators}. However, it is well-known that they are finitely generated (\cite{BullockGeneratorsSkein, AbdielFrohman_SkeinFrobenius, FrohmanKania_SkeinRootUnity, SantharoubaneSkeinGenerators}). The corresponding problem for stated skein algebras of open punctured surfaces is easier. Finite  presentations of stated skein algebras were given for a disc with two punctures on its boundary (i.e. for the bigon) and for the disc with three punctures on its boundary (i.e. for the triangle) in \cite{LeStatedSkein}, for the disc with two punctures on its boundary and one inner puncture in \cite{KojuQGroupsBraidings} and for any connected punctured surface having exactly one boundary component, one puncture on the boundary and eventual inner punctures, in \cite{Faitg_LGFT_SSkein}. 
\vspace{2mm}
\par
The first purpose of this paper is to provide explicit finite presentations for stated skein algebras of an arbitrary connected open punctured surface $\mathbf{\Sigma}$. Let us briefly sketch their construction, we refer to Section  \ref{sec_presentation} for details.

The finite presentations we will define depend on the choice of a finite presentation $\mathbb{P}$ of some groupoid $\Pi_1(\Sigma_{\mathcal{P}}, \mathbb{V})$. In brief, for each boundary arc $a$ of $\mathbf{\Sigma}$, choose a point $v_a \in a$ and let $\mathbb{V}$ be the set of such points. The groupoid $\Pi_1(\Sigma_{\mathcal{P}}, \mathbb{V})$ is the full subcategory of the fundamental groupoid of $\Sigma_{\mathcal{P}}$ whose set of objects is $\mathbb{V}$. A finite presentation $\mathbb{P}=(\mathbb{G}, \mathbb{RL})$ for $\Pi_1(\Sigma_{\mathcal{P}}, \mathbb{V})$ will consist in a finite set $\mathbb{G}$ of generating paths relating points of $\mathbb{V}$ and a finite set $\mathbb{RL}$ of relations among those paths which satisfy some axioms (see Section \ref{sec_presentation} for details). For instance for the triangle $\mathbb{T}$ (the disc with three punctures on its boundary), the groupoid $\Pi_1(\mathbb{T}, \mathbb{V})$ admits the presentation  with generators $\mathbb{G}= \{ \alpha, \beta, \gamma \}$ drawn in Figure \ref{fig_triangle} and the unique relation $\alpha \beta \gamma = 1$.

\begin{figure}[!h] 
\centerline{\includegraphics[width=2cm]{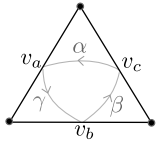} }
\caption{The triangle and some paths.} 
\label{fig_triangle} 
\end{figure}

A path $\alpha \in \mathbb{G}$ can be seen as an arc in $\Sigma_{\mathcal{P}}$ and, after choosing some states $\varepsilon , \varepsilon' \in \{ -, +\}$ for its endpoints, we get an element $\alpha_{\varepsilon \varepsilon'} \in \mathcal{S}_{\omega}(\mathbf{\Sigma})$ in the stated skein algebra. We denote by $\mathcal{A}^{\mathbb{G}} \subset  \mathcal{S}_{\omega}(\mathbf{\Sigma})$ the (finite) set of such elements. It was proved in \cite{KojuAzumayaSkein} that $\mathcal{A}^{\mathbb{G}}$ generates $\mathcal{S}_{\omega}(\mathbf{\Sigma})$ and its elements will be the generators of our presentations.

Concerning the relations, first for each $\alpha \in \mathbb{G}$, one has a  \textit{q-determinant} relation between the elements $\alpha_{\varepsilon \varepsilon'}$. 
For each pair $(\alpha, \beta) \in \mathbb{G}^2$ we will associate a finite set of  \textit{arcs exchange relations} permitting to express an element of the form $\alpha_{\varepsilon \varepsilon'} \beta_{\mu \mu'} \in \mathcal{S}_{\omega}(\mathbf{\Sigma})$ as a linear combination of elements of the form $\beta_{a b} \alpha_{c d}$. Eventually, to each relation $R\in \mathbb{RL}$ in the finite presentation $\mathbb{P}$, we will associate a finite set of so-called \textit{trivial loops relations}. 

\begin{theorem}\label{theorem1}
Let $\mathbf{\Sigma}$ a connected open punctured surface and  $\mathbb{P}$  a finite presentation of $\Pi_1(\Sigma_{\mathcal{P}}, \mathbb{V})$. Then the stated skein algebra $\mathcal{S}_{\omega}(\mathbf{\Sigma})$ is presented by the set of generators $\mathcal{A}^{\mathbb{G}}$ and by the q-determinant, arcs exchange and trivial loops relations.
\end{theorem}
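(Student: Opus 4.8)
The plan is to identify the abstractly presented algebra with $\mathcal{S}_{\omega}(\mathbf{\Sigma})$ by producing a spanning set of the former whose image is a $\mathds{k}$-basis of the latter.

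\textbf{Setting up the surjection.} Let $\mathcal{F}$ be the free unital $\mathds{k}$-algebra on the finite set $\mathcal{A}^{\mathbb{G}}$, let $I\subset\mathcal{F}$ be the two-sided ideal generated by the $q$-determinant, arcs exchange and trivial loops relations, and set $A:=\mathcal{F}/I$. Since $\mathcal{A}^{\mathbb{G}}$ generates $\mathcal{S}_{\omega}(\mathbf{\Sigma})$ by \cite{KojuAzumayaSkein} and each listed relation holds in $\mathcal{S}_{\omega}(\mathbf{\Sigma})$ — the $q$-determinant and arcs exchange relations are local Kauffman-bracket computations in a bigon neighbourhood of the arcs involved, and the trivial loops relation attached to $R\in\mathbb{RL}$ records that the contractible loop carried by $R$ evaluates to the identity $2\times2$ matrix over $\mathcal{S}_{\omega}(\mathbf{\Sigma})$ — there is a surjective algebra morphism $\Phi\colon A\twoheadrightarrow\mathcal{S}_{\omega}(\mathbf{\Sigma})$, and everything reduces to showing that $\Phi$ is injective.

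\textbf{A spanning set of ordered monomials.} Fix a total order on $\mathbb{G}$ and on the four states. For a single path $\alpha$, its $q$-determinant relation together with the $(\alpha,\alpha)$ arcs exchange relations present the subalgebra of $A$ generated by $\alpha_{++},\alpha_{+-},\alpha_{-+},\alpha_{--}$ as a quotient of $\mathcal{O}_{q}(\SL_2)$ (L\^e's presentation of the bigon), hence this subalgebra is spanned by a fixed PBW-type family $\mathcal{M}_{\alpha}$ of monomials in those four elements. Using the arcs exchange relations of the pairs $(\alpha,\beta)$ with $\alpha>\beta$ one rewrites any monomial of $\mathcal{F}$, modulo $I$, as a $\mathds{k}$-linear combination of \emph{ordered} monomials $m_{\alpha_{1}}\cdots m_{\alpha_{k}}$ with $\alpha_{1}<\dots<\alpha_{k}$ in $\mathbb{G}$ and $m_{\alpha_{i}}\in\mathcal{M}_{\alpha_{i}}$; here a termination argument (induction on total degree, then on a disorder statistic) is enough and confluence is \emph{not} needed. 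Thus $A$, and hence $\mathcal{S}_{\omega}(\mathbf{\Sigma})$, is spanned by the image of the set $\mathcal{M}$ of ordered monomials, and it suffices to prove that $\Phi(\mathcal{M})$ is $\mathds{k}$-linearly independent: this forces $\mathcal{M}$ to be a $\mathds{k}$-basis of $A$ and $\Phi$ to be an isomorphism.

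\textbf{Independence via reduction to known cases.} First, replacing $\mathbb{P}$ by a Tietze-equivalent presentation alters the proposed presentation of $\mathcal{S}_{\omega}(\mathbf{\Sigma})$ only through the evident changes of generators and relations, so it is enough to treat one convenient $\mathbb{P}$; I would choose $\mathbb{G}$ to be (part of the data of) an ideal triangulation of $\Sigma_{\mathcal{P}}$, with $\mathbb{RL}$ the face relations. Iterating L\^e's splitting injectivity \cite{LeStatedSkein} along the edges of this triangulation embeds $\mathcal{S}_{\omega}(\mathbf{\Sigma})$ into a tensor product of stated skein algebras of elementary pieces (triangles, and in degenerate cases bigons and once-punctured pieces), for which the prescribed presentation is already established in \cite{LeStatedSkein, Faitg_LGFT_SSkein}: an ordered monomial of $\mathcal{M}$ then maps to an explicit tensor of standard monomials in the pieces, and distinct ordered monomials produce distinct leading terms, yielding the independence. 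Alternatively one can apply the Bonahon--Wong quantum trace \cite{BonahonWongqTrace} to the ordered monomials and read off pairwise distinct leading Laurent monomials in the ambient quantum torus.

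\textbf{Main obstacle.} The technical heart is this last step. One must check that any finite groupoid presentation can be brought by Tietze moves to one realized by an embedded arc system, and that the corresponding transformation of the skein-algebra presentation is precisely the expected one; and one must carry out the bookkeeping for how the $q$-determinant, arcs exchange and trivial loops relations behave under gluing a boundary arc — in particular that the arcs exchange relations created by a gluing are exactly those dictated by the glued groupoid presentation. Controlling the leading terms of the ordered monomials, whether through the triangle decomposition or through the quantum trace, so that they are genuinely pairwise distinct, is the remaining delicate point.
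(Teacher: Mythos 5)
Your overall architecture --- a surjection $\Phi$ from the presented algebra, a spanning set of ordered monomials obtained by rewriting (where, as you correctly note, only termination and not confluence is needed), and then linear independence of the image --- is exactly the skeleton of the paper's proof. The divergence, and the problem, is in the independence step, which carries the entire content of the theorem and which you do not actually carry out. Two concrete gaps. First, the reduction ``by Tietze moves'' to a convenient presentation is not available in the form you want: the presentations here are geometric (pairwise non-intersecting embedded arcs with endpoints in $\mathbb{V}$, with height-order data), the edges of an ideal triangulation run between punctures rather than between points of $\mathbb{V}$, and a general Tietze move introducing a new generator as a word in the old ones would force you to verify that its $q$-determinant and arcs-exchange relations are consequences of the old relations --- a verification comparable in difficulty to the theorem itself. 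The paper needs, and proves, only the much weaker move of deleting a generator that occurs in a relation (Lemma \ref{lemma_reduction}), which reduces an arbitrary $\mathbb{P}$ to one with $\mathbb{RL}=\emptyset$ and then treats any such presentation directly, with no privileged choice. Second, the assertion that distinct ordered monomials acquire pairwise distinct leading terms under the splitting embedding into triangles (or under the quantum trace) is precisely the delicate point; the splitting map sends a single stated arc to a \emph{sum} over states of the cut points, so isolating a leading tensor and comparing it across all ordered monomials requires an explicit filtration argument that you have deferred rather than supplied.

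For comparison, the paper proves independence without triangulations or the quantum trace: it introduces the auxiliary family $\mathcal{B}^{\mathbb{G}}_+$ of positively-stated parallel copies of the generators, shows it is free by a triangular change of basis against L\^e's simple-diagram basis $\mathcal{B}^{\mathfrak{o}}$ (each element of $\mathcal{B}^{\mathbb{G}}_+$ equals a unit times a basis element plus terms with strictly more boundary points, Lemma \ref{lemma_basis+}), and then passes from $\mathcal{B}^{\mathbb{G}}_+$ to the ordered monomials $\mathcal{B}^{\mathbb{G}}$ via the filtration by $\lVert\cdot\rVert=(|\mathbf{n}|,-d)$ and the height-exchange relations (Lemmas \ref{lemma_easy} and \ref{lemma_dev2}). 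If you wish to salvage your route, you would essentially have to reconstruct this leading-term bookkeeping inside a tensor product of triangle algebras, which is not easier than doing it in $\mathcal{S}_{\omega}(\mathbf{\Sigma})$ directly.
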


For every open punctured surface, we can choose finite presentation $\mathbb{P}$ of $\Pi_1(\Sigma_{\mathcal{P}}, \mathbb{V})$ such that the set of relations is empty (for instance for the triangle of Figure \ref{fig_triangle}, one might choose the presentation with generators $\mathbb{G}= \{ \alpha, \beta \}$ and no relation). In this case, the presentation of $\mathcal{S}_{\omega}(\mathbf{\Sigma})$  is quadratic inhomogeneous and, by using the Diamond Lemma, we prove the

\begin{theorem}\label{theorem2}
For  $\mathbf{\Sigma}$ a connected open punctured surface, the quadratic inhomogeneous algebra $\mathcal{S}_{\omega}(\mathbf{\Sigma})$ is Koszul and admits a Poincar\'e-Birkhoff-Witt basis.
\end{theorem}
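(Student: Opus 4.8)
The plan is to build on Theorem \ref{theorem1} by making a specific convenient choice of presentation $\mathbb{P}$ of the groupoid $\Pi_1(\Sigma_{\mathcal{P}}, \mathbb{V})$ with empty relation set $\mathbb{RL}$, so that the resulting presentation of $\mathcal{S}_{\omega}(\mathbf{\Sigma})$ involves only q-determinant and arcs exchange relations; since these are quadratic (the q-determinant relations being quadratic inhomogeneous, and the arcs exchange relations expressing $\alpha_{\varepsilon\varepsilon'}\beta_{\mu\mu'}$ as linear combinations of products $\beta_{ab}\alpha_{cd}$ plus lower-order terms), the algebra is quadratic inhomogeneous in the sense of Positselski--Polishchuk. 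The first step is to fix, once and for all, a total order on the finite generating set $\mathcal{A}^{\mathbb{G}}$: I would order the underlying paths $\alpha \in \mathbb{G}$ arbitrarily and then order the four stated versions $\alpha_{\varepsilon\varepsilon'}$ of each path lexicographically in the states, so that the arcs exchange relations become rewriting rules turning a product of two generators (with the paths ``out of order'') into a combination of ordered products, and the q-determinant relations become rewriting rules eliminating the highest stated version of each path in terms of lower ones.

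Second, I would check the confluence of this rewriting system via the Diamond Lemma (Bergman's version for associative algebras). The ambiguities to resolve are the overlap ambiguities between triples of generators $\alpha_{\varepsilon\varepsilon'}\beta_{\mu\mu'}\gamma_{\nu\nu'}$ with the paths pairwise out of order, and the inclusion/overlap ambiguities between an arcs exchange relation and a q-determinant relation sharing a common stated generator. Resolving the former amounts to a quantum Yang--Baxter type identity among the exchange relations, which should follow from the associativity of the stated skein algebra together with Theorem \ref{theorem1} itself: since by that theorem these relations already present $\mathcal{S}_{\omega}(\mathbf{\Sigma})$, any two reduction paths of a word land on elements that are equal in $\mathcal{S}_{\omega}(\mathbf{\Sigma})$, and one argues that the set of irreducible words is linearly independent (hence the two reductions literally coincide) precisely because Theorem \ref{theorem1} says the relations span all relations. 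Concretely, one shows the irreducible monomials are exactly the ordered monomials $\alpha^{(1)}_{\ast}\cdots\alpha^{(k)}_{\ast}$ in the ordered paths with only non-maximal states, and that these have cardinality matching a known basis of $\mathcal{S}_{\omega}(\mathbf{\Sigma})$ — for instance the basis coming from the quantum trace or from the theory in \cite{KojuAzumayaSkein} — which forces confluence.

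Third, once the Diamond Lemma gives a confluent system, the set of irreducible monomials is a Poincaré--Birkhoff--Witt basis of $\mathcal{S}_{\omega}(\mathbf{\Sigma})$ adapted to the quadratic inhomogeneous presentation, i.e. a basis compatible with the filtration by length of monomials whose image in the associated graded quadratic algebra is again a (quadratic, homogeneous) PBW basis. By the theorem of Priddy (as extended to the inhomogeneous setting by Positselski and Polishchuk--Positselski), the existence of a quadratic PBW basis implies that the associated graded algebra $\operatorname{gr}\mathcal{S}_{\omega}(\mathbf{\Sigma})$ is Koszul; and by the inhomogeneous Koszulity criterion — the associated graded of the inhomogeneous relations being the space of leading quadratic terms, i.e. the relations being ``PBW'' rather than merely quadratic — one concludes $\mathcal{S}_{\omega}(\mathbf{\Sigma})$ is itself Koszul in the filtered sense.

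The main obstacle I anticipate is the confluence verification: even granting Theorem \ref{theorem1}, one must actually exhibit a candidate PBW basis and count it, or else directly check the Yang--Baxter--type compatibility among the arcs exchange relations by hand, which is a genuine (if mechanical) diagrammatic computation in the stated skein algebra; ensuring that the chosen ordering makes \emph{all} ambiguities resolvable simultaneously — in particular that no q-determinant reduction creates a new path-inversion that cannot be further reduced — is the delicate point, and it is why the specific choice of order on $\mathcal{A}^{\mathbb{G}}$ in the first step must be made with care rather than arbitrarily.
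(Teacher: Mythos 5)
Your overall architecture matches the paper's: pass to a presentation $\mathbb{P}=(\mathbb{G},\emptyset)$ with no relations, order the generators so that the q-determinant and arcs exchange relations become an inhomogeneous quadratic rewriting system with distinct leading terms, and invoke the PBW/Koszulness machinery (Loday--Vallette Theorem $4.3.18$, which the paper uses in the form ``if the candidate set $\underline{\mathcal{B}}^{\mathbb{G}}$ of irreducible monomials is a basis, then the algebra is Koszul''). The gap is in your second step, which is where all the work lies. Your two proposed justifications for confluence do not close it. First, the fact that Theorem \ref{theorem1} makes these relations a presentation of $\mathcal{S}_{\omega}(\mathbf{\Sigma})$ only tells you that any two reduction paths of a word give elements that are \emph{equal in the quotient}; it does not tell you that the irreducible monomials are linearly independent, and without that independence ``equal in the quotient'' is perfectly compatible with a non-confluent system. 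Arguing ``the reductions coincide because the irreducibles are independent, and the irreducibles are independent because the reductions coincide'' is circular. Second, ``cardinality matching a known basis'' proves nothing for an infinite-dimensional algebra, and the natural graded dimension count is unavailable here precisely because the presentation is inhomogeneous: the relevant filtration on the skein side (the paper's $\lVert\cdot\rVert$-filtration of Definition \ref{def_filtration_cheloud}) is not by word length, and the trivial-arc/cutting-arc moves change the number of boundary points of a diagram. The direct ambiguity check you mention as the alternative is also not realistic; the paper observes in Section \ref{sec_final} that it would require verifying the confluence of $6578$ critical graphs.

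What is actually needed, and what the paper supplies, is an explicit triangularity argument identifying $\Psi(\underline{\mathcal{B}}^{\mathbb{G}})$ with L\^e's basis $\mathcal{B}^{\mathfrak{o}}$ of increasingly-stated simple diagrams (Theorem \ref{theorem_basis}). This is done in two stages: one first introduces the auxiliary family $\mathcal{B}^{\mathbb{G}}_+$ of positively stated multi-arc diagrams $[D(\mathbf{n}),s]$ and shows it is free by constructing an injection $m:\mathcal{B}^{\mathbb{G}}_+\to\mathcal{B}^{\mathfrak{o}}$ whose ``leading term'' has strictly fewer boundary points than all other terms (Lemma \ref{lemma_basis+}); one then relates $\mathcal{B}^{\mathbb{G}}$ to $\mathcal{B}^{\mathbb{G}}_+$ through the filtration and the height-exchange estimates of Lemmas \ref{lemma_easy} and \ref{lemma_dev2}. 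Your proposal correctly identifies where the difficulty sits and correctly guesses that one must ``exhibit a candidate PBW basis'' rather than check ambiguities, but it does not contain the comparison argument that makes the candidate a basis; as written, the proof of linear independence --- hence of confluence, hence of Koszulness --- is missing.
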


 Theorem \ref{theorem2} implies that $\mathcal{S}_{\omega}(\mathbf{\Sigma})$ has an explicit minimal projective resolution (the so-called Koszul resolution) which permits to compute effectively its cohomology (see \cite{LodayValletteOperads} for details).

\vspace{2mm}
 \par Let $(\Gamma,c)$ be a ciliated graph, that is a finite graph with the data for each vertex of a linear ordering of its adjacent half-edges. Inspired by Fock and Rosly's original work in \cite{FockRosly} on the Poisson structure of character varieties, Alekseev-Grosse-Schomerus (\cite{AlekseevGrosseSchomerus_LatticeCS1,AlekseevGrosseSchomerus_LatticeCS2, AlekseevSchomerus_RepCS}) and Buffenoir-Roche (\cite{BuffenoirRoche, BuffenoirRoche2}) independently defined the so-called \textit{quantum moduli algebras} $\mathcal{L}_{\omega}(\Gamma, c)$, which are combinatorial quantizations of relative character varieties (see Section \ref{sec_charvar} for details). Those algebras arise with some right comodule map $\Delta^{\mathcal{G}} : \mathcal{L}_{\omega}(\Gamma, c) \rightarrow \mathcal{L}_{\omega}(\Gamma, c) \otimes \mathcal{O}_q[\mathcal{G}]$, where $\mathcal{O}_q[\mathcal{G}]=\mathcal{O}_q[\SL_2]^{\otimes \mathring{V}(\Gamma)}$ is the so-called quantum gauge group Hopf algebra. The subalgebra $\mathcal{L}^{inv}_{\omega}(\Gamma) \subset \mathcal{L}_{\omega}(\Gamma,c)$ of coinvariant vectors plays an important role in combinatorial quantization. 
  More precisely, as reviewed in Section \ref{sec_graphs}, we associate to each ciliated graph $(\Gamma,c)$ two punctured surfaces: an open one $\mathbf{\Sigma}^0(\Gamma,c)$ and a closed one $\mathbf{\Sigma}(\Gamma)$ such that the algebras $\mathcal{L}_{\omega}(\Gamma, c) $ and $\mathcal{L}^{inv}_{\omega}(\Gamma)$ are quantization of the $\SL_2(\mathbb{C})$ (relative) character varieties of $\mathbf{\Sigma}^0(\Gamma,c)$ and  $\mathbf{\Sigma}(\Gamma)$ respectively with their Fock-Rosly Poisson structures. 
  We deduce from Theorem \ref{theorem1} the following: 
  
  \begin{theorem}\label{theorem3}
  There exist isomorphisms of algebras $\mathcal{S}_{\omega}(\mathbf{\Sigma}^0(\Gamma,c)) \cong \mathcal{L}_{\omega}(\Gamma,c)$ and $\mathcal{S}_{\omega}(\mathbf{\Sigma}(\Gamma)) \cong \mathcal{L}^{inv}_{\omega}(\Gamma)$.
  \end{theorem}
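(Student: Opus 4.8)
The plan is to reduce the statement to Theorem \ref{theorem1} via a dictionary between the combinatorial data defining a quantum moduli algebra and the presentation data of a stated skein algebra. First I would recall the standard presentation of $\mathcal{L}_{\omega}(\Gamma,c)$: it is generated by matrix coefficients of holonomies $M^{(e)} \in \mathrm{Mat}_2(\mathcal{L}_{\omega}(\Gamma,c))$ attached to the edges $e$ of $\Gamma$, subject to (i) a quantum determinant relation $\det_q M^{(e)} = 1$ for each edge, and (ii) commutation/exchange relations between the entries of $M^{(e)}$ and $M^{(e')}$ governed by the $R$-matrix of $\mathcal{O}_q[\SL_2]$ and the ciliation order $c$ at the vertices shared by $e$ and $e'$. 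On the skein side, the surface $\mathbf{\Sigma}^0(\Gamma,c)$ is built (as reviewed in Section \ref{sec_graphs}) by thickening $\Gamma$ into a ribbon structure dictated by $c$, so that each edge $e$ becomes a band carrying a boundary arc, and each vertex becomes a disc with several boundary arcs ordered by $c$. The key observation is that the spanning-tree (or one-vertex) presentation $\mathbb{P}$ of $\Pi_1(\Sigma_{\mathcal{P}}, \mathbb{V})$ can be chosen so that the generating paths $\mathbb{G}$ are exactly (isotopic to) the cores of the edge-bands, with no relations when $\Gamma$ is a wedge of circles — or more generally with relations matching a chosen presentation of $\pi_1(\Gamma)$.

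The main steps, in order, would be: (1) fix the ribbon surface $\mathbf{\Sigma}^0(\Gamma,c)$ and a presentation $\mathbb{P}=(\mathbb{G},\mathbb{RL})$ whose generators are the edge-core paths; (2) identify, for each generator $\alpha \in \mathbb{G}$, the four stated arcs $\alpha_{\varepsilon\varepsilon'}$ with the four matrix coefficients of the corresponding holonomy matrix $M^{(e)}$, thereby matching the generating set $\mathcal{A}^{\mathbb{G}}$ of Theorem \ref{theorem1} with the generating set of $\mathcal{L}_{\omega}(\Gamma,c)$; (3) check that the q-determinant relations of Theorem \ref{theorem1} are precisely the relations $\det_q M^{(e)} = 1$; (4) check that the arcs-exchange relations for a pair $(\alpha,\beta)$ coincide with the $R$-matrix exchange relations of the quantum moduli algebra, paying attention to the three cases (edges disjoint, sharing one vertex, sharing two vertices) and verifying that the ciliation order $c$ produces exactly the braiding convention used on the skein side; (5) check that the trivial-loops relations attached to $R \in \mathbb{RL}$ match the relations imposed in $\mathcal{L}_{\omega}(\Gamma,c)$ coming from contractible loops in $\Gamma$. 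These five comparisons give the first isomorphism. For the second isomorphism, I would use that $\mathbf{\Sigma}(\Gamma)$ is obtained from $\mathbf{\Sigma}^0(\Gamma,c)$ by gluing the boundary arcs at each inner vertex according to $c$, and invoke L\^e's gluing formula: gluing a pair of boundary arcs corresponds to a coend/coinvariants construction on the skein side, which on the quantum-moduli side is exactly the passage to $\mathcal{L}^{inv}_{\omega}(\Gamma)$ via the comodule map $\Delta^{\mathcal{G}}$. Thus $\mathcal{S}_{\omega}(\mathbf{\Sigma}(\Gamma)) \cong \mathcal{S}_{\omega}(\mathbf{\Sigma}^0(\Gamma,c))^{\mathrm{coinv}} \cong \mathcal{L}^{inv}_{\omega}(\Gamma)$, where the first isomorphism is the gluing theorem and the second is induced by the isomorphism already established, once one checks the coactions are intertwined.

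The hard part will be step (4) together with the bookkeeping of conventions: matching the precise normalization of the holonomy matrices, the co-orientation of boundary arcs, the height order of strands near a boundary arc, and the ciliation order, so that the $R$-matrix appearing in Alekseev--Grosse--Schomerus/Buffenoir--Roche is literally the one governing the stated-skein arcs-exchange relations (up to a possible substitution $q \leftrightarrow q^{-1}$ or a rescaling of $\omega$, which must be pinned down). In particular, the case of two edges sharing a single vertex with a prescribed cilium order is where the combinatorial quantization "braiding" and the skein "crossing near a puncture" must be shown to agree; I expect this to require carefully redrawing the relevant arcs in the ribbon surface and applying the defining skein relations (height exchange and the boundary/puncture relations) to reproduce the $R$-matrix entries. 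Once the conventions are aligned, steps (3) and (5) are essentially immediate, and the compatibility with gluing needed for the second isomorphism follows from functoriality of L\^e's construction with respect to arc-gluing.
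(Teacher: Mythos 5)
Your proposal is correct and follows essentially the same route as the paper: the edges of $\Gamma$ are taken as a relation-free generating set $\mathbb{G}$ of $\Pi_1(\Sigma^0_{\mathcal{P}},\mathbb{V})$, the presentation of Theorem \ref{theorem1} is matched term-by-term (generators, $q$-determinant, exchange relations) with the defining presentation of $\mathcal{L}_{\omega}(\Gamma,c)$, and the second isomorphism is obtained from the coinvariants characterization of the gluing map (Theorem \ref{theorem_exactsequence}) together with equivariance of the identification under the gauge coaction. The convention-matching you rightly flag as the delicate point is resolved in the paper by renormalizing the holonomy matrices to $U(\alpha)=(-1)^{w(\alpha)}\omega^{\delta}C^{-1}M(\alpha)$ for a spin function $w$, which is exactly the bookkeeping device your step (4) would need.
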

  
  Theorem \ref{theorem3} is not surprising and was already proved in some cases. First it is well-known that (stated) skein algebras also induces deformation quantizations of (relative) character varieties : it follows from the work in \cite{Bullock, PS00, Turaev91} for closed punctured surfaces and is proved in 
  \cite[Theorem $1.3$]{KojuQuesneyClassicalShadows} and \cite[Theorem $8.12$]{CostantinoLe19} for open punctured surfaces. So Theorem \ref{theorem3} was expected; for instance its statement was  conjectured by Costantino and L\^e in \cite{CostantinoLe19}. 
  Next the skein origin of the defining relations of quantum moduli algebra was discovered by Bullock, Frohman and Kania-Bartoszynska in  \cite{BullockFrohmanKania_LGFT} where the authors already proved that $\mathcal{S}_{\omega}(\mathbf{\Sigma}(\Gamma))$ and $\mathcal{L}^{inv}_{\omega}(\Gamma)$ are isomorphic in the particular case where $\mathds{k}=\mathbb{C}[[\hbar]]$ and $q:=\omega^{-4}=\exp{\hbar}$. However, their proof does not extend straightforwardly to arbitrary ring (see item $(6)$ of Section \ref{sec_final}).  Eventually, in the special case where $(\Gamma,c)$ is the so-called daisy graph (it has only one vertex, so $\mathbf{\Sigma}^0(\Gamma,c)$ has exactly one boundary component with one puncture on it), Theorem \ref{theorem3} was proved by Faitg in \cite{Faitg_LGFT_SSkein} in the case where $\omega$ is not a root of unity. A detailed comparison between Faitg's isomorphism and ours is made in Section \ref{sec_comparaison}.  Faitg's result can also be derived indirectly from the works in \cite{BenzviBrochierJordan_FactAlg1, GunninghamJordanSafranov_FinitenessConjecture}, as detailed in Section \ref{sec_comparaison}, though the so-obtained isomorphism is less explicit due to a change of duality.

\vspace{2mm}
\paragraph{\textbf{Acknowledgments.}} The author thanks  S.Baseilhac, F.Costantino, M.Faitg, L.Funar , A.Quesney and P.Roche for useful discussions. He acknowledges  support from the Japanese Society for Promotion of Science (JSPS) and the Centre National pour la Recherche et l'Enseignement (CNRS). 


\section{Finite presentations for stated skein algebras}
\subsection{Definitions and first properties of stated skein algebras}\label{sec_def_basic}

\begin{definition}
A \textit{punctured surface} is a pair $\mathbf{\Sigma}=(\Sigma,\mathcal{P})$ where $\Sigma$ is a compact oriented surface and $\mathcal{P}$ is a finite subset of $\Sigma$ which intersects non-trivially each boundary component. A \textit{boundary arc} is a connected component of $\partial \Sigma \setminus \mathcal{P}$. We write $\Sigma_{\mathcal{P}}:= \Sigma \setminus \mathcal{P}$.
\end{definition}

\textbf{Definition of stated skein algebras}
\\ Before stating precisely the definition of stated skein algebras, let us sketch it informally. Given a punctured surface $\mathbf{\Sigma}$ 	and an invertible element $\omega \in \mathds{k}^{\times}$ in some commutative unital ring $\mathds{k}$, the stated skein algebra $\mathcal{S}_{\omega}(\mathbf{\Sigma})$ is the quotient of the $\mathds{k}$-module freely spanned by isotopy classes of stated tangles in $\Sigma_{\mathcal{P}}\times (0,1)$  by some local skein relations. The left part of Figure \ref{fig_statedtangle} illustrates such a stated tangle: each point of $\partial T \subset \partial \Sigma_{\mathcal{P}}$ is equipped with a sign $+$ or $-$ (the state). Here the stated tangle is the union of three stated arcs and one closed curve. In order to work with two-dimensional pictures, we will consider the projection of tangles in $\Sigma_{\mathcal{P}}$ as in the right part of Figure \ref{fig_statedtangle}; such a projection will be referred to as a diagram.

\begin{figure}[!h] 
\centerline{\includegraphics[width=6cm]{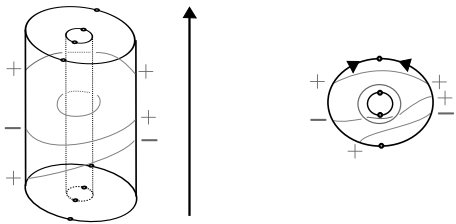} }
\caption{On the left: a stated tangle. On the right: its associated diagram. The arrows represent the height orders. } 
\label{fig_statedtangle} 
\end{figure} 

\par   A \textit{tangle} in $ \Sigma_{\mathcal{P}}\times (0,1)$   is a  compact framed, properly embedded $1$-dimensional manifold $T\subset \Sigma_{\mathcal{P}}\times (0,1)$ such that for every point of $\partial T \subset \partial \Sigma_{\mathcal{P}}\times (0,1)$ the framing is parallel to the $(0,1)$ factor and points to the direction of $1$.  Here, by framing, we refer to a thickening of $T$ to an oriented surface. The \textit{height} of $(v,h)\in \Sigma_{\mathcal{P}}\times (0,1)$ is $h$.  If $b$ is a boundary arc and $T$ a tangle, we impose that no two points in $\partial_bT:= \partial T \cap b\times(0,1)$  have the same heights, hence the set $\partial_bT$ is totally ordered by the heights. Two tangles are isotopic if they are isotopic through the class of tangles that preserve the boundary height orders. By convention, the empty set is a tangle only isotopic to itself.
 
\vspace{2mm}
\par Let $\pi : \Sigma_{\mathcal{P}}\times (0,1)\rightarrow \Sigma_{\mathcal{P}}$ be the projection with $\pi(v,h)=v$. A tangle $T$ is in \textit{generic position} if for each of its points, the framing is parallel to the $(0,1)$ factor and points in the direction of $1$ and is such that $\pi_{\big| T} : T\rightarrow \Sigma_{\mathcal{P}}$ is an immersion with at most transversal double points in the interior of $\Sigma_{\mathcal{P}}$. Every tangle is isotopic to a tangle in generic position. We call \textit{diagram}  the image $D=\pi(T)$ of a tangle in generic position, together with the over/undercrossing information at each double point. An isotopy class of diagram $D$ together with a total order of $\partial_b D:=\partial D\cap b$ for each boundary arc $b$, define uniquely an isotopy class of tangle. When choosing an orientation $\mathfrak{o}(b)$ of a boundary arc $b$ and a diagram $D$, the set $\partial_bD$ receives a natural order by setting that the points are increasing when going in the direction of $\mathfrak{o}(b)$. We will represent tangles by drawing a diagram and an orientation (an arrow) for each boundary arc, as in Figure \ref{fig_statedtangle}. When a boundary arc $b$ is oriented we assume that $\partial_b D$ is ordered according to the orientation. A \textit{state} of a tangle is a map $s:\partial T \rightarrow \{-, +\}$. A pair $(T,s)$ is called a \textit{stated tangle}. We define a \textit{stated diagram} $(D,s)$ in a similar manner.

 \vspace{2mm}
\par  Let  $\omega\in \mathds{k}^{\times}$ an invertible element and write $A:=\omega^{-2}$.

\begin{definition}\label{def_stated_skein}\cite{LeStatedSkein} 
  The \textit{stated skein algebra}  $\mathcal{S}_{\omega}(\mathbf{\Sigma})$ is the  free $\mathds{k}$-module generated by isotopy classes of stated tangles in $\Sigma_{\mathcal{P}}\times (0, 1)$ modulo the following relations \eqref{eq: skein 1} and \eqref{eq: skein 2}, 
  	\begin{equation}\label{eq: skein 1} 
\begin{tikzpicture}[baseline=-0.4ex,scale=0.5,>=stealth]	
\draw [fill=gray!45,gray!45] (-.6,-.6)  rectangle (.6,.6)   ;
\draw[line width=1.2,-] (-0.4,-0.52) -- (.4,.53);
\draw[line width=1.2,-] (0.4,-0.52) -- (0.1,-0.12);
\draw[line width=1.2,-] (-0.1,0.12) -- (-.4,.53);
\end{tikzpicture}
=A
\begin{tikzpicture}[baseline=-0.4ex,scale=0.5,>=stealth] 
\draw [fill=gray!45,gray!45] (-.6,-.6)  rectangle (.6,.6)   ;
\draw[line width=1.2] (-0.4,-0.52) ..controls +(.3,.5).. (-.4,.53);
\draw[line width=1.2] (0.4,-0.52) ..controls +(-.3,.5).. (.4,.53);
\end{tikzpicture}
+A^{-1}
\begin{tikzpicture}[baseline=-0.4ex,scale=0.5,rotate=90]	
\draw [fill=gray!45,gray!45] (-.6,-.6)  rectangle (.6,.6)   ;
\draw[line width=1.2] (-0.4,-0.52) ..controls +(.3,.5).. (-.4,.53);
\draw[line width=1.2] (0.4,-0.52) ..controls +(-.3,.5).. (.4,.53);
\end{tikzpicture}
\hspace{.5cm}
\text{ and }\hspace{.5cm}
\begin{tikzpicture}[baseline=-0.4ex,scale=0.5,rotate=90] 
\draw [fill=gray!45,gray!45] (-.6,-.6)  rectangle (.6,.6)   ;
\draw[line width=1.2,black] (0,0)  circle (.4)   ;
\end{tikzpicture}
= -(A^2+A^{-2}) 
\begin{tikzpicture}[baseline=-0.4ex,scale=0.5,rotate=90] 
\draw [fill=gray!45,gray!45] (-.6,-.6)  rectangle (.6,.6)   ;
\end{tikzpicture}
;
\end{equation}

\begin{equation}\label{eq: skein 2} 
\begin{tikzpicture}[baseline=-0.4ex,scale=0.5,>=stealth]
\draw [fill=gray!45,gray!45] (-.7,-.75)  rectangle (.4,.75)   ;
\draw[->] (0.4,-0.75) to (.4,.75);
\draw[line width=1.2] (0.4,-0.3) to (0,-.3);
\draw[line width=1.2] (0.4,0.3) to (0,.3);
\draw[line width=1.1] (0,0) ++(90:.3) arc (90:270:.3);
\draw (0.65,0.3) node {\scriptsize{$+$}}; 
\draw (0.65,-0.3) node {\scriptsize{$+$}}; 
\end{tikzpicture}
=
\begin{tikzpicture}[baseline=-0.4ex,scale=0.5,>=stealth]
\draw [fill=gray!45,gray!45] (-.7,-.75)  rectangle (.4,.75)   ;
\draw[->] (0.4,-0.75) to (.4,.75);
\draw[line width=1.2] (0.4,-0.3) to (0,-.3);
\draw[line width=1.2] (0.4,0.3) to (0,.3);
\draw[line width=1.1] (0,0) ++(90:.3) arc (90:270:.3);
\draw (0.65,0.3) node {\scriptsize{$-$}}; 
\draw (0.65,-0.3) node {\scriptsize{$-$}}; 
\end{tikzpicture}
=0,
\hspace{.2cm}
\begin{tikzpicture}[baseline=-0.4ex,scale=0.5,>=stealth]
\draw [fill=gray!45,gray!45] (-.7,-.75)  rectangle (.4,.75)   ;
\draw[->] (0.4,-0.75) to (.4,.75);
\draw[line width=1.2] (0.4,-0.3) to (0,-.3);
\draw[line width=1.2] (0.4,0.3) to (0,.3);
\draw[line width=1.1] (0,0) ++(90:.3) arc (90:270:.3);
\draw (0.65,0.3) node {\scriptsize{$+$}}; 
\draw (0.65,-0.3) node {\scriptsize{$-$}}; 
\end{tikzpicture}
=\omega
\begin{tikzpicture}[baseline=-0.4ex,scale=0.5,>=stealth]
\draw [fill=gray!45,gray!45] (-.7,-.75)  rectangle (.4,.75)   ;
\draw[-] (0.4,-0.75) to (.4,.75);
\end{tikzpicture}
\hspace{.1cm} \text{ and }
\hspace{.1cm}
\omega^{-1}
\heightexch{->}{-}{+}
- \omega^{-5}
\heightexch{->}{+}{-}
=
\heightcurve.
\end{equation}
The product of two classes of stated tangles $[T_1,s_1]$ and $[T_2,s_2]$ is defined by  isotoping $T_1$ and $T_2$  in $\Sigma_{\mathcal{P}}\times (1/2, 1) $ and $\Sigma_{\mathcal{P}}\times (0, 1/2)$ respectively and then setting $[T_1,s_1]\cdot [T_2,s_2]=[T_1\cup T_2, s_1\cup s_2]$. Figure \ref{fig_product} illustrates this product.
\end{definition}
\par For a closed punctured surface, $\mathcal{S}_{\omega}(\mathbf{\Sigma})$ coincides with the classical (Turaev's) Kauffman-bracket skein algebra.

\begin{figure}[!h] 
\centerline{\includegraphics[width=8cm]{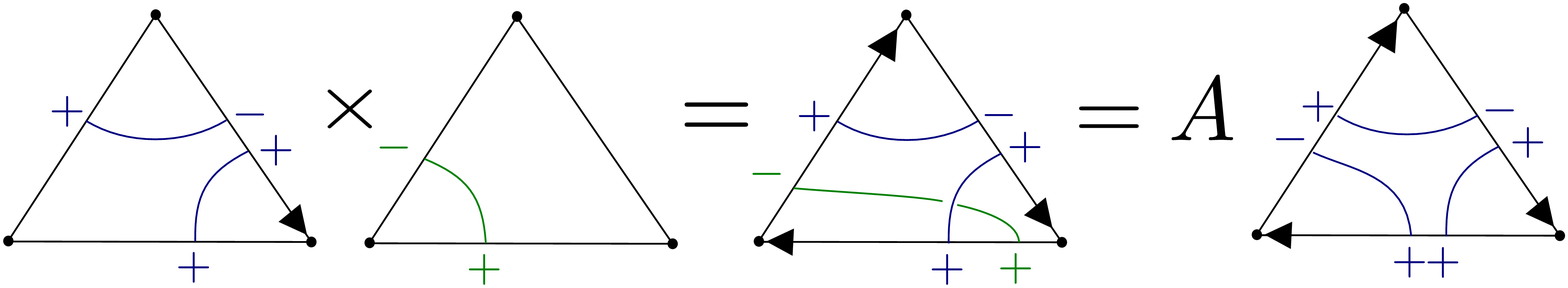} }
\caption{An illustration of the product in stated skein algebras.} 
\label{fig_product} 
\end{figure}

\textbf{Reflexion anti-involution}
\\ Suppose that $\mathds{k} = \mathbb{Z}[\omega^{\pm 1}]$ and consider the $\mathbb{Z}$-linear involution $x\mapsto x^*$ on $\mathds{k}$ sending $\omega$ to $\omega^{-1}$.
Let $r: \Sigma_{\mathcal{P}} \times (0,1) \xrightarrow{\cong} \Sigma_{\mathcal{P}}$ be the homeomorphism defined by $r(x,t) = (x, 1-t)$. Define an anti-linear map  $\theta : \mathcal{S}_{\omega}(\mathbf{\Sigma}) \xrightarrow{\cong} \mathcal{S}_{\omega}(\mathbf{\Sigma}) $ by setting 
$$\theta \left( \sum_i x_i [T_i, s_i] \right) := \sum_i x_i^* [r(T_i), s_i\circ r].$$

\begin{proposition} (\cite[Proposition $2.7$]{LeStatedSkein})
$\theta$ is an anti-morphism of algebras, i.e. $\theta(xy)= \theta(y)\theta(x)$.
\end{proposition}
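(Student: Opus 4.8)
The plan is to factor $\theta$ as the coefficientwise involution $x\mapsto x^*$ of $\mathds{k}$ composed with the geometric operation $[T,s]\mapsto[r(T),s\circ r]$ that reflects a stated tangle in the $(0,1)$-direction, and then to prove two things: that this composite respects the defining skein relations, so that $\theta$ is a well-defined map on $\mathcal{S}_{\omega}(\mathbf{\Sigma})$, and that it reverses the product on the spanning set of stated tangle classes. Anti-linearity of $\theta$ is built into the definition, and $\theta^2=\id$ will be immediate from $r^2=\id$ together with $(\cdot)^{**}=\id$, which also shows that $\theta$ is an isomorphism of $\mathds{k}$-modules.

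First I would settle well-definedness. Since $\mathcal{S}_{\omega}(\mathbf{\Sigma})$ is a quotient of the free $\mathds{k}$-module on isotopy classes of stated tangles, it suffices to check that $\theta$ sends each instance of \eqref{eq: skein 1} and \eqref{eq: skein 2} to a $\mathds{k}$-combination of such relations. The relevant features of $r$ are: it preserves the projection $\pi$ to $\Sigma_{\mathcal{P}}$, so it leaves the underlying diagram of a generic tangle unchanged; it interchanges over- and under-crossings; and along every boundary arc it reverses the height order of the boundary points. Given these, the order reversal is innocuous for the two ``bad half-turn'' relations of \eqref{eq: skein 2}, whose two endpoints carry equal states. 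For \eqref{eq: skein 1}, $r$ turns a positive crossing into a negative one, and the Kauffman relation for a negative crossing differs from that for a positive one exactly by the exchange $A\leftrightarrow A^{-1}$; this is precisely compensated by $x\mapsto x^*$, which sends $A=\omega^{-2}$ to $\omega^{2}=A^{-1}$. The framing/trivial-loop relation is manifestly invariant.

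The delicate part --- the step I expect to be the main obstacle --- is the last pair of relations of \eqref{eq: skein 2}, notably the height-exchange relation with its asymmetric coefficients $\omega^{-1}$ and $\omega^{-5}$. These are formulated relative to a fixed orientation of a boundary arc, and applying $r$ amounts to reversing that orientation while substituting $\omega^{-1}$ for $\omega$; I would verify by a direct computation that the identities so obtained are again consequences of \eqref{eq: skein 2}, equivalently that they coincide with the ``reversed-orientation'' forms of those relations, which are standard derived identities in the stated skein calculus. Keeping track of the powers of $\omega$ here is the only genuinely computational point.

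Finally I would check the product reversal on generators. The product $[T_1,s_1]\cdot[T_2,s_2]$ is represented by $T_1\cup T_2$ with $T_1$ pushed into $\Sigma_{\mathcal{P}}\times(1/2,1)$ and $T_2$ into $\Sigma_{\mathcal{P}}\times(0,1/2)$; applying $r$ moves $T_1$ into $\Sigma_{\mathcal{P}}\times(0,1/2)$ and $T_2$ into $\Sigma_{\mathcal{P}}\times(1/2,1)$, so $r(T_1\cup T_2)$ represents $[r(T_2),s_2\circ r]\cdot[r(T_1),s_1\circ r]$, i.e. $\theta([T_1,s_1]\cdot[T_2,s_2])=\theta([T_2,s_2])\cdot\theta([T_1,s_1])$. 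Extending to arbitrary $x=\sum_i x_i[T_i,s_i]$ and $y=\sum_j y_j[T'_j,s'_j]$ then uses only that $x\mapsto x^*$ is multiplicative and that $\mathds{k}$ is commutative, which lets one identify $\theta(xy)=\sum_{i,j}(x_iy_j)^*\,\theta([T'_j,s'_j])\theta([T_i,s_i])$ with $\theta(y)\theta(x)$, completing the argument.
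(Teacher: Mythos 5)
The paper does not prove this proposition itself but cites it to L\^e's Proposition~2.7, and your proposal reconstructs exactly the standard argument from that source: check that $[T,s]\mapsto x^*[r(T),s\circ r]$ respects the defining relations (the Kauffman relation via $A\leftrightarrow A^{-1}$, and the boundary relations via the reversal of the height order, which is where the $\mathscr{R}^{-1}$/height-exchange computation with the coefficients $\omega^{-1},\omega^{-5}$ must be carried out), then observe that $r$ swaps the two halves $\Sigma_{\mathcal{P}}\times(0,1/2)$ and $\Sigma_{\mathcal{P}}\times(1/2,1)$ and hence reverses the stacking product. You correctly isolate the only genuinely computational point, and that computation does close up (the reflected cutting-arc relation reduces to the original one after applying the height-exchange relations), so the proposal is correct and follows essentially the same route as the cited proof.
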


\vspace{2mm}
\par 
\textbf{Bases for stated skein algebras}
\\ A closed component of a diagram $D$ is trivial if it bounds an embedded disc in $\Sigma_{\mathcal{P}}$. An open component of $D$ is trivial if it can be isotoped, relatively to its boundary, inside some boundary arc. A diagram is \textit{simple} if it has neither double point nor trivial component. By convention, the empty set is a simple diagram. Let $\mathfrak{o}$ denote an arbitrary orientation of the boundary arcs of $\mathbf{\Sigma}$. For each boundary arc $b$ we write $<_{\mathfrak{o}}$ the induced total order on $\partial_b D$. A state $s: \partial D \rightarrow \{ - , + \}$ is $\mathfrak{o}-$\textit{increasing} if for any boundary arc $b$ and any two points $x,y \in \partial_b D$, then $x<_{\mathfrak{o}} y$ implies $s(x)< s(y)$, with the convention $- < +$. 

\begin{definition}\label{def_basis}
 We denote by $\mathcal{B}^{\mathfrak{o}}\subset \mathcal{S}_{\omega}(\mathbf{\Sigma})$ the set of classes of stated diagrams $(D,s)$ such that $D$ is simple and $s$ is $\mathfrak{o}$-increasing. 
\end{definition}

\begin{theorem}\label{theorem_basis} (\cite[Theorem $2.11$]{LeStatedSkein})  the set $\mathcal{B}^{\mathfrak{o}}$ is a  basis of $\mathcal{S}_{\omega}(\mathbf{\Sigma})$. \end{theorem}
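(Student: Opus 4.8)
The plan is to prove that $\mathcal{B}^{\mathfrak{o}}$ is both a spanning set and linearly independent. For the spanning property, I would argue that any stated diagram $(D,s)$ can be rewritten, modulo the defining relations \eqref{eq: skein 1} and \eqref{eq: skein 2}, as a $\mathds{k}$-linear combination of elements of $\mathcal{B}^{\mathfrak{o}}$. First, the Kauffman-bracket relations \eqref{eq: skein 1} allow one to resolve all double points and to remove all trivial closed components, exactly as in the classical case, at the cost of multiplying by powers of $A$ and the scalar $-(A^2+A^{-2})$; this reduces to diagrams with no crossings and no trivial loops. Next, the boundary relations in \eqref{eq: skein 2} must be used to handle trivial \emph{open} components and to make the states $\mathfrak{o}$-increasing. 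The trivial arc relations (the three pictures equal to $0$, $\omega$, and $0$) let one delete a trivial arc isotopic into a boundary arc when its states are not $(+,-)$ in the correct order, and replace it by the empty diagram times $\omega$ otherwise. The height-exchange relation (the last identity in \eqref{eq: skein 2}) is the crucial tool: it lets one swap the heights of two consecutive endpoints on a boundary arc, at the price of lower-order terms (fewer endpoints on that arc, or a strictly smaller diagram after a trivial-arc cancellation). One then sets up a well-founded complexity on stated diagrams --- say, lexicographically ordered by (number of double points, number of trivial components, total number of boundary endpoints, number of ``inversions'' of the state relative to $\mathfrak{o}$) --- and shows each relation strictly decreases this complexity except when it already lands in $\mathcal{B}^{\mathfrak{o}}$. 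This proves $\mathcal{B}^{\mathfrak{o}}$ spans.

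For linear independence, the standard approach is to construct a $\mathds{k}$-linear functional, or better a filtration, that separates the elements of $\mathcal{B}^{\mathfrak{o}}$. One route is to use the known basis of the Kauffman-bracket skein \emph{module} of the thickened surface together with an evaluation/gluing argument: embed $\mathbf{\Sigma}$ into a punctured surface obtained by capping off or gluing bigons to the boundary arcs, and track how simple diagrams with $\mathfrak{o}$-increasing states map under the induced algebra morphisms, or conversely realize $\mathcal{S}_\omega(\mathbf{\Sigma})$ inside a larger algebra (e.g. a quantum torus via the quantum trace of Bonahon--Wong, or a tensor power of the stated skein algebra of the triangle) where a basis is already understood. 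Alternatively, one argues by induction on the number of boundary arcs: splitting $\mathbf{\Sigma}$ along an ideal arc into simpler pieces, the gluing map on stated skein algebras is compatible with the proposed bases, so independence for the pieces (ultimately for the bigon and the triangle, done by hand) propagates to $\mathbf{\Sigma}$.

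The main obstacle is the linear independence half. Spanning is essentially a bookkeeping exercise once the right complexity function is chosen, but proving that no nontrivial $\mathds{k}$-linear combination of the $(D,s)\in\mathcal{B}^{\mathfrak{o}}$ vanishes requires producing enough ``test'' homomorphisms or a faithful representation, and this is exactly where the subtlety of the boundary relations (especially the asymmetric coefficient $\omega^{-5}$ in the height-exchange relation) enters. One must ensure the chosen splitting/gluing morphisms are genuinely injective on the span of $\mathcal{B}^{\mathfrak{o}}$ and not merely surjective; handling the base cases of the bigon and triangle carefully, and checking that the gluing map $\mathcal{S}_\omega(\mathbf{\Sigma}) \hookrightarrow \mathcal{S}_\omega(\mathbf{\Sigma}_1)\otimes\mathcal{S}_\omega(\mathbf{\Sigma}_2)$ (or the analogous self-gluing map) sends $\mathcal{B}^{\mathfrak{o}}$ to linearly independent families, is the technical heart of the argument.
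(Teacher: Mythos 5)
First, a point of comparison: the paper does not actually prove this statement. It is quoted from L\^e's work (\cite[Theorem 2.11]{LeStatedSkein}), and the only hint the present paper gives about the proof is the remark in Section \ref{sec_final}, item $(2)$, that L\^e obtains it by a Diamond Lemma (confluence) argument. Your spanning half is essentially the standard rewriting argument and is fine: relations \eqref{eq: skein 1} eliminate crossings and trivial closed components, relations \eqref{eq: skein 2} eliminate trivial arcs and reorder states, and a suitable well-founded complexity shows the process terminates in the span of $\mathcal{B}^{\mathfrak{o}}$.

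The genuine gap is in the linear-independence half, and it is not merely that you leave the choice of argument open: every concrete route you propose is logically downstream of the theorem itself. The injectivity of the splitting map $i_{|a\#b}$ (Theorem \ref{theorem_gluing}) is proved in \cite{LeStatedSkein} \emph{using} the basis $\mathcal{B}^{\mathfrak{o}}$, so an induction that cuts $\mathbf{\Sigma}$ into triangles and invokes injectivity of the splitting morphism is circular as stated; the same is true of embedding into a quantum torus via the quantum trace, whose injectivity is itself established by computing its effect on a basis. What is actually required is a self-contained argument, and L\^e's is a confluence check: regard the skein relations as a rewriting system on the free module of stated diagrams, verify that all overlapping ambiguities (a crossing adjacent to a boundary reordering, two adjacent reorderings on the same boundary arc, a trivial arc created by a reordering, etc.) resolve to the same normal form --- this is precisely where the coefficients $\omega$ and $\omega^{-5}$ in \eqref{eq: skein 2} must be seen to be compatible with \eqref{eq: skein 1} --- and conclude by Bergman's Diamond Lemma that the irreducible diagrams, i.e.\ the elements of $\mathcal{B}^{\mathfrak{o}}$, are linearly independent. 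Your proposal correctly flags independence as the hard part, but does not supply an argument for it that avoids presupposing the result.
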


\begin{remark}\label{remark_change_basis}
The basis $\mathcal{B}^{\mathfrak{o}}$ is independent on the choice of the ground ring $\mathds{k}$ and of $q\in \mathds{k}^{\times}$. This fact has the following useful consequence: let $\mathds{k} := \mathbb{Z}[\omega^{\pm 1}]$ and $\mathds{k}'$ be any other commutative unital ring with an invertible element $\omega' \in \mathds{k}^{'\times}$. There is a unique morphism of rings $\mu : \mathds{k} \rightarrow \mathds{k}'$ sending $\omega$ to $\omega'$ and the two $\mathds{k}'$ algebras $\mathcal{S}_{\omega}(\mathbf{\Sigma})\otimes_{\mathds{k}} \mathds{k}'$ and $\mathcal{S}_{\omega'}(\mathbf{\Sigma})$ are canonically isomorphic through the isomorphism preserving the basis $\mathcal{B}^{\mathfrak{o}}$. This fact permits to prove formulas in $\mathds{k}$ using the reflexion anti-involution $\theta$ and then apply them to any ring $\mathds{k}'$ by changing the coefficients.

\end{remark}

\vspace{2mm}
\par 
\textbf{Gluing maps}
\\ Let $a$, $b$ be two distinct boundary arcs of $\mathbf{\Sigma}$ and let $\mathbf{\Sigma}_{|a\#b}$ be the punctured surface obtained from $\mathbf{\Sigma}$ by gluing $a$ and $b$. Denote by $\pi : \Sigma_{\mathcal{P}} \rightarrow (\Sigma_{|a\#b})_{\mathcal{P}_{|a\#b}}$ the projection and $c:=\pi(a)=\pi(b)$. Let $(T_0, s_0)$ be a stated framed tangle of ${\Sigma_{|a\#b}}_{\mathcal{P}_{|a\#b}} \times (0,1)$ transversed to $c\times (0,1)$ and such that the heights of the points of $T_0 \cap c\times (0,1)$ are pairwise distinct and the framing of the points of $T_0 \cap c\times (0,1)$ is vertical. Let $T\subset \Sigma_{\mathcal{P}}\times (0,1)$ be the framed tangle obtained by cutting $T_0$ along $c$. 
Any two states $s_a : \partial_a T \rightarrow \{-,+\}$ and $s_b : \partial_b T \rightarrow \{-,+\}$ give rise to a state $(s_a, s, s_b)$ on $T$. 
Both the sets $\partial_a T$ and $\partial_b T$ are in canonical bijection with the set $T_0\cap c$ by the map $\pi$. Hence the two sets of states $s_a$ and $s_b$ are both in canonical bijection with the set $\mathrm{St}(c):=\{ s: c\cap T_0 \rightarrow \{-,+\} \}$. 

\begin{definition}\label{def_gluing_map}
Let $i_{|a\#b}: \mathcal{S}_{\omega}(\mathbf{\Sigma}_{|a\#b}) \rightarrow \mathcal{S}_{\omega}(\mathbf{\Sigma})$ be the linear map given, for any $(T_0, s_0)$ as above, by: 
$$ i_{|a\#b} \left( [T_0,s_0] \right) := \sum_{s \in \mathrm{St}(c)} [T, (s, s_0 , s) ].$$
\end{definition}

\begin{figure}[!h] 
\centerline{\includegraphics[width=8cm]{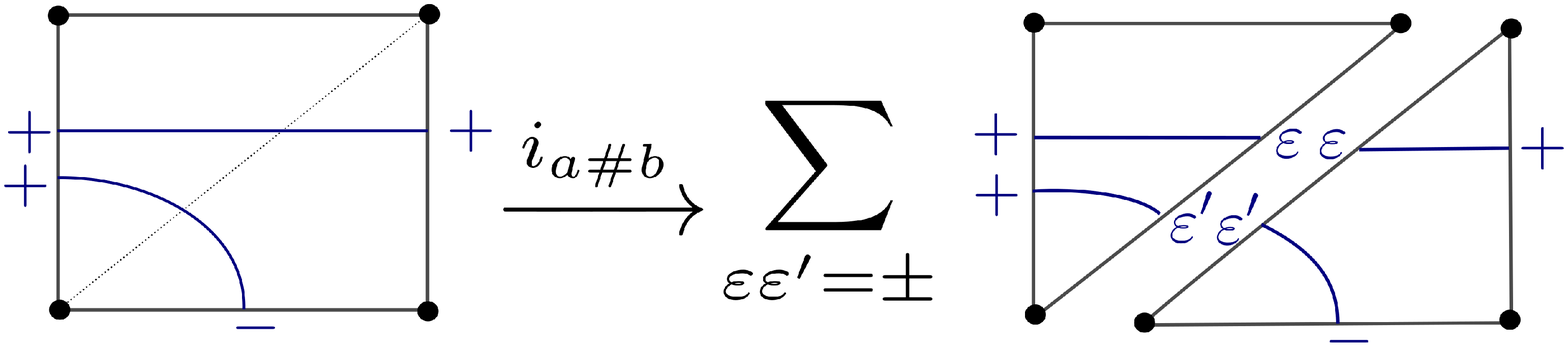} }
\caption{An illustration of the gluing map $i_{a\#b}$.} 
\label{fig_gluingmap} 
\end{figure} 

\begin{theorem}\label{theorem_gluing}\cite[Theorem $3.1$]{LeStatedSkein}
 The linear map $i_{|a\#b}: \mathcal{S}_{\omega}(\mathbf{\Sigma}_{|a\#b}) \rightarrow \mathcal{S}_{\omega}(\mathbf{\Sigma})$ is an injective morphism of algebras. Moreover the gluing operation is coassociative in the sense that if $a,b,c,d$ are four distinct boundary arcs, then we have $i_{|a\#b} \circ i_{|c\#d} = i_{|c\#d} \circ i_{|a\#b}$.
\end{theorem}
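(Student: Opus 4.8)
The plan is to verify, in order, that $i_{|a\#b}$ is well defined, that it is a unital algebra morphism, that it is injective, and that gluings along disjoint arcs commute; only well-definedness is delicate. Throughout I represent classes in $\mathcal{S}_{\omega}(\mathbf{\Sigma}_{|a\#b})$ by \emph{$c$-position} stated diagrams, meaning transverse to $c$, with pairwise distinct heights over the points of $c$, vertical framing along $c$, and no crossing inside a fixed collar of $c$; on such diagrams the formula of Definition~\ref{def_gluing_map} is literally meaningful.

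For well-definedness I would decompose an isotopy between two $c$-position diagrams, or an application of a relation of \eqref{eq: skein 1}--\eqref{eq: skein 2}, into elementary moves and check that $i_{|a\#b}$ is unaffected by each. Moves supported away from a collar of $c$ — in particular every application of \eqref{eq: skein 1} and of \eqref{eq: skein 2}, the latter because the boundary arcs of $\mathbf{\Sigma}_{|a\#b}$ remain boundary arcs of $\mathbf{\Sigma}$ — are invisible to the cutting and to the state sum, hence harmless (a relation localized in a ball meeting $c$ being first isotoped off $c$). Two genuine moves remain near $c$. The first is a finger move pushing a local extremum of a strand across $c$: after cutting, it creates a trivial boundary arc near $a$ (equivalently near $b$) whose two endpoints carry the summed state, and summing that state out and applying the relations \eqref{eq: skein 1}--\eqref{eq: skein 2} reconstitutes the uncut strand — this identity is precisely the algebraic content built into the second family of relations. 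The second, which I expect to be the main obstacle, is the exchange of the heights of two consecutive points of $T_0\cap c$: this is a genuine isotopy in $\mathbf{\Sigma}_{|a\#b}$, because $c$ is an interior arc, but on $\mathbf{\Sigma}$ it exchanges two heights along $a$ and, simultaneously, two heights along $b$. I would apply the height-exchange relation (the last relation of \eqref{eq: skein 2}) on the $a$-side and on the $b$-side and check that the two turnback correction terms cancel once summed over the shared state; the coefficients $\omega^{-1}$ and $-\omega^{-5}$ of \eqref{eq: skein 2} are exactly what makes this cancellation occur. That computation is routine but delicate, and is streamlined by working over $\mathbb{Z}[\omega^{\pm 1}]$ with the anti-involution $\theta$ and Remark~\ref{remark_change_basis}.

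Granting well-definedness, the morphism property is immediate: representing $[T_1]$ and $[T_2]$ in $c$-position and stacking them at heights in $(1/2,1)$ and $(0,1/2)$ gives a $c$-position diagram in which all points of $T_1\cap c$ lie above all points of $T_2\cap c$, so $\mathrm{St}(c)$ for the union is the product of the two factor sets and the state sum factors as $i_{|a\#b}([T_1])\cdot i_{|a\#b}([T_2])$; the empty tangle goes to the empty tangle. Coassociativity is of the same nature: for four distinct boundary arcs the two gluing curves are disjoint, so both composites compute the single iterated sum over states on the pair of curves and agree by Fubini.

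Injectivity I would obtain from the basis $\mathcal{B}^{\mathfrak{o}}$ of Theorem~\ref{theorem_basis}. Fix an orientation $\mathfrak{o}$ of the boundary arcs of $\mathbf{\Sigma}$ restricting on the common arcs to a chosen orientation $\mathfrak{o}'$ of $\mathbf{\Sigma}_{|a\#b}$. Given a basis element $(D_0,s_0)\in\mathcal{B}^{\mathfrak{o}'}$, put $D_0$ in $c$-position with $n:=|D_0\cap c|$ minimal; minimality forces the cut diagram $D\subset\Sigma_{\mathcal{P}}$ to have no trivial component (otherwise one could lower $n$, or $D_0$ would fail to be simple), so $D$ is simple and $i_{|a\#b}(D_0,s_0)=\sum_{s}[D,(s,s_0,s)]$ with each summand a class of a simple stated diagram. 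The summand indexed by $s\equiv -$ is already an element of $\mathcal{B}^{\mathfrak{o}}$: it is $(D,\bar s)$, where $\bar s$ agrees with $s_0$ off $a\cup b$ and is identically $-$ on $\partial_a D$ and on $\partial_b D$. Expanding any other summand in $\mathcal{B}^{\mathfrak{o}}$ by straightening its states along $a$ and along $b$ produces only basis elements with strictly fewer minus signs on $\partial_a D$, or (the turnback terms) with strictly fewer endpoints on $a$. Hence, ordering $\mathcal{B}^{\mathfrak{o}}$ by the number of endpoints on $a$ and then by the number of minus signs on $\partial_a D$, the element $(D,\bar s)$ occurs in $i_{|a\#b}(D_0,s_0)$ with coefficient $1$ and is a maximal term; moreover regluing recovers $D_0$ from $D$ and $\bar s$ recovers $s_0$, so distinct basis elements $(D_0,s_0)$ yield distinct maximal terms. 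Working at the maximal value of "number of endpoints on $a$" occurring in a hypothetical dependence $\sum_j\lambda_j i_{|a\#b}(D_0^{(j)},s_0^{(j)})=0$, the usual triangularity argument then forces each $\lambda_j=0$, so $i_{|a\#b}$ is injective.
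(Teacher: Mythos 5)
The paper offers no proof of this statement: it is imported verbatim from L\^e (\cite[Theorem 3.1]{LeStatedSkein}) and used as a black box, so the only meaningful comparison is with L\^e's original argument, which your reconstruction follows faithfully --- well-definedness by checking invariance under elementary moves supported near the gluing arc, multiplicativity by stacking, coassociativity by Fubini, injectivity by a triangularity argument against the basis of Theorem \ref{theorem_basis}. I see no gap in the plan; the injectivity argument in particular (leading term given by the all-$-$ summand, ordered first by $|\partial_a D|$ and then by the number of $-$ signs on $\partial_a D$, with straightening moves on the $b$-side leaving both $a$-side keys untouched while the second key is already below its maximum) is sound, and the recovery of $(D_0,s_0)$ from the leading term by regluing gives the required distinctness. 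Two points of precision. In the height-exchange step the mechanism is not literally a cancellation of turnback terms: the crossings created on the $a$- and $b$-sides have opposite signs, so the states transform by $\mathscr{R}$ on one side and by $\mathscr{R}^{-1}$ on the other (relations \eqref{height_exchange_rel}), and summing over the shared state in $\mathrm{St}(c)$ contracts $\mathscr{R}\mathscr{R}^{-1}$ to the identity; the coefficients $\omega^{-1}$ and $-\omega^{-5}$ enter only through the matrix $C$ that makes these the correct braiding operators. Second, your leading term $(D,\bar s)$ with $\bar s\equiv -$ on $\partial_a D\cup\partial_b D$ belongs to $\mathcal{B}^{\mathfrak{o}}$ only under the non-strict reading of ``$\mathfrak{o}$-increasing'' (i.e.\ $x<_{\mathfrak{o}}y$ implies $s(x)\le s(y)$), which is the intended one --- the strict inequality in Definition \ref{def_basis} is a typo, since otherwise Theorem \ref{theorem_basis} itself would fail.
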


\textbf{Relation with $U_q\mathfrak{sl}_2$ and its restricted dual $\mathcal{O}_q[\mathrm{SL}_2]$}
\\ Recall that $A=\omega^{-2}$ and write $q:=A^2$. The stated skein algebra has deep relations with the quantum group $U_q\mathfrak{sl}_2$ and its restricted dual $\mathcal{O}_q(\mathrm{SL}_2)$, explored in \cite{LeStatedSkein, KojuQuesneyClassicalShadows, CostantinoLe19, Faitg_LGFT_SSkein} that we briefly reproduce here for later use. Let $\rho : U_q \mathfrak{sl}_2 \rightarrow \mathrm{End}(V)$ be the standard representation of $U_q\mathfrak{sl}_2$, where $V$ is two dimensional with basis $(v_+, v_-)$ and 
$$ \rho(E) = \begin{pmatrix} 0 & 0 \\ 1 & 0 \end{pmatrix}, \quad \rho(F) = \begin{pmatrix} 0 & 1 \\ 0 & 0 \end{pmatrix}, \quad \rho(K) = \begin{pmatrix} q & 0 \\ 0 & q^{-1} \end{pmatrix}.
$$
\par Let $\rho^* : U_q \mathfrak{sl}_2 \rightarrow \mathrm{End}(V^*)$ be the dual representation of $(\rho, V)$, where $\rho^*(x)$ is the transposed of $\rho(S(x))$. One has a $U_q\mathfrak{sl}_2$-equivariant isomorphism $V^* \xrightarrow{\cong} V$ whose matrix in the bases $(v_+^*, v_-^*)$ and $(v_+, v_-)$ writes 
$$ C= \begin{pmatrix} C_+^+ & C_-^+ \\ C_+^- & C_-^- \end{pmatrix} := \begin{pmatrix} 0 & \omega \\ -\omega^{5} & 0 \end{pmatrix}. \mbox{ Therefore }C^{-1}= -A^3 C =  \begin{pmatrix} 0 & -\omega^{-5} \\ \omega^{-1} & 0 \end{pmatrix}.$$

\par Define the operators $\tau, q^{\frac{H\otimes H}{2}} \in \mathrm{End}(V^{\otimes 2})$ by $\tau (v_i\otimes v_j ):= v_j \otimes v_i$ and $q^{\frac{H\otimes H}{2}} (v_i\otimes v_j) = A^{ij} v_i\otimes v_j$ for $i,j \in \{+, -\}$ (we identified $-$ with $-1$ and $+$ with $+1$). Let $\mathscr{R} \in \mathrm{End}(V^{\otimes 2})$ be the braiding operator
$$ \mathscr{R} = \tau \circ q^{\frac{H\otimes H}{2}} \circ \exp_q\left( (q-q^{-1})\rho(E) \otimes \rho(F) \right) =  \tau \circ q^{\frac{H\otimes H}{2}} \circ \left( \mathds{1}_2 + (q-q^{-1}) \rho(E) \otimes \rho(F) \right) .$$
In the basis $\left( v_+ \otimes v_+, v_+ \otimes v_-, v_-\otimes v_+, v_-\otimes v_- \right)$, it writes 
$$ \mathscr{R} =  
\begin{pmatrix}
 \mathscr{R}_{++}^{++} & \mathscr{R}_{+-}^{++} &\mathscr{R}_{-+}^{++} &\mathscr{R}_{--}^{++} \\
\mathscr{R}_{++}^{+-} &\mathscr{R}_{+-}^{+-} &\mathscr{R}_{-+}^{+-} &\mathscr{R}_{--}^{+-}  \\
\mathscr{R}_{++}^{-+} &\mathscr{R}_{+-}^{-+} &\mathscr{R}_{-+}^{-+} &\mathscr{R}_{--}^{-+} \\
\mathscr{R}_{++}^{--} &\mathscr{R}_{+-}^{--} &\mathscr{R}_{-+}^{--} &\mathscr{R}_{--}^{--} 
  \end{pmatrix}
:= \begin{pmatrix} A & 0 & 0 & 0 \\ 0 & 0 &A^{-1} & 0 \\ 0 & A^{-1} & A-A^{-3} & 0 \\ 0 & 0 & 0 & A \end{pmatrix}, \mbox{so } 
\mathscr{R}^{-1}  =
\begin{pmatrix} A^{-1} & 0 & 0 & 0 \\ 0 & A^{-1} - A^3 & A & 0 \\ 0 & A & 0 & 0 \\ 0 & 0 & 0 & A^{-1}\end{pmatrix}.
$$

We now list three families of skein relations, which are straightforward consequences of the definition, and will be used in the paper. Let $i,j \in \{ -, + \}$.

\par $\bullet$  \textit{The trivial arc relations:} 

\begin{equation}\label{trivial_arc_rel}
\begin{tikzpicture}[baseline=-0.4ex,scale=0.5,>=stealth]
\draw [fill=gray!45,gray!45] (-.7,-.75)  rectangle (.4,.75)   ;
\draw[->] (0.4,-0.75) to (.4,.75);
\draw[line width=1.2] (0.4,-0.3) to (0,-.3);
\draw[line width=1.2] (0.4,0.3) to (0,.3);
\draw[line width=1.1] (0,0) ++(90:.3) arc (90:270:.3);
\draw (0.65,0.3) node {\scriptsize{$i$}}; 
\draw (0.65,-0.3) node {\scriptsize{$j$}}; 
\end{tikzpicture}
= C^i_j 
\hspace{.2cm}
\begin{tikzpicture}[baseline=-0.4ex,scale=0.5,>=stealth]
\draw [fill=gray!45,gray!45] (-.7,-.75)  rectangle (.4,.75)   ;
\draw[-] (0.4,-0.75) to (.4,.75);
\end{tikzpicture}
, \hspace{.4cm}
\begin{tikzpicture}[baseline=-0.4ex,scale=0.5,>=stealth]
\draw [fill=gray!45,gray!45] (-.7,-.75)  rectangle (.4,.75)   ;
\draw[->] (-0.7,-0.75) to (-.7,.75);
\draw[line width=1.2] (-0.7,-0.3) to (-0.3,-.3);
\draw[line width=1.2] (-0.7,0.3) to (-0.3,.3);
\draw[line width=1.15] (-.4,0) ++(-90:.3) arc (-90:90:.3);
\draw (-0.9,0.3) node {\scriptsize{$i$}}; 
\draw (-0.9,-0.3) node {\scriptsize{$j$}}; 
\end{tikzpicture}
=(C^{-1})^i_j 
\hspace{.2cm}
\begin{tikzpicture}[baseline=-0.4ex,scale=0.5,>=stealth]
\draw [fill=gray!45,gray!45] (-.7,-.75)  rectangle (.4,.75)   ;
\draw[-] (-0.7,-0.75) to (-0.7,.75);
\end{tikzpicture}.
\end{equation}

\par $\bullet$  \textit{The cutting arc relations:}

\begin{equation}\label{cutting_arc_rel}
\heightcurveright
= \sum_{i,j = \pm} C^i_j
 \hspace{.2cm} 
\heightexchright{->}{i}{j}
, \hspace{.4cm} 
\heightcurve =
\sum_{i,j = \pm} (C^{-1})_j^i
\hspace{.2cm}
\heightexch{->}{i}{j}
\end{equation}.

\par $\bullet$ \textit{The height exchange relations:}

\begin{equation}\label{height_exchange_rel}
\heightexch{->}{i}{j}= \adjustbox{valign=c}{\includegraphics[width=0.9cm]{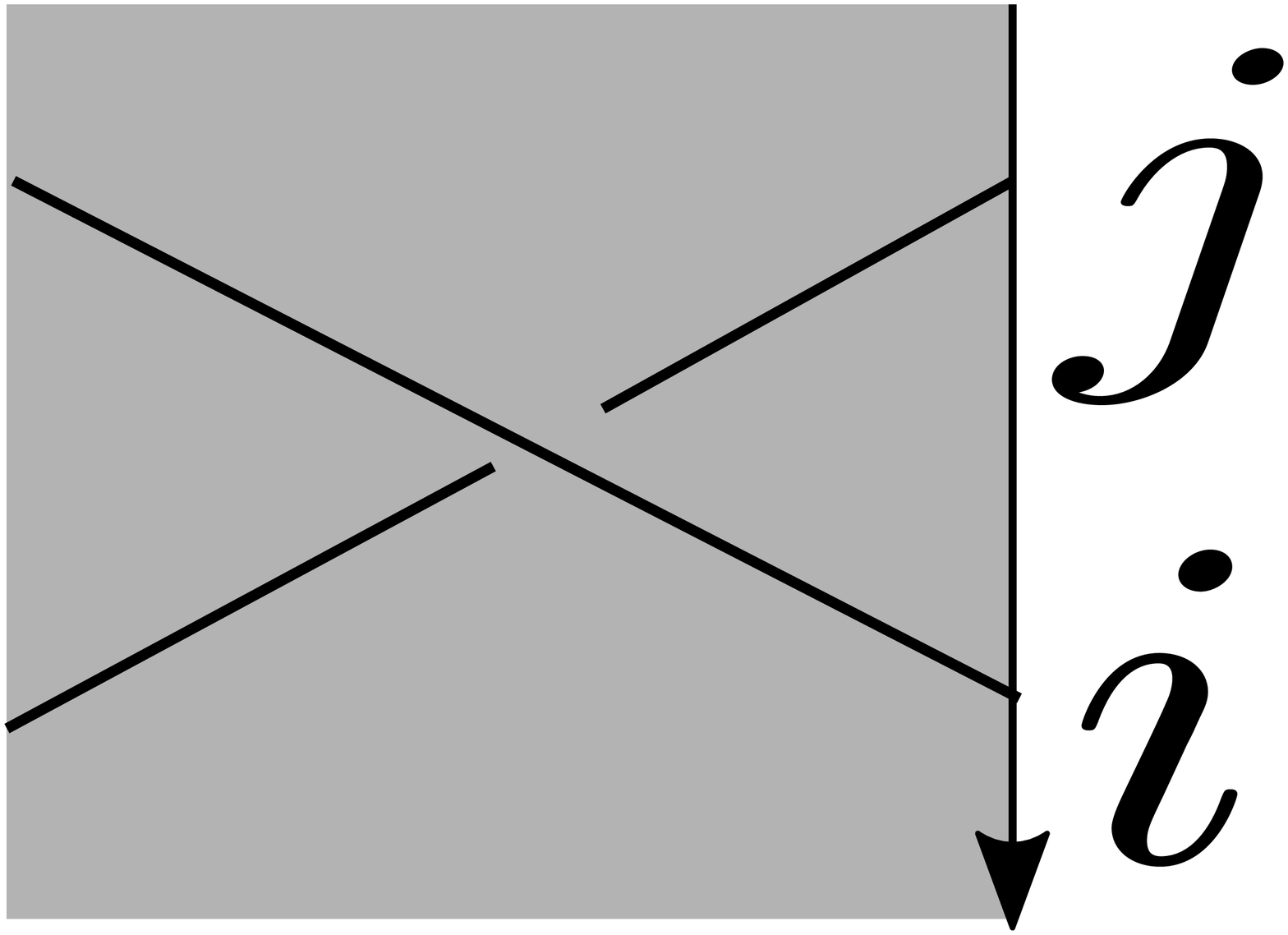}} =  \sum_{k,l = \pm} \mathscr{R}_{i j}^{k l} 
 \hspace{.2cm} 
 \heightexch{<-}{l}{k}
 , \hspace{.4cm}
 \heightexch{<-}{j}{i} = \adjustbox{valign=c}{\includegraphics[width=0.9cm]{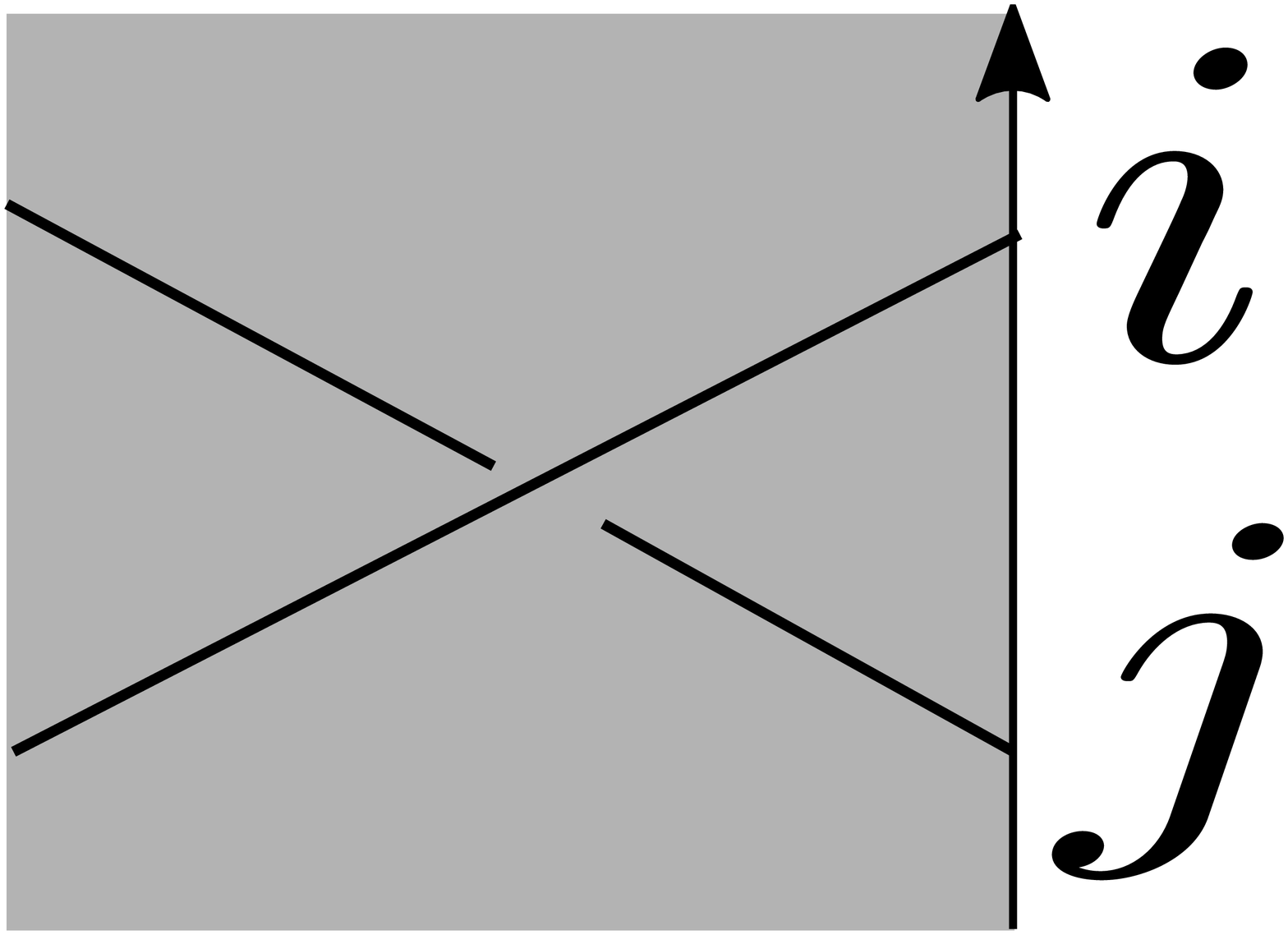}} = \sum_{k,l = \pm} (\mathscr{R}^{-1})_{i j}^{k l} 
 \hspace{.2cm}
 \heightexch{->}{k}{l}.
\end{equation}

We refer to \cite{LeStatedSkein} for proofs. 

\vspace{2mm}
\par

The algebra $\mathcal{O}_q[\mathrm{SL}_2]$ is the algebra presented by generators $x_{\varepsilon \varepsilon'}, \varepsilon, \varepsilon' \in \{-, +\}$ and relations

\begin{align*}\label{relbigone}
x_{++}x_{+-} &= q^{-1}x_{+-}x_{++} & x_{++}x_{-+}&=q^{-1}x_{-+}x_{++}
\\ x_{--}x_{+-} &= q x_{+-}x_{--} & x_{--}x_{-+}&=q x_{-+}x_{--}
\\ x_{++}x_{--}&=1+q^{-1}x_{+-}x_{-+} &  x_{--}x_{++}&=1 + q x_{+-}x_{-+} 
\\ x_{-+}x_{+-}&=x_{+-}x_{-+} & &
\end{align*}

It has a Hopf algebra structured characterized by the formulas 

 \begin{equation*}
 \begin{pmatrix} \Delta (x_{++}) & \Delta (x_{+-}) \\ \Delta(x_{-+}) & \Delta(x_{--}) \end{pmatrix} 
 = 
 \begin{pmatrix} x_{++} & x_{+-} \\ x_{-+} & x_{--} \end{pmatrix} 
 \otimes 
 \begin{pmatrix} x_{++} & x_{+-} \\ x_{-+} & x_{--} \end{pmatrix} 
 \end{equation*}
 \begin{equation*}
 \begin{pmatrix} \epsilon(x_{++}) & \epsilon(x_{+-}) \\ \epsilon(x_{-+}) & \epsilon(x_{--}) \end{pmatrix} =
\begin{pmatrix} 1 &0 \\ 0& 1 \end{pmatrix}  
\text{ and }
\begin{pmatrix} S(x_{++}) & S(x_{+-}) \\ 	S(x_{-+}) & S(x_{--}) \end{pmatrix} 
	= 
	\begin{pmatrix} x_{--} & -q x_{+-} \\ -q^{-1}x_{-+} & x_{++} \end{pmatrix} .
 \end{equation*}
 
 When $q\in \mathbb{C}^*$ is generic (not a root of unity), $\mathcal{O}_q[\mathrm{SL}_2]$ is the restricted dual of $U_q\mathfrak{sl}_2$ (see \cite{BrownGoodearl}).  The \textit{bigon} $\mathbb{B}$ is the punctured surface made of a disc with two punctures on its boundary. It has two boundary arcs $a$ and $b$ and is generated by the stated arcs $\alpha_{\varepsilon \varepsilon}, \varepsilon, \varepsilon'=\pm$ made of an arc $\alpha$ linking $a$ to $b$ with state $\varepsilon$ on $\alpha \cap a$ and $\varepsilon'$ on $\alpha \cap b$. Consider a disjoint union $\mathbb{B} \bigsqcup \mathbb{B}$ of two bigons; by gluing together the boundary arc $b_1$ of the first bigon with the boundary arc $a_2$ of the second, one obtains a morphism $\Delta:=i_{b_1\# a_2} : \mathcal{S}_{\omega}(\mathbb{B}) \rightarrow \mathcal{S}_{\omega}(\mathbb{B})^{\otimes 2}$ which endows $\mathcal{S}_{\omega}(\mathbb{B})$ with a structure of Hopf algebra where $\Delta$ is the coproduct. 
 
 \begin{theorem}[\cite{LeStatedSkein, CostantinoLe19, KojuQuesneyClassicalShadows}] There is an isomorphism of Hopf algebras $\varphi : \mathcal{O}_q[\SL_2] \cong \mathcal{S}_{\omega}(\mathbb{B})$ sending the generator $x_{\varepsilon \varepsilon'} \in \mathcal{O}_q[\SL_2]$ to the element $\alpha_{\varepsilon \varepsilon'} \in \mathcal{S}_{\omega}(\mathbb{B})$.
 \end{theorem}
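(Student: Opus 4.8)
The plan is to construct the isomorphism $\varphi$ by first checking it is a well-defined algebra morphism, then verifying it is compatible with the Hopf structures, and finally proving bijectivity. The key input is the explicit basis $\mathcal{B}^{\mathfrak{o}}$ from Theorem \ref{theorem_basis} together with the skein relations \eqref{eq: skein 1}, \eqref{eq: skein 2}, \eqref{trivial_arc_rel}, \eqref{cutting_arc_rel} and \eqref{height_exchange_rel}, specialized to the bigon $\mathbb{B}$. First I would set up coordinates: $\mathbb{B}$ has one arc generator $\alpha$ connecting the two boundary arcs $a$ and $b$, and for each pair of signs we get $\alpha_{\varepsilon\varepsilon'}\in\mathcal{S}_\omega(\mathbb{B})$; I would fix the orientation $\mathfrak{o}$ of $a$ and $b$ so that the four elements $\alpha_{++},\alpha_{+-},\alpha_{-+},\alpha_{--}$ (with appropriate height orderings, really one element of $\mathcal{B}^{\mathfrak{o}}$ per sign pair up to the increasing condition) are among the basis elements, and check via Theorem \ref{theorem_basis} that in fact $\mathcal{S}_\omega(\mathbb{B})$ is spanned by ordered monomials in these four elements — this makes the algebra generated by the $\alpha_{\varepsilon\varepsilon'}$.

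Next I would verify that the assignment $x_{\varepsilon\varepsilon'}\mapsto\alpha_{\varepsilon\varepsilon'}$ respects the defining relations of $\mathcal{O}_q[\SL_2]$. Each of the seven relations translates into a skein identity among products $\alpha_{\varepsilon\varepsilon'}\alpha_{\mu\mu'}$ in $\mathcal{S}_\omega(\mathbb{B})$; stacking two copies of $\alpha$ one above the other and resolving the resulting diagram using the height-exchange relations \eqref{height_exchange_rel} with the $\mathscr{R}$-matrix given explicitly above, and the skein relation \eqref{eq: skein 1}, produces exactly these identities. For instance the relation $x_{++}x_{+-}=q^{-1}x_{+-}x_{++}$ comes from sliding one endpoint past another using the relevant entry of $\mathscr{R}$, and the two "quantum determinant" relations $x_{++}x_{--}=1+q^{-1}x_{+-}x_{-+}$, $x_{--}x_{++}=1+qx_{+-}x_{-+}$ come from resolving the crossing with \eqref{eq: skein 1} and then applying the trivial arc relations \eqref{trivial_arc_rel} (the constant terms $1$ being the trivial arc that retracts into a boundary arc). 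This gives a well-defined algebra morphism $\varphi:\mathcal{O}_q[\SL_2]\to\mathcal{S}_\omega(\mathbb{B})$.

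For surjectivity, the span statement from the first step shows $\varphi$ hits a spanning set, hence is onto. For injectivity, I would use that $\mathcal{O}_q[\SL_2]$ has a known PBW-type basis (ordered monomials $x_{++}^a x_{+-}^b x_{-+}^c x_{--}^d$ with, say, $d=0$ or $a=0$) and that $\varphi$ sends this basis into $\mathcal{B}^{\mathfrak{o}}$ — or at least into a triangular change of basis with it — so by Theorem \ref{theorem_basis} the images are linearly independent. Finally, compatibility with the coproduct: by definition $\Delta$ on $\mathcal{S}_\omega(\mathbb{B})$ is the gluing map $i_{b_1\#a_2}$ of Definition \ref{def_gluing_map}, and applying it to the diagram $\alpha$ cut at the glued arc $c$ inserts a sum over states $s\in\mathrm{St}(c)$, giving $\Delta(\alpha_{\varepsilon\varepsilon'})=\sum_{\mu}\alpha_{\varepsilon\mu}\otimes\alpha_{\mu\varepsilon'}$, which is precisely the matrix-coproduct formula for $x_{\varepsilon\varepsilon'}$; the counit and antipode formulas follow similarly (the counit from the empty-surface/trivial-arc evaluation, the antipode from the reflexion and the $C$-matrix). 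Since $\varphi$ is a bialgebra isomorphism between Hopf algebras, it automatically commutes with the antipodes.

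The main obstacle I expect is the bookkeeping in the second step: one must keep careful track of height orderings and the orientation conventions on the two boundary arcs of $\mathbb{B}$ when resolving the stacked diagrams, since the entries of $\mathscr{R}$, $\mathscr{R}^{-1}$, $C$ and $C^{-1}$ all enter with specific powers of $\omega$, and an error in which endpoint is "above" the other flips $\mathscr{R}$ to $\mathscr{R}^{-1}$ and changes the relation. A clean way to organize this — and likely what the author does — is to invoke the known presentation of $\mathcal{S}_\omega(\mathbb{B})$ from \cite{LeStatedSkein} (the bigon is one of the cases where a finite presentation is already available) so that one only needs to match generators and relations symbol by symbol, reducing the problem to comparing two explicit presentations rather than recomputing every skein resolution from scratch.
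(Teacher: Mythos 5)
Your outline is sound and matches the standard argument: the paper itself does not prove this theorem but cites \cite{LeStatedSkein} for the algebra isomorphism (established exactly as you describe, by verifying the seven defining relations via skein computations in the bigon and matching a PBW basis of $\mathcal{O}_q[\SL_2]$ against the basis $\mathcal{B}^{\mathfrak{o}}$ of parallel stated arcs, up to a triangular change of basis coming from height reorderings) and \cite{CostantinoLe19, KojuQuesneyClassicalShadows} for compatibility with the coproduct, which is precisely the computation $\Delta(\alpha_{\varepsilon\varepsilon'})=\sum_{\mu}\alpha_{\varepsilon\mu}\otimes\alpha_{\mu\varepsilon'}$ you give, the antipode then being automatic for a bialgebra morphism of Hopf algebras. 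One minor imprecision: the $q$-determinant relations are most cleanly obtained by applying the trivial-arc and cutting-arc relations \eqref{trivial_arc_rel} and \eqref{cutting_arc_rel} to a returning arc (as in the type-$a$ case of Lemma \ref{lemma_qdet_rel}) rather than by resolving a crossing, since the two stacked copies of $\alpha$ are parallel and have no crossing to resolve; this does not affect the correctness of your plan.
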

 
 More precisely, the fact that $\varphi$ is an isomorphism of algebras is proved in \cite{LeStatedSkein} and the fact that it preserves the coproduct was noticed independently in 
 \cite{CostantinoLe19, KojuQuesneyClassicalShadows}.  In all the paper, we will (abusively) identify the Hopf algebras $ \mathcal{O}_q[\SL_2]$ and $ \mathcal{S}_{\omega}(\mathbb{B})$ using $\varphi$. Note that the definition of $\varphi$ depends on an indexing by $a$ and $b$ of the boundary arcs of $\mathbb{B}$. 
\vspace{2mm}
\par 
 Now consider a punctured surface $\mathbf{\Sigma}$ and a boundary arc $c$. By gluing a bigon $\mathbb{B}$ along $\mathbf{\Sigma}$ while gluing $b$ with $c$, one obtains a punctured surface isomorphic to $\mathbf{\Sigma}$, hence a map $\Delta_c^L:= i_{b\#c} :   \mathcal{S}_{\omega}(\mathbf{\Sigma}) \rightarrow  \mathcal{O}_q[\SL_2] \otimes \mathcal{S}_{\omega}(\mathbf{\Sigma})$ which endows $ \mathcal{S}_{\omega}(\mathbf{\Sigma})$ with a structure of left $\mathcal{O}_q[\SL_2]$ comodule. Similarly, gluing $c$ with $a$ induces a right comodule morphism $\Delta_c^R := i_{c\#a} :  \mathcal{S}_{\omega}(\mathbf{\Sigma}) \rightarrow  \mathcal{S}_{\omega}(\mathbf{\Sigma})\otimes  \mathcal{O}_q[\SL_2] $.
The following theorem characterizes the image of the gluing map and  was proved independently in \cite{CostantinoLe19} and \cite{KojuQuesneyClassicalShadows}.

\begin{theorem}\label{theorem_exactsequence}(\cite[Theorem $4.7$]{CostantinoLe19}, \cite[Theorem $1.1$]{KojuQuesneyClassicalShadows})
Let $\mathbf{\Sigma}$ be a punctured surface and $a,b$ two boundary arcs. The following sequence is exact: 
$$ 0 \to \mathcal{S}_{\omega}(\mathbf{\Sigma}_{|a\#b}) \xrightarrow{i_{a\#b}} \mathcal{S}_{\omega}(\mathbf{\Sigma}) \xrightarrow{\Delta^L_a - \sigma \circ \Delta^R_b}\mathcal{O}_q[\SL_2] \otimes \mathcal{S}_{\omega}(\mathbf{\Sigma}), $$
where $\sigma(x\otimes y) := y\otimes x$. 
\end{theorem}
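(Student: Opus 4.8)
The plan is to prove exactness at the two relevant spots separately. Exactness at $\mathcal{S}_{\omega}(\mathbf{\Sigma}_{|a\#b})$ is nothing but the injectivity of $i_{a\#b}$, which is already contained in Theorem~\ref{theorem_gluing}, so the work is to prove $\mathrm{im}(i_{a\#b})=\ker(\Delta^L_a-\sigma\circ\Delta^R_b)$. The inclusion $\mathrm{im}(i_{a\#b})\subseteq\ker(\Delta^L_a-\sigma\circ\Delta^R_b)$ I would establish diagrammatically from coassociativity. Consider $\mathbf{\Sigma}\sqcup\mathbb B$ with the four distinct arcs $a,b$ and $a_{\mathbb B},b_{\mathbb B}$, and perform the two gluings $b_{\mathbb B}\#a$ and $a_{\mathbb B}\#b$ (attaching a single bigon connecting $a$ and $b$); the fully glued surface is isomorphic to $\mathbf{\Sigma}_{|a\#b}$. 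Unravelling the definitions, gluing $a_{\mathbb B}\#b$ first amounts to cutting along the seam $c=\pi(a)=\pi(b)$ (this is $i_{a\#b}$) and then ungluing the bigon from $a$ (this is $\Delta^L_a$), while gluing $b_{\mathbb B}\#a$ first amounts to $i_{a\#b}$ followed by ungluing the bigon from $b$ (this is $\Delta^R_b$, up to the transposition $\sigma$ coming from the order of the two tensor factors). Since the two orders of gluing coincide by the coassociativity part of Theorem~\ref{theorem_gluing}, we get $\Delta^L_a\circ i_{a\#b}=\sigma\circ\Delta^R_b\circ i_{a\#b}$.

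The substantive inclusion is $\ker(\Delta^L_a-\sigma\circ\Delta^R_b)\subseteq\mathrm{im}(i_{a\#b})$, and here I would argue in the basis $\mathcal{B}^{\mathfrak o}$ of Theorem~\ref{theorem_basis}. By Remark~\ref{remark_change_basis} one may take $\mathds{k}=\mathbb Z[\omega^{\pm1}]$. Fix an orientation $\mathfrak o$ of the boundary arcs of $\mathbf{\Sigma}$ chosen so that the gluing identification $a\cong b$ is orientation-reversing, so that $\mathfrak o$-increasing states on $\partial_a D$ and on $\partial_b D$ correspond to the same seam-states in $\mathrm{St}(c)$. Using the trivial-arc, cutting-arc and height-exchange relations \eqref{trivial_arc_rel}, \eqref{cutting_arc_rel}, \eqref{height_exchange_rel} one computes, for a basis element $(D,s)\in\mathcal{B}^{\mathfrak o}$, the expansions of $\Delta^L_a(D,s)$ and $\Delta^R_b(D,s)$: each is obtained by isotoping a collar of $a$ (resp. of $b$) into a freshly glued bigon and re-expanding in the basis, hence has the form
$$(\text{a monomial in the }\alpha_{\varepsilon\varepsilon'})\otimes[D,s]\;+\;(\text{terms with strictly fewer endpoints on the corresponding arc}).$$
Filter $\mathcal{S}_{\omega}(\mathbf{\Sigma})$ by the pair $(\#\partial_a D,\#\partial_b D)$; an element of the kernel is automatically supported in bidegrees with $\#\partial_a D=\#\partial_b D$. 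Run a downward induction on this bidegree: vanishing of the leading part of $(\Delta^L_a-\sigma\circ\Delta^R_b)(x)$ forces the leading part of $x$ to be a $\mathds{k}$-linear combination of state-symmetrized families $\sum_{s\in\mathrm{St}(c)}[D,(s,s_0,s)]$ — which is exactly the shape of $i_{a\#b}$ applied to a basis element of $\mathcal{S}_{\omega}(\mathbf{\Sigma}_{|a\#b})$ — hence lies in $\mathrm{im}(i_{a\#b})$ modulo lower bidegree; subtracting the corresponding element of $\mathrm{im}(i_{a\#b})$ and iterating completes the proof, the base case (diagrams disjoint from $a\cup b$) being immediate since the empty seam-state contributes them directly.

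The hard part is the inductive step: one must verify that the leading-order comparison of the two coactions is \emph{injective} on the associated graded, i.e. that $\Delta^L_a$ and $\sigma\circ\Delta^R_b$ agree on a leading term exactly when that term is state-symmetrized and on nothing more. This reduces to solving an explicit linear system whose coefficients are products of entries of the $R$-matrix $\mathscr R$ and of the matrix $C$, and checking it has only the expected solutions; the reflexion anti-involution $\theta$ together with Remark~\ref{remark_change_basis} would cut the bookkeeping roughly in half. A more structural alternative that avoids most of this computation is to first prove that $\mathcal{S}_{\omega}(\mathbf{\Sigma})$ is a cofree right $\mathcal{O}_q[\SL_2]$-comodule for the coaction $\Delta^R_b$ — intuitively, every strand may be pushed towards the arc $b$ — and then deduce the exactness of the sequence from the homological algebra of comodules over the Hopf algebra $\mathcal{O}_q[\SL_2]$; I would adopt whichever of the two routes turns out to be shorter.
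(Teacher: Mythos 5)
First, a caveat about the comparison you asked for: the paper does not prove Theorem \ref{theorem_exactsequence} at all — it quotes it from \cite[Theorem 4.7]{CostantinoLe19} and \cite[Theorem 1.1]{KojuQuesneyClassicalShadows} — so there is no in-paper proof to measure yours against. Judged on its own, your proposal gets the easy two-thirds right: injectivity is indeed Theorem \ref{theorem_gluing}, and the inclusion $\mathrm{im}(i_{a\#b})\subseteq\ker(\Delta^L_a-\sigma\circ\Delta^R_b)$ via coassociativity of the two gluings of a bigon interposed between $a$ and $b$ is the standard (and correct) argument.

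The genuine gap is in the reverse inclusion, which is the entire content of the theorem, and you say so yourself: the step "one must verify that the leading-order comparison of the two coactions is injective on the associated graded" is precisely the statement to be proved, and you neither solve the linear system you reduce it to nor prove the cofreeness claim you offer as an alternative; ending with "I would adopt whichever of the two routes turns out to be shorter" is not a proof of either. Concretely, after fixing a simple diagram $D$ (note that $\Delta^L_a$ and $\Delta^R_b$ do not change the underlying diagram, only split off states into the bigon factor, so the problem does decouple diagram by diagram at leading order), you must show that a family of coefficients $(x_{s_a,s_b})_{s_a\in\mathrm{St}(a),\,s_b\in\mathrm{St}(b)}$ satisfying the leading-order equation is forced to be of the diagonal form $x_{s_a,s_b}=\lambda\,\delta_{s_a,s_b}$ under the identification of both with $\mathrm{St}(c)$. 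This requires an actual computation with the matrices $C$ and $\mathscr{R}$ (or, in the cofreeness route, a proof that $\Delta^R_b$ makes $\mathcal{S}_{\omega}(\mathbf{\Sigma})$ a cofree comodule — a nontrivial assertion, especially over an arbitrary ring $\mathds{k}$ where $\mathcal{O}_q[\SL_2]$ is not cosemisimple). A secondary soft spot: the claim that an element of the kernel is "automatically supported in bidegrees with $\#\partial_a D=\#\partial_b D$" needs an argument, since $\mathcal{O}_q[\SL_2]$ is only filtered, not graded, by word length (because of $x_{++}x_{--}=1+q^{-1}x_{+-}x_{-+}$); it is true once one works with a PBW basis of $\mathcal{O}_q[\SL_2]$ and the fact that the second tensor factor retains the diagram $D$, but as written it is asserted rather than shown. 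In short: correct skeleton, missing heart.
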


An easy but very important consequence of the fact that $\Delta_a^L$ and $\Delta_a^R$ are comodule maps are the following \textit{boundary skein relations}:

\begin{equation}\label{boundary_skein_rel}
\left(  \epsilon \otimes \id \right) \circ \Delta_a^L = \id  \quad \mbox{and} \quad  \left( \id \otimes  \epsilon \right) \circ \Delta_a^R = \id. 
\end{equation}

 The image through the counit $\epsilon$ of a stated diagram in $\mathbb{B}$ can be computed using the formulas: 
 
 \begin{equation}\label{epsilon_formula}
\epsilon \left( \bigonheightcurve{i}{j}\right) = C_j^i , \quad \epsilon \left( \bigonheightcurveright{i}{j} \right) = (C^{-1})_j^i , \quad  \epsilon \left( \adjustbox{valign=c}{\includegraphics[width=1.2cm]{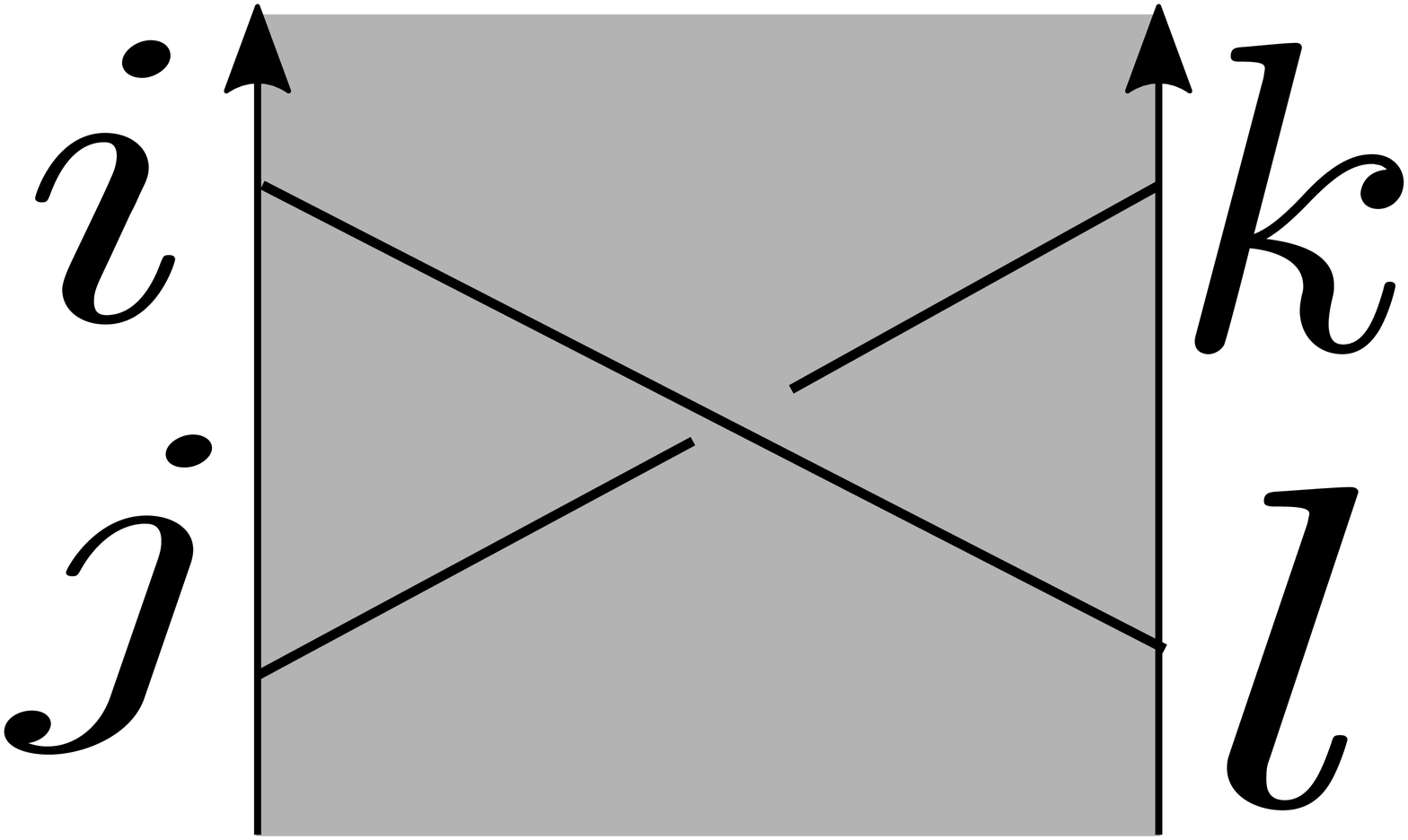}} \right) = \mathscr{R}^{i j}_{k l} , \quad \epsilon \left( \adjustbox{valign=c}{\includegraphics[width=1.2cm]{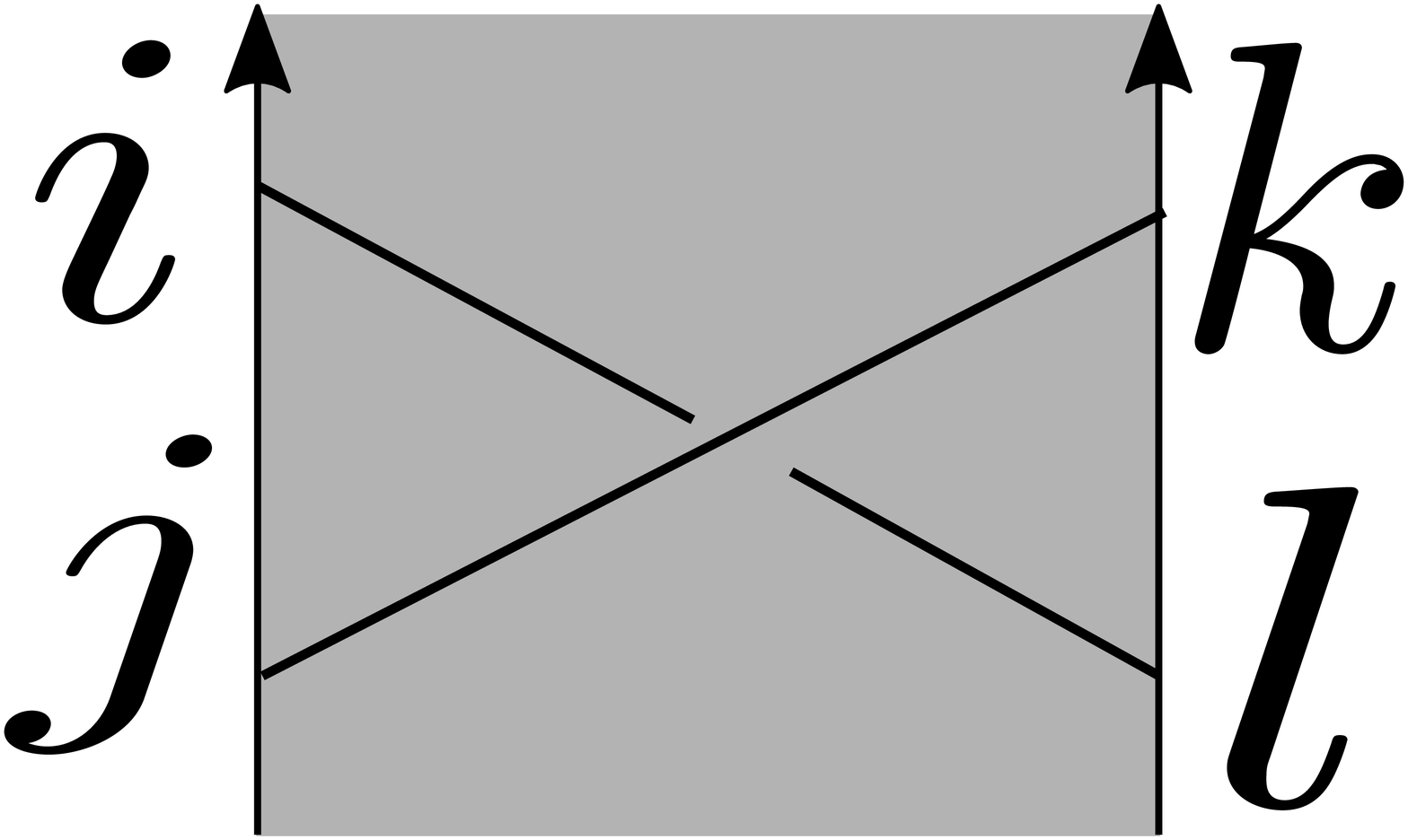}}\right) = (\mathscr{R}^{-1})^{i j}_{k l}.
 \end{equation}
 
 Figure \ref{fig_boundary_skein} illustrates an instance of  boundary skein relation \eqref{boundary_skein_rel}. Here we draw a dotted arrow to illustrate where we cut the bigon. 
 Note that all the trivial arcs \eqref{trivial_arc_rel}, cutting arc \eqref{cutting_arc_rel} and height exchange \eqref{height_exchange_rel} relations are particular cases of \eqref{boundary_skein_rel}.

 \begin{figure}[!h] 
\centerline{\includegraphics[width=10cm]{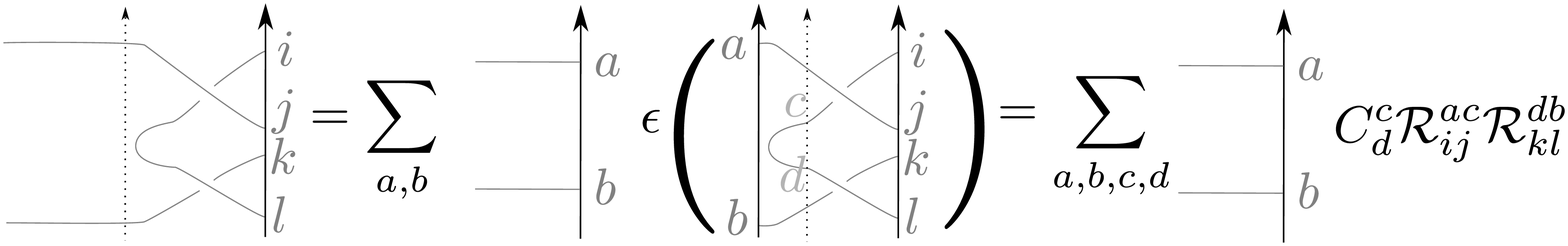} }
\caption{An example of boundary skein relation.} 
\label{fig_boundary_skein} 
\end{figure}

\subsection{The small fundamental groupoid  and its finite presentations}\label{sec_presentation}

During all this section we fix a punctured surface $\mathbf{\Sigma}=(\Sigma, \mathcal{P})$ such that $\Sigma$ is connected and has non empty boundary. 
  For each boundary arc $a$ of $\mathbf{\Sigma}$, fix a point $v_a \in a$ and denote by $\mathbb{V}$ the set $\{v_a\}_a$. 
  
  \begin{definition}
  The \textit{small fundamental groupoid}  $\Pi_1(\Sigma_{\mathcal{P}}, \mathbb{V})$ is the full subcategory of the fundamental groupoid $\Pi_1(\Sigma_{\mathcal{P}})$ generated by $\mathbb{V}$. 
  \end{definition}
  Said differently, $\Pi_1(\Sigma_{\mathcal{P}}, \mathbb{V})$ is the small groupoid whose set of objects is $\mathbb{V}$ and such that a morphism (called path) $\alpha : v_1 \rightarrow v_2$ is a homotopy class of continuous map $\varphi_{\alpha} : [0,1] \rightarrow \Sigma_{\mathcal{P}}$  with $\varphi_{\alpha}(0)=v_1$ and $\varphi_{\alpha}(1)=v_2$. 
  The map $\varphi_{\alpha}$ will be referred to as a \textit{geometric representative} of $\alpha$.
  The composition is the concatenation of paths. For a path $\alpha : v_1 \rightarrow v_2$, we write $s(\alpha)=v_1$ (the source point) and $t(\alpha)=v_2$ (the target point) and $\alpha^{-1} : v_2 \rightarrow v_1$ the path with opposite orientation (\textit{i.e.} $\varphi_{\alpha^{-1}}(t) = \varphi_{\alpha} (1-t)$).  
  \vspace{2mm}
\par We will define the notion of \textit{finite presentation} $\mathbb{P}$ of the groupoid  $\Pi_1(\Sigma_{\mathcal{P}}, \mathbb{V})$ and attach to each such $\mathbb{P}$ a finite presentation of $\mathcal{S}_{\omega}(\mathbf{\Sigma})$. In order to get some intuition, consider the punctured surface in Figure \ref{fig_presentation}: it is an annulus with two punctures per boundary component, so it has four boundary arcs. The figure shows some paths $\beta_1, \ldots, \beta_5$ and we will say that $\Pi_1(\Sigma_{\mathcal{P}}, \mathbb{V})$ is finitely presented by the set of generators $\{\beta_1, \ldots, \beta_5\}$ together with the relation $\beta_2^{-1} \beta_4 \beta_5 \beta_3 =1$. 
We will deduce that $\mathcal{S}_{\omega}(\mathbf{\Sigma})$ is generated by the stated arcs $(\beta_i)_{\varepsilon \varepsilon'}$ and that the relation $\beta_2^{-1} \beta_4 \beta_5 \beta_3 =1$ induces a relation among them.
Alternatively, the same punctured surface has a presentation with the smaller set of generators $\{\beta_1, \ldots, \beta_4\}$ and no relation. The induced finite presentation of $\mathcal{S}_{\omega}(\mathbf{\Sigma})$ will be simpler.

\begin{figure}[!h] 
\centerline{\includegraphics[width=3cm]{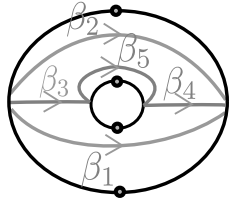} }
\caption{A punctured surface and a set of generators for its small fundamental groupoid.} 
\label{fig_presentation} 
\end{figure}

\begin{definition}\label{def_generators} \begin{enumerate}
\item  A \textit{set of generators} for $\Pi_1(\Sigma_{\mathcal{P}}, \mathbb{V})$ is a set $\mathbb{G}$ of paths in $\Pi_1(\Sigma_{\mathcal{P}}, \mathbb{V})$ such that any path $\alpha \in \Pi_1(\Sigma_{\mathcal{P}}, \mathbb{V})$ decomposes as $\alpha= \alpha_1^{\varepsilon_1} \ldots \alpha_n^{\varepsilon_n}$ with $\varepsilon_i = \pm 1$ and $\alpha_i \in \mathbb{G}$. We also require that each path $\alpha \in \mathbb{G}$ is the homotopy class of some embedding  $\varphi_{\alpha} : [0,1] \rightarrow \Sigma_{\mathcal{P}}$ such that the images of the $\varphi_{\alpha}$  do not intersect outside $\mathbb{V}$ and eventually intersect transversally at $\mathbb{V}$. The \textit{generating graph} is the oriented ribbon graph $\Gamma \subset \Sigma_{\mathcal{P}}$ whose set of vertices is $\mathbb{V}$ and edges are the image of the $\varphi$. We will always assume implicitly that the geometric representatives $\varphi_{\alpha}$ is part of the data defining a set of generators. Moreover, when $\alpha \in \mathbb{G}$ is a path such that $s(\alpha)=t(\alpha)$ (i.e. $\alpha$ is a loop) we add the additional datum of a "height order" for its endpoints, that is we specify whether $h(s(\alpha))< h(t(\alpha))$ or $h(t(\alpha))< h(s(\alpha))$.
\item For a path $\alpha : v_1 \rightarrow v_2$ and $\varepsilon, \varepsilon' \in \{-,+\}$, we denote by $\alpha_{\varepsilon \varepsilon'} \in \mathcal{S}_{\omega}(\mathbf{\Sigma})$ the class of the stated arc $(\alpha, \sigma)$, where the state $\sigma$ is given by $\sigma(v_1)=\varepsilon$ and $\sigma(v_2)=\varepsilon'$. When both endpoints lye in the same boundary arc (i.e. when $s(\alpha)=t(\alpha)$) we use the chosen height order to specify which endpoint lies on the top. Set 
$$ \mathcal{A}^{\mathbb{G}} := \{ \alpha_{\varepsilon \varepsilon'} | \alpha \in \mathbb{G}, \varepsilon, \varepsilon' \in \{-,+\}  \} \subset \mathcal{S}_{\omega}(\mathbf{\Sigma}). $$
\end{enumerate}
 \end{definition}

\begin{example}\label{exemple_pres}
For any connected open punctured surface $\mathbf{\Sigma}$, the groupoid $\Pi_1(\Sigma_{\mathcal{P}}, \mathbb{V})$ admits a finite set of generators depicted in Figure \ref{fig_generators_final} and defined as follows. Denote by $a_0, \ldots, a_n$ the boundary arcs, by $\partial_0, \ldots, \partial_r$ the boundary components of $\Sigma$ with $a_0\subset \partial_0$ and write  $v_i:= a_i \cap \mathbb{V}$. 
Let $\overline{\Sigma}$ be the surface obtained from $\Sigma$ by gluing a disc along each boundary component $\partial_i$ for $1\leq i \leq r$, and choose $\alpha_1, \beta_1, \ldots, \alpha_g, \beta_g$ some paths in $\pi_1(\Sigma_{\mathcal{P}}, v_0)(=\mathrm{End}_{\Pi_1(\Sigma_{\mathcal{P}}, \mathbb{V})}(v_0)$), such that their images in $\overline{\Sigma}$ generate the free group $\pi_1(\overline{\Sigma}, v_0)$ (said differently, the $\alpha_i$ and $\beta_i$ are longitudes and meridians of $\Sigma$). For each inner puncture $p$ choose a peripheral curve $\gamma_p \in \pi_1(\Sigma_{\mathcal{P}}, v_0)$ encircling $p$ once and for each boundary puncture $p_{\partial}$ between two boundary arcs $a_i$ and $a_j$, consider the path $\alpha_{p_{\partial}} : v_i \rightarrow v_j $ represented by the corner arc in $p_{\partial}$. Eventually, for each boundary component $\partial_j$, with $1\leq j \leq r$, containing a boundary arc $a_{k_j} \subset \partial_j$,  choose a path $\delta_{\partial_j} : v_0 \rightarrow v_{k_j}$. The set 
$$\mathbb{G}':= \{ \alpha_i, \beta_i, \alpha_p, \delta_{\partial_j} | 1\leq i \leq g, p\in \mathcal{P}, 1\leq j \leq r\}$$
is a generating set for $\Pi_1(\Sigma_{\mathcal{P}}, \mathbb{V})$ and Figure \ref{fig_generators_final} represents a set of geometric representatives for $\mathbb{G}'$. Moreover each of its generators which is not one of the $\delta_{\partial_j}$ can be expressed as a composition of the other ones (we will soon say that there is a relation among those generators), therefore a set $\mathbb{G}$ obtained from $\mathbb{G}'$ by removing one of the element of the form $\alpha_i, \beta_i$ or $\gamma_p$, is still a generating set for  $\Pi_1(\Sigma_{\mathcal{P}}, \mathbb{V})$. The height orders can be chosen arbitrarily.
Note that $\mathbb{G}$ has cardinality $2g-2+s+n_{\partial}$, where $g$ is the genus of $\Sigma$, $s:=|\mathcal{P}|$ is the number of punctures and $n_{\partial}:= |\pi_0(\partial \Sigma)|$ is the number of boundary components.

 In the particular case where $\Sigma$ has exactly one boundary component with one puncture on it (and eventual inner punctures), the generating graph of $\mathbb{G}$ is called the \textit{daisy graph}. The daisy graph was first considered in \cite{AlekseevMalkin_PoissonCharVar} in the context of classical lattice gauge field theory and in \cite{AlekseevSchomerus_RepCS, Faitg_LGFT_MCG, Faitg_LGFT_SSkein, BaseilhacRoche_LGFT1} in the quantum case.
\begin{figure}[!h] 
\centerline{\includegraphics[width=9cm]{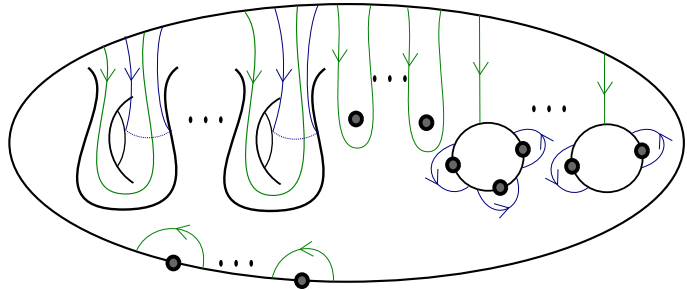} }
\caption{The geometric representatives of a set of generators for $\Pi_1(\Sigma_{\mathcal{P}}, \mathbb{V})$.} 
\label{fig_generators_final} 
\end{figure} 

\end{example}

 \begin{proposition}\label{prop_generators}[\cite[Proposition $3.4$]{KojuAzumayaSkein}]
If $\mathbb{G}$ is a set of generators of $\Pi_1(\Sigma_{\mathcal{P}}, \mathbb{V})$, then the set $ \mathcal{A}^{\mathbb{G}} $
generates $\mathcal{S}_{\omega}(\mathbf{\Sigma}) $ as an algebra.
\end{proposition}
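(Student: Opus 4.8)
The plan is to show that the $\mathds{k}$-subalgebra $\mathcal{A} \subseteq \mathcal{S}_{\omega}(\mathbf{\Sigma})$ generated by $\mathcal{A}^{\mathbb{G}}$ contains the whole basis $\mathcal{B}^{\mathfrak{o}}$ of Theorem \ref{theorem_basis}; since $\mathcal{B}^{\mathfrak{o}}$ spans $\mathcal{S}_{\omega}(\mathbf{\Sigma})$ this forces $\mathcal{A} = \mathcal{S}_{\omega}(\mathbf{\Sigma})$. So I would fix a simple stated diagram $(D,s) \in \mathcal{B}^{\mathfrak{o}}$. As $D$ has no double point, pushing its connected components into pairwise disjoint height intervals exhibits $(D,s)$ as the product in $\mathcal{S}_{\omega}(\mathbf{\Sigma})$ of its stated components; hence it is enough to treat separately a single simple stated arc and a single simple stated closed curve.

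For a simple stated arc $\gamma$ from a boundary arc $a$ to a boundary arc $b$ with states $\varepsilon, \varepsilon'$, I would first isotope its endpoints inside $a$ and $b$ onto the marked points $v_a, v_b$ (harmless, since $\gamma$ is the only component, keeping the prescribed height order when $a=b$), so that $\gamma$ represents a morphism $\beta \in \Pi_1(\Sigma_{\mathcal{P}}, \mathbb{V})$. The hypothesis that $\mathbb{G}$ generates the groupoid gives a factorization $\beta = \alpha_1^{\eta_1} \cdots \alpha_n^{\eta_n}$ with $\alpha_i \in \mathbb{G}$, $\eta_i = \pm 1$. I would then build a \emph{specific} (possibly self-crossing) diagram $\gamma_0$ of $\beta$: concatenate slightly pushed-off copies of the geometric representatives of $\alpha_1^{\eta_1}, \ldots, \alpha_n^{\eta_n}$, placing the $i$-th piece into the height band $(1-i\delta,\, 1-(i-1)\delta)$ for small $\delta$, and joining consecutive pieces near each intermediate marked point by a small turnback of the form occurring in the cutting arc relation \eqref{cutting_arc_rel}. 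Since $\gamma$ and $\gamma_0$ are homotopic arcs, their diagrams are related by isotopy and Reidemeister moves, so their classes in $\mathcal{S}_{\omega}(\mathbf{\Sigma})$ differ by a unit of $\mathds{k}$ (each first Reidemeister move contributing a factor $-A^{\pm 3}$, by \eqref{eq: skein 1}): $\gamma_{\varepsilon\varepsilon'} = u \cdot (\gamma_0)_{\varepsilon\varepsilon'}$ for some $u \in \mathds{k}^{\times}$. Applying \eqref{cutting_arc_rel} at the $n-1$ turnbacks, from the topmost piece inward, each step splits off one embedded generator arc as a left factor (this is exactly where the height layering is used), and one obtains $(\gamma_0)_{\varepsilon\varepsilon'}$ as a $\mathds{k}$-linear combination of products of elements of the form $(\alpha_i^{\eta_i})_{\mu\mu'}$. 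Finally, reversing the orientation of an arc leaves the underlying stated framed arc in $\Sigma_{\mathcal{P}} \times (0,1)$ unchanged — a height order being a property of the positions of the two endpoints, not of which one is declared the source — so $(\alpha_i^{-1})_{\mu\mu'} = (\alpha_i)_{\mu'\mu} \in \mathcal{A}^{\mathbb{G}}$; hence $(\gamma_0)_{\varepsilon\varepsilon'} \in \mathcal{A}$ and $\gamma_{\varepsilon\varepsilon'} \in \mathcal{A}$.

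For a simple stated closed curve $\gamma$, I would choose a boundary arc $b$ and a path from $v_b$ to $\gamma$; conjugating along it presents $\gamma$ as freely homotopic to a loop $\ell \in \pi_1(\Sigma_{\mathcal{P}}, v_b) = \mathrm{End}_{\Pi_1(\Sigma_{\mathcal{P}}, \mathbb{V})}(v_b)$. Realizing $\ell$ by a diagram with its two endpoints on $b$ near $v_b$ at distinct heights and closing it up there by a half-turn of the form in \eqref{cutting_arc_rel} produces a diagram $\gamma_0$ of a curve freely homotopic to $\gamma$; reading \eqref{cutting_arc_rel} from right to left expresses the class of $\gamma_0$ as a $\mathds{k}$-linear combination of the elements $\ell_{ij}$, $i,j \in \{-,+\}$, each of which lies in $\mathcal{A}$ by the arc case, while $\gamma = u'\cdot(\gamma_0)$ for a unit $u' \in \mathds{k}^{\times}$ as before. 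Hence $\gamma \in \mathcal{A}$. Combining the three cases gives $\mathcal{B}^{\mathfrak{o}} \subseteq \mathcal{A}$, whence $\mathcal{A} = \mathcal{S}_{\omega}(\mathbf{\Sigma})$.

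The hard part is the geometric bookkeeping in the arc case: checking that the diagram $\gamma_0$ can actually be drawn with all $n-1$ turnbacks in the standard local shape of \eqref{cutting_arc_rel} and with the height layering compatible at all of them at once, so that each application of \eqref{cutting_arc_rel} genuinely splits off a product factor rather than merely performing a local move, and organising the self-crossings of $\gamma_0$ — which, by the height layering, only ever occur between pieces in distinct height bands (hence are harmless for the product structure, though bringing them into that position may require height exchanges \eqref{height_exchange_rel}). The hypothesis that $\mathbb{G}$ generates $\Pi_1(\Sigma_{\mathcal{P}}, \mathbb{V})$ is used precisely to produce the factorization of $\beta$, and the various scalars appearing in the rewriting (the entries of $C$, $C^{-1}$ and $\mathscr{R}$) are irrelevant to the membership statement.
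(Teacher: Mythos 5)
Your proposal is correct and follows essentially the same route as the paper: the paper's (one‑line) proof also reduces a simple stated diagram to elements of $\mathcal{A}^{\mathbb{G}}$ by factoring each component through the generating paths and applying the cutting arc relations \eqref{cutting_arc_rel} at the marked points of $\mathbb{V}$ (this is exactly what Figure \ref{fig_prop_generators} depicts, with the full geometric bookkeeping deferred to \cite[Proposition 3.4]{KojuAzumayaSkein}). Your additional observations — handling closed components by opening them at a boundary arc, the identity $(\alpha^{-1})_{\mu\mu'}=\alpha_{\mu'\mu}$, and the irrelevance of the unit scalars — are all consistent with the paper's Lemma \ref{lemma_orientation_reversing} and its treatment.
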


The proof of Proposition \ref{prop_generators} is an easy consequence of the cutting arc relations illustrated in Figure \ref{fig_prop_generators}.

\begin{figure}[!h] 
\centerline{\includegraphics[width=12cm]{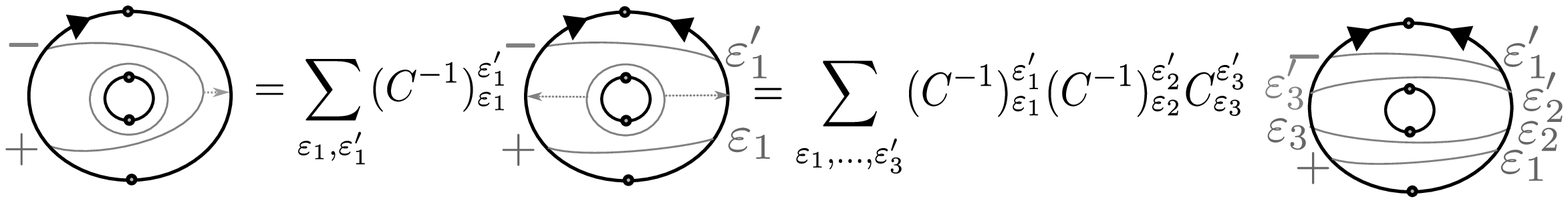} }
\caption{The figure illustrates how an application of the cutting arc relations permits to express any simple stated diagram in terms of the elements of $\mathcal{A}^{\mathbb{G}}$. Here $\mathbb{G}=\{\beta_1, \beta_2, \beta_3, \beta_4 \}$ are the generators of Figure \ref{fig_presentation}. We draw dotted arrows to exhibit where we perform the cutting arc relations.} 
\label{fig_prop_generators} 
\end{figure} 

\vspace{2mm}
\par We now define the notion of relations for a generating set $\mathbb{G}$. Let $\mathcal{F}(\mathbb{G})$ denote the free semi-group generated by the elements of $\mathbb{G}$ and let $\mathrm{Rel}_{\mathbb{G}}$ denote the subset of $\mathcal{F}(\mathbb{G})$ of elements of the form $R=\beta_{1}\star \ldots \star \beta_{n}$ such that $s(\beta_{i})= t(\beta_{i+1})$ and such that the path $\beta_1\ldots \beta_n$ is trivial. We write $R^{-1}:= \beta_n^{-1} \star \ldots \star \beta_1^{-1}$. A relation $R=\beta_{1}\star \ldots \star \beta_{n}\in  \mathrm{Rel}_{\mathbb{G}}$ is called \textit{simple} if the $\beta_i$ admit representative as embedded curves whose concatenation forms a contractible simple closed curve $\gamma$ in $\Sigma_{\mathcal{P}}$ whose orientation coincides with the orientation of the disc bounded by $\gamma$. Note that "being simple" depends on the choice of geometric representatives of the generators.

\begin{definition}
A finite subset $\mathbb{RL}\subset \mathrm{Rel}_{\mathbb{G}}$ is called a \textit{finite set of relations} if its elements are simple and  every word  $R\in \mathrm{Rel}_{\mathbb{G}}$ can be decomposed as $ R = \beta \star R_1^{\varepsilon_1} \star \ldots \star R_m^{\varepsilon_m}\star \beta^{-1}$, where $R_i \in \mathbb{RL}$, $\varepsilon_i \in \{ \pm 1 \}$ and $\beta=\beta_{1}\star \ldots \star \beta_{n}\in \mathcal{F}(\mathbb{G})$ is such that $s(\beta_{i})= t(\beta_{i+1})$.
The pair $\mathbb{P}:=(\mathbb{G}, \mathbb{RL})$ is called a \textit{finite presentation} of $\Pi_1(\Sigma_{\mathcal{P}}, \mathbb{V})$.
\end{definition}

 As illustrated in the introduction, the small fundamental groupoid of the triangle $\mathbb{T}$ admits the finite presentation with generating set $\mathbb{G}=\{\alpha, \beta, \gamma\}$ and unique relation $\mathbb{RL}=\{ \alpha \star \beta \star \gamma \}$.
 
 For a general connected open punctured surface $\mathbf{\Sigma}$, the set $\mathbb{G}$ of Example \ref{exemple_pres} is the generating set of a presentation of $\Pi_1(\Sigma_{\mathcal{P}}, \mathbb{V})$ with no relation.

 \subsection{Relations among the generators of the stated skein algebras}
 
 We fix a connected open punctured surface $\mathbf{\Sigma}$, a finite presentation $\mathbb{P}=(\mathbb{G}, \mathbb{RL})$ of $\Pi_1(\Sigma_{\mathcal{P}}, \mathbb{V})$, and look for relations in $\mathcal{S}_{\omega}(\mathbf{\Sigma})$ among the elements of $\mathcal{A}^{\mathbb{G}}$.

\begin{definition}
 An \textit{oriented arc} $\beta$ is a non-closed connected simple diagram of $\Sigma_{\mathcal{P}}$ together with an orientation plus an eventual height order of its endpoints in the case where they both lye in the same boundary arc.
 We will denote by $s(\beta)$ and $t(\beta)$ its endpoints so that $\beta$ is oriented from $s(\beta)$ towards $t(\beta)$. For $\varepsilon, \varepsilon' \in \{-,+\}$, we denote by $\beta_{\varepsilon \varepsilon'} \in \mathcal{S}_{\omega}(\mathbf{\Sigma})$ the class of the stated diagram $(\beta, \sigma)$ where $\sigma(s(\beta))=\varepsilon$ and $\sigma(t(\beta))=\varepsilon'$.
 \end{definition}
 
 Note that to each oriented arc one can associate a path in $\Pi_1(\Sigma_{\mathcal{P}}, \mathbb{V})$ by first isotoping its endpoints to $\mathbb{V}$ and then taking its homotopy class. However a path in $\Pi_1(\Sigma_{\mathcal{P}}, \mathbb{V})$ can be associated to several distinct oriented arcs, so an oriented arc contains more information that a path in the small fundamental groupoid.

We want to see the elements of $\mathbb{G}$ as pairwise non intersecting oriented arcs as illustrated in Figure \ref{fig_path_to_arc}. Recall that by Definition \ref{def_generators}, any path $\alpha \in \mathbb{G}$ is endowed with a geometric representative $\varphi_{\alpha}$ whose image is an oriented arc  $\underline{\alpha} \subset \Sigma_{\mathcal{P}}$ so that the $\underline{\alpha}$ pairwise do not intersect outside of $\mathbb{V}$ and their intersect transversally in $\mathbb{V}$. So each point $v_a \in \mathbb{V}$ is endowed with a total order $<_{v_a}$ on the set of its adjacent arcs (so the presenting graph has a ciliated ribbon graph structure). 
 
 The orientation of $\Sigma_{\mathcal{P}}$ induces an orientation of its boundary arcs which, in turn, induces a total order $<_a$ on each boundary arc $a$, where $v_1 <_a v_2$ if $a$ is oriented from $v_1$ towards $v_2$. After isotoping the $\underline{\alpha}$ in a small neighbourhood of each $v_a$ in such a way that the vertex order order $<_{v_a}$ matches with the boundary arc order $<_a$ as illustrated in Figure \ref{fig_path_to_arc}, we get a family of pairwise non-intersecting oriented arcs representing the elements of $\mathbb{G}$.
 
 \begin{figure}[!h] 
\centerline{\includegraphics[width=10cm]{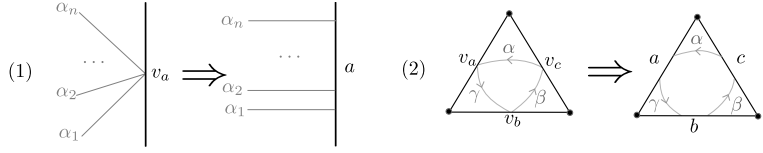} }
\caption{$(1)$ An illustration of the local isotopy we perform to turn the set of edges of a (ribbon) presenting graph into a set of pairwise non-intersecting oriented arcs. $(2)$ An example in the case of the triangle.} 
\label{fig_path_to_arc} 
\end{figure} 
 
 \begin{convention} From now on, we consider the elements of $\mathbb{G}$ as pairwise non-intersecting oriented arcs.
 \end{convention}

 \begin{definition}
 Let $\alpha$ be an oriented arc,  set $v_1:=s(\alpha)$ and $v_2:= t(\alpha)$ and denote by $u$ and $v$ the boundary arcs containing $v_1$ and $v_2$ respectively. The arc $\alpha$ is said
 \begin{itemize}
 \item \textit{of type} $a$ if $u\neq v$; 
 \item \textit{of type} $b$ if $u=v$,  $h(v_1)<h(v_2)$ and  $v_2 <_u v_1$; 
 \item \textit{of type} $c$ if $u=v$,  $h(v_1)<h(v_2)$ and  $v_1 <_u v_2$;  
   \item \textit{of type} $d$ if $u=v$,  $h(v_2)<h(v_1)$ and  $v_1 <_u v_2$; 
    \item \textit{of type} $e$ if $u=v$,  $h(v_2)<h(v_1)$ and  $v_2 <_u v_1$.
    \end{itemize}
    Here $h(v)$ represents the height of $v$ ($h$ is the second projection $\Sigma_{\mathcal{P}} \times (0,1) \rightarrow (0,1)$). Figure \ref{fig_type_arcs} illustrates the five types of oriented arcs.
    \end{definition}
    
    \begin{figure}[!h] 
\centerline{\includegraphics[width=8cm]{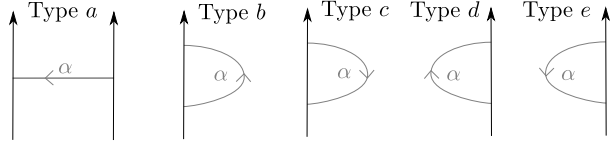} }
\caption{An illustration of the five types of oriented arcs.} 
\label{fig_type_arcs} 
\end{figure}

\begin{notations}
\begin{enumerate}
\item For $\alpha$ an oriented arc, write $M(\alpha) := \begin{pmatrix} \alpha_{++} & \alpha_{+-} \\ \alpha_{-+} & \alpha_{--} \end{pmatrix}$ the $2\times 2$ matrix with coefficients in $\mathcal{S}_{\omega}(\mathbf{\Sigma})$. The relations among the generators of $\mathcal{S}_{\omega}(\mathbf{\Sigma})$ that we will soon define are much more elegant when written using of the following matrix
$$
N(\alpha) := \left\{ 
\begin{array}{ll} 
M(\alpha) & \mbox{, if }\alpha \mbox{ is of type }a; \\
M(\alpha)C & \mbox{, if }\alpha \mbox{ is of type }b; \\
M(\alpha) ^tC & \mbox{, if }\alpha \mbox{ is of type }c; \\
C^{-1}M(\alpha) & \mbox{, if }\alpha \mbox{ is of type }d; \\
{}^t C^{-1} M(\alpha) & \mbox{, if }\alpha \mbox{ is of type }e;
\end{array} \right.
$$
 where ${}^tM$ denotes the transpose of $M$.
\item Let $M_{a,b}(R)$ the ring of $a\times b$ matrices with coefficients in some ring $R$ (here $R$ will be $\mathcal{S}_{\omega}(\mathbf{\Sigma})$). The \textit{Kronecker product} $\odot : M_{a,b}(R) \otimes M_{c,d}(R) \rightarrow M_{ac, bd}(R)$ is defined by $(A\odot B)_{j,l}^{i,k} = A^i_j B^k_l$. For instance 
$$ M(\alpha) \odot M(\beta) = 
\begin{pmatrix}
 \alpha_{++} \beta_{++} & \alpha_{++} \beta_{+-} & \alpha_{+-} \beta_{++} & \alpha_{+-} \beta_{+-} \\
 \alpha_{++} \beta_{-+} &\alpha_{++} \beta_{--} &\alpha_{+-} \beta_{-+} &\alpha_{+-} \beta_{--}  \\
 \alpha_{-+} \beta_{++} &\alpha_{-+} \beta_{+-} &\alpha_{--} \beta_{++} &\alpha_{--} \beta_{+-} \\
 \alpha_{-+} \beta_{-+} &\alpha_{-+} \beta_{--} &\alpha_{--} \beta_{-+} &\alpha_{--} \beta_{--} 
 \end{pmatrix}.
 $$

 \item By abuse of notations, we also denote by $\tau$ the matrix of the flip map $\tau: v_i\otimes v_j \mapsto v_j\otimes v_i , V^{\otimes 2} \rightarrow V^{\otimes 2}$, \textit{i.e.}
 $$ \tau = \begin{pmatrix}
 1 & 0 & 0 & 0 \\
 0 & 0 & 1 & 0 \\
 0 & 1 & 0 & 0 \\
 0 & 0 & 0 & 1
 \end{pmatrix}.
 $$
 \item For a $4\times 4$ matrix $X= (X^{ij}_{kl})_{ i,j, k, l =\pm}$, we define the $2\times 2$ matrices $\tr_L(X)$ and $\tr_R(X)$ by the formulas
 $$ \tr_L(X)_a^b := \sum_{i = \pm} X^{i b}_{i a} \quad \mbox{and} \quad \tr_R(X)_a^b := \sum_{i = \pm} X^{b i}_{a i}.$$

\item For $M= \begin{pmatrix} a & b \\ c & d \end{pmatrix}$, we set $ \mathrm{det}_q (M) := ad-q^{-1}bc$ and $\mathrm{det}_{q^2}(M) := ad - q^{-2}bc$.
\end{enumerate}
\end{notations}

\begin{lemma}[Orientation reversing formulas]\label{lemma_orientation_reversing}
Let $\alpha$ be an oriented arc and $\alpha^{-1}$ be the same arc with opposite orientation. Then one has 
$$ M(\alpha^{-1}) = {}^tM(\alpha).$$
Therefore, one has 
\begin{equation}\label{eq_inversion}
N(\alpha^{-1}) = 
\left\{ \begin{array}{ll}
{}^tN(\alpha) & \mbox{, if }\alpha \mbox{ is of type }a; \\
{}^tC^{-1} {}^tN(\alpha)^tC & \mbox{, if }\alpha \mbox{ is of type }b \mbox{ or }d; \\
C^{-1} {}^tN(\alpha) C & \mbox{, if }\alpha \mbox{ is of type }c \mbox{ or }e.
\end{array} \right.
\end{equation}
\end{lemma}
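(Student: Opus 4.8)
The plan is to derive the whole statement from the single identity $M(\alpha^{-1}) = {}^tM(\alpha)$ together with a short dictionary describing how the type of an oriented arc changes under orientation reversal. The identity $M(\alpha^{-1}) = {}^tM(\alpha)$ is essentially a matter of unwinding the definitions: by construction $(\alpha^{-1})_{\varepsilon \varepsilon'}$ is the class of the stated diagram carried by the same underlying framed arc as $\alpha$, with the state $\varepsilon$ placed at the endpoint $s(\alpha^{-1}) = t(\alpha)$ and the state $\varepsilon'$ at the endpoint $t(\alpha^{-1}) = s(\alpha)$. When both endpoints lie on the same boundary arc, reversing the orientation changes neither the underlying diagram nor the heights of its two endpoints, so the auxiliary height order is inherited unchanged. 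Hence $(\alpha^{-1})_{\varepsilon \varepsilon'}$ is literally the same element of $\mathcal{S}_{\omega}(\mathbf{\Sigma})$ as $\alpha_{\varepsilon' \varepsilon}$ for all $\varepsilon, \varepsilon' \in \{-,+\}$, and this is precisely the asserted matrix identity.

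Next I would record the behaviour of the type under reversal. Reversing the orientation of $\alpha$ interchanges its source $v_1 := s(\alpha)$ with its target $v_2 := t(\alpha)$, hence it simultaneously swaps the condition $h(v_1) < h(v_2)$ with $h(v_2) < h(v_1)$ and the condition $v_1 <_u v_2$ with $v_2 <_u v_1$, while leaving the alternative $u = v$ versus $u \neq v$ untouched. Comparing with the five defining conditions one reads off at once: $\alpha^{-1}$ is of type $a$ exactly when $\alpha$ is; if $\alpha$ is of type $b$ then $\alpha^{-1}$ is of type $d$ and conversely; and if $\alpha$ is of type $c$ then $\alpha^{-1}$ is of type $e$ and conversely. (For a loop the chosen height order of the two endpoints is reversed along with the orientation, and one checks the same pairing of types results.) In particular $\alpha$ and $\alpha^{-1}$ always have types lying in the same line of the list in \eqref{eq_inversion}.

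With these two inputs the matrix formulas become a purely formal computation, carried out separately in the cases $a$, $\{b,d\}$ and $\{c,e\}$. In each case I would write out the definition of the matrix $N$ for $\alpha$ and for $\alpha^{-1}$ according to their matching types from the dictionary, substitute $M(\alpha^{-1}) = {}^tM(\alpha)$, eliminate $M(\alpha)$ by inverting the relation defining $N(\alpha)$ (which expresses $M(\alpha)$ as $N(\alpha)$ multiplied on one side by a power of $C$ or of ${}^tC$), and simplify using only the elementary identities ${}^t(XY) = {}^tY\,{}^tX$ and ${}^t({}^tX) = X$ together with $C^{-1} = -A^{3}C$ (hence also ${}^tC^{-1} = -A^{3}\,{}^tC$) recorded in Section \ref{sec_def_basic}. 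For instance, when $\alpha$ is of type $b$ one has $N(\alpha) = M(\alpha)C$, while $\alpha^{-1}$ is of type $d$, so $N(\alpha^{-1}) = C^{-1}M(\alpha^{-1}) = C^{-1}\,{}^tM(\alpha)$; substituting $M(\alpha) = N(\alpha)C^{-1}$ and simplifying with the explicit form of $C$ brings this into the form displayed in \eqref{eq_inversion} for the $\{b,d\}$ case, and the cases $\{c,e\}$ and $a$ are entirely analogous.

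The only genuinely non-mechanical step is the type dictionary: one must check carefully that under reversal the five defining conditions really do pair up as $a \leftrightarrow a$, $b \leftrightarrow d$ and $c \leftrightarrow e$, the slightly delicate point being loops, where one has to track the behaviour of the prescribed height order. Once that is in place, the identity $M(\alpha^{-1}) = {}^tM(\alpha)$ and the case-by-case algebra producing \eqref{eq_inversion} are routine.
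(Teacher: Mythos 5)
Your overall strategy --- establish $M(\alpha^{-1})={}^tM(\alpha)$ from the definitions, record how the type changes under orientation reversal, then do the matrix algebra --- is exactly the intended one, but your type dictionary is incompatible with the formula you are trying to prove, and the ``routine'' simplification you assert at the end does not actually go through. You claim that orientation reversal pairs $b\leftrightarrow d$ and $c\leftrightarrow e$; the paper's convention (see the Remark following Lemma \ref{lemma_height_reversing}, and the sentence ``if $\alpha$ is of type e, then $\alpha^{-1}$ is of type d'' in the proof of Lemma \ref{lemma_qdet_rel}) is that it pairs $b\leftrightarrow c$ and $d\leftrightarrow e$. Equation \eqref{eq_inversion} is calibrated to the latter pairing and fails for yours. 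Concretely, take $\alpha$ of type $b$ and suppose $\alpha^{-1}$ is of type $d$ as you claim: then $N(\alpha^{-1})=C^{-1}M(\alpha^{-1})=C^{-1}\,{}^tM(\alpha)=C^{-1}\,{}^t(C^{-1})\,{}^tN(\alpha)$, and $C^{-1}\,{}^t(C^{-1})=\mathrm{diag}(\omega^{-10},\omega^{-2})$ is a non-scalar diagonal matrix; left multiplication by it cannot equal conjugation by the antidiagonal matrix ${}^tC$ (which swaps the diagonal entries of ${}^tN(\alpha)$), so this is \emph{not} ${}^tC^{-1}\,{}^tN(\alpha)\,{}^tC$. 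By contrast, if $\alpha^{-1}$ is of type $c$ one gets $N(\alpha^{-1})={}^tM(\alpha)\,{}^tC={}^t\bigl(CN(\alpha)C^{-1}\bigr)={}^tC^{-1}\,{}^tN(\alpha)\,{}^tC$, which is the stated formula. So the claim that ``simplifying with the explicit form of $C$ brings this into the form displayed in \eqref{eq_inversion}'' is false under your dictionary, and the lemma is not proved as written.

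The root of the trouble is precisely the ``slightly delicate point'' you flagged and then resolved the wrong way: how the height-order datum of a loop is transported to $\alpha^{-1}$. You transport it physically (the same endpoint stays on top), which does make $M(\alpha^{-1})={}^tM(\alpha)$ immediate but forces $b\to d$ and $c\to e$; the pairing $b\to c$, $d\to e$ required by \eqref{eq_inversion} corresponds to transporting the datum formally (if $s(\alpha)$ lies below $t(\alpha)$, then $s(\alpha^{-1})$ lies below $t(\alpha^{-1})$). A complete argument must fix one convention for $\alpha^{-1}$, derive the type pairing from that convention, and then check that \emph{both} halves of the lemma hold simultaneously under it; your proposal implicitly uses one convention for the first identity and would need the other for the second, so at least one of the two assertions is left unestablished. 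Carrying out the $2\times 2$ computations explicitly (rather than asserting they work) would have exposed the mismatch.
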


\begin{proof} This is a straightforward consequence of the definitions.
\end{proof}

\begin{lemma}[Height reversing formulas]\label{lemma_height_reversing}
Let $\alpha$ be an oriented arc with both endpoints in the same boundary arcs let $\alpha^0$ be the same arc with reversing height order for its endpoints. Then one has:
\begin{equation}\label{eq_height_reversing}
M(\alpha^0) = 
\left\{ \begin{array}{ll}
\tr_R\left( \mathscr{R}^{-1}( {}^tC^{-1} \odot M(\alpha){}^tC) \right) &  \mbox{, if }\alpha \mbox{ is of type }b; \\
\tr_L \left( \mathscr{R}^{-1}( M(\alpha)C \odot C^{-1}) \right) &  \mbox{, if }\alpha \mbox{ is of type }c; \\
\tr_L \left( ({}^tC^{-1}M(\alpha) \odot {}^tC) \mathscr{R} \right) &  \mbox{, if }\alpha \mbox{ is of type }d; \\
\tr_R \left( (C\odot C^{-1}M(\alpha)) \mathscr{R} \right)&  \mbox{, if }\alpha \mbox{ is of type }e.
\end{array}
\right.
\end{equation}
\end{lemma}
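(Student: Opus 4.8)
The plan is to derive each of the four formulas from the cutting arc relations \eqref{cutting_arc_rel} together with the height exchange relations \eqref{height_exchange_rel}, exactly as in the proof of Lemma \ref{lemma_orientation_reversing}, but keeping track of the matrix bookkeeping. Let me describe the type $b$ case, the other three being entirely analogous after exchanging the roles of $<_u$-order and height-order and of $\mathscr R$ with $\mathscr R^{-1}$.

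\textbf{Step 1: cap off to make a loop near one endpoint.} Let $\alpha$ be of type $b$, with endpoints $v_1 = s(\alpha)$, $v_2 = t(\alpha)$ in a boundary arc $u$ with $h(v_1) < h(v_2)$ and $v_2 <_u v_1$. The diagram $\alpha^0$ differs from $\alpha$ only in that the two endpoints near $u$ have their heights swapped; isotoping the strand near $u$, this is the same as inserting a small half-turn (a ``cup-cap'' crossing) near the boundary. Concretely, I would represent $\alpha^0_{ij}$ as the diagram obtained from $\alpha$ by attaching a trivial boundary arc that crosses over itself. Using the trivial arc relation \eqref{trivial_arc_rel} to create a pair of endpoints on $u$ summed against $C$ and $C^{-1}$, and then resolving the resulting crossing with a height exchange relation \eqref{height_exchange_rel} (which contributes $\mathscr R^{-1}$, since the forced crossing for type $b$ is the one appearing in the second equation of \eqref{height_exchange_rel}), one gets $\alpha^0_{ij}$ as a sum over the internal states of the form $\sum (\mathscr R^{-1})^{\cdots}_{\cdots} ({}^tC^{-1})^{\cdots}_{\cdots} (M(\alpha){}^tC)^{\cdots}_{\cdots}$.

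\textbf{Step 2: recognize the Kronecker product and partial trace.} The point of the somewhat elaborate $\odot$ and $\tr_R$, $\tr_L$ notation introduced before the lemma is precisely to package the sum from Step 1. The index that is summed (the ``internal'' boundary state created and then annihilated) is exactly a partial trace, and since for type $b$ the extra strand is attached on the side of $v_2$ (the higher, but $<_u$-smaller endpoint), the surviving indices pair up as $\tr_R$ applied to $\mathscr R^{-1}\bigl({}^tC^{-1} \odot (M(\alpha)\,{}^tC)\bigr)$. One then checks that the $2\times 2$ matrix so obtained has $(i,j)$ entry equal to $(\alpha^0)_{ij} = M(\alpha^0)^i_j$, which is the claimed identity. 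The analogous computations for types $c$, $d$, $e$ place the auxiliary strand at the other endpoint and/or force the oppositely-oriented crossing, producing $\tr_L$ versus $\tr_R$ and $\mathscr R$ versus $\mathscr R^{-1}$ as recorded in \eqref{eq_height_reversing}; concretely one can also obtain types $d,e$ from types $b,c$ by applying the orientation-reversing Lemma \ref{lemma_orientation_reversing} and the reflexion anti-involution of Remark \ref{remark_change_basis}, or $c$ from $b$ by reflecting the boundary-arc orientation.

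\textbf{Main obstacle.} The computations themselves are routine once set up; the real care is in the combinatorics of \emph{which} crossing sign and \emph{which} partial trace appear in each of the five types. The subtlety is that swapping the heights of two endpoints that are also ordered along the boundary arc forces a specific over/under crossing, and getting the variance right — i.e.\ distinguishing $\mathscr R$ from $\mathscr R^{-1}$ and $\tr_L$ from $\tr_R$ — requires drawing the diagrams carefully and matching with the conventions fixed in \eqref{trivial_arc_rel}--\eqref{height_exchange_rel} and in the definitions of the arc types. I would organize the proof by doing type $b$ in full diagrammatic detail and then deducing the others by symmetry (orientation reversal, boundary-orientation reversal, and $\theta$), so that only one genuinely new diagram chase is needed.
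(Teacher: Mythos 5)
Your proposal is correct and follows essentially the same route as the paper: the paper derives \eqref{eq_height_reversing} by applying the boundary skein relations \eqref{boundary_skein_rel} (of which the trivial-arc, cutting-arc and height-exchange relations you invoke are exactly the special cases, as noted after \eqref{epsilon_formula}), doing one type in full diagrammatic detail (type $e$ there, type $b$ here) and leaving the remaining cases to the analogous computation. Your additional suggestion of dispatching the other types via arc-orientation reversal and the reflexion anti-involution $\theta$ is a legitimate shortcut consistent with how the paper handles symmetry elsewhere (e.g.\ in Lemma \ref{lemma_qdet_rel}).
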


\begin{proof} Equations \eqref{eq_height_reversing} are obtained by using the boundary skein relations \eqref{boundary_skein_rel}. Figure \ref{fig_height_reversing} illustrates the proof in the case where $\alpha$ is of type e. The other cases are similar and left to the reader. 

In Figure \ref{fig_height_reversing}, we represented the curve $\alpha$ in blue to emphasize that, despite what the picture suggests, the curve can be arbitrarily complicated. Since the boundary arcs relation only involves the intersection of $\alpha$ with a small neighborhood (a bigon) of the boundary arc (colored in grey), how is the blue part of the figure does not matter.

\begin{figure}[!h] 
\centerline{\includegraphics[width=12cm]{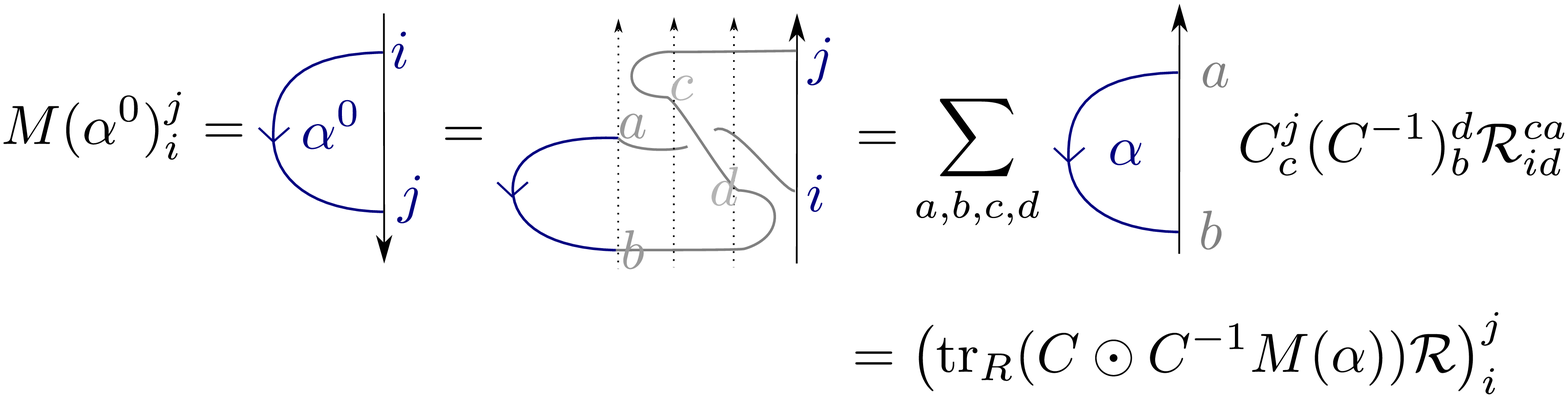} }
\caption{An illustration of the proof of  Equation \eqref{eq_height_reversing} in the case where $\alpha$ is of type e.} 
\label{fig_height_reversing} 
\end{figure}

\end{proof}

\begin{remark} Reversing the orientation of an arc exchanges (type b) $\leftrightarrow$ (type c) and (type d) $\leftrightarrow$ (type e) whereas reversing the height order exchanges (type b) $\leftrightarrow$ (type e) and (type c) $\leftrightarrow$ (type d). Therefore Equations \eqref{eq_inversion} and \eqref{eq_height_reversing} permit to switch between the types $b,c,d,e$; this will permit us to write the arcs exchange and trivial loops relations in a simpler form by specifying the type of arc.
\end{remark}

\begin{lemma}[Trivial loops relations]\label{lemma_trivial_loops_relations}
Let $R= \beta_k \star \ldots \star \beta_1$ be a simple relation. Suppose that all arcs $\beta_i$ are either of type $a$ or $d$. Then
\begin{equation}\label{eq_trivial_loops_rel}
\mathds{1}_2 = C M(\beta_k) C^{-1} M(\beta_{k-1}) C^{-1} \ldots C^{-1} M(\beta_1).
\end{equation}
\end{lemma}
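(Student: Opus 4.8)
The statement concerns a simple relation $R = \beta_k \star \ldots \star \beta_1$ whose arcs are all of type $a$ or $d$, and asserts the matrix identity \eqref{eq_trivial_loops_rel}. The plan is to reduce this to the simplest case of a contractible triangle and then use the boundary/cutting skein relations established earlier. Since $R$ is simple, by definition the arcs $\beta_i$ admit embedded representatives whose concatenation forms a contractible simple closed curve $\gamma$ bounding a disc $D \subset \Sigma_{\mathcal{P}}$ with matching orientation. The idea is to work inside this disc, where the entire configuration can be pictured concretely: we have $k$ oriented arcs meeting cyclically at the points $v_1 = t(\beta_1) = s(\beta_k)$, \ldots on the boundary of $D$, and we want to show that summing over states of the shared endpoints (which is exactly what the products $M(\beta_{i+1}) C^{-1} M(\beta_i)$ encode, via the cutting arc relation \eqref{cutting_arc_rel}) collapses to the identity.

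The key computational input is the cutting arc relation \eqref{cutting_arc_rel}: the concatenation of two arcs $\beta_{i+1}$ and $\beta_i$ sharing an endpoint, after inserting a summation over the state at that endpoint weighted by $(C^{-1})^i_j$, equals the single longer arc $\beta_{i+1}\beta_i$. Here one must be careful: the factor $C$ versus $C^{-1}$ and the ordering of indices depend on the type of the arcs and the local height order near the shared vertex, which is why the hypothesis restricts to types $a$ and $d$ (for which $N(\alpha) = M(\alpha)$ and $N(\alpha) = C^{-1}M(\alpha)$ respectively, and the relevant endpoint sits at a controlled height). First I would verify, by unwinding the definitions of types $a$ and $d$ together with the orientation/height conventions fixed in the Convention before Figure \ref{fig_path_to_arc}, that each matrix product $C^{-1} M(\beta_i)$ appearing in \eqref{eq_trivial_loops_rel} is precisely the bookkeeping for gluing $\beta_{i+1}$ to $\beta_i$ at $v_i$ via \eqref{cutting_arc_rel} — i.e., that the state sum over the intermediate vertex is implemented by matrix multiplication with the correct $C^{-1}$ inserted. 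Telescoping all $k-1$ internal gluings, the right-hand side $C M(\beta_k) C^{-1} \cdots C^{-1} M(\beta_1)$ becomes $C$ times the matrix $M(\gamma')$ of the single stated arc obtained by concatenating all the $\beta_i$, where $\gamma'$ is $\gamma$ cut open at the one remaining vertex $v_k$.

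Now $\gamma'$ is a contractible arc with both endpoints in the same boundary arc, and the remaining leftmost factor $C$ together with the height/orientation data (inherited from the fact that $R$ closes up as an oriented trivial loop) is exactly the data needed to apply the trivial arc relation \eqref{trivial_arc_rel}: a contractible arc with both endpoints on one boundary arc, appropriately stated, equals $C^i_j$ (or $(C^{-1})^i_j$) times the empty diagram. Thus $C \cdot M(\gamma') = \mathds{1}_2$, which is the claim. Concretely I would present this as: (i) use \eqref{cutting_arc_rel} repeatedly to collapse the product to $C \cdot M(\gamma')$; (ii) observe $\gamma'$ bounds a half-disc in $D$ so it is a trivial arc; (iii) apply \eqref{trivial_arc_rel} to get $M(\gamma') = C^{-1}$ (in the appropriate convention), hence $C M(\gamma') = \mathds{1}_2$. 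Alternatively, the whole identity can be derived by the same picture-manipulation strategy used in the proof of Lemma \ref{lemma_height_reversing} (cf.\ Figure \ref{fig_height_reversing}): draw the disc $D$ with all arcs, isotope so that all the cutting/trivial-arc moves happen inside bigon neighbourhoods of the vertices, and read off the matrix identity.

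\textbf{Main obstacle.} The genuine difficulty is bookkeeping the signs, the $C$ versus $C^{-1}$ placement, and the index orderings coming from the height orders and the ciliated-graph structure at each vertex $v_i$ — i.e., checking that the hypothesis ``all $\beta_i$ of type $a$ or $d$'' is exactly what makes every internal gluing contribute $C^{-1}$ on the correct side with no transposition, and that the boundary vertex $v_k$ contributes the single leftmost $C$. Once the local picture at a type-$a$ and at a type-$d$ vertex is drawn carefully and matched against \eqref{cutting_arc_rel} and \eqref{trivial_arc_rel}, the telescoping is automatic; so the proof is essentially a careful unwinding of conventions plus one application each of the cutting arc and trivial arc relations, and I would reasonably write ``this follows from repeated application of the cutting arc relations \eqref{cutting_arc_rel} and the trivial arc relation \eqref{trivial_arc_rel}, see Figure [\ldots]'' after supplying that figure.
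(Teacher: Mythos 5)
Your proposal is correct and follows essentially the same route as the paper: the paper's proof introduces the trivial arc $\alpha^0$ closing up the concatenation of the $\beta_i$ (your $\gamma'$), evaluates it once by the trivial arc relation \eqref{trivial_arc_rel} to get $(C^{-1})_i^j$, and once by repeated cutting arc relations \eqref{cutting_arc_rel} to get $\left(M(\beta_k)C^{-1}\cdots C^{-1}M(\beta_1)\right)_i^j$, then equates — which is your telescoping read in the opposite direction. The height bookkeeping you flag as the main obstacle is resolved in the paper exactly as you anticipate: the type $a$/$d$ hypothesis lets one choose tangle representatives with $h(v_1)<h(w_1)<h(v_2)<\cdots<h(w_k)$ so each internal gluing contributes a $C^{-1}$ with no transposition.
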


\begin{proof} 
Equation \eqref{eq_trivial_loops_rel} is a consequence of the trivial arc and cutting arc relations illustrated in Figure \ref{fig_polygonrel} in the case of the triangle with presentation whose generators are the arcs $\{ \alpha, \beta, \gamma\}$ drawn in Figure \ref{fig_triangle} and the relation is $\alpha\star\beta\star\gamma=1$. Figure \ref{fig_polygonrel} shows the equality between the matrix coefficients of $C^{-1}$ and $M(\alpha) C^{-1} M(\beta) C^{-1} M(\gamma)$.
\begin{figure}[!h] 
\centerline{\includegraphics[width=12cm]{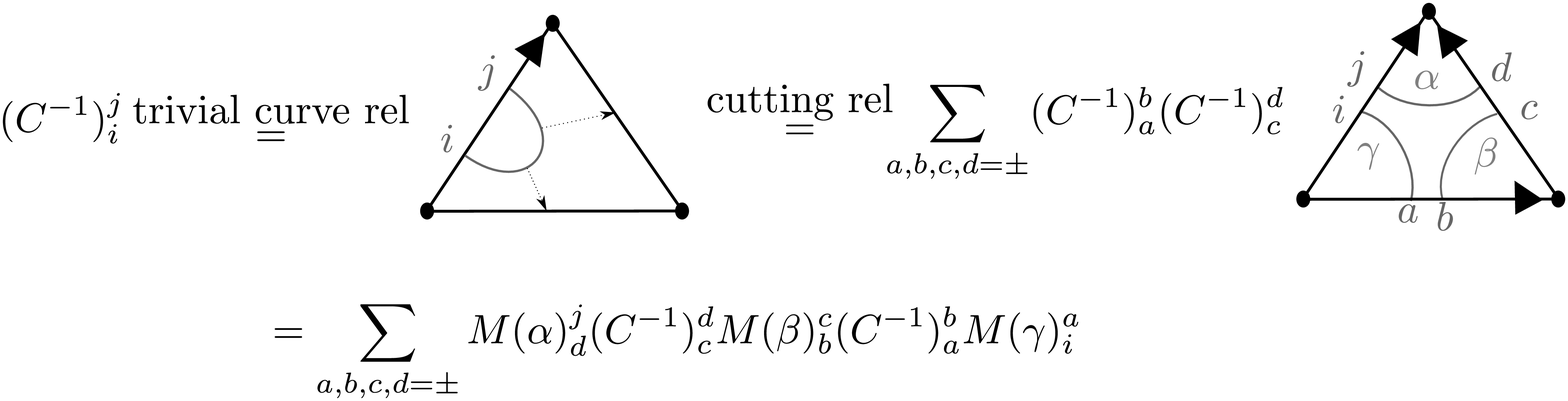} }
\caption{An illustration of the proof of  Equation \eqref{eq_trivial_loops_rel} in the case of the triangle.} 
\label{fig_polygonrel} 
\end{figure} 

\par Let us detail the proof in the general case. Since $\beta_i$ is either of type $a$ or $d$, it can be represented by a tangle $T(\beta_i)$ such that the height of the source endpoint of $\beta_i$ (say $v_i$) is smaller than the height of its target endpoint (say $w_i$); said differently $h(v_i)<h(w_i)$. One can further choose the $T(\beta_i)$ so that $T(\beta_{i+1})$ lies on the top of $T(\beta_i)$ (so $h(v_1)<h(w_1)<h(v_2)<\ldots <h(w_k)$). Let $T$ be the tangle made of the disjoint union of the $T(\beta_i)$. By the assumption that $R$ is a simple relation, we can suppose that $T$ is in generic position (in the sense of Section $2.1$) and that its projection diagram is simple. Fix $i,j \in \{-, +\}$ and let $\alpha^{0}$ be a trivial arc with endpoints $s(\alpha^{0})=v_1$ and $t(\alpha^{0})=w_k$ so that $\alpha^{0}$ can be isotoped (relatively to its boundary) to an arc inside $\partial \Sigma_{\mathcal{P}}$. One the one hand, the trivial arc relation \eqref{trivial_arc_rel} gives the equality $\alpha^{0}_{i j} = (C^{-1})_i^j$. On the other hand, the cutting arc relation \eqref{cutting_arc_rel} gives the equality

\begin{align*}
& (C^{-1})_i^j = \alpha^{0}_{i j } = \sum_{ s\in \mathrm{St}(T), s(v_1)= i, s(w_k)=j} [T,s] (C^{-1})_{s(w_1)}^{s(v_2)} (C^{-1})_{s(w_2)}^{s(v_3)} \ldots (C^{-1})_{s(w_{k-1})}^{v_k}
\\ &= 
 \sum_{\mu_1, \ldots \mu_{2k-2} = \pm} M(\beta_k)^j_{\mu_1} (C^{-1})_{\mu_2}^{\mu_1} M(\beta_{k-1})_{\mu_2}^{\mu_3} \ldots M(\beta_1)_i^{\mu_{2k-2}} = \left( M(\beta_k) C^{-1} M(\beta_{k-1}) C^{-1} \ldots M(\beta_1) \right)_i^j.
 \end{align*}
 This concludes the proof.

\end{proof}

\par Let $\alpha, \beta$ be two non-intersecting oriented arcs. Denote by $a,b,c,d$ the boundary arcs containing $s(\alpha), t(\alpha), s(\beta), t(\beta)$ respectively. 
Reversing the orientation and the height order of $\alpha$ or $\beta$ if necessary, we have ten different possibilities illustrated in Figure \ref{fig_arcrelations}.

\begin{figure}[!h] 
\centerline{\includegraphics[width=12cm]{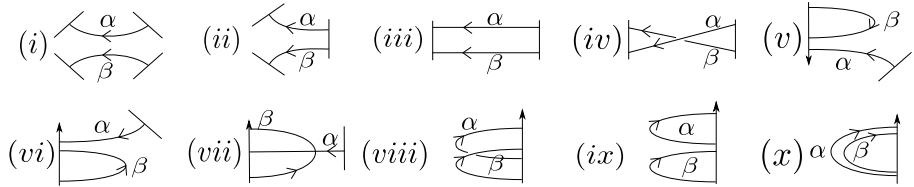} }
\caption{Ten configurations for two non-intersecting oriented arcs.} 
\label{fig_arcrelations} 
\end{figure}

\begin{lemma}\label{lemma_arcsrelations} 
\begin{itemize}

\item[(i)] If the elements of $\{a,b,c,d\}$ are pairwise distinct, one has 
\begin{equation}\label{arcrel1}
N(\alpha) \odot N(\beta) = \tau \left( N(\beta) \odot N(\alpha)\right) \tau.
\end{equation}

\item[(ii)] When $a=c$, $\{a,b,d\}$ has cardinal $3$ and $s(\beta)<_a s(\alpha)$, one has
\begin{equation}\label{arcrel2}
N(\alpha)\odot N(\beta) = \tau (N(\beta) \odot N(\alpha)) \mathscr{R}.
\end{equation}

\item[(iii)] When $a=c\neq b=d$ and $s(\beta)<_a s(\alpha), t(\alpha) <_b t(\beta)$, one has 
\begin{equation}\label{arcrel3}
N(\alpha)\odot N(\beta) = \mathscr{R}^{-1}(N(\beta) \odot N(\alpha)) \mathscr{R}.
\end{equation}

\item[(iv)] When $a=c\neq b=d$ and $s(\beta)<_a s(\alpha), t(\beta) <_b t(\alpha)$,
\begin{equation}\label{arcrel4}
N(\alpha)\odot N(\beta) = \mathscr{R}(N(\beta) \odot N(\alpha)) \mathscr{R}.
\end{equation}

\item[(v)] When $b=c=d\neq a$ and $s(\beta)<_a t(\beta) <_a t(\alpha)$ and $h(s(\beta))<h(t(\beta))$, one has
\begin{equation}\label{arcrel5}
N(\alpha) \odot N(\beta) = \mathscr{R}^{-1} \left( N(\beta) \odot \mathds{1}_2 \right) \mathscr{R} \left( N(\alpha) \odot \mathds{1}_2 \right).
\end{equation}

\item[(vi)] When $b=c=d\neq a$ and $t(\alpha)<_a t(\beta) <_a s(\beta)$ and $h(s(\beta))< h(t(\beta)) < h(t(\alpha))$, one has
\begin{equation}\label{arcrel6}
N(\alpha) \odot N(\beta) = \mathscr{R}^{-1}\left( N(\beta) \odot \mathds{1}_2 \right) \mathscr{R} \left( N(\alpha) \odot \mathds{1}_2 \right).
\end{equation}

\item[(vii)] When $b=c=d\neq a$ and $t(\beta)<_a t(\alpha) <_a s(\beta)$ and $h(s(\beta))<h(t(\alpha))<h(t(\beta))$, one has 
\begin{equation}\label{arcrel7}
N(\alpha) \odot N(\beta) = \mathscr{R} \left( N(\beta) \odot \mathds{1}_2 \right) \mathscr{R} \left( N(\alpha) \odot \mathds{1}_2 \right).
\end{equation}

\item[(viii)] When $a=b=c=d$ and $s(\beta)<_a s(\alpha) <_a t(\beta) <_a t(\alpha)$ and 
\\ $h(s(\beta))< h(s(\alpha))< h(t(\beta))< h(t(\alpha))$, one has
\begin{equation} \label{arcrel8}
\left( \mathds{1}_2 \odot N(\alpha) \right) \mathscr{R}^{-1} \left( \mathds{1}_2 \odot N(\beta) \right) \mathscr{R}^{-1} = 
\mathscr{R} \left( \mathds{1}_2 \odot N(\beta) \right) \mathscr{R}^{-1} \left( \mathds{1}_2 \odot N(\alpha) \right).
\end{equation}

\item[(ix)] When $a=b=c=d$ and $s(\beta) <_a t(\beta) <_a s(\alpha) <_a t(\alpha)$ and 
\\ $h(s(\beta))< h(t(\beta))<h(s(\alpha))<h(t(\alpha))$, one has
\begin{equation}\label{arcrel9}
\mathscr{R}^{-1} \left( \mathds{1}_2 \odot N(\alpha) \right) \mathscr{R} \left( \mathds{1}_2 \odot N(\beta) \right) =
\left( \mathds{1}_2 \odot N(\beta) \right) \mathscr{R}^{-1} \left( \mathds{1}_2 \odot N(\alpha) \right) \mathscr{R}.
\end{equation}

\item[(x)] When $a=b=c=d$ and $s(\alpha)<_a s(\beta) <_a t(\beta) <_a t(\alpha)$, and 
\\ $h(s(\alpha)) < h(s(\beta)) < h(t(\beta)) < h( t( \alpha))$,  one has
\begin{equation}\label{arcrel10}
\left( \mathds{1}_2 \odot N(\alpha) \right) \mathscr{R}^{-1} \left( \mathds{1}_2 \odot N(\beta) \right)\mathscr{R} =
\mathscr{R} \left( \mathds{1}_2 \odot N(\beta) \right) \mathscr{R}^{-1} \left( \mathds{1}_2 \odot N(\alpha) \right).
\end{equation}

\end{itemize}
\end{lemma}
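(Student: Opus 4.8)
The plan is to establish the ten identities by a direct local computation in $\mathcal{S}_{\omega}(\mathbf{\Sigma})$, relying on the observation that in every configuration of Figure~\ref{fig_arcrelations} the failure of the coefficients of $\alpha$ and $\beta$ to commute is entirely concentrated in small bigon neighbourhoods of the boundary arcs shared by the two arcs. Indeed, outside such neighbourhoods $\alpha$ and $\beta$ have disjoint projections in $\Sigma_{\mathcal{P}}$, so in the product $\alpha_{\varepsilon\varepsilon'}\beta_{\mu\mu'}$ (with $\alpha$ stacked above $\beta$) one may isotope $\beta$ freely past $\alpha$; the only crossings that cannot be removed this way are those forced by the cyclic order of the relevant endpoints along the common boundary arcs, and these are resolved using the height exchange relations~\eqref{height_exchange_rel}, together with the trivial arc and cutting arc relations~\eqref{trivial_arc_rel},~\eqref{cutting_arc_rel} when an arc reconnects to itself through the boundary. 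All of these are instances of the boundary skein relations~\eqref{boundary_skein_rel}, and the matrices $N(\alpha),N(\beta)$ are set up precisely so that the factors $C^{\pm 1}$ produced whenever an endpoint of a type $b,c,d,e$ arc is reflected around a puncture get absorbed, leaving an identity involving only the $\mathscr{R}$-matrix and the flip $\tau$.

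Concretely, I would treat the three groups of cases in turn. In the four-endpoint cases (i)--(iv), both arcs are of type $a$ and $N=M$: case (i) has no shared boundary arc and simply records that $\alpha_{\varepsilon\varepsilon'}$ and $\beta_{\mu\mu'}$ commute, the two $\tau$'s merely implementing the relabeling of the tensor factors; cases (ii)--(iv) are the exchange relations obtained by applying~\eqref{height_exchange_rel} once along the single shared arc (case (ii)) or twice along the two shared arcs (cases (iii), (iv)), the exponents of the $\mathscr{R}^{\pm1}$ and the side on which each lands being read off from the strand orientations and the prescribed cyclic order $s(\beta)<_a s(\alpha)$, $t(\alpha)<_b t(\beta)$, etc. In the cases (v)--(vii) the arc $\beta$ is a loop on the arc $b=c=d$, which also contains $t(\alpha)$, so there are two endpoints of $\beta$ and one of $\alpha$ on that arc and two successive height exchanges are required; the hypotheses on the heights $h(s(\beta))<h(t(\beta))<\dots$ are exactly what makes these two exchanges well defined, and they produce the pattern $\mathscr{R}^{\pm1}(N(\beta)\odot\mathds{1}_2)\mathscr{R}(N(\alpha)\odot\mathds{1}_2)$. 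Finally, in (viii)--(x) both $\alpha$ and $\beta$ are loops on a single boundary arc and one resolves the four endpoints one height level at a time, in the order dictated by the hypotheses, to obtain~\eqref{arcrel8}--\eqref{arcrel10}. In each case, after the local computation one sums over the internal states and rewrites the resulting scalar equalities in matrix form using the Kronecker product $\odot$ and the partial traces; whenever a reversal of orientation or of height order was used to bring the pair into one of the ten standard pictures, Lemmas~\ref{lemma_orientation_reversing} and~\ref{lemma_height_reversing} translate the identity back to the original arcs.

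The main difficulty is organizational rather than conceptual: there are many near-identical pictures, and in each one must track carefully (a) which strand lies above the other near the boundary, hence whether one sees $\mathscr{R}$ or $\mathscr{R}^{-1}$ and on which side of the equation it appears, and (b) the factors $C^{\pm1}$ hidden in the passage between $N(\cdot)$ and $M(\cdot)$, which must cancel exactly. To keep the number of hand computations down, I would first verify each identity for arcs of a fixed reference type (type $a$ for non-loops and, say, type $d$ for loops), drawing a single picture per configuration in the spirit of Figures~\ref{fig_height_reversing} and~\ref{fig_polygonrel}, and then check that both sides of each identity transform in the same way under the substitutions of Lemmas~\ref{lemma_orientation_reversing} and~\ref{lemma_height_reversing}, so that the remaining types follow formally; one may in addition pass to $\mathds{k}=\mathbb{Z}[\omega^{\pm1}]$ via Remark~\ref{remark_change_basis} and use the reflexion anti-involution $\theta$ as a further symmetry relating some of the ten identities to one another.
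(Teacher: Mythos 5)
Your plan is essentially the paper's proof: cases (i)--(iv) are handled exactly as you describe by the height exchange relations, and the remaining cases by applying the boundary skein relation in a small bigon neighbourhood of the shared boundary arc, with the $C^{\pm1}$ factors absorbed into $N(\cdot)$ and the reduction between arc types carried out via Lemmas \ref{lemma_orientation_reversing} and \ref{lemma_height_reversing}. The one device you leave implicit, which the paper makes explicit for cases (vii)--(x), is that those identities are obtained by forming the $4\times 4$ state matrix $V^{ij}_{kl}=[\alpha\cup\beta,\sigma_{ijkl}]$ of the disjoint union and computing it in \emph{two} different ways before equating (equivalently, expanding both sides of \eqref{arcrel8}--\eqref{arcrel10} into exchange relations $\alpha_{ij}\beta_{kl}=\sum c\,\beta_{k'l'}\alpha_{i'j'}$ and verifying these by successive height exchanges, as your plan would do).
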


\begin{proof}
Equation \eqref{arcrel1} says that in case $(i)$ any $\alpha_{i j }$ commutes with any $\beta_{k l}$, which is obvious. Equations \eqref{arcrel2}, \eqref{arcrel3}, \eqref{arcrel4} in cases $(ii), (iii)$ and $(iv)$ are straightforward consequences of the height exchange relation \eqref{height_exchange_rel}. All other cases will be derived using the boundary skein relations \eqref{boundary_skein_rel}. As in the proof of Lemma \ref{lemma_height_reversing}, we will color the arcs $\alpha$ and $\beta$ in red and blue to remind the reader that they might be much more complicated than what they look in the picture: in the computations we perform while using the boundary skein relation, we only care about the restriction of the diagrams (depicted in grey) in a small bigon in the neighborhood of the boundary arc $a$ and not of the actual shape of the blue and red parts.

Equations \eqref{arcrel5} and \eqref{arcrel6} in cases $(v)$ and $(vi)$ are proved in a very similar way; we detail the proof of \eqref{arcrel6} and leave \eqref{arcrel7} to the reader. In case $(vi)$, one has:

\begin{align*}
 &\left( M(\alpha) \odot M(\beta) \right)_{kl}^{ij} = \alpha_{k i} \beta_{l j} =  \adjustbox{valign=c}{\includegraphics[width=1.5cm]{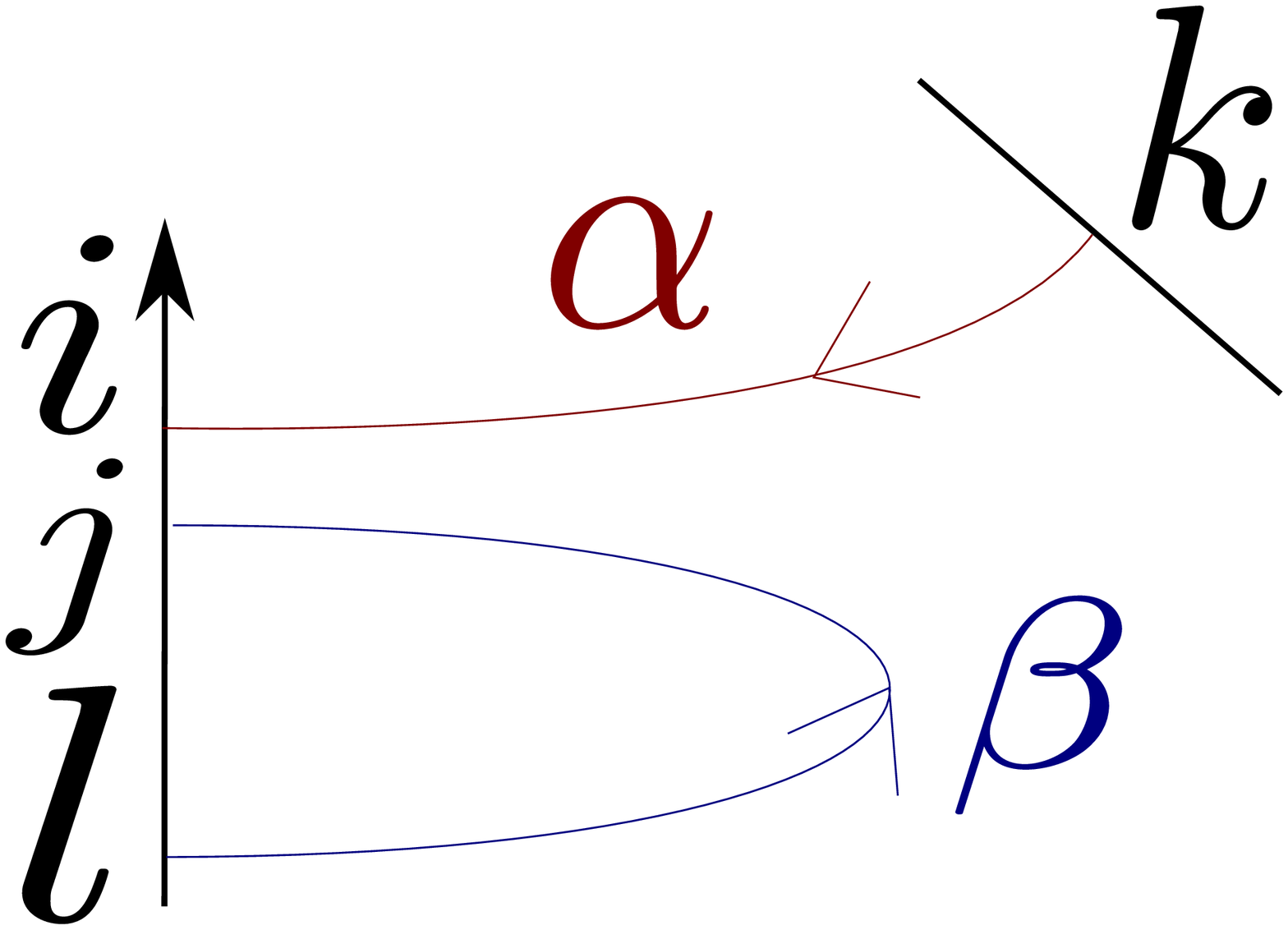}}  = \adjustbox{valign=c}{\includegraphics[width=3cm]{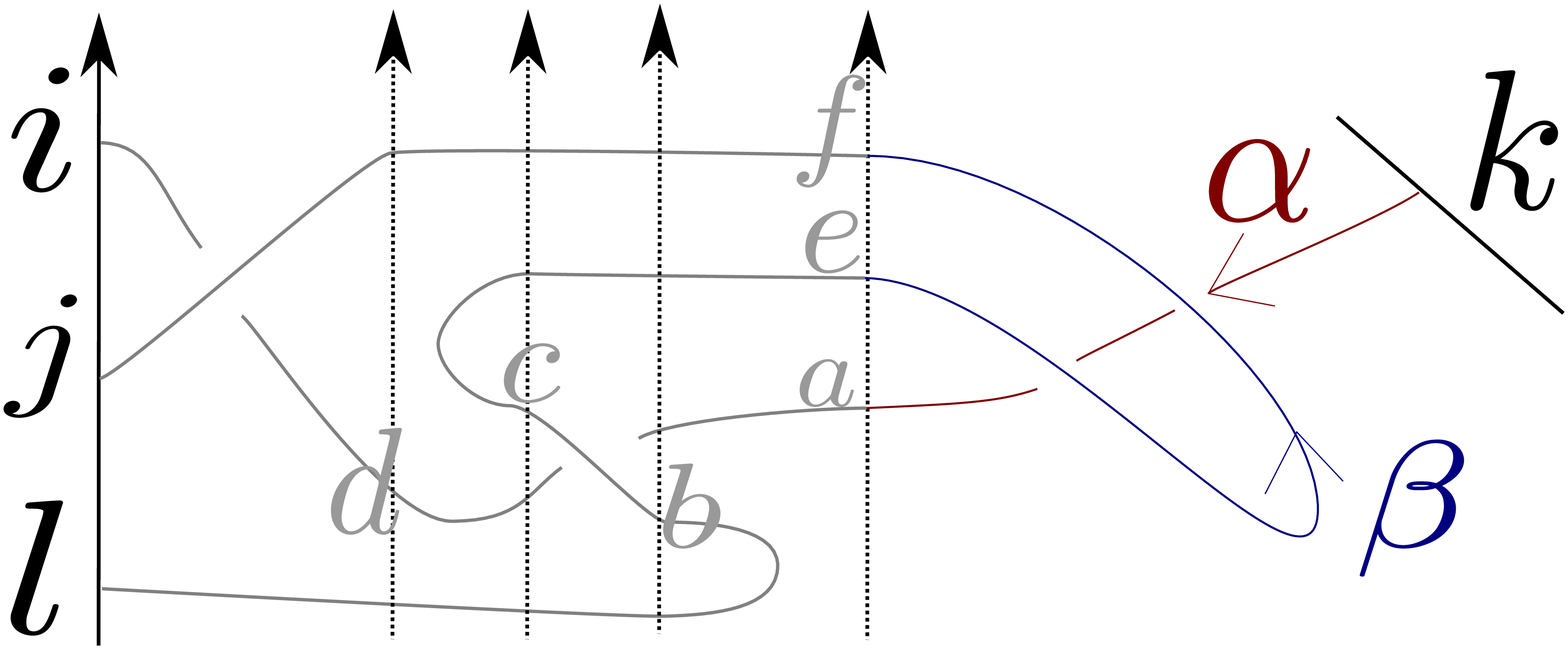}}  
 \\
& = \sum_{a,b,c,d,e,f=\pm} (\mathscr{R}^{-1})_{f d}^{i j}  M(\beta)_e^f C_c^e \mathscr{R}_{ab}^{cd} M(\alpha)_k^a (C^{-1})_l^b = \left( \mathscr{R}^{-1} (M(\beta) C \odot \mathds{1}_2) \mathscr{R}(M(\alpha)\odot C^{-1}) \right)_{kl}^{ij}.
\end{align*}

To handle cases $(vii)$ to $(x)$, we introduce the $4\times 4$ matrix $V=(V_{kl}^{ij})_{i,j,k,l\in \{-,+\}}$, where $V_{kl}^{ij}=[\alpha \cup \beta, \sigma_{ijkl}] \in \mathcal{S}_{\omega}(\mathbf{\Sigma})$ is the class of the simple diagram $\alpha \cup \beta$ with state $\sigma_{ijkl}$ sending $t(\alpha), t(\beta), s(\alpha)$ and $s(\beta)$ to $i,j,k$ and $l$ respectively. Here the height order of the points of $\partial (\alpha \cup \beta)$ is given by the boundary arc orientation drawn in Figure \ref{fig_arcrelations}. The trick is to compute $V$ in two different ways and then equating the two obtained formulas.

\vspace{2mm}
\par In case $(vii)$, on the one hand, we first prove the equality $V=\tau (M(\beta)C \odot \mathds{1}_2) \mathscr{R} (M(\alpha)\odot C^{-1})$ as follows:
$$ V^{ij}_{kl} =  \adjustbox{valign=c}{\includegraphics[width=1.5cm]{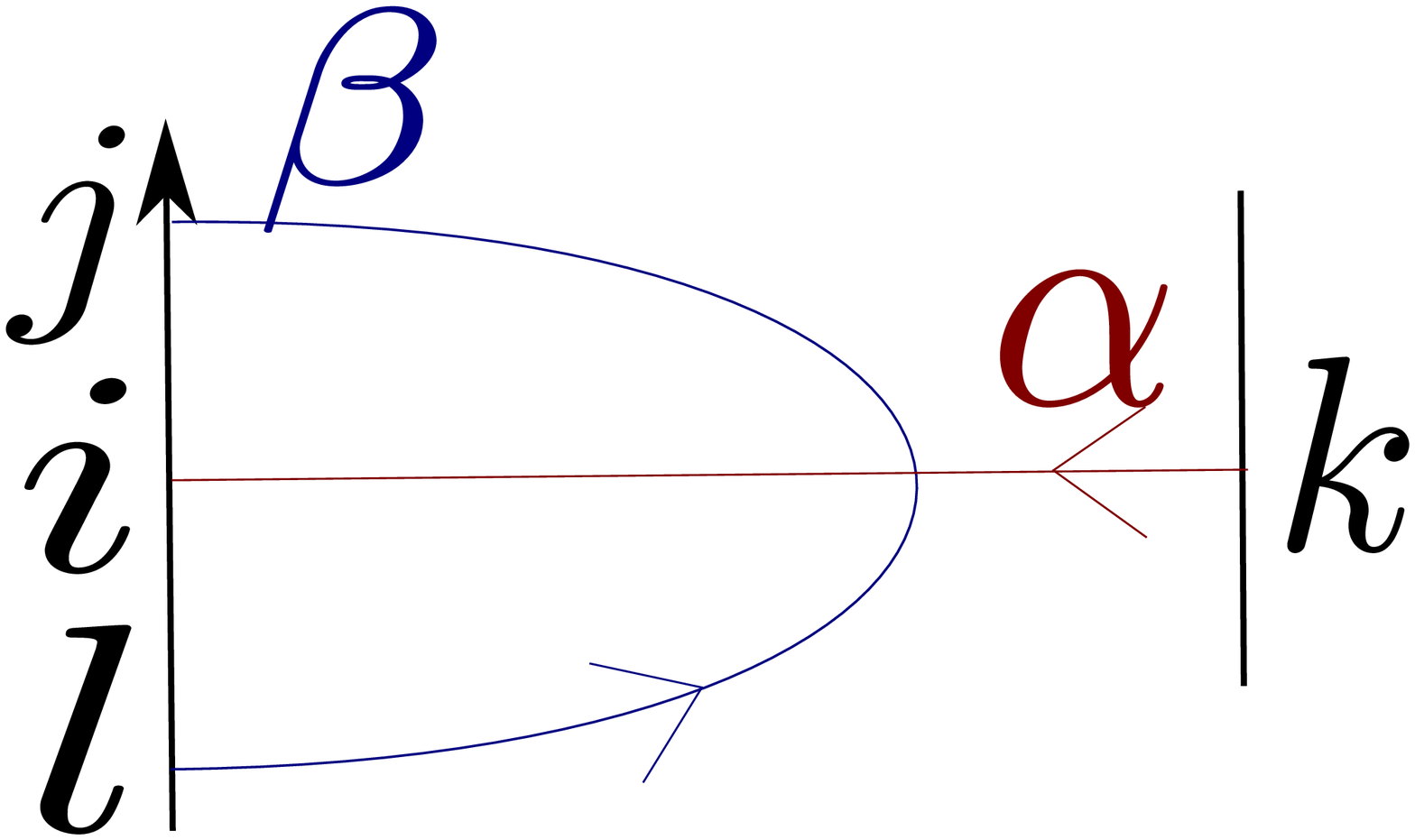}}  =\adjustbox{valign=c}{\includegraphics[width=3cm]{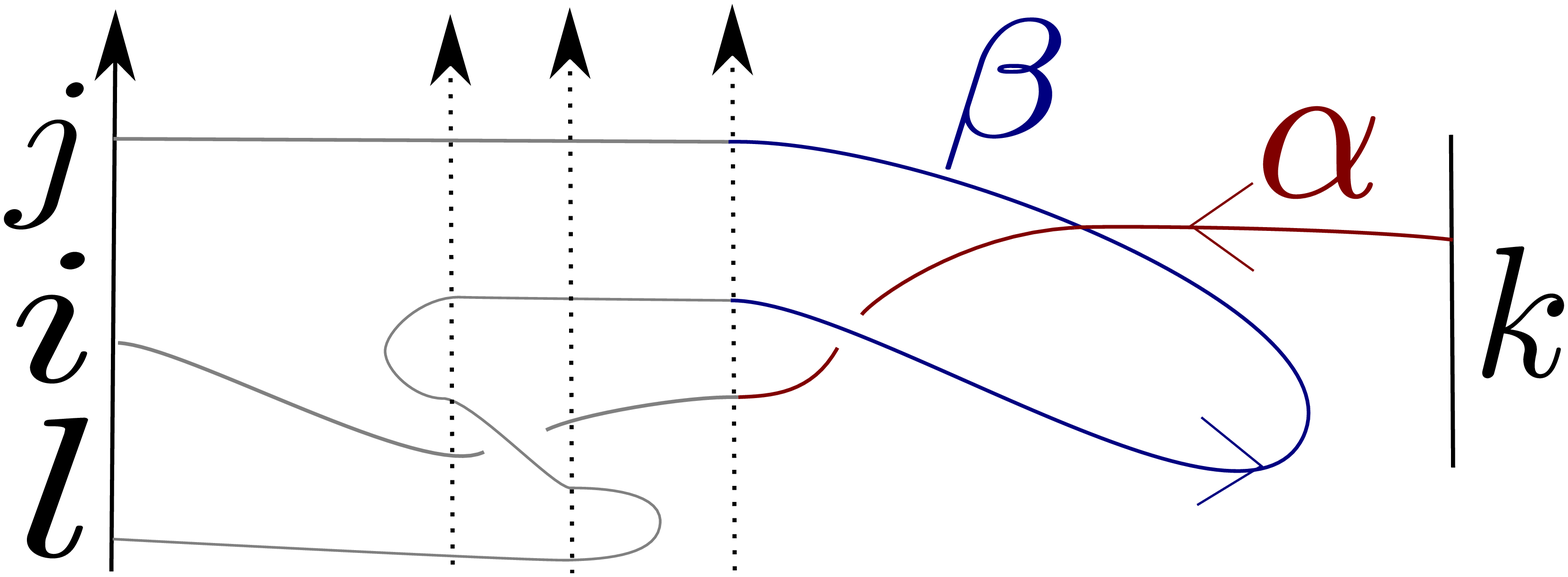}}= \left( (M(\beta)C \odot \mathds{1}_2) \mathscr{R} (M(\alpha)\odot C^{-1}) \right)_{kl}^{ji}.$$
On the other hand, we prove the equality $V= \tau \mathscr{R}^{-1} (M(\alpha) \odot M(\beta))$ as follows:
$$ V^{ij}_{kl} = \adjustbox{valign=c}{\includegraphics[width=1.5cm]{Case_vii_1.eps}}  =  \adjustbox{valign=c}{\includegraphics[width=3cm]{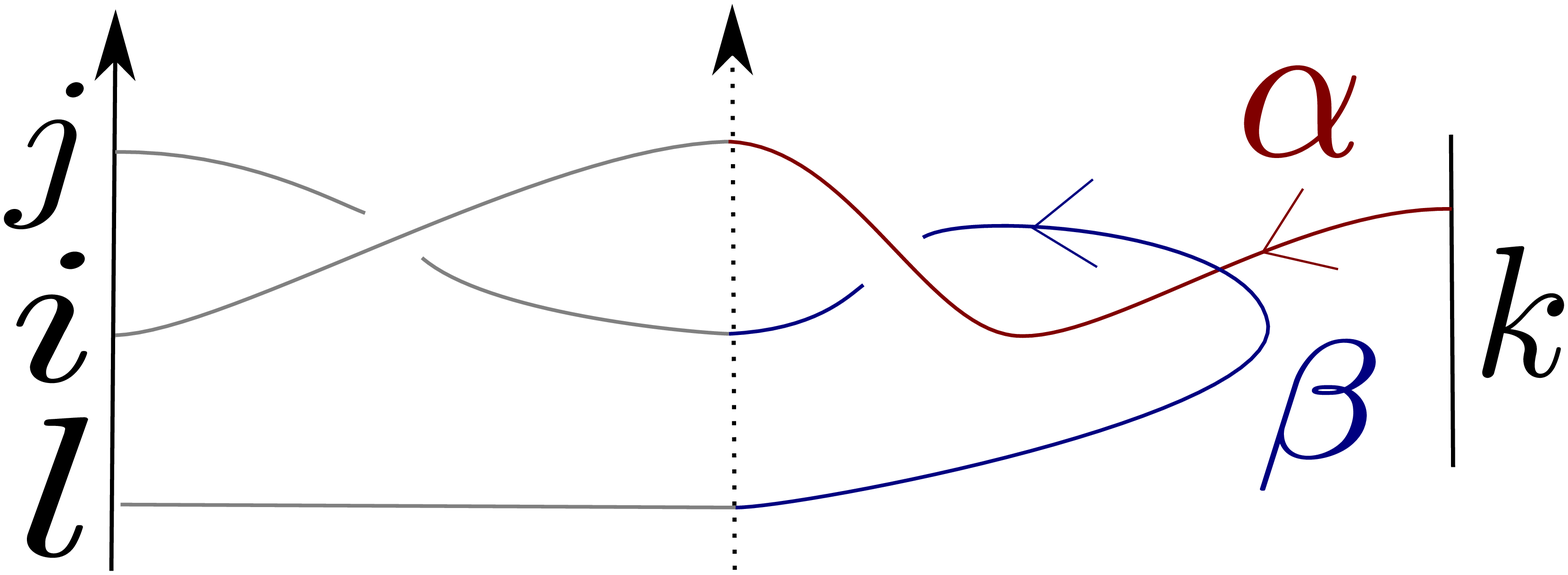}} =\left( \mathscr{R}^{-1} (M(\alpha) \odot M(\beta)) \right)_{kl}^{ji}.$$
So we get the equality $ \mathscr{R}^{-1} (M(\alpha) \odot M(\beta)) = (M(\beta)C \odot \mathds{1}_2) \mathscr{R} (M(\alpha)\odot C^{-1}) (=\tau V)$ and Equation \eqref{arcrel7} follows. 

\vspace{2mm}
\par In case $(vii)$, on the one hand, we first prove the equality $V=\tau (C\odot M(\alpha)) \mathscr{R}^{-1} (\mathds{1}_2 \odot C^{-1}M(\beta))$ as follows:
$$ V^{ij}_{kl} =  \adjustbox{valign=c}{\includegraphics[width=1.5cm]{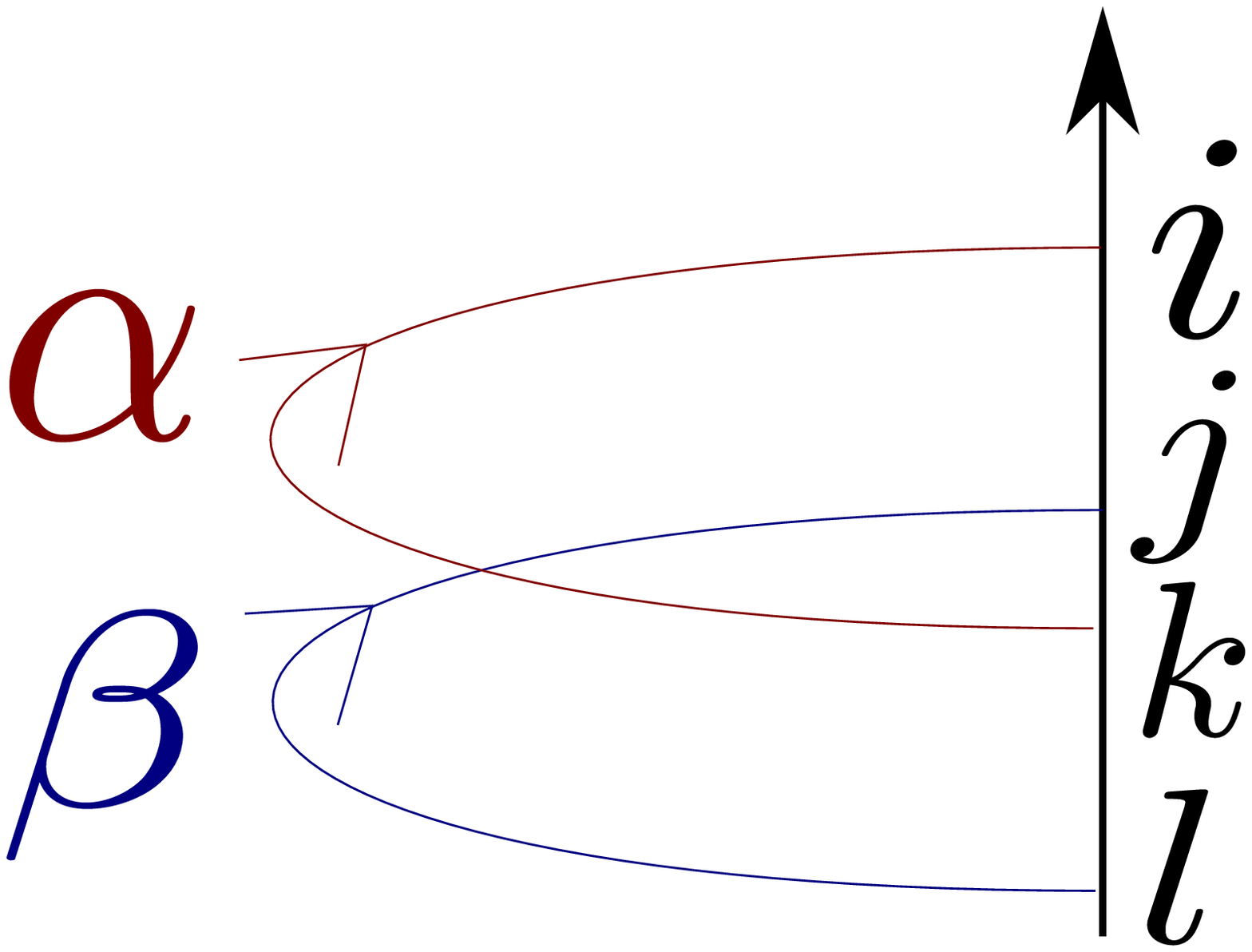}}  =\adjustbox{valign=c}{\includegraphics[width=3cm]{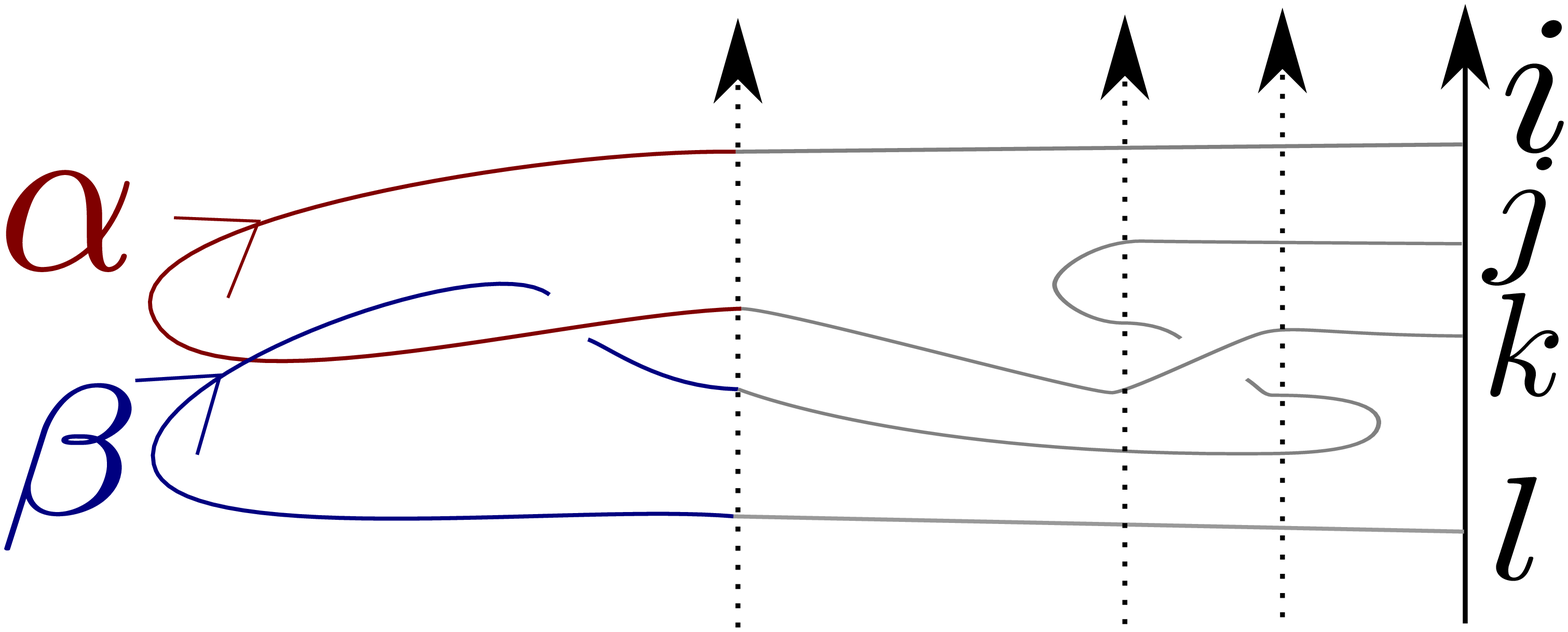}}= \left( (C\odot M(\alpha)) \mathscr{R}^{-1}(\mathds{1}_2 \odot C^{-1}M(\beta)) \right)_{kl}^{ji}.$$
On the other hand, we prove the equality $V= \tau (C\odot C) \mathscr{R} (\mathds{1}_2 \odot C^{-1}M(\beta)) \mathscr{R}^{-1} (\mathds{1}_2 \odot C^{-1}M(\alpha)) \mathscr{R}$ as follows:

$$ V^{ij}_{kl} =  \adjustbox{valign=c}{\includegraphics[width=1.5cm]{Case_viii_1.eps}}  =\adjustbox{valign=c}{\includegraphics[width=3.5cm]{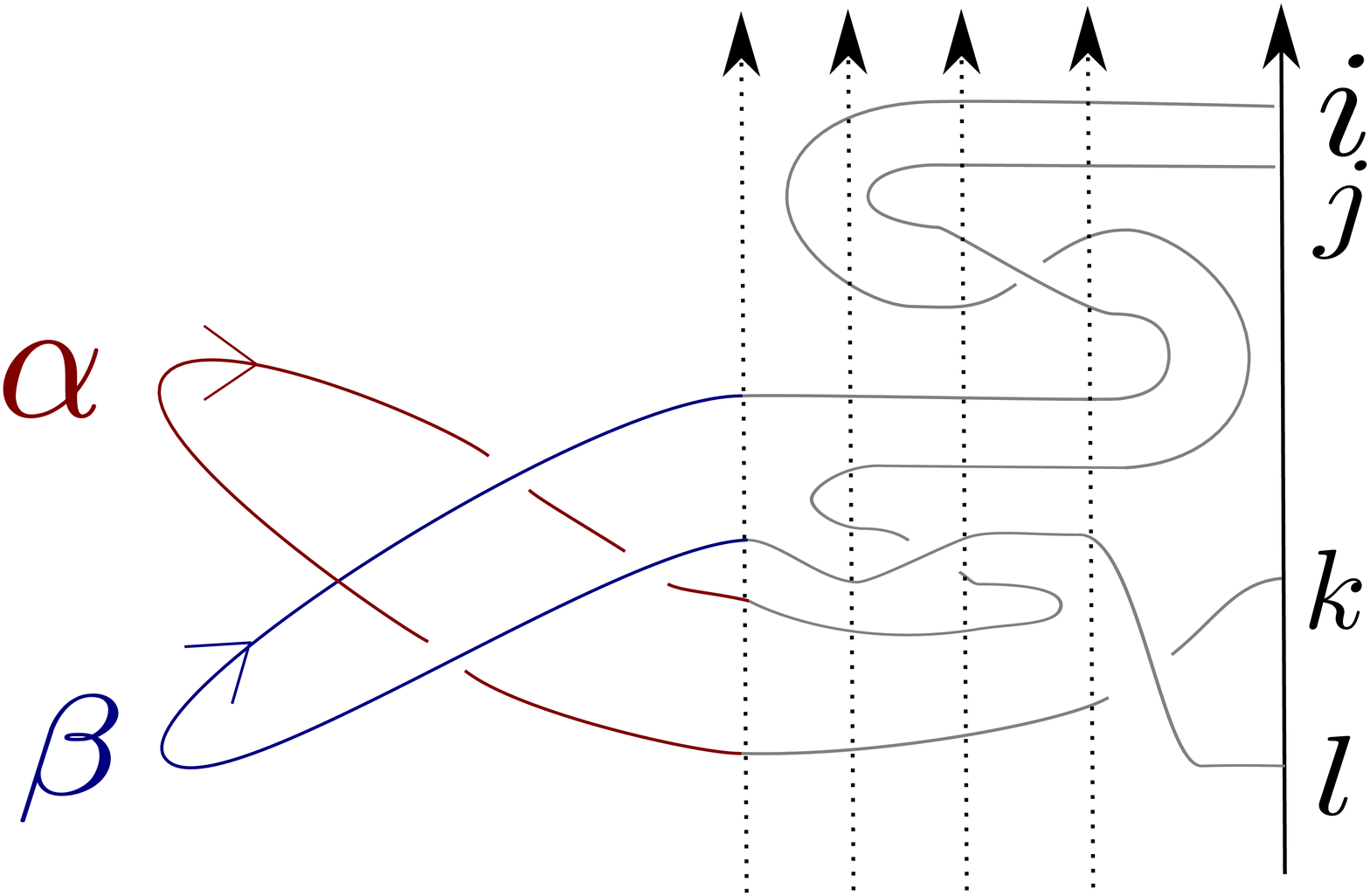}}= \left( (C\odot C) \mathscr{R} (\mathds{1}_2 \odot C^{-1}M(\beta)) \mathscr{R}^{-1} (\mathds{1}_2 \odot C^{-1}M(\alpha)) \mathscr{R} \right)_{kl}^{ji}.$$
Equation \eqref{arcrel8} follows by equating the two obtained expressions for $V$.

\vspace{2mm}
\par In case $(x)$, on the one hand, we first prove the equality $V= (C\odot M(\alpha))\mathscr{R}^{-1} (\mathds{1}_2 \odot C^{-1} M(\beta)) \mathscr{R}$ as follows:

$$ V_{kl}^{ij} =  \adjustbox{valign=c}{\includegraphics[width=1.5cm]{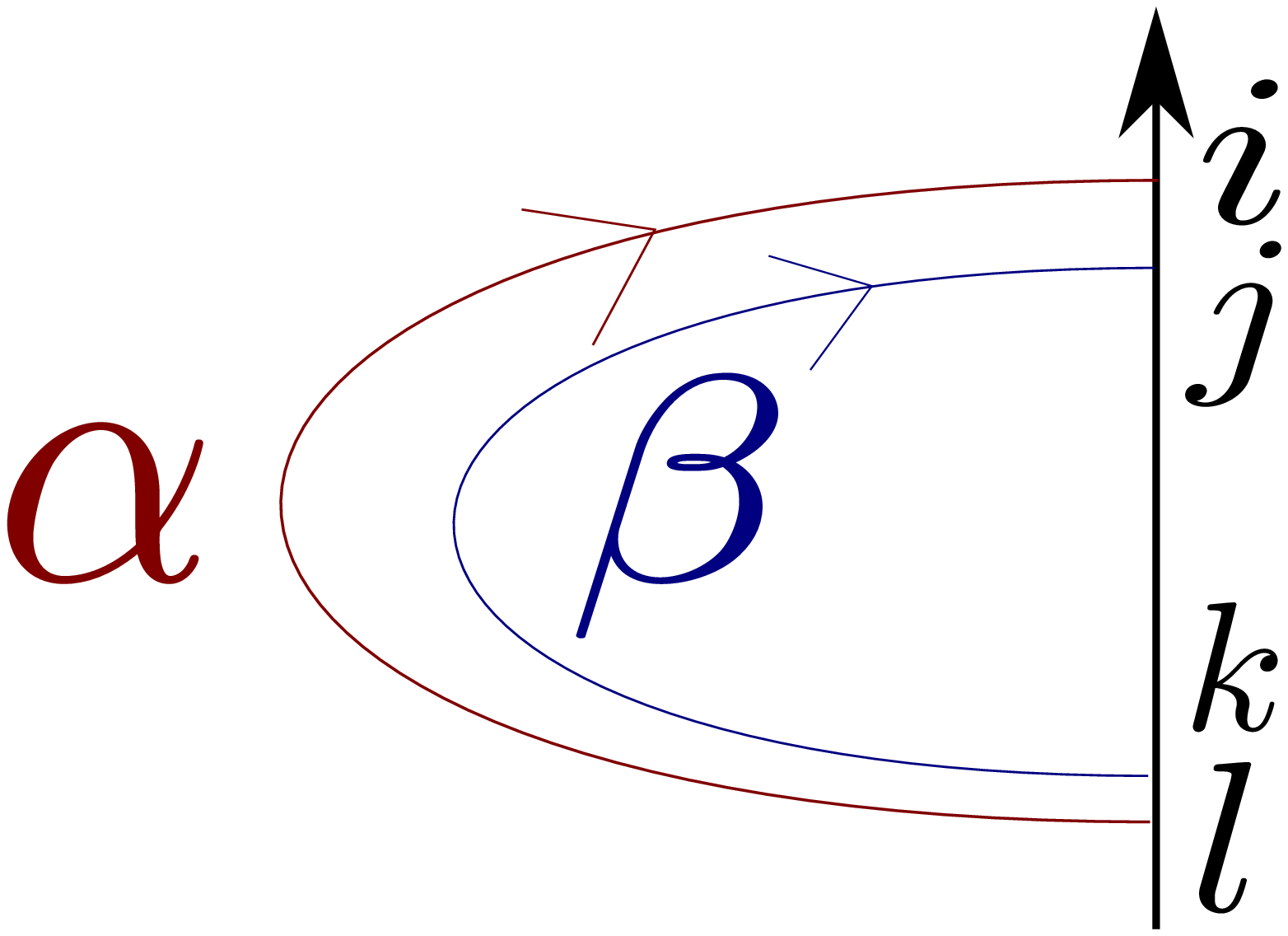}}  =\adjustbox{valign=c}{\includegraphics[width=3.5cm]{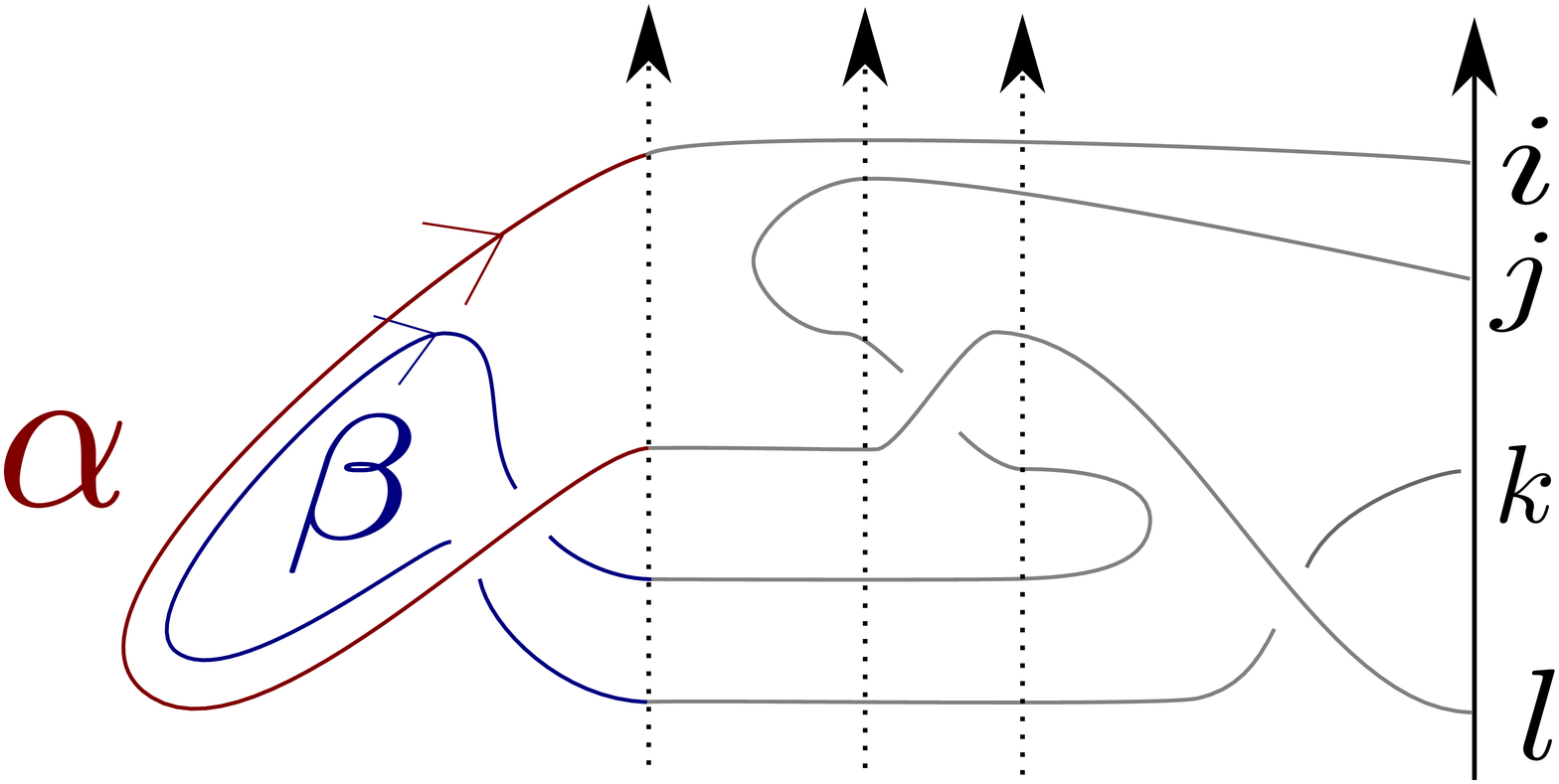}}= \left( (C\odot M(\alpha))\mathscr{R}^{-1} (\mathds{1}_2 \odot C^{-1} M(\beta)) \mathscr{R} \right)_{kl}^{ij}.$$

On the other hand, we prove the equality $V=(C\odot C) \mathscr{R} (\mathds{1}_2 \odot C^{-1}M(\beta)) \mathscr{R}^{-1}(\mathds{1}_2 \odot C^{-1}M(\alpha))$ as follows:

$$ V_{kl}^{ij} = \adjustbox{valign=c}{\includegraphics[width=1.5cm]{Case_x_1.eps}}  =\adjustbox{valign=c}{\includegraphics[width=4cm]{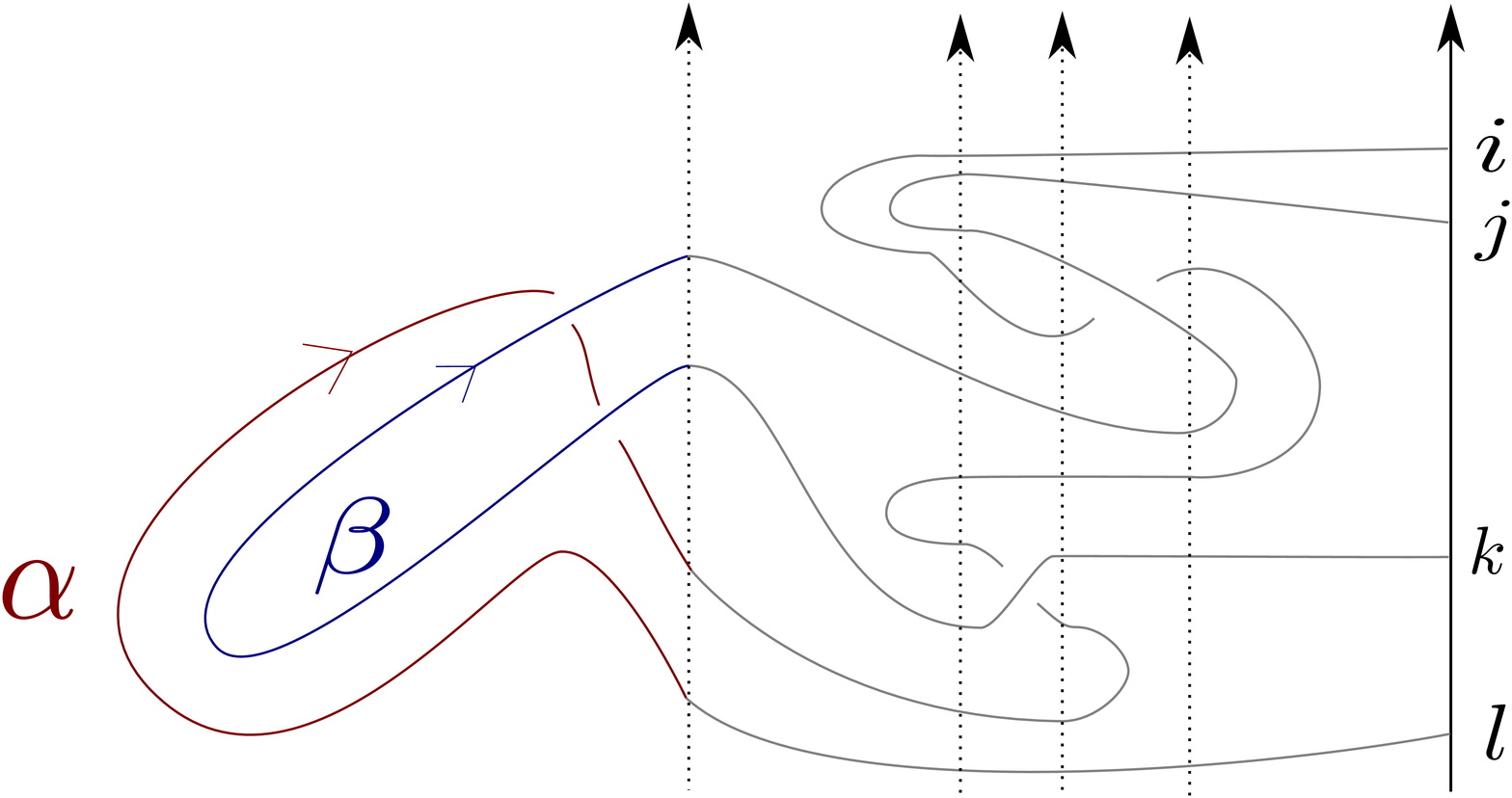}}= \left( (C\odot C) \mathscr{R} (\mathds{1}_2 \odot C^{-1}M(\beta)) \mathscr{R}^{-1}(\mathds{1}_2 \odot C^{-1}M(\alpha)) \right)_{kl}^{ij}.$$

Therefore, we obtain the following equality that will be used in the proof of Lemma \ref{lemma_qdet_rel}: 
\begin{equation}\label{eq_V}
 V = (C\odot M(\alpha))\mathscr{R}^{-1} (\mathds{1}_2 \odot C^{-1} M(\beta)) \mathscr{R} = (C\odot C) \mathscr{R} (\mathds{1}_2 \odot C^{-1}M(\beta)) \mathscr{R}^{-1}(\mathds{1}_2 \odot C^{-1}M(\alpha)).
 \end{equation}
 Equation \eqref{arcrel10} follows.
 
\vspace{2mm}
\par In case $(ix)$, we slightly change the strategy. We define the $4\times 4$ matrix $W= (W_{kl}^{ij})_{i,j,k,l\in \{-,+\}}$ by $W_{kl}^{ij}:=  \adjustbox{valign=c}{\includegraphics[width=1.5cm]{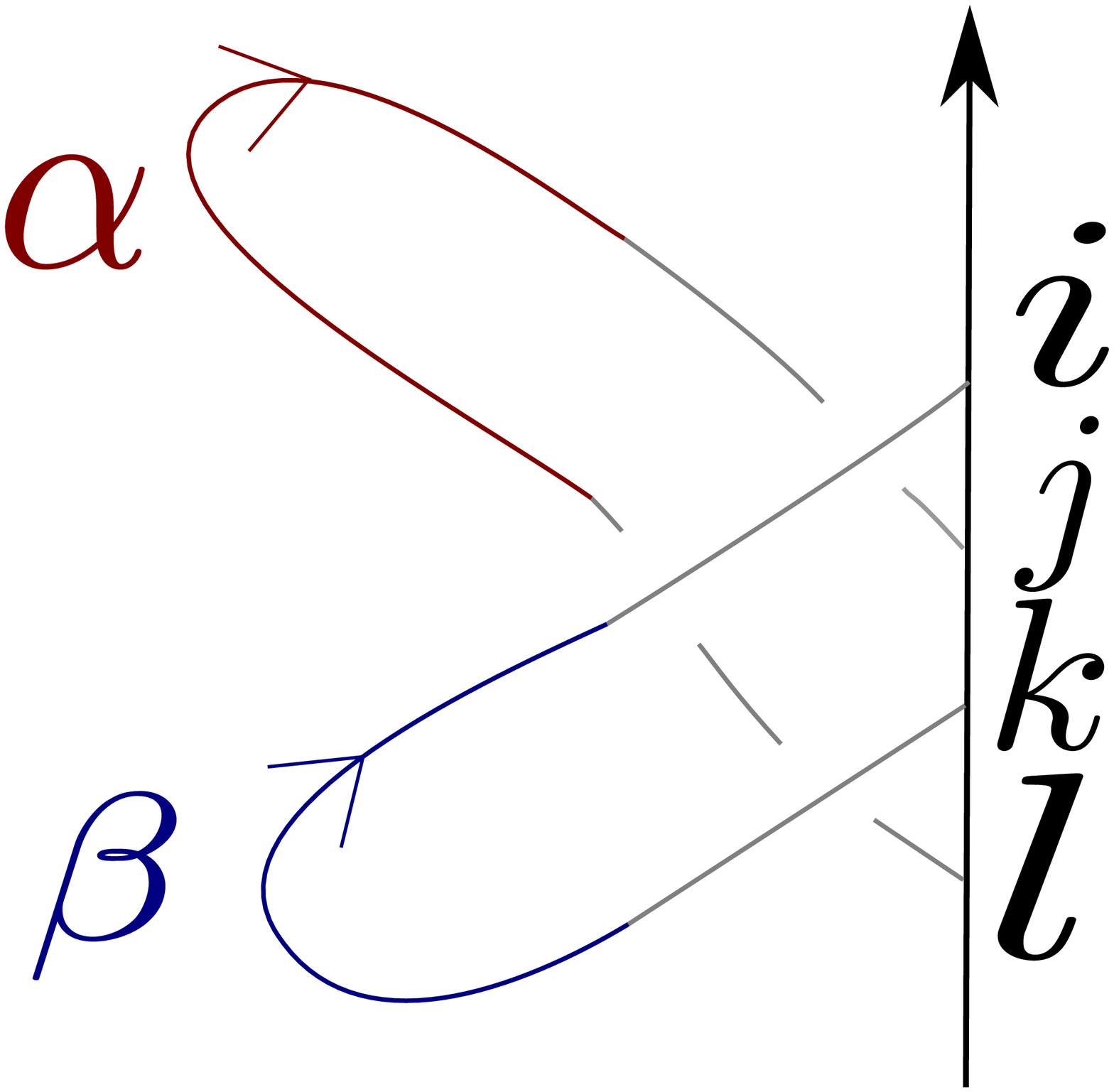}} $. We first prove the equality $W= (C\odot M(\beta)) \mathscr{R}^{-1}(\mathds{1}_2 \odot C^{-1}M(\alpha))$ as follows:

$$ W_{kl}^{ij}=  \adjustbox{valign=c}{\includegraphics[width=1.5cm]{Case_ix_1.eps}} = \adjustbox{valign=c}{\includegraphics[width=3.5cm]{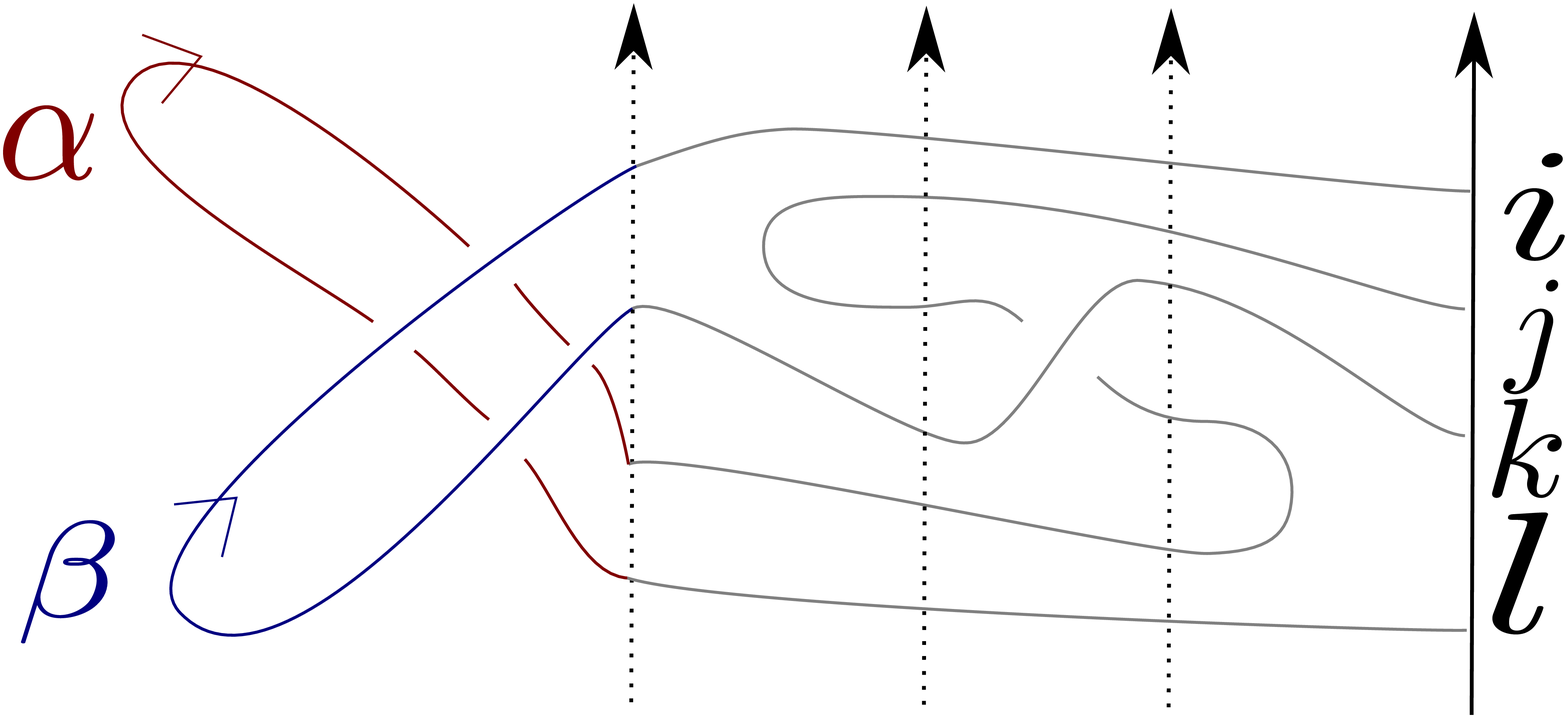}} = \left( (C\odot M(\beta)) \mathscr{R}^{-1}(\mathds{1}_2 \odot C^{-1}M(\alpha)) \right)_{kl}^{ij}.$$
Next, we prove the equality $W= (C\odot C) \mathscr{R}^{-1}(\mathds{1}_2\odot C^{-1}M(\alpha)) \mathscr{R} (\mathds{1}_2\odot C^{-1}M(\beta)) \mathscr{R}^{-1}$ as follows: 

$$ W_{kl}^{ij}=  \adjustbox{valign=c}{\includegraphics[width=1.5cm]{Case_ix_1.eps}} = \adjustbox{valign=c}{\includegraphics[width=3.5cm]{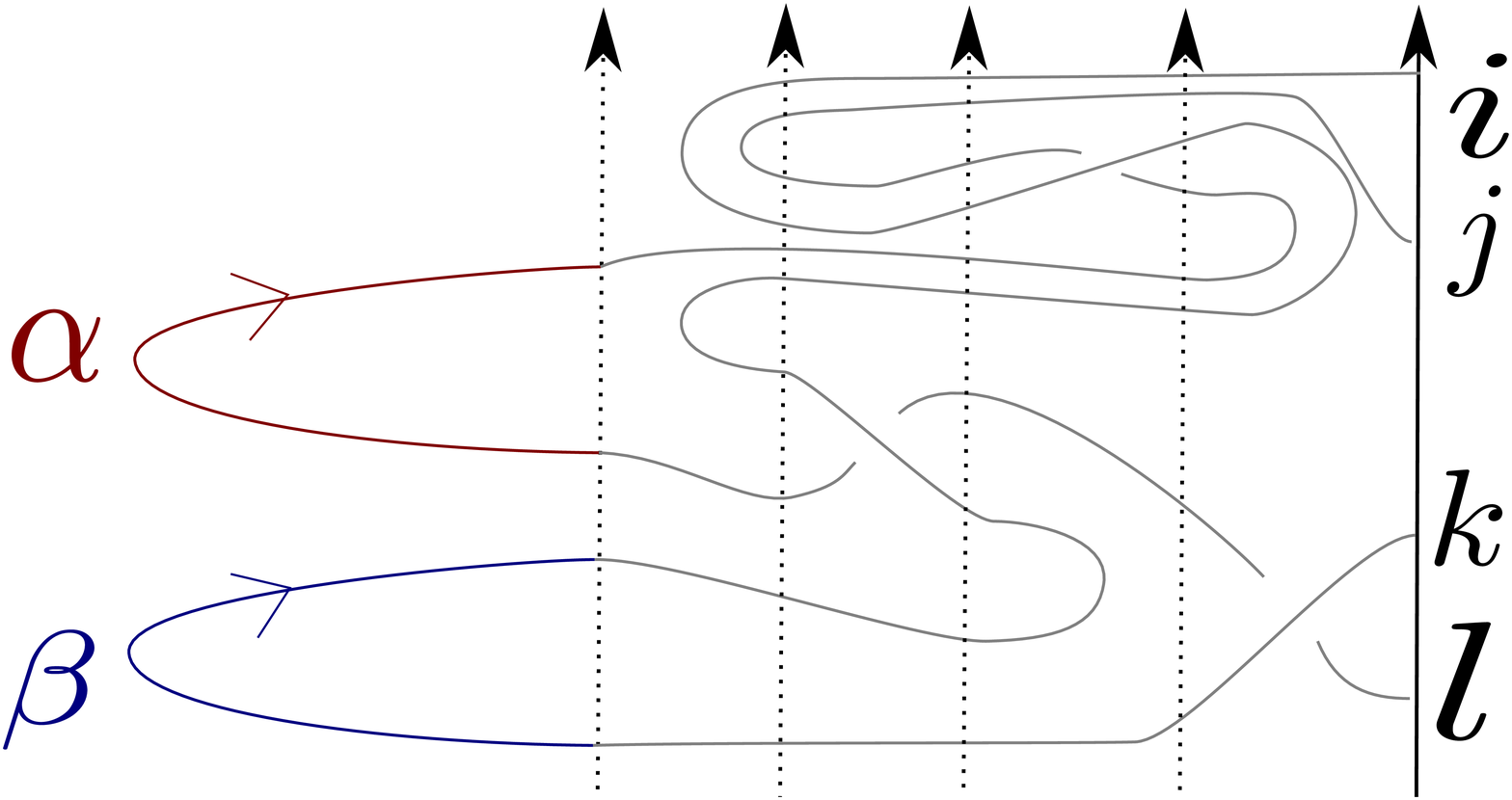}} = \left(  (C\odot C) \mathscr{R}^{-1}(\mathds{1}_2\odot C^{-1}M(\alpha)) \mathscr{R} (\mathds{1}_2\odot C^{-1}M(\beta)) \mathscr{R}^{-1} \right)_{kl}^{ij}.$$

Equation \eqref{arcrel9} follows by equating the two obtained expressions for $W$. This concludes the proof.

\end{proof}

\begin{lemma}[q-determinant relations]\label{lemma_qdet_rel}
Let $\alpha$ be an oriented arc. Then 
\begin{equation}\label{qdet_rel}
\mathrm{det}_q (N(\alpha))=1 \mbox{, if }\alpha \mbox{ is of type a, and } \mathrm{det}_{q^2}(N(\alpha))= 1 \mbox{, else}.
\end{equation}
\end{lemma}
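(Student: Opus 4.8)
The plan is to split by the type of $\alpha$: the type $a$ identity comes from the bigon, and the four remaining identities from the matrix equation \eqref{eq_V}, using the reversing formulas of Lemmas \ref{lemma_orientation_reversing} and \ref{lemma_height_reversing} to cut down the number of sub-cases.

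\emph{Type $a$.} Here $N(\alpha)=M(\alpha)$. A simple arc $\alpha$ joining two distinct boundary arcs admits a regular neighbourhood that is a bigon $\mathbb{B}_\alpha$ inside $\mathbf{\Sigma}$, and $\mathbf{\Sigma}$ is recovered by gluing $\mathbb{B}_\alpha$ back along its two free sides; by Theorem \ref{theorem_gluing} (applied to the disjoint-union inclusion followed by the gluing morphism) this yields an algebra morphism $\mathcal{O}_q[\SL_2]\cong\mathcal{S}_\omega(\mathbb{B}_\alpha)\to\mathcal{S}_\omega(\mathbf{\Sigma})$ sending each generator $x_{\varepsilon\varepsilon'}$ to $\alpha_{\varepsilon\varepsilon'}$. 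Feeding the defining relation $x_{++}x_{--}-q^{-1}x_{+-}x_{-+}=1$ of $\mathcal{O}_q[\SL_2]$ through it gives $\mathrm{det}_q(M(\alpha))=1$. (Alternatively one checks this directly inside $\mathcal{S}_\omega(\mathbf{\Sigma})$: express $\alpha_{++}\alpha_{--}$ and $\alpha_{+-}\alpha_{-+}$ as two parallel copies of $\alpha$, use the height exchange relation \eqref{height_exchange_rel} near $t(\alpha)$ to create a crossing between the two copies, resolve it by \eqref{eq: skein 1}; one resolution closes the two copies into a pair of trivial arcs, contributing the scalar $1$ via \eqref{trivial_arc_rel}, while the other reproduces the remaining monomial.)

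\emph{Types $b,c,d,e$.} By the Remark following Lemma \ref{lemma_height_reversing}, \eqref{eq_inversion} and \eqref{eq_height_reversing} permute these four types, so it suffices to establish $\mathrm{det}_{q^2}(N(\alpha))=1$ for arcs of one fixed type, say $c$, and then to observe that each of these transformations sends that identity to the one for the new type (a direct computation with the explicit entries of $\mathscr{R}$, $C$ and $C^{-1}=-A^3C$). For $\alpha$ of type $c$, apply \eqref{eq_V} to the pair $(\alpha,\beta)$, where $\beta$ is a parallel push-off of $\alpha$ with endpoints perturbed so that $(\alpha,\beta)$ is in configuration $(x)$ of Figure \ref{fig_arcrelations}; then $M(\beta)=M(\alpha)$ and the left-hand side $V$ of \eqref{eq_V} is the class of the simple diagram formed by two parallel copies of $\alpha$. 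Contracting both sides of \eqref{eq_V} along the index slots attached to $\beta$ with suitable copies of $C^{\pm1}$ caps the inner copy off: by \eqref{trivial_arc_rel} and \eqref{eq: skein 1} this collapses $V$ to the scalar $1$, whereas the same contraction applied to either of the two matrix expressions for $V$ in \eqref{eq_V}, once expanded with the explicit $\mathscr{R}^{\pm1}$, returns $\mathrm{det}_{q^2}(N(\alpha))$. The mismatch between the height orders of configuration $(x)$ and those of type $a$ is precisely what turns $q^{-1}$ into $q^{-2}$.

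\emph{The main obstacle.} Almost all of the work is the bookkeeping in the last step: determining which contraction of \eqref{eq_V} by the matrices $C^{\pm1}$ makes its geometric side degenerate to the scalar $1$ (and not to a nontrivial loop, or to a spurious factor such as $-(A^2+A^{-2})$), and then carrying out the $4\times4$ expansion with $\mathscr{R}^{\pm1}$, $C^{\pm1}$ and the Kronecker product $\odot$ so as to identify the outcome with $\mathrm{det}_{q^2}(N(\alpha))$. Matching the $\mathscr{R}$-matrix to the height orders of configuration $(x)$ and keeping the normalisations straight ($q=\omega^{-4}$, $A=\omega^{-2}$, $C^{-1}=-A^3C$) is the delicate part; the type $a$ case and the reduction among the types $b,c,d,e$ are comparatively routine.
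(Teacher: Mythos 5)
Your proposal follows essentially the same route as the paper's proof: the type-$a$ identity from a trivial returning arc resolved by the trivial/cutting arc relations, one of the remaining types from the two matrix expressions for $V$ in \eqref{eq_V} applied to a parallel push-off of $\alpha$ in configuration $(x)$, and the other types transported by the reversing formulas. Three points to tighten. First, your primary justification for type $a$ — an algebra morphism $\mathcal{O}_q[\SL_2]\to\mathcal{S}_\omega(\mathbf{\Sigma})$ from a bigon neighbourhood of $\alpha$ — does not follow from Theorem \ref{theorem_gluing}: the gluing map goes from the glued surface to the cut one, and cutting $\mathbf{\Sigma}$ along the two free sides of a regular neighbourhood of a type-$a$ arc yields a square (four boundary punctures), not a bigon, so one would need functoriality of $\mathcal{S}_\omega$ under embeddings of punctured surfaces (as in \cite{CostantinoLe19}), which this paper does not set up; your parenthetical direct computation is the correct fallback and is exactly what the paper does. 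Second, contracting \eqref{eq_V} produces a relation of the form $\alpha_{--}\alpha_{++}-q^2\alpha_{+-}\alpha_{-+}=A$; to convert this into $\mathrm{det}_{q^2}(N(\alpha))=1$ with $N(\alpha)=C^{-1}M(\alpha)$ you additionally need the commutation $\alpha_{+-}\alpha_{-+}=\alpha_{-+}\alpha_{+-}$, which the paper extracts from the arc-exchange relation \eqref{arcrel10} for $\alpha$ and its push-off — this should be made explicit rather than absorbed into the ``$4\times4$ expansion''. Third, for the step between types related by a height reversal the paper uses the reflection anti-involution $\theta$ over $\mathbb{Z}[\omega^{\pm1}]$ followed by base change (Remark \ref{remark_change_basis}) rather than the height-reversing formula \eqref{eq_height_reversing}; your route through \eqref{eq_height_reversing} is legitimate in principle but requires verifying that the partial-trace formula transports the $q^2$-determinant identity, which is the more laborious of the two options.
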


\begin{proof}
First suppose that $\alpha$ is of type a. Applying the trivial arc and cutting arc relation, we obtain: 

$$ (C^{-1})_+^- =  \bigonheightcurveright{-}{+} = (C^{-1})^+_-   \adjustbox{valign=c}{\includegraphics[width=1.2cm]{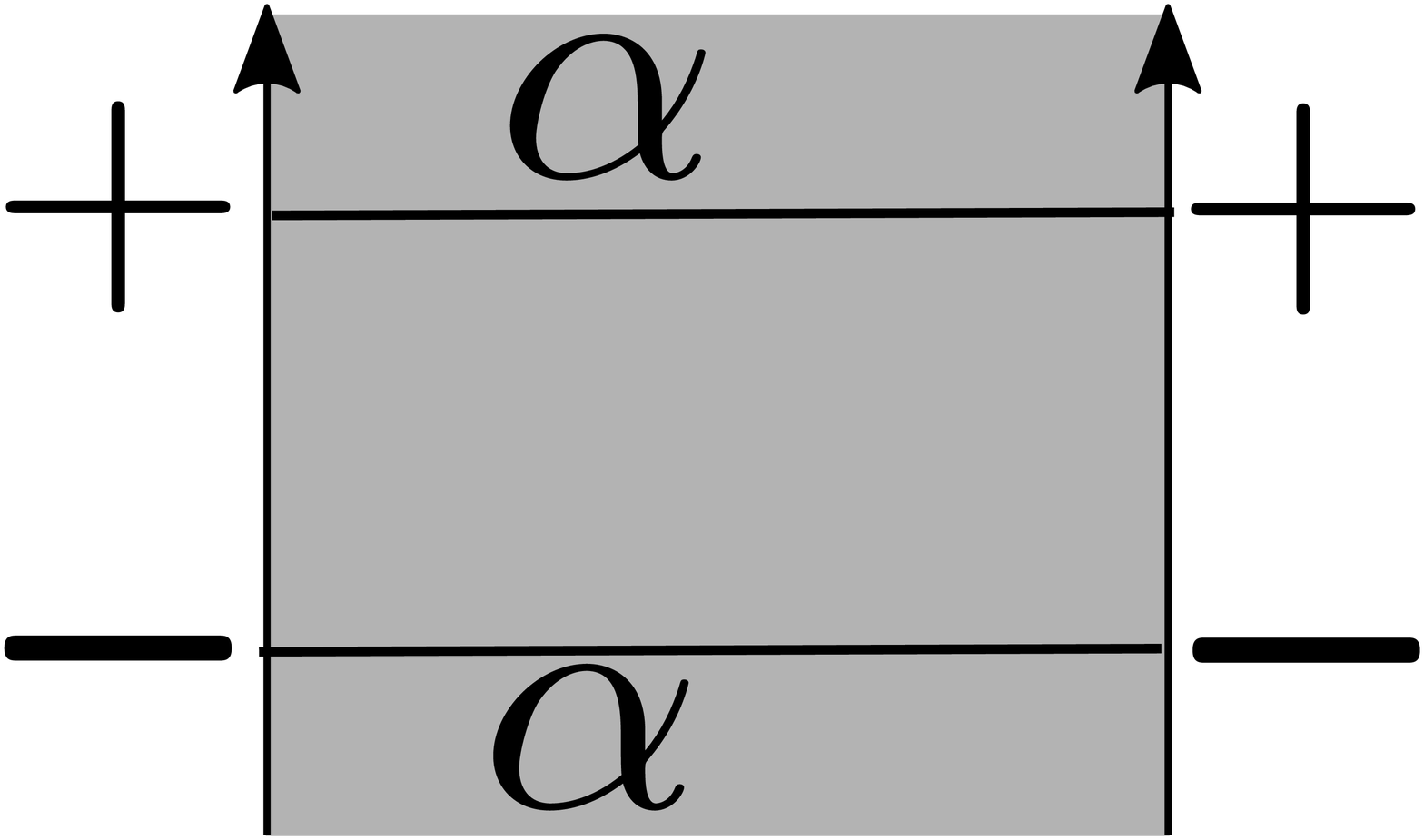}}   + (C^{-1})^-_+ \adjustbox{valign=c}{\includegraphics[width=1.2cm]{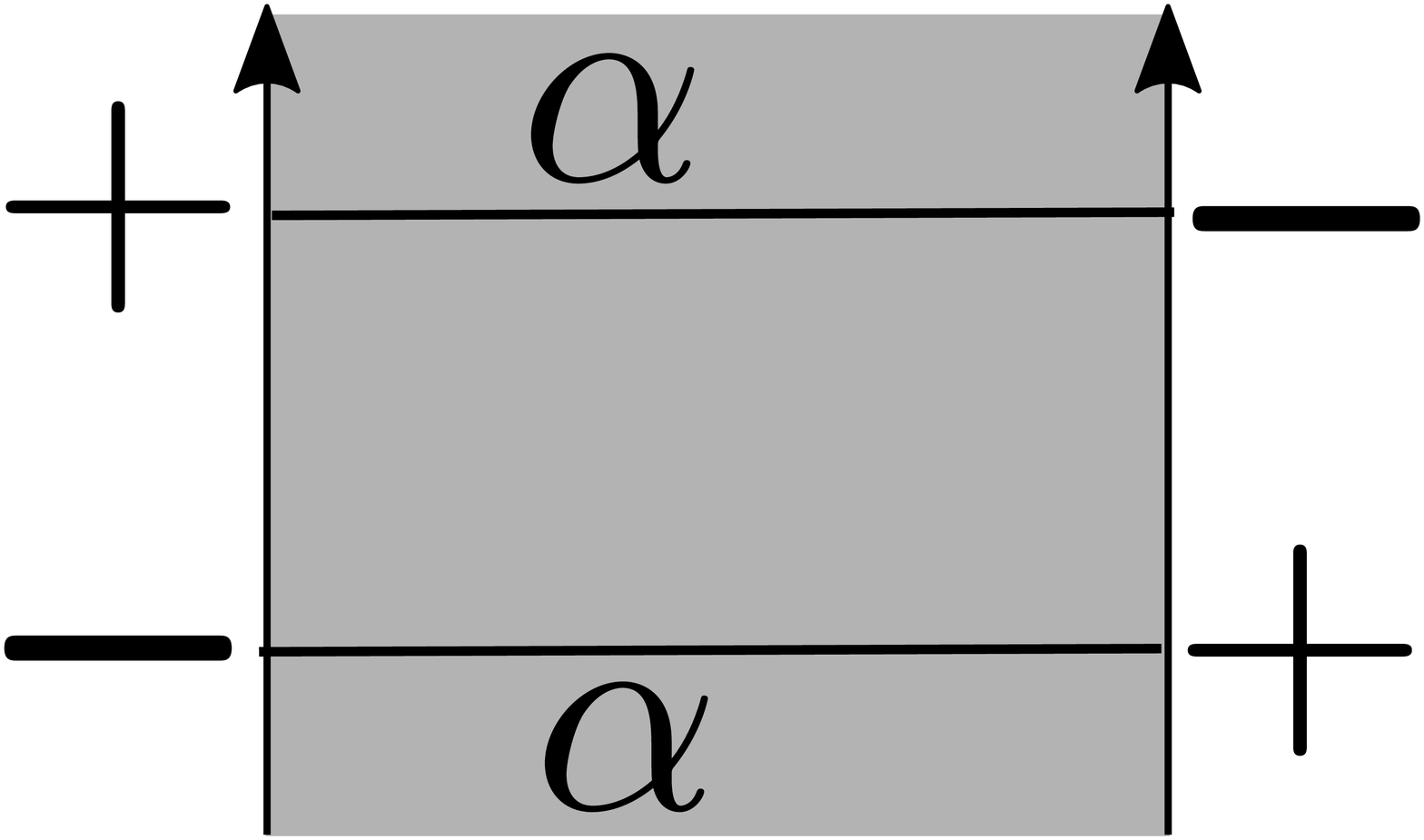}} , $$ 

which is equivalent to the equation $\alpha_{++} \alpha_{--} - q^{-1}\alpha_{+-} \alpha_{-+} = 1$ as claimed. Next we suppose that $\alpha$ is of type d. Let $\beta$ be an arc isotope to and disjoint from  $\alpha$, placed as in the configuration (x) of Figure \ref{fig_arcrelations}.  Consider the matrix $V=(V_{kl}^{ij})_{i,j,k,l\in \{-,+\}}$, where $V_{kl}^{ij}=[\alpha \cup \beta, \sigma_{ijkl}] \in \mathcal{S}_{\omega}(\mathbf{\Sigma})$ is the class of the simple diagram $\alpha \cup \beta$ with state $\sigma_{ijkl}$ sending $t(\alpha), t(\beta), s(\alpha)$ and $s(\beta)$ to $i,j,k$ and $l$ respectively, like in the proof of Lemma \ref{lemma_arcsrelations} (i.e. $V_{kl}^{ij}=  \adjustbox{valign=c}{\includegraphics[width=1.5cm]{Case_x_1.eps}}$). Again, using the trivial arc and cutting arc relation, we obtain: 

$$ C_-^+ =  \adjustbox{valign=c}{\includegraphics[width=1.5cm]{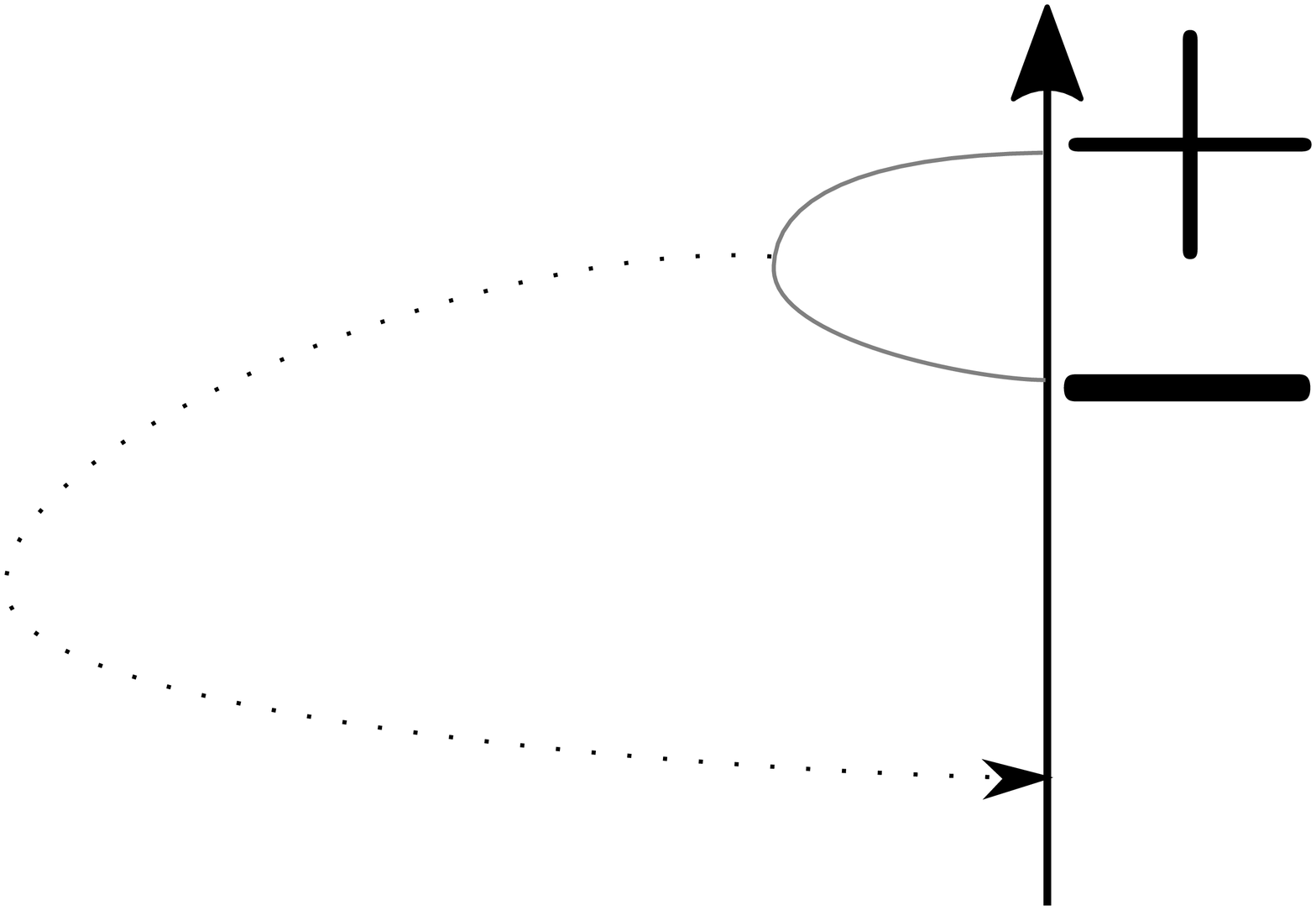}} = C_+^-  \adjustbox{valign=c}{\includegraphics[width=1.5cm]{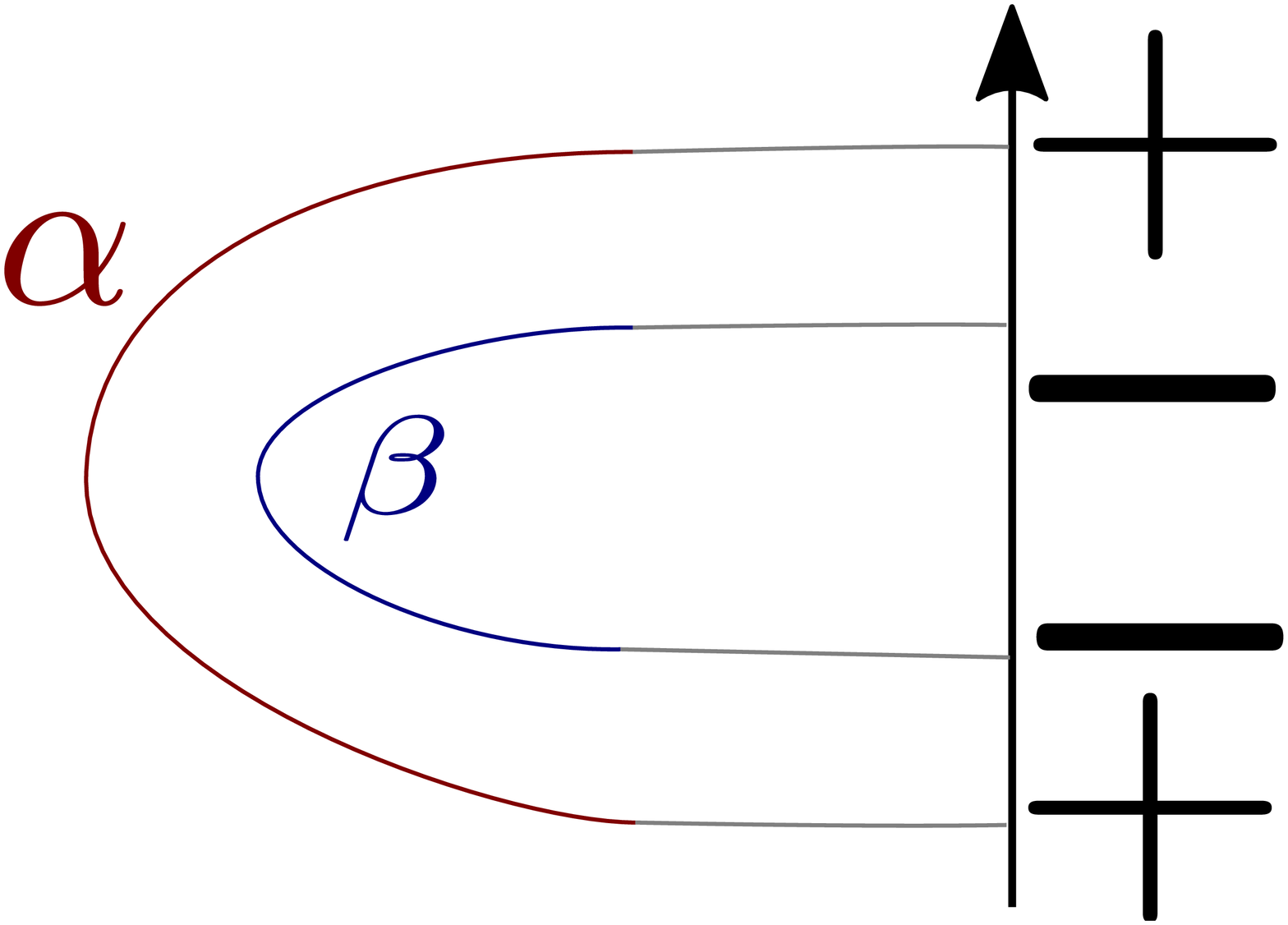}} + C_-^+  \adjustbox{valign=c}{\includegraphics[width=1.5cm]{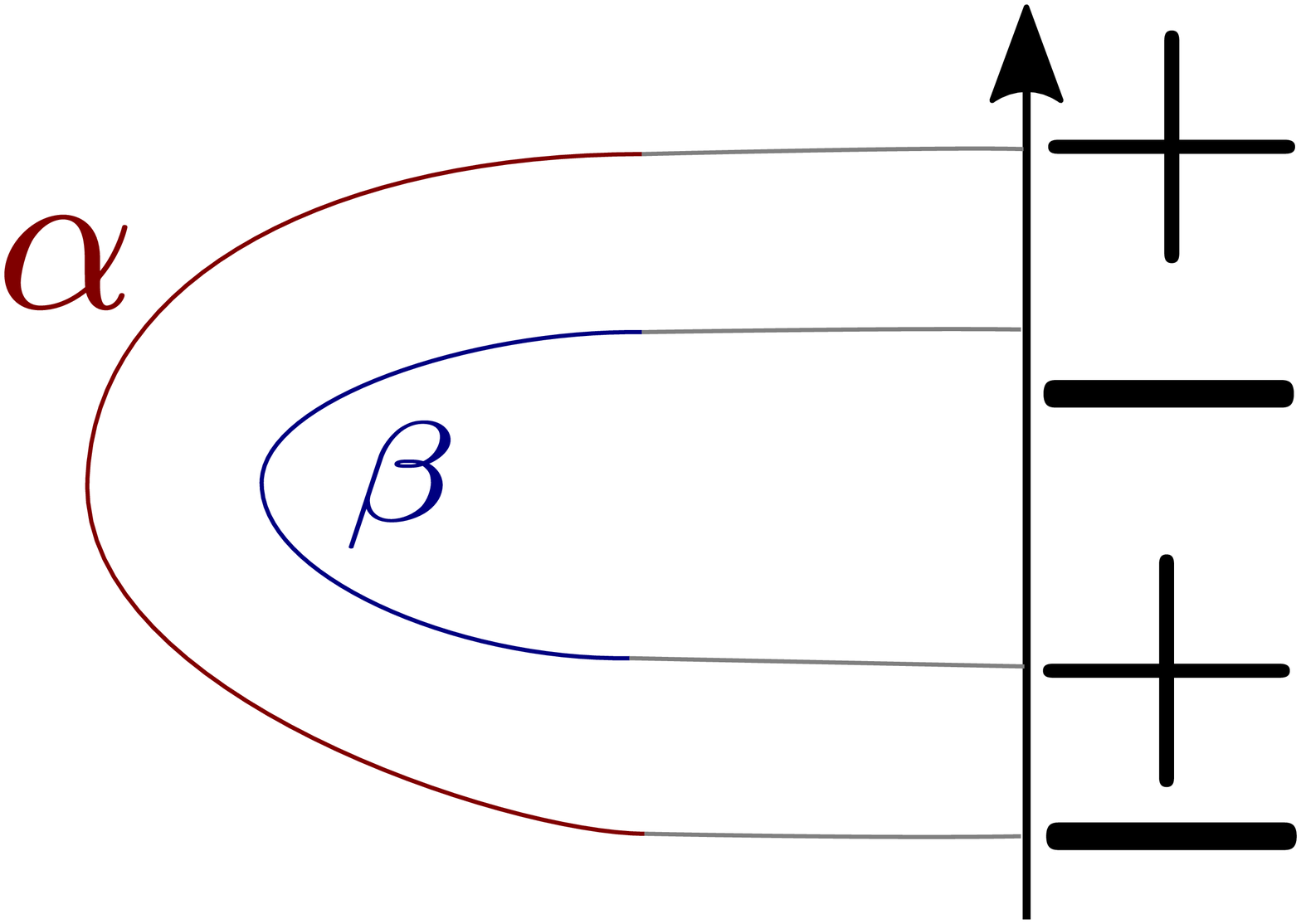}} 
 \quad \Leftrightarrow  V^{+-}_{-+} - q V^{+-}_{+-} = 1.
$$

Next, by developing the matrix coefficients in the equalities \eqref{eq_V}, we find the equalities
$$ V^{+-}_{-+} = q \alpha_{+-} \alpha_{-+} + A^{-1} \quad \mbox{and} \quad V^{+-}_{+-} = q^{-2} \alpha_{--}\alpha{++} - A^{-3}.$$ 
Putting these equalities together, we find

\begin{equation}\label{eq_qdet_typed}
 \alpha_{--} \alpha_{++} - q^2 \alpha_{+-} \alpha_{-+} = A.
 \end{equation}
 
Now developing Equation \eqref{arcrel10}, we obtain $\alpha_{+-} \alpha_{-+} = \alpha_{-+} \alpha_{+-}$, therefore: 
$$ \mathrm{det}_{q^2} (N(\alpha)) = \mathrm{det}_{q^2} 
\begin{pmatrix} - \omega^{-5} \alpha_{-+} & -\omega^{-5} \alpha_{--} \\ \omega^{-1} \alpha_{++} & \omega^{-1} \alpha_{+-} \end{pmatrix}
 = - A^3 \alpha_{-+} \alpha_{+-} + A^{-1} \alpha_{--} \alpha_{++} = 1.$$
 
\par Now, if $\alpha$ is of type e, then $\alpha^{-1}$ is of type d. A simple computation shows that if $M= \begin{pmatrix} a & b \\ c & d \end{pmatrix}$ is such that $ad=da$ then $\mathrm{det}_{q^2} ( M ) = \mathrm{det}_{q^2} \left( C^{-1} {}^tM C \right)$, so we deduce the q-determinant formula for $\alpha$ of type e from the facts that it holds for $\alpha^{-1}$, from the orientation reversing formula in Lemma \ref{lemma_orientation_reversing} and from the equality $\alpha_{+-} \alpha_{-+} = \alpha_{-+} \alpha_{+-}$.

\par Suppose that $\alpha$ is of type c and choose $\mathds{k}=\mathbb{Z}[\omega^{\pm 1}]$. Recall from Section $2.1$ the reflexion anti-involution $\theta$. The image $\theta(\alpha)$ is of type d, so applying $\theta$ to Equation \eqref{eq_qdet_typed}, we obtain that:
\begin{equation}\label{eq_qdet_typec}
  \alpha_{++} \alpha_{--} - q^{-2} \alpha_{-+} \alpha_{+-} = A^{-1}. 
  \end{equation} 
By Remark \ref{remark_change_basis}, since Equation \ref{eq_qdet_typec} holds for $\mathds{k}=\mathbb{Z}[\omega^{\pm 1}]$, it also holds for any other ring.  Also using $\theta$, we find that  $\alpha_{+-} \alpha_{-+} = \alpha_{-+} \alpha_{+-}$ and the equation $\mathrm{det}_{q^2}(N(\alpha))=1$ follows. Eventually, when $\alpha$ is of type b, we deduce the q-determinant relation from the facts that it holds for $\alpha^{-1}$ (of type c), from the orientation reversing formulas of Lemma \ref{lemma_orientation_reversing} and from the identity $\alpha_{+-} \alpha_{-+} = \alpha_{-+} \alpha_{+-}$.
\end{proof}

\begin{definition}
Let $\mathbb{P}=(\mathbb{G}, \mathbb{RL})$ be a finite presentation of $\Pi_1(\Sigma_{\mathcal{P}}, \mathbb{V})$. By Proposition \ref{prop_generators}, the set $\mathcal{A}^{\mathbb{G}}$ generates $\mathcal{S}_{\omega}(\mathbf{\Sigma})$ and we have found three families of relations: 
\begin{enumerate}
\item For each $\alpha\in \mathbb{G}$ we have the either the relation $\mathrm{det}_q(N(\alpha))=1$ or $\mathrm{det}_{q^2} (N(\alpha))=1$  by Equation \eqref{qdet_rel} in Lemma \ref{lemma_qdet_rel}; we call them the \textit{q-determinant relations}.
\item For each $R\in \mathbb{RL}$, we have four relations obtained by considering the matrix coefficients in Equation \eqref{eq_trivial_loops_rel} in Lemma \ref{lemma_trivial_loops_relations}; we call them \textit{trivial loops relations}.
\item For each pair $(\alpha, \beta)$ of elements in $\mathbb{G}$, we have $16$ relations obtained by considering the matrix coefficients in one of the Equations \eqref{arcrel1}, $\ldots$, \eqref{arcrel10} of Lemma \ref{lemma_arcsrelations} after having possibly replaced $\alpha$ or $\beta$ by $\alpha^{-1}$ or $\beta^{-1}$, if necessary, and using the inversion formula \eqref{eq_inversion}; we call them \textit{arcs exchange relations}.
\end{enumerate}
 \end{definition}
 
 \section{Proof of Theorems \ref{theorem1} and \ref{theorem2}}\label{sec_proof}
 
  In this section, we prove Theorems \ref{theorem1} and \ref{theorem2}.  Let $\mathcal{L}_{\omega}(\mathbb{P})$ be the algebra generated by the elements of $\mathbb{G}$ modulo the q-determinant, trivial loops and arcs exchange relations and write $\Psi : \mathcal{L}_{\omega}(\mathbb{P})\rightarrow \mathcal{S}_{\omega}(\mathbf{\Sigma})$ the obvious algebra morphism. By Proposition \ref{prop_generators}, $\Psi$ is surjective and we need to show that $\Psi$ is injective to prove Theorem \ref{theorem1}. We cut the proof of Theorem \ref{theorem1} in three steps: $(1)$ first in Step $1$, we show that it is sufficient to make the proof in the case where $\mathbb{P}$ has no relation (as in Example \ref{exemple_pres}); $(2)$ in this particular case, the finite presentation defining $\mathcal{L}_{\omega}(\mathbb{P})$ is inhomogeneous quadratic  and we will use the Diamond Lemma to extract PBW bases of $\mathcal{L}_{\omega}(\mathbb{P})$ and to prove it is Koszul; in Step $2$ we extract the re-written rules and their leading terms from the q-determinant and arc exchange relations and exhibit the associated spanning family $\underline{\mathcal{B}}^{\mathbb{G}}\subset\mathcal{L}_{\omega}(\mathbb{P})$; $(3)$ eventually in Step $3$, we show that the image by $\Psi$ of $\underline{\mathcal{B}}^{\mathbb{G}}$ is a basis,  this will prove both the injectivity of $\Psi$ and the fact that $\underline{\mathcal{B}}^{\mathbb{G}}$ is a Poincar\'e-Birkhoff-Witt basis and conclude the proofs of Theorems \ref{theorem1} and \ref{theorem2}.

\subsection{Step $1$: Reduction to the case where $\mathbb{P}$ has no relation}

 Let $\Gamma$ be the presenting graph of $\mathbb{P}$ and consider its fundamental groupoid $\Pi_1(\Gamma)$: the objects of $\Pi_1(\Gamma)$ are the vertices of $\Gamma$ (\textit{i.e.} the set $\mathbb{V}$) and the morphisms are compositions $\alpha_k^{\varepsilon_k} \ldots \alpha_1^{\varepsilon_1}$ where $\alpha_i \in \mathbb{G}$. The inclusion $\Gamma \subset \Sigma_{\mathcal{P}}$ induces a functor $F: \Pi_1(\Gamma) \rightarrow \Pi_1(\Sigma_{\mathcal{P}}, \mathbb{V})$ which is the identity on the objects. The fact that $\mathbb{G}$ is a set of generators implies that $F$ is full and $\mathbb{P}$ has no relations if and only if $F$ is faithful. Fix $v_0 \in \mathbb{V}$. For a relation $R\in \mathbb{RL}$ of the form $R= \beta_k \star \ldots \star \beta_1$, the \textit{base point of} $R$ is $s(\beta_1)=t(\beta_k)$. By inspecting the trivial loop relation \eqref{eq_trivial_loops_rel}, we see that  changing a relation $R$ by a relation $\beta \star R \star \beta^{-1}$ does not change the algebra $\mathcal{L}_{\omega}(\mathbb{P})$. Since $\Sigma_{\mathcal{P}}$ is assumed to be connected, we can suppose that  all relations in $\mathbb{RL}$ have the same base point $v_0$, so each relation $R= \beta_k \star \ldots \star \beta_1$ induces an element $[R]=\beta_k \ldots \beta_1 \in \pi_1(\Gamma, v_0)$.
 The functor $F$ induces a surjective group morphism $F_{v_0} : \pi_1(\Gamma, v_0) \rightarrow \pi_1(\Sigma_{\mathcal{P}}, v_0)$ and the fact that $\mathbb{RL}$ is a set of relations implies that $\{ [R], R\in \mathbb{RL} \}$ generates $\mathrm{ker} (F_{v_0})$. Since $\pi_1(\Gamma, v_0)$ is a free group, so is $\mathrm{ker} (F_{v_0})$. Let $R_1, \ldots, R_m\in \mathbb{RL}$ be such that $\{[R_1], \ldots, [R_m]\}$ is a minimal set of generators for the free group $\mathrm{ker} (F_{v_0})$. For each $R_i$, choose an element $\beta_i \in \mathbb{G}$ such that either $\beta_i$ or $\beta_i^{-1}$ appears in the expression of $R_i$ such that the set $\mathbb{G}'$ obtained from $\mathbb{G}$ by removing the $\beta_i$'s is a generating set. So if $\Gamma'$ is the presenting graph of $\mathbb{G}'$, the morphism $F'_{v_0} : \pi_1(\Gamma', v_0) \rightarrow \pi_1(\Sigma_{\mathcal{P}}, v_0)$ is injective, so the functor $F' : \Pi_1(\Gamma') \rightarrow \Pi_1(\Sigma_{\mathcal{P}}, \mathbb{V})$ is faithful and $\mathbb{P}':=(\mathbb{G}', \emptyset)$ is a finite presentation of $\Pi_1(\Sigma_{\mathcal{P}}, \mathbb{V})$ with no relations. 
 
 \par The inclusion $\mathbb{G}'\subset \mathbb{G}$ induces an algebra morphism $\widetilde{\varphi} : \mathcal{T}[\mathbb{G}'] \hookrightarrow \mathcal{T}[\mathbb{G}]$ on the free tensor algebras generated by $\mathbb{G}'$ and $\mathbb{G}$ respectively and $\widetilde{\varphi}$ sends q-determinant and arc exchange relations to q-determinant and arc exchange relations, so it induces an algebra morphism 
 $$\varphi : \mathcal{L}_{\omega}(\mathbb{P}') \rightarrow \mathcal{L}_{\omega}(\mathbb{P})$$.
 
 \begin{lemma}\label{lemma_reduction}
 The morphism $\varphi$ is an isomorphism.
 \end{lemma}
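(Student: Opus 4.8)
The plan is to show that $\varphi$ is an isomorphism by exhibiting an explicit inverse, constructed by "filling in" the generators of $\mathbb{G}$ that were removed. Concretely, write $\mathbb{G} = \mathbb{G}' \sqcup \{\beta_1, \ldots, \beta_m\}$ (up to replacing some $\beta_i$ by $\beta_i^{-1}$). Each relation $R_i \in \mathbb{RL}$ has base point $v_0$, and by the very choice of $\beta_i$, the word $[R_i] \in \pi_1(\Gamma, v_0)$ contains $\beta_i^{\pm 1}$; solving this word for $\beta_i$ expresses $\beta_i$ as a composition of generators in $\mathbb{G}'$, hence as a path $\gamma_i \in \Pi_1(\Sigma_{\mathcal{P}}, \mathbb{V})$ which, using the reduction $\mathbb{P}'$ has no relation, can be realized as an oriented arc with endpoints in $\mathbb{V}$. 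Applying the cutting arc relations (as in the proof of Proposition \ref{prop_generators}, see Figure \ref{fig_prop_generators}) to this arc yields, for each pair of states $\varepsilon, \varepsilon'$, an explicit element $\widetilde{(\beta_i)}_{\varepsilon\varepsilon'} \in \mathcal{L}_{\omega}(\mathbb{P}')$; send the generator $(\beta_i)_{\varepsilon\varepsilon'}$ of $\mathcal{T}[\mathbb{G}]$ to this element and send $(\alpha)_{\varepsilon\varepsilon'}$ to itself for $\alpha \in \mathbb{G}'$. This defines an algebra morphism $\widetilde{\psi} : \mathcal{T}[\mathbb{G}] \to \mathcal{L}_{\omega}(\mathbb{P}')$.

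Next I would check that $\widetilde{\psi}$ kills the defining relations of $\mathcal{L}_{\omega}(\mathbb{P})$, so that it descends to $\psi : \mathcal{L}_{\omega}(\mathbb{P}) \to \mathcal{L}_{\omega}(\mathbb{P}')$. There are three families to verify. The q-determinant relation for each $\alpha \in \mathbb{G}'$ holds by construction (it is a defining relation of $\mathcal{L}_{\omega}(\mathbb{P}')$); for each $\beta_i$ it holds because $\widetilde{\psi}(\beta_i)$ is the image under the surjection $\mathcal{L}_{\omega}(\mathbb{P}') \twoheadrightarrow \mathcal{S}_{\omega}(\mathbf{\Sigma})$-precomposition of an oriented arc in $\mathcal{S}_{\omega}(\mathbf{\Sigma})$, so the relevant $q$-determinant identity is just Lemma \ref{lemma_qdet_rel} pulled back. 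The arc exchange relations are handled the same way: $\widetilde{\psi}$ sends the $N$-matrix of each generator of $\mathbb{G}$ to the $N$-matrix of an oriented arc representing the corresponding path, and any two such arcs can be isotoped into one of the ten configurations of Figure \ref{fig_arcrelations}, so Lemma \ref{lemma_arcsrelations} applies. Finally, the trivial loops relations of $\mathcal{L}_{\omega}(\mathbb{P})$ associated to $R \in \mathbb{RL}$ are killed because, again via the oriented-arc realization, $\widetilde{\psi}$ maps them to a genuine trivial loop identity in $\mathcal{S}_{\omega}(\mathbf{\Sigma})$ lifted to $\mathcal{L}_{\omega}(\mathbb{P}')$ through Lemma \ref{lemma_trivial_loops_relations} — but one must be careful that this lift lands in $\mathcal{L}_{\omega}(\mathbb{P}')$ and not merely in $\mathcal{S}_{\omega}(\mathbf{\Sigma})$; this is where one uses that $\{[R_1], \ldots, [R_m]\}$ generates $\ker(F_{v_0})$, so that every $R \in \mathbb{RL}$ is a product of conjugates of the $R_i^{\pm 1}$, and the corresponding identity follows formally from the defining identities $\widetilde{\psi}(\beta_i) = \gamma_i$.

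Then I would verify that $\psi$ and $\varphi$ are mutually inverse. The composite $\psi \circ \varphi : \mathcal{L}_{\omega}(\mathbb{P}') \to \mathcal{L}_{\omega}(\mathbb{P}')$ is the identity on generators $(\alpha)_{\varepsilon\varepsilon'}$, $\alpha \in \mathbb{G}'$, hence the identity. For $\varphi \circ \psi$: it is the identity on $(\alpha)_{\varepsilon\varepsilon'}$, $\alpha \in \mathbb{G}'$, so it suffices to show $\varphi \circ \psi$ fixes $(\beta_i)_{\varepsilon\varepsilon'}$, i.e. that in $\mathcal{L}_{\omega}(\mathbb{P})$ one has $(\beta_i)_{\varepsilon\varepsilon'} = \varphi(\widetilde{(\beta_i)}_{\varepsilon\varepsilon'})$. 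But both sides become equal after applying the surjection $\Psi : \mathcal{L}_{\omega}(\mathbb{P}) \to \mathcal{S}_{\omega}(\mathbf{\Sigma})$ — indeed $\Psi\circ\varphi$ factors through $\Psi'$ for $\mathbb{P}'$ — so we cannot conclude injectivity that cheaply; instead this identity is exactly one of the trivial loops relations $R_i$ defining $\mathcal{L}_{\omega}(\mathbb{P})$, rewritten via the arc-exchange relations to move the arc $\gamma_i$ past itself, which is the content of Lemma \ref{lemma_trivial_loops_relations} applied inside $\mathcal{L}_{\omega}(\mathbb{P})$. I expect the main obstacle to be precisely this last bookkeeping: carefully checking that the purely algebraic manipulations in $\mathcal{L}_{\omega}(\mathbb{P})$ that rewrite a trivial loop relation into the equation $(\beta_i)_{\varepsilon\varepsilon'} = \varphi(\widetilde{(\beta_i)}_{\varepsilon\varepsilon'})$ only use the q-determinant, arc exchange, and trivial loops relations (and not the skein relations of $\mathcal{S}_{\omega}(\mathbf{\Sigma})$), which amounts to noting that all of Lemmas \ref{lemma_orientation_reversing}, \ref{lemma_height_reversing}, \ref{lemma_trivial_loops_relations}, \ref{lemma_arcsrelations}, \ref{lemma_qdet_rel} were proved using only boundary skein / cutting arc / trivial arc relations, all of which are among the imposed relations of $\mathcal{L}_{\omega}(\mathbb{P})$.
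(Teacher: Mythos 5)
There is a genuine gap, and it sits exactly where you suspect it does: the well-definedness of your candidate inverse $\psi$. To descend $\widetilde{\psi}$ from $\mathcal{T}[\mathbb{G}]$ to $\mathcal{L}_{\omega}(\mathbb{P})$ you must verify that the $q$-determinant, arc exchange and trivial loops relations involving the removed generators $\beta_i$ are sent to $0$ \emph{in} $\mathcal{L}_{\omega}(\mathbb{P}')$, i.e.\ that the resulting identities among the $\mathbb{G}'$-generators are formal consequences of the defining relations of $\mathbb{P}'$ alone. Your justification is to prove these identities in $\mathcal{S}_{\omega}(\mathbf{\Sigma})$ (via Lemmas \ref{lemma_trivial_loops_relations}, \ref{lemma_arcsrelations}, \ref{lemma_qdet_rel}) and ``pull them back'' through the surjection $\mathcal{L}_{\omega}(\mathbb{P}')\twoheadrightarrow \mathcal{S}_{\omega}(\mathbf{\Sigma})$; but an identity holding in the image only lifts if that surjection is injective, which is precisely Theorem \ref{theorem1} for $\mathbb{P}'$ --- the statement the whole reduction is meant to feed into. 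Your closing remark that the cutting arc / trivial arc / boundary skein relations ``are among the imposed relations of $\mathcal{L}_{\omega}(\mathbb{P})$'' is false: the only imposed relations are the $q$-determinant, arc exchange and trivial loops relations among the finitely many generators $\mathcal{A}^{\mathbb{G}}$; the skein-theoretic relations used to prove Lemmas \ref{lemma_arcsrelations}--\ref{lemma_qdet_rel} involve arbitrary diagrams and are not available inside the abstractly presented algebra. So the circularity is not resolved.

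The paper's proof is much lighter and avoids constructing a two-sided inverse altogether. For surjectivity it only observes that, inside $\mathcal{L}_{\omega}(\mathbb{P})$, the trivial loop relation \eqref{eq_trivial_loops_rel} attached to the relation $R_i$ containing $\beta_i^{\pm 1}$ can be solved for $M(\beta_i)$: the $q$-determinant relations provide explicit two-sided inverses for the matrices $M(\beta_j)$, so each $(\beta_i)_{\varepsilon\varepsilon'}$ is a polynomial in the $\mathbb{G}'$-generators, already within the presented algebra. Injectivity carries essentially no independent content in the paper's scheme: once Step $3$ shows that $\mathcal{L}_{\omega}(\mathbb{P}')\to\mathcal{S}_{\omega}(\mathbf{\Sigma})$ is injective, the factorization of that map as $\Psi\circ\varphi$ forces $\varphi$ to be injective, and combined with surjectivity this yields both that $\varphi$ and $\Psi$ are isomorphisms. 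If you want to salvage your approach, the honest route is to define $\widetilde{(\beta_i)}_{\varepsilon\varepsilon'}$ by \emph{formally} solving the matrix identity \eqref{eq_trivial_loops_rel} in $\mathcal{T}[\mathbb{G}']$ (not by cutting arcs in the skein algebra) and then carry out the well-definedness check purely algebraically --- a substantial computation that the paper deliberately sidesteps.
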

 
 \begin{proof} To prove the surjectivity, we need to show that for each removed path $\beta_i \in \mathbb{G} \setminus \mathbb{G}'$, the stated arcs $(\beta_i)_{\varepsilon \varepsilon'}$ can be expressed as a polynomial in the stated arcs $(\alpha^{\pm 1})_{\mu \mu'}$ for $\alpha \in \mathbb{G}'$. This follows from the trivial loop relation \eqref{eq_trivial_loops_rel} associated to the relation $R_i \in \mathbb{RL}$ containing $\beta_i^{\pm 1}$. The injectivity of $\varphi$ is a straightforward consequence of the definition. 
 \end{proof}

\subsection{Step $2$: Poincar\'e-Birkhoff-Witt bases and Koszulness}\label{sec_Koszul}

\begin{convention}
In the rest of the section, we now suppose that $\mathbb{P}=(\mathbb{G}, \emptyset)$ is a presentation with no relations and that every arc in $\mathbb{G}$ is either of type $a,c$ or $d$.
\end{convention}
Note that the convention on the type of the generators is not restrictive but purely conventional since we can always replace a generator $\alpha$ by $\alpha^{-1}$ without changing the set $\mathcal{A}^{\mathbb{G}}$ of generators of $\mathcal{S}_{\omega}(\mathbf{\Sigma})$.

Since $\mathbb{P}$ has no relation,  the defining presentation of $\mathcal{L}_{\omega}(\mathbb{P})$ contains only q-determinant and arc exchange relations. All these relations are quadratic (inhomogeneous) in the generators $\mathcal{A}^{\mathbb{G}}$ and we want to apply the Diamond Lemma to prove that $\mathcal{L}_{\omega}(\mathbb{P})$ is Koszul.

\vspace{2mm} \par 
\textbf{Reminder on the Diamond Lemma for PBW bases}
\vspace{2mm} \par 
Following the exposition in Section $4$ of \cite{LodayValletteOperads}, we briefly recall the statement of the Diamond Lemma for PBW bases.

 Let $V$ be a free finite rank $\mathds{k}$-module, denote by $T(V):=\oplus_{n\geq 0} V^{\otimes n}$ the tensor algebra and fix $R \subset V^{\otimes 2}$ a finite subset. The quotient algebra $\mathcal{A}:= \quotient{T(V)}{(R)}$ is called a \textit{quadratic algebra}. Let $\{v_i\}_{i\in I}$ be a totally ordered basis of $V$ and write $I=\{1, \ldots, k\}$ so that $v_i < v_{i+1}$. Then the set $J:= \bigsqcup_{n\geq 0} I^n$ (where $I^0=\{0\}$) is totally ordered by the lexicographic order and the set of elements $v_{\mathbf{i}}=v_{i_1}\ldots v_{i_n}$, for $\mathbf{i}=(i_1, \ldots, i_k)$, forms a basis of $T(V)$. We suppose that the elements $r\in R$ (named relators) have the form
 $$ r= v_i v_j - \sum_{(k,l)<(i,j)} \lambda_{kl}^{ij} v_k v_l.$$
 The term $v_iv_j$ is called the \textit{leading term} of $r$.
  We assume that two distinct relators have distinct leading terms.
  Define the family
 \begin{equation}\label{def_PBW_basis}
  \mathcal{B} :=  \{ v_{i_1} \ldots v_{i_n}| \quad \mbox{so that }v_{i_k}v_{i_{k+1}} \mbox{is not a leading term}, \quad \forall 1\leq k \leq n-1\}, 
  \end{equation}
 and denote by $\mathcal{B}^{(3)}\subset \mathcal{B}$ the subset of elements of length $3$ (of the form $v_{i_1}v_{i_2}v_{i_3}$). Obviously the set $\mathcal{B}$ spans $\mathcal{A}$. 

\begin{theorem}[Diamond Lemma for PBW bases: Bergman \cite{Bergman_DiamondLemma}, see also \cite{LodayValletteOperads} Theorem $4.3.10$]\label{theorem_PBWbases1}
If $\mathcal{B}^{(3)}$ is free, then $\mathcal{B}$ is a (Poincar\'e-Birkhoff-Witt) basis and $\mathcal{A}$ is Koszul.
\end{theorem}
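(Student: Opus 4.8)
The plan is to reproduce the standard rewriting-theoretic proof (Bergman's Diamond Lemma; cf. \cite{Bergman_DiamondLemma, LodayValletteOperads}). First I would set up the rewriting system attached to the data: to each relator $r = v_iv_j - \sum_{(k,l)<(i,j)}\lambda_{kl}^{ij}v_kv_l$ associate the rule that, inside any monomial $v_{a_1}\cdots v_{a_n}$, replaces an occurrence of the two consecutive letters $v_iv_j$ by $\sum_{(k,l)<(i,j)}\lambda_{kl}^{ij}v_kv_l$, and extend it $\mathds{k}$-linearly so that it acts on an element of $T(V)$ by rewriting one chosen monomial of its support at a time. By construction the monomials to which no rule applies are exactly the elements of $\mathcal{B}$. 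The first point to check is termination: applying any rule at a fixed pair of positions of a degree-$n$ monomial replaces it by monomials strictly smaller in the lexicographic order on $I^n$ (the prefix is unchanged and $v_kv_l < v_iv_j$ in the two active positions); since $I$ is finite, $I^n$ is a finite, hence well-founded, totally ordered set, and the induced order on $T(V)_n$ (comparing the non-reduced monomials occurring in the support) is again well-founded, so no infinite chain of rewrites exists. Hence every element of $T(V)$ reduces in finitely many steps to a $\mathds{k}$-linear combination of elements of $\mathcal{B}$, and since each relator rewrites to $0$, this reconfirms that the image of $\mathcal{B}$ spans $\mathcal{A}$.

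The core of the argument is confluence: the fully reduced form of a monomial must be independent of the order in which rules are applied. By Newman's lemma applied to the terminating system above, it suffices to establish local confluence; and because every relator is homogeneous quadratic, the system has no inclusion ambiguities (a length-$2$ word has no proper length-$2$ subword) and its only overlap ambiguities live in degree $3$, namely a monomial $v_iv_jv_k$ for which both $v_iv_j$ and $v_jv_k$ are leading terms. Reducing such a monomial by first rewriting the left factor, versus first rewriting the right factor, and then reducing completely, produces two elements $P, Q \in \operatorname{span}(\mathcal{B}^{(3)}) \subset V^{\otimes 3}$, each congruent to $v_iv_jv_k$ modulo the quadratic ideal, so that $P - Q \in (R)\cap V^{\otimes 3} = R\otimes V + V\otimes R$. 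This is exactly where the hypothesis enters: since $\mathcal{B}^{(3)}$ is free, it is a $\mathds{k}$-basis of $\mathcal{A}_3 = V^{\otimes 3}/(R\otimes V + V\otimes R)$, so the vanishing of the class of $P-Q$ there forces $P = Q$ already in $V^{\otimes 3}$; hence every degree-$3$ overlap resolves, and local --- therefore global --- confluence holds. (Conversely, resolvability of all degree-$3$ overlaps is easily seen to be equivalent to $\mathcal{B}^{(3)}$ being free, so the hypothesis is the sharp one.)

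With confluence in hand, sending a monomial to its now-unique complete reduction defines a $\mathds{k}$-linear map $\mathrm{nf} : T(V) \to \operatorname{span}(\mathcal{B})$ that annihilates every relator, hence the ideal $(R)$, hence descends to a $\mathds{k}$-linear section $\mathcal{A} \to \operatorname{span}(\mathcal{B})$ restricting to the identity on the image of $\mathcal{B}$; this shows that image is $\mathds{k}$-linearly independent, and being spanning it is a basis --- a Poincar\'e-Birkhoff-Witt basis, since it consists of ordered monomials avoiding all leading terms. Finally, Koszulness follows from the existence of this PBW basis by Priddy's theorem (\cite{LodayValletteOperads}, Theorem $4.3.10$): concretely, the quadratic monomial algebra $\mathcal{A}^{\mathrm{lt}} := T(V)/(v_iv_j : v_iv_j \text{ a leading term})$ admits the same set $\mathcal{B}$ as a basis, so $\mathcal{A}$ and $\mathcal{A}^{\mathrm{lt}}$ have equal Hilbert series, $\mathcal{A}^{\mathrm{lt}}$ is Koszul because monomial quadratic algebras always are, and a quadratic algebra sharing the Hilbert series of its Koszul monomial degeneration is itself Koszul. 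I expect the only genuinely delicate point to be the passage from local to global confluence --- making the "rewrite one monomial of a linear combination at a time" dynamics precise enough that Newman's lemma applies, and checking that the quadratic shape of the relators really does preclude all ambiguities beyond the degree-$3$ overlaps --- after which everything reduces to bookkeeping and to citing the Koszulness of monomial algebras.
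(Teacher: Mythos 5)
The paper does not actually prove this statement: it is quoted from Bergman's Diamond Lemma as packaged in \cite{LodayValletteOperads} (Theorem $4.3.10$), so there is no in-paper argument to compare yours against. Your reconstruction of the rewriting-theoretic proof is the standard one and its main line is correct: termination via the well-founded lexicographic order on $I^n$ (homogeneity of the quadratic relators keeps the degree fixed, so the order argument applies); reduction of all ambiguities to the degree-$3$ overlaps $v_iv_jv_k$ because every leading term has length two (no inclusion ambiguities, disjoint redexes commute trivially); and resolution of those overlaps from the hypothesis, since the two complete reductions $P,Q$ both lie in $\operatorname{span}(\mathcal{B}^{(3)})$ while $P-Q\in\operatorname{span}(\mathcal{B}^{(3)})\cap(R\otimes V+V\otimes R)$, which vanishes precisely when $\mathcal{B}^{(3)}$ is free in $\mathcal{A}_3$. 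The point you flag as delicate --- making the one-monomial-at-a-time reduction of linear combinations compatible with Newman's lemma --- is exactly where Bergman's original paper spends its care (reductions can act trivially when coefficients cancel), but it is standard bookkeeping and your awareness of it is appropriate.

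The one genuine flaw is in your justification of the Koszulness step. The principle you invoke --- that a quadratic algebra sharing the Hilbert series of its Koszul monomial degeneration is itself Koszul --- is false in general: equality of Hilbert series (even the functional equation relating $h_{\mathcal{A}}$ and $h_{\mathcal{A}^!}$) is a necessary but not sufficient numerical test for Koszulity, and quadratic non-Koszul algebras passing it exist. The correct mechanism, and the one \cite{LodayValletteOperads} actually uses, is that the PBW basis exhibits a basis of each $V^{\otimes n}$ compatible with all the subspaces $V^{\otimes i}\otimes R\otimes V^{\otimes n-i-2}$, so that the lattice these generate is distributive and Backelin's criterion applies; equivalently, $\mathcal{A}$ is a filtered deformation of the monomial algebra $\mathcal{A}^{\mathrm{lt}}$ with $\operatorname{gr}\mathcal{A}\cong\mathcal{A}^{\mathrm{lt}}$, and Koszulity lifts from the associated graded. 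Since you also cite the theorem itself for this implication the defect is easily repaired, but as written your sketch does not establish Koszulness.
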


The arc exchange relations defining $\mathcal{L}_{\omega}(\mathbb{P})$ are quadratic, however the $q$-determinant relations are not (because of the $1$ in $\mathrm{det}_q(N(\alpha))=1$), so $\mathcal{L}_{\omega}(\mathbb{P})$ is not quadratic but rather inhomogeneous quadratic. An \textit{inhomogeneous quadratic algebra} is an algebra of the form  $\mathcal{A}:= \quotient{T(V)}{(R)}$, where $R\subset V^{\otimes 2}\oplus V\oplus \mathds{k} \subset T(V)$. We further make the assumptions that $(ql_1): R\cap V =\{0\}$ and $(ql_2): (R\otimes V +V\otimes R)\cap V^{\otimes 2} \subset R\cap V^{\otimes 2}$. The hypothesis $(ql_2)$ says that one cannot create new relations by adding an element to $R$, so it is not restrictive. Like before, we fix an ordered basis $\{v_i\}_{i\in I}$ of $V$ and suppose that the relators of $R$ have the form
\begin{equation}\label{eq_relators}
 r = v_i v_j - \sum_{(k,l)<(i,j)} \lambda_{kl}^{ij} v_k v_l - c_{i,j}, 
 \end{equation}
where $c_{i,j}$ are some scalars and we suppose that two distinct relators have distinct leading terms.
The associated quadratic algebra $q\mathcal{A}$ is the algebra with same generators $v_i$ but where the relators have been changed by replacing the scalars $c_{i,j}$ by $0$. 
Let $q\mathcal{B} \subset q\mathcal{A}$ and $\mathcal{B} \subset \mathcal{A}$ be the two generating families defined by Equation \eqref{def_PBW_basis}. 

\begin{theorem}[\cite{LodayValletteOperads} Theorem $4.3.18$]\label{theorem_PBWbases2}
Suppose that $q\mathcal{B}^{(3)}\subset q\mathcal{A}$ is free, then both $q\mathcal{B}$ and $\mathcal{B}$ are (PBW) bases of $q\mathcal{A}$ and $\mathcal{A}$ respectively and both $q\mathcal{A}$ and $\mathcal{A}$ are Koszul.
 \end{theorem}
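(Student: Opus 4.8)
The plan is to bootstrap the inhomogeneous statement from the homogeneous Diamond Lemma (Theorem~\ref{theorem_PBWbases1}) by passing to the associated graded of the word-length filtration, the only genuinely new point being that the scalar (and linear) corrections $c_{i,j}$ in the relators \eqref{eq_relators} do not destroy confluence. As a first step I would apply Theorem~\ref{theorem_PBWbases1} directly to the quadratic algebra $q\mathcal{A}$: since $q\mathcal{B}^{(3)}$ is free by hypothesis, $q\mathcal{B}$ is a PBW basis of $q\mathcal{A}$ and $q\mathcal{A}$ is Koszul; nothing more is needed for this half of the statement.

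Next I would transfer the conclusion to $\mathcal{A}$. Equip $T(V)$ with the increasing filtration $F_pT(V):=\bigoplus_{n\le p}V^{\otimes n}$ by word length and give $\mathcal{A}=\quotient{T(V)}{(R)}$ the induced exhaustive filtration $F_p\mathcal{A}$. Because each relator \eqref{eq_relators} has leading term $v_iv_j$ of length $2$ and all remaining terms of length $\le 2$, the symbol in $\mathrm{gr}^F$ of a relator is its quadratic part, which is a relator of $q\mathcal{A}$; hence there is a surjection of graded algebras $q\mathcal{A}\twoheadrightarrow\mathrm{gr}^F\mathcal{A}$, and the rewriting rules attached to $R$ have exactly the same leading terms as those of $q\mathcal{A}$, so the normal-monomial family $\mathcal{B}$ of \eqref{def_PBW_basis} coincides set-theoretically with $q\mathcal{B}$ and spans $\mathcal{A}$. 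It remains to see that $\mathcal{B}$ is linearly independent, i.e. that the length-$3$ ambiguities of the rewriting system for $R$ are resolvable. Resolving a degree-$3$ overlap $v_iv_jv_l$ splits, along the word-length filtration, into its leading quadratic component — resolvable precisely because $q\mathcal{B}^{(3)}$ is free — and a lower-length correction governed by the linear and constant terms; the standing hypotheses $(ql_1)$ (the linear parts produce no new quadratic relation) and $(ql_2)$ (the quadratic relations produced from degree-$3$ overlaps already lie in $R$) are exactly what forces these corrections to cancel. This is the Braverman--Gaitsgory/Positselski criterion, and it lets one invoke the general (inhomogeneous) form of Bergman's Diamond Lemma \cite{Bergman_DiamondLemma}: $\mathcal{B}$ is a PBW basis of $\mathcal{A}$, the surjection above is an isomorphism, and $\mathrm{gr}^F\mathcal{A}\cong q\mathcal{A}$.

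Finally, since $\mathcal{A}$ carries an exhaustive filtration whose associated graded $\mathrm{gr}^F\mathcal{A}$ is isomorphic to the Koszul quadratic algebra $q\mathcal{A}$, the algebra $\mathcal{A}$ is Koszul in the inhomogeneous sense; concretely, the spectral sequence of the filtration shows that the Koszul complex of $q\mathcal{A}$ computes the relevant cohomology of $\mathcal{A}$. This is the content of \cite[Theorem~$4.3.18$]{LodayValletteOperads}, to which I would ultimately refer.

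The step I expect to be the main obstacle is the confluence of the inhomogeneous ambiguities in the middle paragraph. In the homogeneous setting the freeness of $q\mathcal{B}^{(3)}$ is the whole story, whereas here one must additionally control the constant terms $c_{i,j}$ and verify that the conditions $(ql_1)$, $(ql_2)$ genuinely force the degree-$\le 2$ corrections of every degree-$3$ overlap to vanish; this is where inhomogeneous Koszul duality enters, and it is not a routine verification.
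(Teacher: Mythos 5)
The paper offers no proof of this statement: it is imported verbatim as Theorem $4.3.18$ of \cite{LodayValletteOperads}, so there is nothing internal to compare against. Your sketch is a faithful outline of the standard proof given there (apply the homogeneous Diamond Lemma to $q\mathcal{A}$, pass to the word-length filtration so that $q\mathcal{A}$ surjects onto $\mathrm{gr}^F\mathcal{A}$, and use $(ql_1)$, $(ql_2)$ together with freeness of $q\mathcal{B}^{(3)}$ to resolve the inhomogeneous degree-$3$ ambiguities \`a la Braverman--Gaitsgory/Positselski); the only blemish is your parenthetical gloss of $(ql_1)$, which describes a consequence closer to $(ql_2)$ --- $(ql_1)$ is just $R\cap V=\{0\}$, automatic here since the relators carry only constant corrections --- and this does not affect the argument.
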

 
 There exists a linear surjective morphism $\varphi : q\mathcal{A} \rightarrow \mathcal{A}$ sending the generating family $q\mathcal{B}$ to $\mathcal{B}$ (see \cite[Section $4.2.9$]{LodayValletteOperads}). So, if $\mathcal{B}$ is a basis of $\mathcal{A}$, then $q\mathcal{B}$ is free, therefore Theorem \ref{theorem_PBWbases2} implies that $\mathcal{A}$ is Koszul. Therefore, we have the 
 
 \begin{theorem}\label{theorem_Koszul}
 If $\mathcal{B}$ is a basis of $\mathcal{A}$, then $\mathcal{A}$ is Koszul.
 \end{theorem}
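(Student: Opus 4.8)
The plan is to obtain Theorem~\ref{theorem_Koszul} as a formal consequence of Theorem~\ref{theorem_PBWbases2}: all that has to be checked is that the hypothesis of that theorem, namely that the degree-three part $q\mathcal{B}^{(3)}$ of the spanning family $q\mathcal{B}\subset q\mathcal{A}$ is free, is implied by the assumption that $\mathcal{B}$ is a basis of $\mathcal{A}$.

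First I would invoke the linear surjection $\varphi\colon q\mathcal{A}\to\mathcal{A}$ from \cite[Section~4.2.9]{LodayValletteOperads}, which by construction carries the family $q\mathcal{B}$ onto the family $\mathcal{B}$ while respecting the common combinatorial indexing of these two families by the reduced words of Equation~\eqref{def_PBW_basis} (so $\varphi$ sends the $q\mathcal{A}$-element labelled by a reduced word to the $\mathcal{A}$-element labelled by the same word). The key observation is then that this forces $q\mathcal{B}$ to be linearly independent: given any finite linear combination $\sum_i\lambda_i b_i=0$ with $b_i\in q\mathcal{B}$ pairwise distinct, applying $\varphi$ yields $\sum_i\lambda_i\varphi(b_i)=0$, a linear combination of pairwise distinct elements of the basis $\mathcal{B}$, so every $\lambda_i$ vanishes. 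Since $q\mathcal{B}$ already spans $q\mathcal{A}$ by construction, we conclude that $q\mathcal{B}$ is a basis of $q\mathcal{A}$; in particular its length-three subfamily $q\mathcal{B}^{(3)}$ is free.

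Having verified the hypothesis, Theorem~\ref{theorem_PBWbases2} applies directly and tells us that $q\mathcal{A}$ and $\mathcal{A}$ are both Koszul (and that $q\mathcal{B},\mathcal{B}$ are PBW bases), which is precisely the assertion of Theorem~\ref{theorem_Koszul}. I do not expect any genuine obstacle in this argument: it is purely an assembly of the inhomogeneous-Koszul machinery recalled in the preceding pages, the one point requiring care being the injectivity of $\varphi$ on the spanning set $q\mathcal{B}$, which is already built into its construction in \cite{LodayValletteOperads}. The substantive work is elsewhere, in Step~3 of the proof of Theorem~\ref{theorem1}, where one must show that $\Psi(\underline{\mathcal{B}}^{\mathbb{G}})$ is actually a basis of $\mathcal{S}_\omega(\mathbf{\Sigma})$; that statement is exactly the input ``$\mathcal{B}$ is a basis of $\mathcal{A}$'' consumed by the present theorem.
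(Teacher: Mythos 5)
Your proposal is correct and follows exactly the paper's own argument: the paper likewise invokes the linear surjection $\varphi : q\mathcal{A} \to \mathcal{A}$ carrying $q\mathcal{B}$ to $\mathcal{B}$, deduces that $q\mathcal{B}$ (hence $q\mathcal{B}^{(3)}$) is free when $\mathcal{B}$ is a basis, and concludes via Theorem \ref{theorem_PBWbases2}. The only point you spell out in more detail is the injectivity of $\varphi$ on the indexed family $q\mathcal{B}$, which the paper leaves implicit but which is indeed built into the construction in \cite{LodayValletteOperads}.
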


  \vspace{2mm} \par 
\textbf{The relators of the stated skein presentations and PBW bases}
\vspace{2mm} \par
For $\alpha \in \mathbb{G}$, we write 
 $$\mathcal{B}(\alpha) = \{ (\alpha_{++})^a (\alpha_{+-})^b (\alpha_{--})^c, a,b,c \geq 0\} \cup \{ (\alpha_{++})^a (\alpha_{-+})^b (\alpha_{--})^c, a,b,c \geq 0\}\subset \mathcal{L}_{\omega}(\mathbb{P}).$$
 Fix a total order $<$ on the set $\mathbb{G}$ of generators and index its elements as $\mathbb{G}=\{ \alpha_1, \ldots, \alpha_n \}$, where $\alpha_i < \alpha_{i+1}$. Let 
 $$ \underline{\mathcal{B}}^{\mathbb{G}}:= \{ m_1 m_2\ldots  m_n | \quad m_i \in \mathcal{B}(\alpha_i) \} \subset \mathcal{L}_{\omega}(\mathbb{P}).$$
 
 We want to apply Theorem \ref{theorem_Koszul} to prove that $\mathcal{L}_{\omega}(\mathbb{P})$ is Koszul. By definition, $\mathcal{L}_{\omega}(\mathbb{P})$ is an inhomogeneous quadratic algebra whose set of generators is $\mathcal{A}^{\mathbb{G}}= \{ \alpha_{ij} | \alpha \in \mathbb{G}, i,j=\pm \}$ and whose relations are the arcs exchange and $q$-determinant relations. 
 
 We first define a total order $\prec$ on $\mathcal{A}^{\mathbb{G}}$ by imposing that 
  $ \alpha_{ab} \prec \beta_{cd}$ if  $\alpha < \beta$ and that $\alpha_{++}\prec \alpha_{+-} \prec \alpha_{-+} \prec \alpha_{--}$. 

 The goal of this subsection is to rewrite the q-determinant and arc exchange relations such that they define a set of  relators of the form \eqref{eq_relators} whose leading terms are pairwise distinct, satisfying $(ql_1)$ and $(ql_2)$  and such that the set of leading terms is 
 
 \begin{equation}\label{eq_leadingterms}
  \mathrm{Leading Terms}:= \{ \alpha_{ab}\beta_{cd} | \mbox{ such that either }(\alpha>\beta) \mbox{ or }( \alpha=\beta \mbox{ and either }a<c\mbox{ or }b<d) \}. 
  \end{equation}
 
 The set $ \underline{\mathcal{B}}^{\mathbb{G}}$ is the generating set defined by \eqref{def_PBW_basis} with this set of leading terms (\textit{i.e.} $ \underline{\mathcal{B}}^{\mathbb{G}}$ is the set of elements $v_1\ldots v_n$ where $v_i \in \mathcal{A}^{\mathbb{G}}$ and $v_iv_{i+1}$ is not in $\mathrm{Leading Terms}$). At this stage, it will become clear that $ \underline{\mathcal{B}}^{\mathbb{G}}$ spans $\mathcal{L}_{\omega}(\mathbb{P})$.
 Once we'll perform this task, we will prove in Step $3$ that $ \underline{\mathcal{B}}^{\mathbb{G}}$ if free by showing that its image through $\Psi : \mathcal{L}_{\omega}(\mathbb{P})\rightarrow \mathcal{S}_{\omega}(\mathbf{\Sigma})$ is a basis of $\mathcal{S}_{\omega}(\mathbf{\Sigma})$. This will imply that $\Psi$ is an isomorphism (so will prove Theorem \ref{theorem1}) and Theorem \ref{theorem_Koszul} will imply that $\mathcal{L}_{\omega}(\mathbb{P})$ is Koszul (so it will prove Theorem \ref{theorem2}). 
 
 \vspace{2mm}
 \par Consider two distinct generators $\alpha, \beta \in \mathbb{G}$ such that $\alpha>\beta$. For each $a,b,c,d \in \{\pm \}$, we have an arc exchange relation of the form 
 $$ \alpha_{ab} \beta_{cd} = \sum_{ijkl = \pm } c_{a,b,c,d}^{i,j,k,l} \beta_{ij} \alpha_{kl}, $$
 where $c_{a,b,c,d}^{i,j,k,l}$ are some scalars. We associate the relator $r= \alpha_{ab}\beta_{cd} - \sum_{ijkl = \pm } c_{a,b,c,d}^{i,j,k,l} \beta_{ij} \alpha_{kl}$, whose leading term is $\alpha_{ab}\beta_{cd}$ (because $\alpha>\beta$ implies that $ \alpha_{ab}\beta_{cd} \succ \beta_{ij}\alpha_{kl}$)  and denote by $R_{\alpha, \beta}$ the set (of cardinal $16$) of such relators.
 
 \vspace{2mm}
 \par Now suppose that $\alpha \in \mathbb{G}$ is of type $a$. The set of relations between the generators $\alpha_{ij}$ are given by
 $$M(\alpha)\odot M(\alpha) = \mathscr{R}^{-1}(M(\alpha) \odot M(\alpha)) \mathscr{R}, \quad \mbox{ and }\mathrm{det}_q(M(\alpha))=1.$$
 Note that in this case, the subalgebra of $\mathcal{L}_{\omega}(\mathbb{P})$ generated by the $\alpha_{ij}$  is isomorphic to $\mathcal{O}_q[\SL_2]\cong \mathcal{S}_{\omega}(\mathbb{B})$. We rewrite those relations as follows:
 
\begin{equation}\tag{Ra}
\left\{
\begin{array}{ll}
 \alpha_{+-}\alpha_{++} = q\alpha_{++}\alpha_{+-}, & \alpha_{-+}\alpha_{++}=q\alpha_{++}\alpha_{-+}, \\
  \alpha_{--}\alpha_{+-}=q\alpha_{+-}\alpha_{--}, &  \alpha_{--}\alpha_{-+} = q\alpha_{-+}\alpha_{--}, \\
   \alpha_{+-}\alpha_{-+} = q \alpha_{++}\alpha_{--} - q, & \alpha_{-+}\alpha_{+-}=q \alpha_{++}\alpha_{--} - q, \\ 
 \alpha_{--}\alpha_{++} = q^2 \alpha_{++}\alpha_{--}  + 1 - q^2. &
\end{array}
\right.
\end{equation}

The associated set of relators $R_{\alpha}$ is defined by assigning to each of the seven equalities of the form $x=y$ in the system $Ra$, the relator $r:= x-y$ with leading term $x$. 
Note that the set of leading terms of the elements of $R_{\alpha}$ is the set of elements $\alpha_{ab}\alpha_{cd}$ such that either $a<c$ or $b<d$.

 \vspace{2mm}
 \par Now suppose that $\alpha \in \mathbb{G}$ is of type $d$. The set of relations between the generators $\alpha_{ij}$ are given by

$$\left( \mathds{1}_2 \odot N(\alpha) \right) \mathscr{R}^{-1} \left( \mathds{1}_2 \odot N(\alpha) \right)\mathscr{R} =
\mathscr{R} \left( \mathds{1}_2 \odot N(\alpha) \right) \mathscr{R}^{-1} \left( \mathds{1}_2 \odot N(\alpha) \right), \quad \mbox{ and }\mathrm{det}_{q^2}(N(\alpha))=1, $$
where $N(\alpha)=C^{-1}M(\alpha)$. These relations generate the same ideal as the following set of relations:

\begin{equation}\tag{Rd}
\left\{
\begin{array}{ll}
\alpha_{-+} \alpha_{++} = \alpha_{++}\alpha_{-+} + (q-q^{-1})q^2 \alpha_{+-}\alpha_{--}, & \alpha_{+-}\alpha_{++} = q^2 \alpha_{++} \alpha_{+-},  \\
 \alpha_{--}\alpha_{-+} = \alpha_{-+}\alpha_{--} + (q-q^{-1})q^2\alpha_{+-}\alpha_{--},  &  \alpha_{--}\alpha_{+-} = q^2 \alpha_{+-} \alpha_{--}, \\
 \alpha_{+-}\alpha_{-+} = \alpha_{++}\alpha_{--}- (q- q^{-1})^2 \alpha_{+-}^2 - A,  &  \alpha_{-+}\alpha_{+-}= \alpha_{++}\alpha_{--}- (q- q^{-1})^2 \alpha_{+-}^2 - A, \\
\alpha_{--}\alpha_{++} = q^2 \alpha_{++}\alpha_{--} - q^2(q-q^{-1})^2 \alpha_{+-}^2 +A(1-q^2).&
\end{array}
\right.
\end{equation}

As before, we denote by $R_{\alpha}$ the set of relators obtained from system (Rd) by assigning to each of the seven equalities of the form $x=y$ in the system $Ra$, the relator $r:= x-y$ with leading term $x$. Again,  the set of leading terms of the elements of $R_{\alpha}$ is the set of elements $\alpha_{ab}\alpha_{cd}$ such that either $a<c$ or $b<d$.

 \vspace{2mm}
\par For $\alpha\in \mathbb{G}$ of type $c$, the set of relations between the elements $\alpha_{ij}$ can be obtained from the system (Rd) using the reflection anti-involution. Once re-arranging the terms, we get the system of relations:

\begin{equation}\tag{Rc}
\left\{
\begin{array}{ll}
\alpha_{-+} \alpha_{++} = \alpha_{++}\alpha_{-+} + (q-q^{-1}) \alpha_{+-}\alpha_{--}, & \alpha_{+-}\alpha_{++} = q^2 \alpha_{++} \alpha_{+-},  \\
 \alpha_{--}\alpha_{-+} = \alpha_{-+}\alpha_{--} + (q-q^{-1})\alpha_{+-}\alpha_{--},  &  \alpha_{--}\alpha_{+-} = q^2 \alpha_{+-} \alpha_{--}, \\
 \alpha_{+-}\alpha_{-+} = q^2\alpha_{++}\alpha_{--}-  A^3,  &  \alpha_{-+}\alpha_{+-}= q^2\alpha_{++}\alpha_{--}-  A^3, \\
\alpha_{--}\alpha_{++} = q^2 \alpha_{++}\alpha_{--} +(q-q^{-1})^2 \alpha_{+-}^2 +A^{-1}(1-q^2).&
\end{array}
\right.
\end{equation}

Like previously, we denote by  $R_{\alpha}$ the associated set of relators and note that the  set of leading terms is the set of elements $\alpha_{ab}\alpha_{cd}$ such that either $a<c$ or $b<d$.

 \vspace{2mm}
\par Let $V$ be the free $\mathds{k}$-module with basis $\mathcal{A}^{\mathbb{G}}$ and $R \subset \mathds{k}\oplus V^{\otimes 2} \subset  T(V)$ be the union of the sets of relators $R_{\alpha, \beta}$ and $R_{\alpha}$, where $\alpha, \beta \in \mathbb{G}$ and $\alpha>\beta$. Then $\mathcal{L}_{\omega}(\mathbb{P})=\quotient{T(V)}{(R)}$, the leading terms of $R$ are pairwise distinct and they form the set $\mathrm{LeadingTerms}$ of Equation \eqref{eq_leadingterms} and the hypotheses $(ql_1)$ and $(ql_2)$ are obviously satisfied. Therefore, if we prove that $\underline{\mathcal{B}}^{\mathbb{G}}$ is a basis of $\mathcal{L}_{\omega}(\mathbb{P})$ then Theorem \ref{theorem_Koszul} would imply that $\mathcal{L}_{\omega}(\mathbb{P})$ is Koszul.

\subsection{Step $3$: Injectivity of $\Psi$}

 Denote by $\mathcal{B}^{\mathbb{G}}\subset \mathcal{S}_{\omega}(\mathbb{P})$ the image of $\underline{\mathcal{B}}^{\mathbb{G}}$ by $\Psi : \mathcal{L}_{\omega}(\mathbb{P})\rightarrow \mathcal{S}_{\omega}(\mathbf{\Sigma})$.

\begin{theorem}\label{prop_new_basis}
The set $\mathcal{B}^{\mathbb{G}}$ is a basis of $\mathcal{S}_{\omega}(\mathbf{\Sigma})$.
\end{theorem}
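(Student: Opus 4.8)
Since $\underline{\mathcal{B}}^{\mathbb{G}}$ spans $\mathcal{L}_{\omega}(\mathbb{P})$ and $\Psi$ is surjective, the set $\mathcal{B}^{\mathbb{G}}$ already spans $\mathcal{S}_{\omega}(\mathbf{\Sigma})$, so the whole content is the linear independence of $\mathcal{B}^{\mathbb{G}}$ (this will then force $\Psi$ to be an isomorphism and $\underline{\mathcal{B}}^{\mathbb{G}}$ to be a basis). The plan is to compare $\mathcal{B}^{\mathbb{G}}$ with the basis $\mathcal{B}^{\mathfrak{o}}$ of Theorem~\ref{theorem_basis} for a well-chosen orientation $\mathfrak{o}$ of the boundary arcs: I will expand each element of $\mathcal{B}^{\mathbb{G}}$ in the basis $\mathcal{B}^{\mathfrak{o}}$ and prove that the resulting column-finite transition matrix is invertible, which is equivalent to independence.

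The first step is to make $\mathcal{B}^{\mathbb{G}}$ geometric. By the standing Convention the generators are pairwise disjoint embedded arcs, and since $\mathbb{P}$ has no relation they are moreover essential and pairwise non-isotopic. Hence for $b=\Psi(m_1\cdots m_n)$ with $m_i\in\mathcal{B}(\alpha_i)$, stacking the factors along the $(0,1)$-direction and projecting produces --- without any crossing, hence without any extra scalar --- the stated simple diagram $(D_b,s_b)$, where $D_b$ is the disjoint union over $i$ of a cable of $k_i:=\deg m_i$ parallel copies of $\alpha_i$, and $s_b$ is the state dictated by the stacking order of the factors of $m_i$. Using the ciliated ribbon structure of the generating graph $\Gamma$ one fixes the relative position of the cables along every boundary arc, and one chooses $\mathfrak{o}$ so that a single cable of any generator already carries an $\mathfrak{o}$-increasing state; this is exactly the purpose of the normal forms (Ra), (Rc), (Rd).

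Next I would expand $(D_b,s_b)$ in $\mathcal{B}^{\mathfrak{o}}$. No skein relation increases the number of boundary endpoints, and the only relation that touches them --- the third relation of \eqref{eq: skein 2}, equivalently the cutting arc relations \eqref{cutting_arc_rel} combined with the height exchange relations \eqref{height_exchange_rel} --- either preserves that number while re-sorting states through the triangular matrices $\mathscr{R}^{\pm1}$, $C^{\pm1}$, or strictly decreases it through the turn-back term $\heightcurve$. Since $D_b$ is already simple and crossingless, the maximal-endpoint part of the $\mathcal{B}^{\mathfrak{o}}$-expansion of $(D_b,s_b)$ is supported on $D_b$ with a coefficient which is a power of $\omega$ (a product of diagonal $\mathscr{R}$-entries), in particular invertible. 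Filtering $\mathcal{S}_{\omega}(\mathbf{\Sigma})$ by the number of boundary endpoints, independence of $\mathcal{B}^{\mathbb{G}}$ then follows by induction on this filtration, provided one knows that at each level the newly appearing elements of $\mathcal{B}^{\mathbb{G}}$ are independent modulo the lower ones; and this last point reduces, via the arc exchange relations and the identification of the subalgebra generated by a single $\mathcal{A}^{\{\alpha_i\}}$ with $\mathcal{O}_q[\SL_2]\cong\mathcal{S}_{\omega}(\mathbb{B})$ (for type $a$) or a twist of it, to the explicit normal-form computations of (Ra), (Rc), (Rd). The reflection anti-involution $\theta$ together with Remark~\ref{remark_change_basis} let one establish all the needed identities over $\mathbb{Z}[\omega^{\pm1}]$ first and then base change.

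The hard part is precisely this inductive independence step. The subtlety is that the plain count of $-$'s on a boundary arc shared by several generators does not separate the factors $m_i$, so the bare leading term does not determine $b$; one must control how the cables along each shared boundary arc interact during the re-sorting, i.e.\ show that the arc exchange and $q$-determinant relations produce no unexpected collapse among products of normal monomials. In practice this amounts to checking that the diamond ambiguities of the presentation resolve --- which gives freeness of $\underline{\mathcal{B}}^{\mathbb{G}}$ in $\mathcal{L}_{\omega}(\mathbb{P})$ --- and that this freeness survives under $\Psi$, by matching, level by level in the endpoint filtration, the span of $\mathcal{B}^{\mathbb{G}}$ against $\mathcal{B}^{\mathfrak{o}}$. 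Once the conventions for the cables and for $\mathfrak{o}$ are fixed, everything else is the elementary triangularity argument.
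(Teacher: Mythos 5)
Your overall architecture is the right one --- reduce to the known basis $\mathcal{B}^{\mathfrak{o}}$ of Theorem \ref{theorem_basis} through the cable diagrams $D(\mathbf{n})$ and a triangularity argument governed by the number of boundary endpoints --- but the proof has a genuine gap exactly where you flag ``the hard part,'' and the fix you propose does not work. You choose the \emph{maximal}-endpoint term of the expansion of a positive-state cable diagram in $\mathcal{B}^{\mathfrak{o}}$ as leading term, and you correctly observe that this term does not determine the element: two distinct positive states on the same cable diagram that distribute the same multiset of signs differently among interleaved cables on a shared boundary arc re-sort to the same $\mathfrak{o}$-increasing state, so the max-endpoint leading-term map is not injective and the triangularity collapses. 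Your proposed repair --- ``checking that the diamond ambiguities of the presentation resolve, which gives freeness of $\underline{\mathcal{B}}^{\mathbb{G}}$ in $\mathcal{L}_{\omega}(\mathbb{P})$'' --- is circular relative to the logic of the paper: the whole point of proving Theorem \ref{prop_new_basis} directly in $\mathcal{S}_{\omega}(\mathbf{\Sigma})$ is to \emph{deduce} confluence from it (via Theorem \ref{theorem_Koszul}), precisely because a direct verification of the ambiguities is out of reach (the paper counts $6578$ of them in Section \ref{sec_final}, item $(2)$).

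The missing idea is to run the endpoint triangularity in the opposite direction. The paper's Lemma \ref{lemma_basis+} attaches to each positive-state cable class $b$ the \emph{unique minimal}-endpoint term $m(b)\in\mathcal{B}$ of its expansion, obtained by iteratively gluing each badly ordered $(+,-)$ pair of consecutive endpoints into a turn-back; injectivity of $m$ is then not a formal triangularity statement but is proved by exhibiting an explicit right inverse $g$: given $[D,s]\in\mathcal{B}$, each component of $D$ factors \emph{uniquely} as a word in the generators because $\mathbb{P}=(\mathbb{G},\emptyset)$ has no relation (so $\Pi_1(\Gamma)\to\Pi_1(\Sigma_{\mathcal{P}},\mathbb{V})$ is faithful and $\pi_1(\Gamma,v_0)$ is free), and cutting along the generators recovers the cable diagram and its positive state. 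This unique-factorization input is what your argument lacks and what actually separates the interleaved cables. A second, separate triangularity (the lexicographic filtration by $(|\mathbf{n}|,-d)$ of Definition \ref{def_filtration_cheloud} together with Lemma \ref{lemma_easy}) is then needed to pass from positive-state cable diagrams to the actual products in $\mathcal{B}^{\mathbb{G}}$; note that, contrary to your claim that stacking produces the cable diagram ``without any extra scalar,'' re-ordering the boundary heights produces a power of $\omega$ \emph{and} genuinely lower-order correction terms, which is why this second filtration step cannot be skipped.
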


\begin{corollary}
\begin{enumerate}
\item The morphism $\Psi : \mathcal{L}_{\omega}(\mathbb{P})\rightarrow \mathcal{S}_{\omega}(\mathbf{\Sigma})$ is an isomorphism.
\item The family $\mathcal{B}^{\mathbb{G}}$ is a PBW basis and $\mathcal{S}_{\omega}(\mathbf{\Sigma})$ is Koszul.
\end{enumerate}
\end{corollary}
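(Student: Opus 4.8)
The plan is to deduce both items of the corollary directly from Theorem~\ref{prop_new_basis}, since all of the analytic content is already packaged there; what remains is bookkeeping combined with the Diamond Lemma statements assembled in Step~2. Throughout I work under the Step~2 convention that $\mathbb{P}=(\mathbb{G},\emptyset)$.

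\textbf{Item (1).} Recall from Proposition~\ref{prop_generators} that $\Psi$ is surjective, and from Step~2 that the family $\underline{\mathcal{B}}^{\mathbb{G}}$ spans $\mathcal{L}_{\omega}(\mathbb{P})$ (it is precisely the family $\mathcal{B}$ attached by \eqref{def_PBW_basis} to the leading-term set \eqref{eq_leadingterms}). It thus suffices to prove injectivity of $\Psi$. Let $x\in\ker\Psi$ and write $x=\sum_i \lambda_i b_i$ with $b_i\in\underline{\mathcal{B}}^{\mathbb{G}}$. Applying $\Psi$ gives $0=\sum_i \lambda_i \Psi(b_i)$, a linear combination of elements of $\mathcal{B}^{\mathbb{G}}=\Psi(\underline{\mathcal{B}}^{\mathbb{G}})$. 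By Theorem~\ref{prop_new_basis} the family $\mathcal{B}^{\mathbb{G}}$ is a basis; in particular the $\Psi(b_i)$ are pairwise distinct and linearly independent, so every $\lambda_i$ vanishes and $x=0$. Hence $\Psi$ is an isomorphism. As a by-product, the spanning family $\underline{\mathcal{B}}^{\mathbb{G}}$ is free (its image under the isomorphism $\Psi$ is free), hence is itself a basis of $\mathcal{L}_{\omega}(\mathbb{P})$.

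\textbf{Item (2).} By Step~2 the algebra $\mathcal{L}_{\omega}(\mathbb{P})$ is presented as $\quotient{T(V)}{(R)}$, with $V$ the free module on $\mathcal{A}^{\mathbb{G}}$ and $R$ the union of the relator sets $R_{\alpha,\beta}$ and $R_{\alpha}$; these satisfy $(ql_1)$ and $(ql_2)$, have pairwise distinct leading terms, and $\underline{\mathcal{B}}^{\mathbb{G}}$ is exactly the associated family $\mathcal{B}$ of \eqref{def_PBW_basis}. Having just shown $\underline{\mathcal{B}}^{\mathbb{G}}$ is a basis, Theorem~\ref{theorem_Koszul} yields that $\mathcal{L}_{\omega}(\mathbb{P})$ is Koszul, while Theorem~\ref{theorem_PBWbases2} shows that $\underline{\mathcal{B}}^{\mathbb{G}}$ is a Poincar\'e--Birkhoff--Witt basis of $\mathcal{L}_{\omega}(\mathbb{P})$. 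Transporting these two conclusions along the algebra isomorphism $\Psi$ of item (1) --- which identifies the inhomogeneous quadratic presentation of $\mathcal{L}_{\omega}(\mathbb{P})$ with that of $\mathcal{S}_{\omega}(\mathbf{\Sigma})$ --- we conclude that $\mathcal{S}_{\omega}(\mathbf{\Sigma})$ is Koszul and that $\mathcal{B}^{\mathbb{G}}=\Psi(\underline{\mathcal{B}}^{\mathbb{G}})$ is a PBW basis.

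\textbf{Main obstacle.} Granting Theorem~\ref{prop_new_basis}, the argument is formal; the only points requiring care are (a) that $\Psi$ restricts to a \emph{bijection} $\underline{\mathcal{B}}^{\mathbb{G}}\to\mathcal{B}^{\mathbb{G}}$, with no two distinct monomials of $\underline{\mathcal{B}}^{\mathbb{G}}$ collapsing, which is precisely guaranteed by the linear independence asserted in Theorem~\ref{prop_new_basis}; and (b) that the hypotheses of the Diamond Lemma statements (pairwise distinct leading terms, $(ql_1)$, $(ql_2)$) are genuinely met by the relator sets of Step~2, so that Theorems~\ref{theorem_Koszul} and~\ref{theorem_PBWbases2} apply. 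Finally, for a presentation $\mathbb{P}$ carrying nonempty relations one first passes to the relation-free presentation $\mathbb{P}'$ and invokes the isomorphism $\mathcal{L}_{\omega}(\mathbb{P}')\cong\mathcal{L}_{\omega}(\mathbb{P})$ of Lemma~\ref{lemma_reduction}, after which the above applies verbatim to complete Theorem~\ref{theorem1}.
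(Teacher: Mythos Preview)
Your proof is correct and follows exactly the approach sketched in the paper: the corollary is stated without a separate proof there, since the logic was already laid out at the end of Step~2 (show that the image $\mathcal{B}^{\mathbb{G}}$ of the spanning family $\underline{\mathcal{B}}^{\mathbb{G}}$ is a basis, deduce that $\Psi$ is injective, then apply Theorem~\ref{theorem_Koszul}). Your handling of the one genuine subtlety---that $\Psi$ restricts to a bijection on $\underline{\mathcal{B}}^{\mathbb{G}}$---is also in line with how the paper treats $\mathcal{B}^{\mathbb{G}}$ as a family indexed by $\underline{\mathcal{B}}^{\mathbb{G}}$ in the proof of Theorem~\ref{prop_new_basis}.
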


The fact that $\mathcal{B}^{\mathbb{G}}$ spans linearly $\mathcal{S}_{\omega}(\mathbf{\Sigma})$ follows from the surjectivity of $\Psi$ (so follows from Proposition \ref{prop_generators}), however we will reprove this fact.  The proof of Theorem \ref{prop_new_basis} is divided in two steps: first we introduce another family $\mathcal{B}^{\mathbb{G}}_{+} \subset \mathcal{S}_{\omega}(\mathbf{\Sigma})$ and prove that $\mathcal{B}^{\mathbb{G}}_+$ is free by relating it to the basis $\mathcal{B}$. Next we use a filtration of $\mathcal{S}_{\omega}(\mathbf{\Sigma})$ to deduce that  $\mathcal{B}^{\mathbb{G}}$ is free from the fact that  $\mathcal{B}^{\mathbb{G}}_+$ is free.

\vspace{2mm}
\par For $\alpha \in \mathbb{G}$ and $n\geq 0$, we denote by $\alpha^{\left<n \right>}$ the simple diagram made of $n$ pairwise non-intersecting copies of $\alpha$. For $\mathbf{n}\in \mathbb{N}^{\mathbb{G}}$, we denote by $D(\mathbf{n})$ the simple diagram $\bigsqcup_{\alpha \in \mathbb{G}} \alpha^{\left<n(\alpha)\right>}$. Denote by $v$ and $w$ the two endpoints of $\alpha$ and by $a$ and $b$ the (non necessary distinct) boundary arcs containing $v$ and $w$ respectively. Write $v_1, \ldots, v_n$ and $w_1, \ldots, w_n$ the endpoints of $\alpha^{\left< n\right>}$ such that $v_i<_a v_{i+1}$ and $w_i <_b w_{i+1}$ (so $v_i$ and $w_i$ are not necessary the boundary points of the same component of $\alpha^{\left<n \right>}$). A state $s\in \mathrm{St}(D(\mathbf{n}))$ is \textit{positive} if for all $\alpha \in \mathbb{G}$ and for all $i\leq j$ one has $s(v_i)\leq s(v_j)$ and $s(w_i)\leq s(w_j)$; we let $\mathrm{St}^+(D(\mathbf{n}))$ denote the set of positive states.

\begin{definition}
We denote by $\mathcal{B}^{\mathbb{G}}_{+} \subset \mathcal{S}_{\omega}(\mathbf{\Sigma})$ the set of classes $[D(\mathbf{n}), s]$ for $\mathbf{n}\in \mathbb{N}^{\mathbb{G}}$ and $s\in \mathrm{St}^+(D(\mathbf{n}))$.
\end{definition}

\begin{notations}
Let $(D,s)$ be a stated diagram and $a$ a boundary arc. We denote by $d_a([D,s]) \in \mathbb{N}$ the number of pairs $(v,w)$ in $\partial_a D$ such that $v<_a w$ and $(s(v), s(w)) = (+, -)$ (recall that the orientation of $\Sigma_{\mathcal{P}}$ induces an orientation of $a$ which, in turns, induces the order $<_a$). 
We also write $d([D,s])= \sum_a d_a([D,s])$. Note that $s$ is $\mathfrak{o}^+$-increasing if and only if $d([D,s])=0$.
\end{notations}

Let us develop an element $b \in \mathcal{B}^{\mathbb{G}}_+$ in the basis $\mathcal{B}$ as $b=\sum_i a_i [D_i,s_i]$. We denote by $S(b)\subset \mathcal{B}$ the set of basis elements $[D_i,s_i]$ such that $a_i\neq 0$.

\begin{lemma}\label{lemma_basis+}
\begin{enumerate}
\item There exists a unique element $m(b)=[D_0,s_0] \in S(b)$ such that for all $[D_i,s_i]\in S(b)$ such that $[D_i,s_i]\neq [D_0,s_0]$, one has $|\partial D_i| > |\partial D_0|$.
\item The map $m : \mathcal{B}^{\mathbb{G}}_{+} \rightarrow \mathcal{B}$ is injective.
\item The family $\mathcal{B}^{\mathbb{G}}_+$ is a basis.
\end{enumerate}
\end{lemma}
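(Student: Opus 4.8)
The plan is to establish the three items of Lemma \ref{lemma_basis+} in order, deducing each from the previous one.

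\textbf{Item (1): existence and uniqueness of the minimal element.} I would fix $b=[D(\mathbf{n}),s]$ with $s\in \mathrm{St}^+(D(\mathbf{n}))$ and expand it in the basis $\mathcal{B}$ using the defining relations of $\mathcal{S}_\omega(\mathbf{\Sigma})$. The diagram $D(\mathbf{n})$ is already simple (it is a disjoint union of parallel copies of the generating arcs, which are non-intersecting), so the only obstruction to $b$ itself being a basis element is that the state $s$ need not be $\mathfrak{o}$-increasing for the reference orientation $\mathfrak{o}$ used to define $\mathcal{B}$. The key observation is that each time we apply a height-exchange or cutting-arc relation \eqref{height_exchange_rel}, \eqref{cutting_arc_rel} to fix a ``bad pair'' (a pair $(v,w)$ with $v<_\mathfrak{o}w$ and $(s(v),s(w))=(+,-)$, counted by $d_a$), the resulting terms either keep the same diagram $D(\mathbf{n})$ with a state having strictly smaller $d$-value, or produce a diagram with strictly fewer boundary points (the ``$\heightcurve$'' term in \eqref{eq: skein 2}, which caps off two endpoints). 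I would run this rewriting to termination: the terms that never lost a boundary point assemble into a single multiple of $[D(\mathbf{n}),s']$ where $s'$ is the $\mathfrak{o}$-increasing re-ordering of $s$, and the leading coefficient is a power of $\omega$ (in particular a unit), so this term survives; all other terms have strictly smaller $|\partial D_i|$. Set $m(b):=[D(\mathbf{n}),s']$. Uniqueness is immediate since $|\partial D(\mathbf{n})|=|\partial D_i|$ forces, among simple diagrams obtained by the above rewriting with no capping, exactly this one class.

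\textbf{Item (2): injectivity of $m$.} Here I would recover $\mathbf{n}$ and $s$ from $m(b)=[D(\mathbf{n}),s']$. Since the generators $\mathbb{G}$ are realized as pairwise non-intersecting oriented arcs (the Convention before Lemma \ref{lemma_trivial_loops_relations}), the simple diagram $D(\mathbf{n})$ determines the multiplicities $\mathbf{n}\in\mathbb{N}^{\mathbb{G}}$ — no two distinct $\mathbf{n}$ give isotopic diagrams because distinct generators are non-isotopic arcs and parallel copies are counted by isotopy class. Given $D(\mathbf{n})$, the $\mathfrak{o}$-increasing state $s'$ on it is unique for each assignment of the \emph{number} of $+$'s versus $-$'s on each boundary arc; but the map $s\mapsto s'$ on $\mathrm{St}^+(D(\mathbf{n}))$ is itself injective, because a positive state is already monotone along each boundary arc in its own (arc-endpoint) order, and the passage to $\mathfrak{o}$ only permutes within each boundary arc — and since both orders are total, the multiset of values on each boundary arc together with the diagram pins down $s$. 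Hence $m$ is injective.

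\textbf{Item (3): $\mathcal{B}^{\mathbb{G}}_+$ is a basis.} Freeness follows from a triangularity argument: order $\mathcal{B}$ by a refinement of $b\mapsto |\partial D|$; by item (1), each $b\in\mathcal{B}^{\mathbb{G}}_+$ has a well-defined ``leading term'' $m(b)\in\mathcal{B}$ with unit coefficient, all other terms strictly larger in $|\partial D|$, and by item (2) distinct elements of $\mathcal{B}^{\mathbb{G}}_+$ have distinct leading terms; a standard unitriangularity argument then shows $\mathcal{B}^{\mathbb{G}}_+$ is linearly independent. For spanning, I would argue that $\mathcal{B}^{\mathbb{G}}_+\subset\Psi(\underline{\mathcal{B}}^{\mathbb{G}})$ (the positive-state products $[D(\mathbf{n}),s]$ are exactly the images of the monomials $m_1\cdots m_n$ in $\underline{\mathcal{B}}^{\mathbb{G}}$, up to the ordering conventions — each $\mathcal{B}(\alpha)$ maps onto the positive states on $\alpha^{\langle n\rangle}$), together with the fact that $\Psi$ is surjective (Proposition \ref{prop_generators}) and that the $\mathcal{B}$-expansion of any monomial in $\mathcal{A}^{\mathbb{G}}$, reduced using the relators of Step 2, lands in the span of $\mathcal{B}^{\mathbb{G}}_+$. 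Actually the cleanest route is: $\mathcal{B}^{\mathbb{G}}_+$ is independent and, by the leading-term count, $|\mathcal{B}^{\mathbb{G}}_+\cap\{|\partial D|\le N\}|$ matches $|\mathcal{B}\cap\{|\partial D|\le N\}|$ via $m$ being a bijection onto the set of $\mathfrak{o}$-increasing states on diagrams of the form $D(\mathbf{n})$ — but every simple diagram is isotopic to some $D(\mathbf{n})$ after applying cutting-arc relations, which is precisely the content of Proposition \ref{prop_generators}. So $m$ is onto $\mathcal{B}$, giving spanning.

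\textbf{Main obstacle.} The delicate point is item (1): controlling the rewriting so that exactly one term retains the full boundary and verifying its coefficient is a unit. One must check that applying the cutting-arc/height-exchange relations to remove bad pairs does not accidentally merge or cap the ``good'' term, and that the inductive quantity ($d([D,s])$ lexicographically paired with $|\partial D|$, or similar) strictly decreases. Getting the bookkeeping right — and confirming that $m(b)$ as defined is genuinely in $\mathcal{B}$ (simple diagram, $\mathfrak{o}$-increasing state) and appears with a power-of-$\omega$ coefficient — is where the real work lies; items (2) and (3) are then essentially formal consequences of the unitriangular structure plus Proposition \ref{prop_generators}.
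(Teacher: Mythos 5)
There is a genuine gap, and it starts with a misreading of item $(1)$. The lemma asserts that $m(b)$ is the unique element of $S(b)$ with \emph{strictly fewer} boundary points than every other term ($|\partial D_i| > |\partial D_0|$ for all other $i$), i.e.\ the minimal-boundary term, which the paper obtains by capping off all the bad $(+,-)$ pairs via the move $\heightexch{->}{-}{+} \mapsto \heightcurve$ until the state becomes $\mathfrak{o}$-increasing. You instead take $m(b)$ to be the term that \emph{retains} the full boundary $\partial D(\mathbf{n})$, namely $q^{d(b)}[D(\mathbf{n}),s']$ with $s'$ the sorted state. That term does exist, is unique among full-boundary terms, and has unit coefficient — but it is the \emph{maximal}-boundary term, so it does not satisfy the property stated in item $(1)$ whenever $d(b)>0$ (the capped terms have strictly fewer boundary points, not more).

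This is not merely a sign-of-inequality issue: your map is not injective, so items $(2)$ and $(3)$ collapse. The sorted state $s'$ on a boundary arc $a$ only remembers the total number of $-$'s among \emph{all} points of $\partial_a D(\mathbf{n})$, whereas a positive state is only required to be monotone separately on each family $v_1<_a\cdots<_a v_n$ of endpoints of parallel copies of a single generator. If a boundary arc carries endpoints of two distinct generators $\alpha,\beta\in\mathbb{G}$ (with, say, $\mathbf{n}(\alpha)=\mathbf{n}(\beta)=1$ and $\alpha$'s endpoint $<_a$-smaller), the two positive states $s_1=(+,-)$ and $s_2=(-,+)$ on these two points are distinct elements of $\mathrm{St}^+(D(\mathbf{n}))$, yet both sort to the same increasing state, so your $m$ identifies them; your parenthetical claim that ``the multiset of values on each boundary arc together with the diagram pins down $s$'' is exactly what fails here. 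The paper's minimal term distinguishes them: $[D(\mathbf{n}),s_1]=q[D(\mathbf{n}),s_2]+\omega[\text{capped}]$ has minimal term the capped diagram (in which $\alpha$ and $\beta$ have been concatenated), while $[D(\mathbf{n}),s_2]$ is its own minimal term. Recovering $b$ from the capped diagram is where the hypothesis $\mathbb{RL}=\emptyset$ enters (unique decomposition of each component's homotopy class into generators), a hypothesis your argument never uses — another sign the route cannot work. Your triangularity argument for item $(3)$ is fine in shape, but it needs the correct (minimal-term, injective) $m$; likewise your surjectivity claim ``$m$ is onto $\mathcal{B}$'' is neither proved nor needed — the paper only constructs $g$ with $g\circ m=\id$ and obtains spanning separately from the cutting-arc expansion.
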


\begin{proof}
$(1)$ Let $b=[D(\mathbf{n}), s] \in \mathcal{B}^{\mathbb{G}}_+$ and let us define $m(b)$. If $s$ is $\mathfrak{o}^+$-increasing, then $b\in \mathcal{B}$ so $m(b)=b$ satisfies the property. Else $d:=d(b)>0$ and we can find a boundary arc $a$ and a pair $(v,w)$ of consecutive points of $\partial_a D(\mathbf{n})$ such that $(s(v),s(w))=(+,-)$ and $v<_a w$. By gluing the points $v$ and $w$ together and then pushing it in the interior of $\Sigma_{\mathcal{P}}$ (that is by performing the local move $\heightexch{->}{-}{+} \mapsto \heightcurve$), we get a new stated diagram $(D_2,s_2)$ such that $d([D_2,s_2])= d(b)-1$. By performing the above move consecutively, we get a sequence of stated diagrams 
$(D(\mathbf{n}),s)=(D_1,s_1) \mapsto (D_2,s_2) \mapsto \ldots \mapsto (D_d, s_d)$ with $d[D_{i+1},s_{i+1}] = d(D_i,s_i)-1$, so $m(b):= [D_d,s_d] \in \mathcal{B}$ (see Figure \ref{fig_application_m} for an example). The skein relation

$$
\heightexch{->}{-}{+} =
q \heightexch{->}{+}{-}
+ \omega
\heightcurve, 
$$
shows that $[D_i, s_i] = q [D_i',s'_i] +  \omega [D_{i+1},s_{i+1}]$, where $|\partial D'_i| =| \partial D_i | > | \partial D_{i+1}|$ so the first assertion follows by induction.

\vspace{2mm}
\par $(2)$ To prove that  $m : \mathcal{B}^{\mathbb{G}}_{+} \rightarrow \mathcal{B}$ is injective, we construct a right inverse $g: \mathcal{B} \rightarrow \mathcal{B}^{\mathbb{G}}_{+}$ such that $g\circ m =\id$. Let $[D,s] \in \mathcal{B}$ and let us define $g([D,s])$. Obviously, $g$ sends the class of the empty diagram to itself, so we suppose that $D$ is not empty.
First suppose that $D$ is connected and consider a path $\alpha_D \in \Pi_1(\Sigma_{\mathcal{P}}, \mathbb{V})$ representing $D$. Since $\mathbb{P}=(\mathbb{G}, \emptyset)$ is a finite presentation of $\Pi_1(\Sigma_{\mathcal{P}}, \mathbb{V})$ without relation, the path $\alpha_D$ decomposes in a unique way as $\alpha_D = \alpha_{i_1}^{\varepsilon_1} \ldots \alpha_{i_n}^{\varepsilon_n}$, where $\alpha_{i_k} \in \mathbb{G}$. Let  $\mathbf{n}_D \in \mathbb{N}^{\mathbb{G}}$ be such that  $\mathbf{n}_D(\alpha)$ is the number of times $\alpha$ appears in the decomposition of $\alpha_D$. Now, if $D$ is not connected and has connected components $D_1, \ldots, D_n$, we set $\mathbf{n}_D:= \sum_{i=1}^n \mathbf{n}_{D_i}$. By applying the cutting arc relation
$$
\heightcurve = \omega^{-1} \heightexch{->}{-}{+}-q \heightexch{->}{+}{-}
$$
 consecutively, we write $[D,s]$ as a linear combination: 
$$ [D,s] = \sum_{s \in \mathrm{St}(D(\mathbf{n}_D))} \alpha_s [D(\mathbf{n}_D), s].$$ 
Note that because $[D,s] \in \mathcal{B}$, the stated diagram $(D,s)$ contains no trivial arc by definition, so only the positive states $s\in \mathrm{St}^+(D(\mathbf{n}_D))$ have a non-vanishing coefficient $\alpha_s\neq 0$. In particular, we have proved that $\mathcal{B}^{\mathbb{G}}_+$ generates $\mathcal{S}_{\omega}(\mathbf{\Sigma})$. We define $g([D,s])$ as the element $[D(\mathbf{n}_D), s_0]$ such that $\alpha_s\neq 0$ and $d([D(\mathbf{n}_D), s_0])$ is minimal. Note that it obtained from $[D,s]$ by a series of local moves $\heightcurve \mapsto \heightexch{->}{-}{+}$ which are the inverse of the local moves used to define $m([D(\mathbf{n}_D), s_0])$, so $g \circ m = \id$. Compare the Examples of Figure \ref{fig_prop_generators} and \ref{fig_application_m} for an illustration.

\vspace{2mm}
\par $(3)$ It remains to prove that $\mathcal{B}^{\mathbb{G}}_+$ is free. Consider an arbitrary total order $\prec$ on $\mathcal{B}$ such that if $| \partial D | < | \partial D'|$ then $[D,s] \prec [D',s']$ for any $[D,s], [D',s']$ in $\mathcal{B}$. By contradiction, suppose there exists a non empty finite family $\{ b_i^+ \}_{i \in I}$ of elements of $\mathcal{B}^{\mathbb{G}}_+$ and a family of non-vanishing scalars $\{ x_i \}_{i \in I}$ such that 
\begin{equation}\label{eq_cl}
 \sum_{i\in I} x_i b_i^+ = 0.
 \end{equation}
Let $i_0 \in I$ be such that $m(b_{i_0})$ is the minimum for $\prec$ of the set $\{ m(b_i), i\in I\}$. By developing each $b_i^+$ in Equation \eqref{eq_cl} in the basis $\mathcal{B}$, we get a vanishing linear combination $\sum_j y_j b_j =0$ of elements $b_j \in \mathcal{B}$. Since $\mathcal{B}$ is free, we have $y_j=0$ for all $j$.
 Let $j_0$ be such that $b_{j_0}=m(b_{i_0})$. It follows from assertions $(1)$ and $(2)$ that $y_{j_0}= x_{i_0}$, so $x_{i_0}=0$ and we have a contradiction.

\begin{figure}[!h] 
\centerline{\includegraphics[width=8cm]{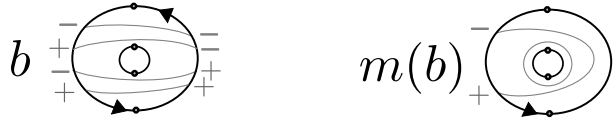} }
\caption{An element $b\in \mathcal{B}^{\mathbb{G}}_+$ and its associated element $m(b)\in \mathcal{B}$. Here $\mathbb{G}=\{ \beta_1,\beta_2, \beta_3, \beta_4\}$ are the generators of Figure \ref{fig_presentation}} 
\label{fig_application_m} 
\end{figure}

\end{proof}

We now want to deduce that $\mathcal{B}^{\mathbb{G}}$ is a basis from the fact that $\mathcal{B}^{\mathbb{G}}_+$ is a basis. The argument is based on the use of an algebra filtration of $\mathcal{S}_{\omega}(\mathbf{\Sigma})$ that we now introduce. 

\begin{definition}\label{def_filtration_cheloud}
For $\mathbf{n}\in \mathbb{N}^{\mathbb{G}}$, we let $|\mathbf{n}|:= \sum_{\alpha \in \mathbb{G}} \mathbf{n}(\alpha)$. For a class $[D(\mathbf{n}), s]$, we set $\lVert [D(\mathbf{n}), s]\rVert := (| \mathbf{n}|, - d([D(\mathbf{n}),s]) ) \in \mathbb{N}\times \mathbb{Z}$. 
Denote by $<$ the lexicographic order on $\mathbb{N}\times \mathbb{Z}$, \textit{i.e.} $(k_1,k_2)<(k'_1,k'_2)$ if either $k_1<k'_1$ or $k_1=k'_1$ and $k_2<k'_2$.
Eventually, to $\mathbf{k}=(k_1,k_2) \in \mathbb{N}\times \mathbb{Z}$ we associate the submodule
$$ \mathcal{F}_{\mathbf{k}} := \mathrm{Span} \left( [D(\mathbf{n}),s], \mbox{ such that } \lVert [D(\mathbf{n}), s] \rVert \leq \mathbf{k} \right).$$
\end{definition}

In order to prove that the $\{ \mathcal{F}_{\mathbf{k}} \}$ form an algebra filtration, the following elementary observation will be quite useful:

\begin{lemma}\label{lemma_easy}
Let $T, T'$ be two tangles in $\Sigma_{\mathcal{P}} \times (0,1)$ which are isotopic through an isotopy that does not preserves the height orders. Let $s\in \mathrm{St}(T)$ and $s'\in \mathrm{St}(T')$ be two states such that for all boundary arc $a$, if $\partial_a T = \{v_1, \ldots, v_n \}$ and $\partial_aT' = \{w_1, \ldots, w_n \}$ are ordered such that $h(v_i)<h(v_{i+1})$ and $h(w_i)< h(w_{i+1})$, then one has $s(v_i)=s'(w_i)$ for all $i\in \{1, \ldots, n\}$. Then one has 

\begin{equation}\label{eq_developement}
 [T,s]= \omega^n [T',s'] + \sum_{\sigma \in \mathrm{St}(T'), d([T',\sigma])<d([T',s'])} x_{\sigma} [T', \sigma], 
 \end{equation}
 
where $n\in \mathbb{Z}$, $x_{\sigma} \in \mathds{k}$ and the sum in the right-hand-side is over states $\sigma$ of $T'$ such that $d([T',\sigma])<d([T',s'])$.
\end{lemma}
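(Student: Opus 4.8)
The plan is to reduce the general case to a sequence of elementary moves and then to handle a single such move by the height exchange relations \eqref{height_exchange_rel} (equivalently, by the boundary skein relations \eqref{boundary_skein_rel}). First, since a height--order--preserving isotopy does not change the class in $\mathcal{S}_{\omega}(\mathbf{\Sigma})$, I would isotope $T$ while keeping the boundary height orders fixed until its underlying diagram agrees with a fixed chosen diagram of $T'$. After this reduction, $T$ and $T'$ carry one and the same diagram and differ \emph{only} in the height orders of the endpoints lying on certain boundary arcs. On each such arc one passes from the height order of $T$ to that of $T'$ by a finite sequence of transpositions of two endpoints consecutive for the height order, and I would argue by induction on the number of such transpositions, the empty case being trivial (then $[T,s]=[T',s']$, so $n=0$ and the sum is empty).

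For the inductive step it is enough to treat one elementary transposition: swapping the heights of two endpoints $p,p'$ consecutive for the height order on a boundary arc $a$. Applying the relevant case of \eqref{height_exchange_rel} rewrites $[T,s]$ as a linear combination $\sum_{\sigma}c_{\sigma}[\widehat{T},\sigma]$, where $\widehat{T}$ carries the transposed height order and, by the weight preservation of $\mathscr{R}^{\pm 1}$ (visible on the displayed matrices of $\mathscr{R}$ and $\mathscr{R}^{-1}$), every $\sigma$ with $c_{\sigma}\neq 0$ agrees with $s$ away from $\{p,p'\}$ and takes at $\{p,p'\}$ the same unordered pair of values as $s$. Among these finitely many $\sigma$'s there is exactly one, call it $\widehat{s}$, which is position--matched to $s$, namely the state obtained from $s$ by transposing its two values at $p$ and $p'$; reading off the corresponding (off--diagonal) entry of $\mathscr{R}^{\pm 1}$ one finds $c_{\widehat{s}}=A^{\pm1}=\omega^{\mp 2}$, a power of $\omega$. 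Each of the remaining terms differs from $\widehat{s}$ only by the two values at $p$ and $p'$, and since $p,p'$ are consecutive on $a$ one checks that every such term has $d$ strictly smaller than $d([\widehat{T},\widehat{s}])$. This is exactly the asserted form \eqref{eq_developement} in the one--transposition case.

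Finally I would assemble the single--transposition identities along the chosen sequence: the monomial coefficients $A^{\pm1}$ multiply into the global factor $\omega^{n}$, and the position--matched state is carried along to $s'$. The step that genuinely requires care — and which I expect to be the main obstacle — is the filtration bookkeeping: one must check that an error term $[\,\cdot\,,\sigma]$ produced at an intermediate stage, which has $d$ strictly below that of the current position--matched state, still satisfies $d([T',\sigma])<d([T',s'])$ after the remaining transpositions have been applied to it. This is where the weight preservation of $\mathscr{R}^{\pm 1}$ is used again: each further transposition rewrites a class only in terms of classes whose states have a fixed unordered pair of values at the transposed endpoints, so it cannot push an error term back up to the level of the leading term. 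Once this is verified — together with the explicit identification of the entry of $\mathscr{R}^{\pm1}$ (and, in the variants, of $C^{\pm1}$) responsible for the monomial coefficient — the rest of the argument is a direct computation.
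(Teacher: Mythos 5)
Your proposal is correct and follows essentially the same route as the paper: reduce to a finite sequence of elementary exchanges of two height-consecutive boundary points (after a height-order-preserving isotopy), treat a single exchange via the explicit entries of $\mathscr{R}^{\pm 1}$ in the height exchange relations \eqref{height_exchange_rel} — three diagonal cases giving a monomial $A^{\pm 1}$ and one case producing a single correction term whose state differs by a transposition at the two points — and then compose. The only difference is one of emphasis: you make explicit the preliminary matching of diagrams and the filtration bookkeeping in the composition step, which the paper compresses into ``it is clear that'' the developments concatenate.
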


\begin{proof}
We say that a tangle $T_i$ is obtained from another one $T_{i+1}$ by an elementary height exchange if there exists a boundary arc $a$ and two consecutive points $v$ and $w$ in $\partial_a T_i$ with $h(v)<h(w)$ ('consecutive' means that there does not exists any $p\in \partial_a T_i$ such that $h(v)<h(p)<h(w)$) such that $T_{i+1}$ is the tangle obtained from $T_i$ by exchanging the heights of $v$ and $w$. 
Since $T$ and $T'$ are isotopic, through an isotopy that does not preserve the height orders, we can obtain $T'$ from $T$ by a finite sequence $T=T_1 \mapsto T_2 \mapsto \ldots \mapsto T_n=T'$ of elementary height exchanges. It is clear that if one has a development \eqref{eq_developement} when the pair $(T,T')$ is equal to a pair $(T_i, T_{i+1})$ and a pair $(T_{i+1}, T_{i+2})$, then it holds for the pair $(T_i, T_{i+2})$, so by induction on the size $n$ the finite sequence, it is sufficient to prove the lemma in the particular case where $T$ and $T'$ differ by an elementary height exchange. In this case, Equation \eqref{eq_developement} follows from the height exchange relations
\begin{align*}
&  \adjustbox{valign=c}{\includegraphics[width=1cm]{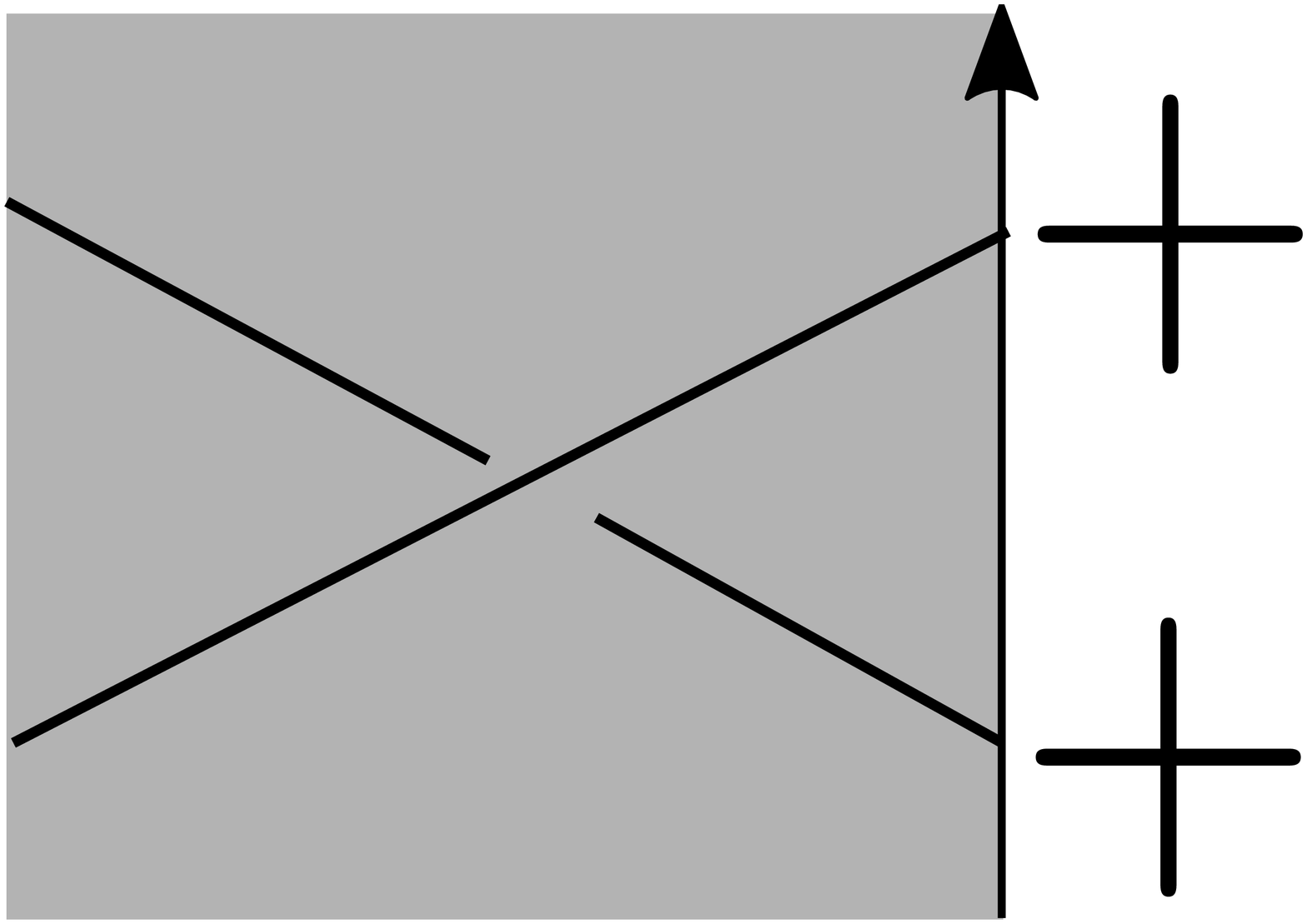}} = A \heightexch{->}{+}{+}, \quad   \adjustbox{valign=c}{\includegraphics[width=1cm]{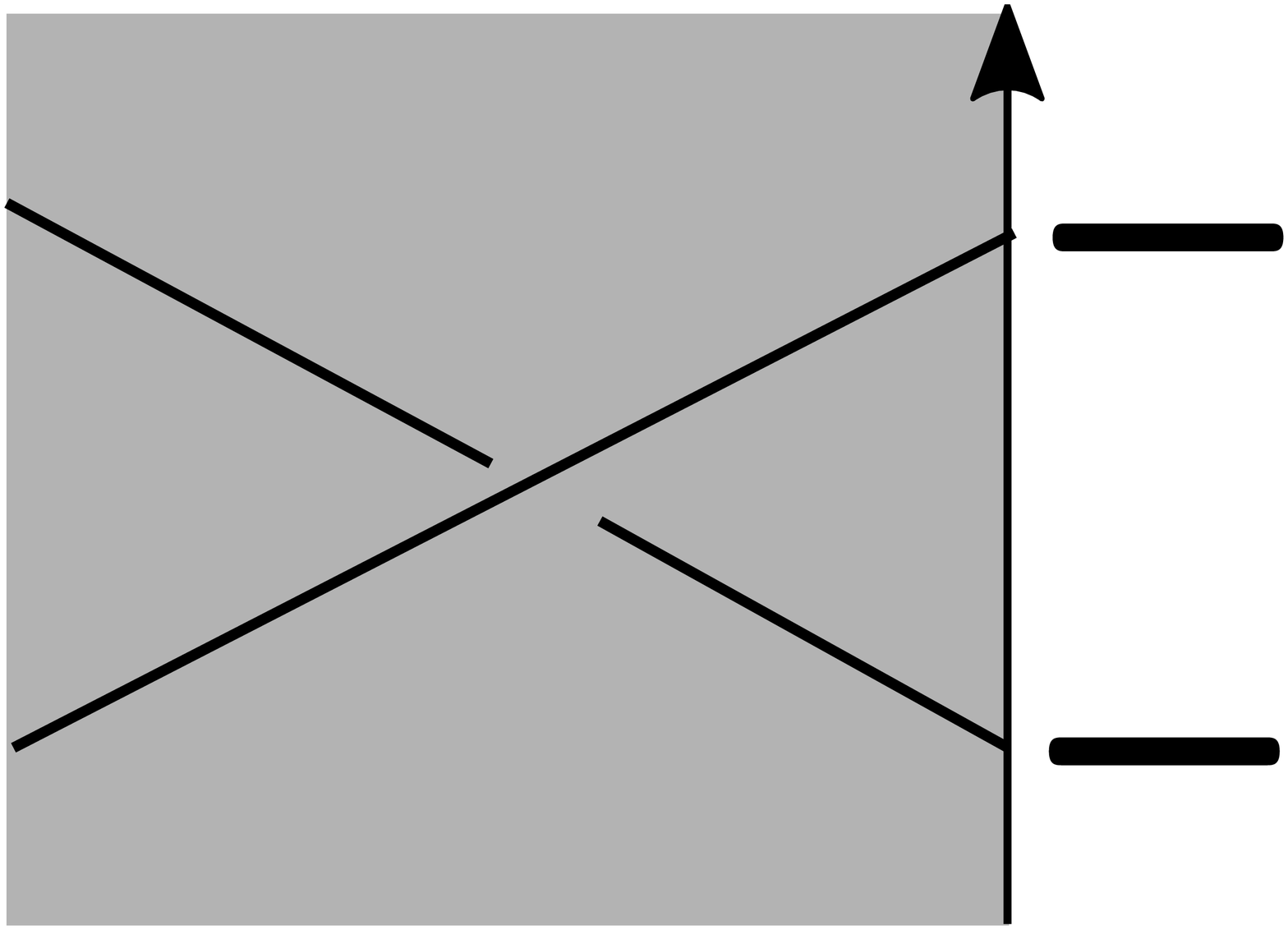}} = A \heightexch{->}{-}{-},  \quad \adjustbox{valign=c}{\includegraphics[width=1cm]{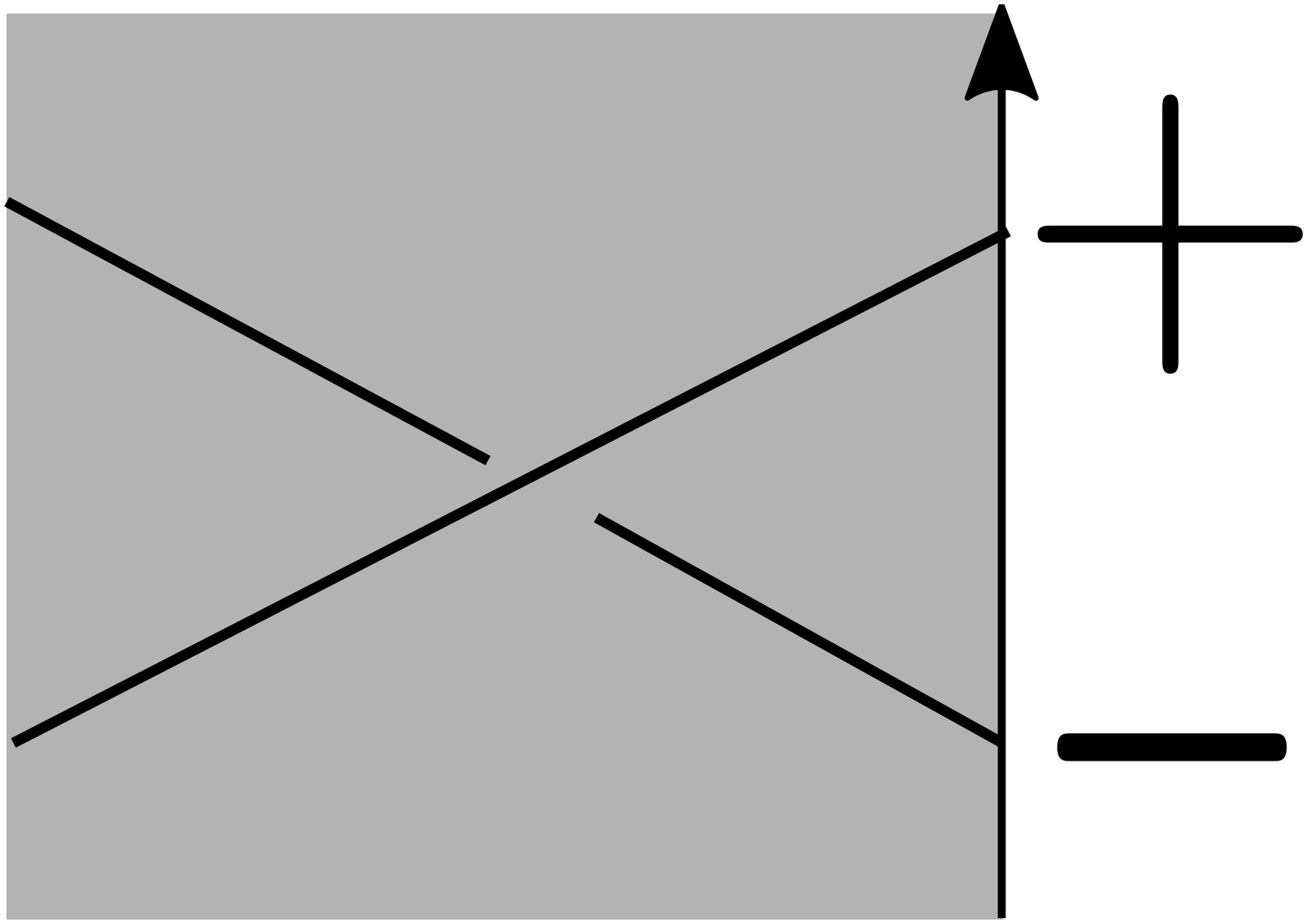}} = A^{-1} \heightexch{->}{-}{+}, \\
& \adjustbox{valign=c}{\includegraphics[width=1cm]{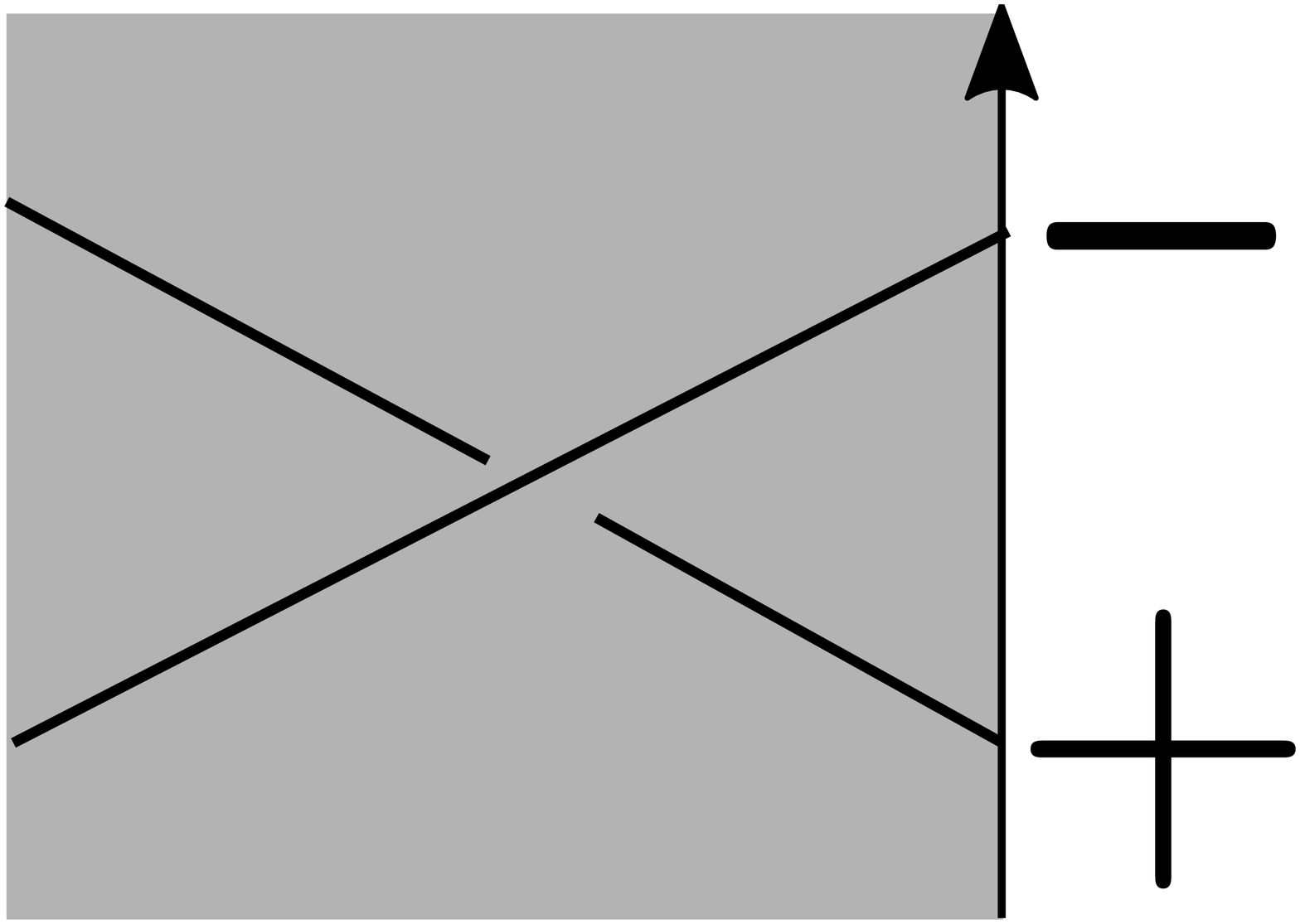}} = A^{-1} \heightexch{->}{+}{-} +(A-A^{-3})  \heightexch{->}{-}{+}.
\end{align*}

\end{proof}

\begin{notations}\label{notations_reloud}
Let $b\in \mathcal{B}^{\mathbb{G}}$, so by definition $b=b_{\alpha_1}\ldots b_{\alpha_n}$, where $b_{\alpha_i}\in \mathcal{B}(\alpha_i)$, that is one has either $b_{\alpha_i}= \alpha_{++}^{a_i}\alpha_{+-}^{b_i}\alpha_{--}^{c_i}$ or $b_{\alpha_i}= \alpha_{++}^{a_i}\alpha_{-+}^{b_i}\alpha_{--}^{c_i}$, for some $a_i, b_i, c_i \geq 0$. Let $\mathbf{n} \in \mathbb{N}^{\mathbb{G}}$ be defined by $\mathbf{n}(\alpha_i) := a_i +b_i+c_i$. Let $T(\mathbf{n})$ be the tangle underlying $D(\mathbf{n})$. 
Let $(T,s)$ be a stated tangle (unique up to isotopy) such that  $b=[T,s]$, so that $T(\mathbf{n})$ is obtained from $T$ by an isotopy that does not necessary preserve the height order. Eventually we define the element $b^+ := [T(\mathbf{n}), s^+] \in \mathcal{B}^{\mathbb{G}}_+$, where $s^+ \in \mathrm{St}^+(T(\mathbf{n}))$ is the unique state such that $(T,s)$ and $(T(\mathbf{n}),s^+)$ satisfies the assumption of Lemma \ref{eq_developement}. Note that the induced map $(\cdot)^+ : \mathcal{B}^{\mathbb{G}} \rightarrow \mathcal{B}^{\mathbb{G}}_+$, sending $b$ to $b^+$, is a bijection.
\end{notations}

\begin{lemma}\label{lemma_dev2}
\begin{enumerate}
\item For $\mathbf{k}, \mathbf{k}' \in \mathbb{N} \times \mathbb{Z}$, one has $\mathcal{F}_{\mathbf{k}} \cdot \mathcal{F}_{\mathbf{k}'} \subset \mathcal{F}_{\mathbf{k}+\mathbf{k}'}$.
\item For $b\in \mathcal{B}^{\mathbb{G}}$, one has 
\begin{equation}\label{eq_developement2}
 b = \omega^n b^+ + \mbox{lower terms}, 
 \end{equation}
where $n\in \mathbb{Z}$ and 'lower terms' is a linear combination of basis elements $b_i^+ \in \mathcal{B}^{\mathbb{G}}_+$ such that $\lVert b_i^+ \rVert < \lVert b^+ \rVert$.
\end{enumerate}
\end{lemma}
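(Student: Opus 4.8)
## Proof plan for Lemma \ref{lemma_dev2}

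The plan is to prove the two assertions in order, since (2) will rely on the multiplicativity of the filtration established in (1).

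\textbf{Assertion (1): the filtration property.} First I would observe that it suffices to check $\mathcal{F}_{\mathbf{k}}\cdot \mathcal{F}_{\mathbf{k}'}\subset \mathcal{F}_{\mathbf{k}+\mathbf{k}'}$ on generators of the two submodules, i.e. on products $[D(\mathbf{n}),s]\cdot [D(\mathbf{n}'),s']$ with $\lVert[D(\mathbf{n}),s]\rVert\le \mathbf{k}$ and $\lVert[D(\mathbf{n}'),s']\rVert\le\mathbf{k}'$. By definition of the product in $\mathcal{S}_\omega(\mathbf{\Sigma})$, the diagram $D$ underlying this product is isotopic, through an isotopy not necessarily preserving height orders, to $D(\mathbf{n}+\mathbf{n}')$: indeed, stacking $D(\mathbf{n})$ above $D(\mathbf{n}')$ gives $|\mathbf{n}|+|\mathbf{n}'|$ parallel copies distributed over the arcs of $\mathbb{G}$, exactly the geometric data of $D(\mathbf{n}+\mathbf{n}')$, up to reordering the strands by height. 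Applying Lemma \ref{lemma_easy} to the stated tangle $(T,s\cup s')$ underlying the product and to $T(\mathbf{n}+\mathbf{n}')$, we get
\begin{equation*}
[T,s\cup s'] = \omega^{m}[T(\mathbf{n}+\mathbf{n}'),s''] + \sum_{d([T(\mathbf{n}+\mathbf{n}'),\sigma])<d([T(\mathbf{n}+\mathbf{n}'),s''])} x_\sigma\,[T(\mathbf{n}+\mathbf{n}'),\sigma],
\end{equation*}
for the appropriate state $s''$. Every term on the right has first coordinate $|\mathbf{n}+\mathbf{n}'| = |\mathbf{n}|+|\mathbf{n}'| \le k_1+k'_1$. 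For the leading term, I must check $-d([T(\mathbf{n}+\mathbf{n}'),s''])\le -d([D(\mathbf{n}),s])-d([D(\mathbf{n}'),s'])$, i.e. that $d$ is superadditive under stacking; this is the one point requiring a small argument: counting the $(+,-)$-inversions arc by arc, an inversion present in $D(\mathbf{n})$ (respectively $D(\mathbf{n}')$) remains an inversion after stacking (the relative height order within each factor is preserved and, for strands in the same boundary arc, a $(+,-)$ pair in the upper or lower block stays a $(+,-)$ pair), and possibly new inversions are created between the two blocks, so $d([T(\mathbf{n}+\mathbf{n}'),s''])\ge d([D(\mathbf{n}),s])+d([D(\mathbf{n}'),s'])$. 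Combining the first-coordinate bound with this one, each summand has norm $\le \mathbf{k}+\mathbf{k}'$ for the lexicographic order (the low-$d$ terms have strictly larger second coordinate but the same first coordinate, hence still lie in $\mathcal{F}_{\mathbf{k}+\mathbf{k}'}$), which proves (1).

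\textbf{Assertion (2): the leading-term expansion.} Fix $b = b_{\alpha_1}\cdots b_{\alpha_n}\in\mathcal{B}^{\mathbb{G}}$ with associated $\mathbf{n}\in\mathbb{N}^{\mathbb{G}}$ as in Notations \ref{notations_reloud}, and let $(T,s)$ be the stated tangle with $b=[T,s]$. By construction $T(\mathbf{n})$ is isotopic to $T$ via an isotopy not preserving height orders, and $s^+$ is precisely the state making $(T,s)$ and $(T(\mathbf{n}),s^+)$ satisfy the hypotheses of Lemma \ref{lemma_easy}. Applying that lemma directly yields
\begin{equation*}
b = [T,s] = \omega^{n}\,[T(\mathbf{n}),s^+] + \sum_{\sigma\in\mathrm{St}(T(\mathbf{n})),\ d([T(\mathbf{n}),\sigma])<d([T(\mathbf{n}),s^+])} x_\sigma\,[T(\mathbf{n}),\sigma],
\end{equation*}
and by definition $[T(\mathbf{n}),s^+] = b^+$. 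Every term $[T(\mathbf{n}),\sigma]$ on the right is, up to the skein relations reordering strand heights, a positive-state diagram of the same shape $D(\mathbf{n})$ — more precisely, each such $[T(\mathbf{n}),\sigma]$ expands in $\mathcal{B}^{\mathbb{G}}_+$ (which is a basis by Lemma \ref{lemma_basis+}) only into elements $b_i^+ = [D(\mathbf{n}_i),s_i]$ with $|\mathbf{n}_i| = |\mathbf{n}|$ and $d([D(\mathbf{n}_i),s_i]) \ge d([T(\mathbf{n}),\sigma]) > d([T(\mathbf{n}),s^+]) = d(b^+)$; hence $\lVert b_i^+\rVert = (|\mathbf{n}|,-d(b_i^+)) < (|\mathbf{n}|,-d(b^+)) = \lVert b^+\rVert$. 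This gives exactly \eqref{eq_developement2}. The only care needed here is to see that re-expressing a general $[T(\mathbf{n}),\sigma]$ (whose state $\sigma$ need not be positive) in the basis $\mathcal{B}^{\mathbb{G}}_+$ does not increase $|\mathbf{n}|$ — which is clear since the cutting-arc relations used to produce $\mathcal{B}^{\mathbb{G}}_+$ from arbitrary diagrams never create new strands on the generators — and does not decrease $d$ below $d([T(\mathbf{n}),\sigma])$; the latter follows because those relations only move heights monotonically (cf. the proof of Lemma \ref{lemma_basis+}(1)).

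\textbf{Main obstacle.} The genuinely delicate point is the bookkeeping of the $d$-invariant: establishing its superadditivity under stacking in part (1), and controlling how $d$ behaves when a non-positively-stated diagram $[T(\mathbf{n}),\sigma]$ is rewritten in $\mathcal{B}^{\mathbb{G}}_+$ in part (2). Both amount to tracking $(+,-)$-inversions through the local moves, and both are elementary once one fixes the convention that all re-orderings push lower-height strands to smaller positions along each oriented boundary arc; I expect no conceptual difficulty beyond this careful accounting.
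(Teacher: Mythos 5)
Your overall strategy coincides with the paper's (stack and apply Lemma \ref{lemma_easy} for assertion $(1)$; apply Lemma \ref{lemma_easy} directly for assertion $(2)$), and your superadditivity observation $d(s'')\ge d(x)+d(y)$ in part $(1)$ is exactly the point needed to place the \emph{leading} term in $\mathcal{F}_{\mathbf{k}+\mathbf{k}'}$, a point the paper leaves implicit. However, your treatment of the \emph{correction} terms contains a genuine inconsistency in both parts, precisely where the lemma has content. In part $(2)$ you display the sum over states $\sigma$ with $d([T(\mathbf{n}),\sigma])<d([T(\mathbf{n}),s^+])$ (quoting Lemma \ref{lemma_easy} as printed) and two lines later assert $d([T(\mathbf{n}),\sigma])>d([T(\mathbf{n}),s^+])$; these cannot both hold, and it is the second inequality that you actually need to conclude $\lVert b_i^+\rVert=(|\mathbf{n}|,-d(b_i^+))<\lVert b^+\rVert$. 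In part $(1)$ you concede that the low-$d$ terms have \emph{strictly larger} second coordinate $-d$, i.e.\ strictly larger norm than the leading term, and then conclude they ``still lie in $\mathcal{F}_{\mathbf{k}+\mathbf{k}'}$''; this is a non sequitur, since membership in $\mathcal{F}_{\mathbf{k}+\mathbf{k}'}$ requires the norm to be \emph{at most} $\mathbf{k}+\mathbf{k}'$, and a term with the same first coordinate and larger $-d$ could violate that bound.

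The way out is to notice that the inequality in Lemma \ref{lemma_easy} as printed is reversed: the only height-exchange relation that produces a correction term is
$\adjustbox{valign=c}{\includegraphics[width=1cm]{C3.eps}} = A^{-1}\heightexch{->}{+}{-}+(A-A^{-3})\heightexch{->}{-}{+}$,
whose leading term $\heightexch{->}{+}{-}$ carries no inversion while the correction $\heightexch{->}{-}{+}$ carries one (compare the count in the proof of Lemma \ref{lemma_basis+}$(1)$). Hence each elementary height exchange produces corrections with $d$ strictly \emph{larger} than the leading term, so the sum in Equation \eqref{eq_developement} should run over $\sigma$ with $d([T',\sigma])>d([T',s'])$. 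With that corrected inequality the corrections have strictly smaller second coordinate $-d$, hence strictly smaller norm, and both of your arguments close up: in $(1)$ the corrections automatically lie below the leading term, whose norm you have already bounded by superadditivity, and in $(2)$ the inequality $d(b_i^+)>d(b^+)$ you invoke becomes available. You should state this corrected form of Lemma \ref{lemma_easy} explicitly and use it consistently; as written, your proof relies on two mutually contradictory inequalities and the step bounding the correction terms fails. (A minor further point: the terms $[T(\mathbf{n}),\sigma]$ with $\sigma$ non-positive are not themselves elements of $\mathcal{B}^{\mathbb{G}}_+$, and your claim that re-expanding them never decreases $|\mathbf{n}|$ is not quite right --- the move $\heightexch{->}{-}{+}\mapsto\heightcurve$ can join two copies of the same generator into a trivial arc and drop $|\mathbf{n}|$ by two --- but since that only lowers the first coordinate of the norm it is harmless.)
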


Note that the second assertion of Lemma \ref{lemma_dev2} implies that $\mathcal{B}^{\mathbb{G}}$ spans $\mathcal{S}_{\omega}(\mathbf{\Sigma})$ so reproves Proposition \ref{prop_generators}.

\begin{proof}
$(1)$ Let $x:=[T(\mathbf{n}), s]$ and $y:=[T(\mathbf{n}'),s']$ and denote by $(T(\mathbf{n}) \cup T(\mathbf{n}'), s\cup s')$ the stated tangle obtained by stacking $(T(\mathbf{n}),s)$ on top of $(T(\mathbf{n}'),s')$, so that $x\cdot y = [T(\mathbf{n}) \cup T(\mathbf{n}'), s\cup s']$. The tangles $T(\mathbf{n}) \cup T(\mathbf{n}')$ and $T(\mathbf{n} + \mathbf{n}')$ differ by an isotopy that does not necessary preserve the height orders, so Lemma \ref{lemma_easy} implies that $x\cdot y$ is a linear combination of elements of the form $[D(\mathbf{n}+\mathbf{n}'), \sigma]$ such  that $\lVert [D(\mathbf{n}+\mathbf{n}'), \sigma] \rVert \leq \lVert x \rVert + \lVert y\rVert$. This proves the first assertion.

\vspace{2mm}
\par $(2)$ Using Notations \ref{notations_reloud}, we apply Lemma \ref{lemma_easy} to  $b=[T,s]$ and $b^+= [T(\mathbf{n}),s^+]$, and Equation \ref{eq_developement2} is just a rewriting of Equation \eqref{eq_developement}.
\end{proof}

\begin{proof}[Proof of Theorem \ref{prop_new_basis}]
The fact that $\mathcal{B}^{\mathbb{G}}$ generates $\mathcal{S}_{\omega}(\mathbf{\Sigma})$ follows both from Proposition \ref{prop_generators} or from the second assertion of Lemma \ref{lemma_dev2}. Let us prove that $\mathcal{B}^{\mathbb{G}}$ is free.
Consider a vanishing linear combination $\sum_{i\in I} x_i b_i =0$, where $b_i \in \mathcal{B}^{\mathbb{G}}$ and the $x_i$ are non-vanishing scalars and suppose by contradiction that $I$ is not empty. Let $i_0 \in I$ be such that $\lVert b_{i_0}^+ \rVert$ is the maximum, for $<$, of the set $\{ \lVert b_i^+ \rVert \} \subset \mathbb{N}\times \mathbb{Z}$. The fact that $(\cdot)^+ : \mathcal{B}^{\mathbb{G}} \rightarrow \mathcal{B}^{\mathbb{G}}_+$ is a bijection implies that this maximum is unique. Developing each $b_i$ in the basis $\mathcal{B}^{\mathbb{G}}_+$ (using Equation \eqref{eq_developement2}), the equation $\sum_{i\in I} x_i b_i =0$ induces an equation of the form $\sum_{b^+ \in \mathcal{B}^{\mathbb{G}}_+} y_{b^+} b^+$ and the fact that $\mathcal{B}^{\mathbb{G}}_+$ is free (Lemma \ref{lemma_basis+}) implies that each $y_{b^+}$ vanishes. Lemma \ref{lemma_dev2} implies that $x_{i_0}=y_{b^+_{i_0}}$. Therefore $x_{i_0}=0$ and we have a contradiction.

\end{proof}

\section{Lattice gauge field theory}

\subsection{Ciliated graphs and quantum gauge group coaction}\label{sec_graphs}

Since the pioneer work of Fock and Rosly \cite{FockRosly}, constructions in lattice gauge field theory are based on ciliated graphs. As we now explain, to a ciliated graph $(\Gamma,c)$ one can  associate a punctured surface $\mathbf{\Sigma}^0$ together with a finite presentation $\mathbb{P}$ of its associated groupoid. 
\begin{definition}
\begin{enumerate}
\item A \textit{ribbon graph} $\Gamma$ is a finite graph together with the data, for each vertex, of a cyclic ordering of its adjacent half-edges. An \textit{orientation} for a ribbon graph is the choice of an orientation for each of its edges. 
\item  A \textit{ciliated ribbon graph} $(\Gamma, c)$ is a ribbon graph $\Gamma$ together with a lift, for each vertex, of the cyclic ordering of the adjacent half-edges, to a linear ordering. In pictures, if the half-edges adjacent to a vertex have the cyclic ordering $e_1<e_2<\ldots <e_n <e_1$ that we lift to the linear ordering $e_1<e_2<\ldots <e_n$, we draw a \textit{cilium} between $e_n$ and $e_1$. 
\item We associate surfaces to ribbon graphs as follows. 
\begin{itemize}
\item[(i)]
Place a  a disc $D_v$ on top of  each vertex $v$ and a band $B_e$ on top of each edge $e$,  then glue the discs to the band using the cyclic ordering: we thus get a surface $S(\Gamma)$ named the \textit{ fattening of }$\Gamma$. 
\item[(ii)]
 The \textit{closed punctured surface} $\mathbf{\Sigma}(\Gamma)=(\Sigma(\Gamma), \mathcal{P})$ \textit{associated to }$\Gamma$ is the closed punctured surface obtained from $S(\Gamma)$ by gluing a disc to each boundary component and placing a puncture inside each added disc. So $S(\Sigma)$ retracts by deformations to $\Sigma_{\mathcal{P}}(\Gamma)$.
 \item[(iii)]
The \textit{open punctured surface} $\mathbf{\Sigma}^0(\Gamma, c)=(\Sigma^0(\Gamma,c), \mathcal{P}^0)$ \textit{associated to }$(\Gamma,c)$ is obtained from $S(\Gamma)$ by first pushing each vertex $v$ to the boundary of $S(\Gamma)$ in the direction of the associated cilium. Said differently, if the ordered half-edges adjacent to $v$ are $e_1<e_2<\ldots<e_n$, we push $v$ in the boundary of $D_v$ such that it lies between the band $B_{e_n}$ and the band $B_{e_1}$. Next place a puncture $p_v$ next to $v$ (in the counterclockwise direction) on the same boundary component than $v$. Eventually, to each boundary component of $S(\Gamma)$ which does not contain any puncture $p_v$, glue a disc and place a puncture inside the disc. In the so-obtained punctured surface  $\mathbf{\Sigma}^0(\Gamma, c)$, each boundary arc contains exactly one vertex $v$ of $\Gamma$, so we denote by $a_v$ the boundary arc containing $v$. Suppose that $\Gamma$ is oriented. Then the oriented edges of $\Gamma$ form a set $\mathbb{G}$ of generators of $\Pi_1(\Sigma^0_{\mathcal{P}}, \mathbb{V})$ such that  $\mathbb{P}(\Gamma, c):=(\mathbb{G}, \emptyset)$ is a finite presentation without relations. 
\end{itemize}
\item For $v_1, v_2$ two distinct vertices of $(\Gamma,c)$, the ciliated graph $(\Gamma_{v_1 \#v_2}, c_{v_1\#v_2})$ is obtained by gluing the vertices $v_1$ and $v_2$ together to a vertex $v$ in such a way that if $e_1<\ldots<e_n$ and $f_1<\ldots <f_m$ are the ordered half-edges adjacent to $v_1$ and $v_2$ respectively, then the linear order of the half-edges adjacent to $v$ is $e_1<\ldots<e_n<f_1<\ldots<f_m$. Note that $c_{v_1\#v_2} \neq c_{v_2\#v_1}$.
\end{enumerate}

Figure \ref{fig_ciliated_graphs} illustrates two examples having the same ribbon graph but different ciliated structures: the punctured surface $\mathbf{\Sigma}^0(\Gamma,c)$ is a disc with two inner punctures and two boundary punctures whereas $\mathbf{\Sigma}^0(\Gamma,c')$ is an annulus with one puncture per boundary component and one inner puncture.

\end{definition}

\begin{figure}[!h] 
\centerline{\includegraphics[width=12cm]{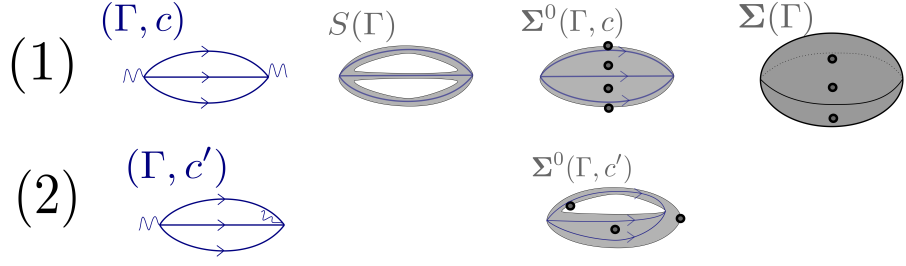} }
\caption{(1) From left to right: a ciliated graph $(\Gamma,c)$, its fattening $S(\Gamma)$, its open punctured surface $\mathbf{\Sigma}^0(\Gamma, c)$ and its closed punctured surface $\mathbf{\Sigma}(\Gamma)$. (2) The same ribbon graph with a different ciliated structure $c'$ and the associated open punctured surface $\mathbf{\Sigma}^0(\Gamma, c')$.}
\label{fig_ciliated_graphs} 
\end{figure}

\begin{remark}
Costantino and L\^e made in \cite{CostantinoLe19} the following important remark: the punctured surface $\mathbf{\Sigma}^0(\Gamma_{v_1\#v_2}, c_{v_1\#v_2})$ is obtained from $\mathbf{\Sigma}^0(\Gamma, c) \bigsqcup \mathbb{T}$ by gluing the boundary arcs $a_{v_1}$ and $a_{v_2}$  to two faces of the triangle $\mathbb{T}$. In particular, when $\Gamma = \Gamma_1 \bigsqcup \Gamma_2$ with $v_1\in \Gamma_1$ and $v_2\in \Gamma_2$, this property, together with Theorem \ref{theorem_exactsequence} permitted the authors of \cite{CostantinoLe19} to prove that $\mathcal{S}_{\omega}(\mathbf{\Sigma}^0(\Gamma_{v_1\#v_2}, c_{v_1\#v_2}))$ is the cobraided tensor product of $\mathcal{S}_{\omega}(\mathbf{\Sigma}^0(\Gamma_1, c_1))$ with  $\mathcal{S}_{\omega}(\mathbf{\Sigma}^0(\Gamma_2, c_2))$. The same gluing property were discovered by Alekseev-Grosse-Schomerus in \cite{AlekseevGrosseSchomerus_LatticeCS1,AlekseevGrosseSchomerus_LatticeCS2} for the quantum moduli spaces.
\end{remark}

For an oriented ciliated graph $(\Gamma, c)$, we denote by $V(\Gamma)$ its set of vertices and $\mathcal{E}(\Gamma)$ its set of (oriented) edges. Like in the previous section, we see the elements of $\mathcal{E}(\Gamma)$ as oriented arcs. Denote by $\mathbb{D}_0$ the punctured surface made of a disc with a single puncture on its boundary.  The closed punctured surface $\mathbf{\Sigma}(\Gamma)$ is obtained from the open one $\mathbf{\Sigma}^0(\Gamma,c)$ by gluing a copy $\mathbb{D}_0$ along each boundary arc $a_v$. Therefore, writing $\widehat{\mathbb{D}}:= \bigsqcup_{v\in V(\Gamma)} \mathbb{D}_0$, by Theorem \ref{theorem_exactsequence} one has an exact sequence:

\begin{equation}\label{eq_exact_sequence_graph}
0 \rightarrow \mathcal{S}_{\omega}(\mathbf{\Sigma}(\Gamma)) \xrightarrow{i} \mathcal{S}_{\omega}(\mathbf{\Sigma}^0(\Gamma,c) \bigsqcup \widehat{\mathbb{D}}) \xrightarrow{\Delta^R -\sigma \circ \Delta^L}   \mathcal{S}_{\omega}(\mathbf{\Sigma}^0(\Gamma,c)\bigsqcup \widehat{\mathbb{D}}) \otimes \mathcal{O}_q[\SL_2]^{\otimes V(\Gamma)}, 
\end{equation}
where $i$ represents the gluing map.

Using the isomorphism $\mathcal{S}_{\omega}(\mathbb{D}_0) \cong \mathds{k}$ sending the class of the empty stated tangle to the neutral element $1\in \mathds{k}$, we define an isomorphism 

 $$\kappa: \mathcal{S}_{\omega}\left(\mathbf{\Sigma}^0(\Gamma,c) \bigsqcup \widehat{\mathbb{D}}\right) \cong \mathcal{S}_{\omega}(\mathbf{\Sigma}^0(\Gamma,c))\otimes \otimes_{v\in V(\Gamma)} \mathcal{S}_{\omega}(\mathbb{D}_0) \cong \mathcal{S}_{\omega}(\mathbf{\Sigma}^0(\Gamma,c)). $$

Denote by $\iota :  \mathcal{S}_{\omega}(\mathbf{\Sigma}(\Gamma)) \hookrightarrow  \mathcal{S}_{\omega}(\mathbf{\Sigma}^0(\Gamma,c))$ the injective morphism $\iota := \kappa \circ i$. Also denote by $\Delta^{\mathcal{G}} : \mathcal{S}_{\omega}(\mathbf{\Sigma}^0(\Gamma,c)) \rightarrow \mathcal{S}_{\omega}(\mathbf{\Sigma}^0(\Gamma,c)) \otimes \mathcal{O}_q[\SL_2]^{\otimes V(\Gamma)}$ the (unique) morphism making the following diagram commuting: 
$$
\begin{tikzcd}
\mathcal{S}_{\omega}\left(\mathbf{\Sigma}^0(\Gamma,c) \bigsqcup \widehat{\mathbb{D}}\right) 
\arrow[r, "\Delta^R"] \arrow[d, "\kappa", "\cong"'] &
\mathcal{S}_{\omega}\left(\mathbf{\Sigma}^0(\Gamma,c) \bigsqcup \widehat{\mathbb{D}}\right)  \otimes  \mathcal{O}_q[\SL_2]^{\otimes V(\Gamma)} 
\arrow[d, "\kappa\otimes \id", "\cong"'] \\
\mathcal{S}_{\omega}(\mathbf{\Sigma}^0(\Gamma,c)) 
\arrow[r, "\Delta^{\mathcal{G}}"] & 
\mathcal{S}_{\omega}(\mathbf{\Sigma}^0(\Gamma,c)) \otimes  \mathcal{O}_q[\SL_2]^{\otimes V(\Gamma)} 
\end{tikzcd}
$$

\begin{definition}\label{def_gauge_coaction}
The \textit{quantum gauge group} is the Hopf algebra $\mathcal{O}_q[\mathcal{G}]:= \mathcal{O}_q[\SL_2]^{\otimes V(\Gamma)}$. The (right) Hopf-comodule map $\Delta^{\mathcal{G}} :  \mathcal{S}_{\omega}(\mathbf{\Sigma}^0(\Gamma,c)) \rightarrow  \mathcal{S}_{\omega}(\mathbf{\Sigma}^0(\Gamma,c)) \otimes \mathcal{O}_q[\mathcal{G}] $ is called the \textit{quantum gauge group coaction}.
\end{definition}

Note that, by definition,  the following diagram commutes:
$$
\begin{tikzcd}
\mathcal{S}_{\omega}\left(\mathbf{\Sigma}^0(\Gamma,c) \bigsqcup \widehat{\mathbb{D}}\right) 
\arrow[r, "\sigma \circ \Delta^L"] \arrow[d, "\kappa", "\cong"'] &
\mathcal{S}_{\omega}\left(\mathbf{\Sigma}^0(\Gamma,c) \bigsqcup \widehat{\mathbb{D}}\right)  \otimes  \mathcal{O}_q[\mathcal{G}] 
\arrow[d, "\kappa\otimes \id", "\cong"'] \\
\mathcal{S}_{\omega}(\mathbf{\Sigma}^0(\Gamma,c)) 
\arrow[r, "\id \otimes \epsilon"] & 
\mathcal{S}_{\omega}(\mathbf{\Sigma}^0(\Gamma,c)) \otimes  \mathcal{O}_q[\mathcal{G}] 
\end{tikzcd}
$$
Therefore 
the exactness of the sequence \eqref{eq_exact_sequence_graph}  implies that we have the following exact sequence:
\begin{equation}\label{eq_exact_sequence_graph}
0 \rightarrow \mathcal{S}_{\omega}(\mathbf{\Sigma}(\Gamma)) \xrightarrow{\iota} \mathcal{S}_{\omega}(\mathbf{\Sigma}^0(\Gamma,c)) \xrightarrow{\Delta^{\mathcal{G}} - \id \otimes  \epsilon}  \mathcal{S}_{\omega}(\mathbf{\Sigma}^0(\Gamma,c)) \otimes \mathcal{O}_q[\mathcal{G}].
\end{equation}

Said differently, $\iota(\mathcal{S}_{\omega}(\mathbf{\Sigma}(\Gamma)))$ is the subalgebra of $ \mathcal{S}_{\omega}(\mathbf{\Sigma}^0(\Gamma,c))$ of coinvariant vectors for the quantum gauge group coaction.

\begin{notations}
For $x\in \mathcal{O}_q[\SL_2]$ and $v_0\in \mathring{V}$, we denote by $x^{(v_0)} \in \mathcal{O}_q[\mathcal{G}]=\mathcal{O}_q[\SL_2]^{\otimes V(\Gamma)}$ the element of the form $\otimes_v y_v$, where $y_v=1$ for $v\neq v_0$ and $y_{v_0}=x$. 
\end{notations}

Let $\alpha$ be an arc of type either $a$ or $d$ and write $v_1$ and $v_2$ the elements of $\mathbb{V}$ corresponding to the boundary arcs containing $s(\alpha)$ and $t(\alpha)$ respectively. The quantum gauge group coaction is characterised by the following formula illustrated in Figure \ref{fig_qgaugecoaction}: 

\begin{equation}\label{eq_qcoaction}
\Delta^{\mathcal{G}}( \alpha_{ij} ) = \sum_{a,b=\pm} \alpha_{ab} \otimes x_{jb}^{(v_2)} x_{ia}^{(v_1)}.
\end{equation}

\begin{figure}[!h] 
\centerline{\includegraphics[width=8cm]{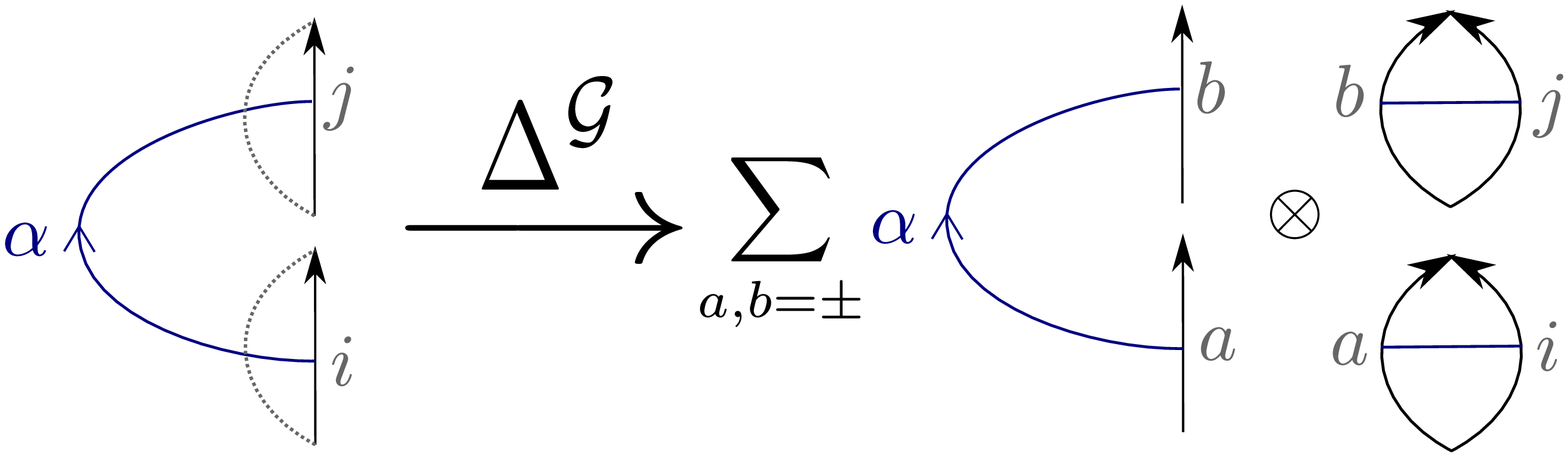} }
\caption{An illustration of Equation \eqref{eq_qcoaction}.}
\label{fig_qgaugecoaction} 
\end{figure} 

\par In order to prepare the comparison between stated skein algebras at $\omega=+1$ and relative character varieties in the next subsection, let us derive from \ref{theorem1} an alternative presentation of $\mathcal{S}_{\omega}(\mathbf{\Sigma})$. During all the rest of the section, we fix a finite presentation $\mathbb{P}=(\mathbb{G}, \mathbb{RL})$ of $\Pi_1(\Sigma_{\mathcal{P}}, \mathbb{V})$ such that every arc of $\mathbb{G}$ is either of type $a$ or $d$. 

\vspace{2mm}
\par 
When comparing skein algebras with character varieties, there is a well known sign issue which requires some attention. When $\mathbf{\Sigma}$ is closed, the skein algebra $\mathcal{S}_{+1}(\mathbf{\Sigma})$ is generated by the classes of closed curves $\gamma$ whereas the algebra $\mathbb{C}[\mathcal{X}_{\SL_2}(\mathbf{\Sigma})]$ of regular functions of the character variety is generated by curve functions $\tau_{\gamma}$, sending a class $[\rho]$ of representation $\rho : \pi_1(\Sigma_{\mathcal{P}})\rightarrow  \SL_2(\mathbb{C})$ to $\tau_{\gamma}([\rho]):= \tr( \rho(\gamma))$. However there is no isomorphism $\mathcal{S}_{+1}(\mathbf{\Sigma}) \cong \mathbb{C}[\mathcal{X}_{\SL_2}(\mathbf{\Sigma})]$ sending $\gamma$ to $\tau_{\gamma}$. Instead, we fix a spin structure on $\Sigma_{\mathcal{P}}$ with associated Johnson quadratic form  $\omega : \mathrm{H}_1(\Sigma_{\mathcal{P}}; \mathbb{Z}/2\mathbb{Z})\rightarrow \mathbb{Z}/2\mathbb{Z}$ and define $w(\gamma):= 1+\omega([\gamma])$. Then it follows from \cite{Bullock, PS00, Barett} that we have an isomorphism $\mathcal{S}_{+1}(\mathbf{\Sigma}) \cong \mathbb{C}[\mathcal{X}_{\SL_2}(\mathbf{\Sigma})]$ sending $\gamma$ to $(-1)^{w(\gamma)}\tau_{\gamma}$. A similar sign issue appears when dealing with stated skein algebras and relative character varieties; this was studied in \cite{KojuQuesneyClassicalShadows} to which we refer for further details (see also \cite{Thurston_PositiveBasis, CostantinoLe19} for an elegant interpretation of this sign issue in term of \textit{twisted character variety}). 

In short, the authors defined in \cite{KojuQuesneyClassicalShadows} the notion of \textit{relative spin structure} to which one can associate a map $w : \mathbb{G} \rightarrow  \mathbb{Z}/2\mathbb{Z}$ having the property that for any simple relation $R=\beta_k \star \ldots \star \beta_1$, one has $\sum_{i=1}^k w(\beta_i) = 1$. We will call \textit{spin function} a map $w : \mathbb{G} \rightarrow  \mathbb{Z}/2\mathbb{Z}$ satisfying this property.

\begin{notations}
Let $w$ be a spin function.
For $\alpha \in \mathbb{G}$, we denote by $U(\alpha)$ the $2\times 2$ matrix with coefficients in $\mathcal{S}_{\omega}(\mathbf{\Sigma})$ defined by 
\begin{equation}\label{eq_defU}
 U(\alpha) := \left\{ 
\begin{array}{ll}
(-1)^{w(\alpha)} \omega C^{-1}M(\alpha) & \mbox{, if }\alpha \mbox{ is of type a;}\\
(-1)^{w(\alpha)}  C^{-1}M(\alpha) =(-1)^{w(\alpha)}N(\alpha) & \mbox{, if }\alpha \mbox{ is of type d.}
\end{array}
\right. 
\end{equation}
\end{notations}

\begin{proposition}\label{prop_presU}
\begin{enumerate}
\item
The stated skein algebra $\mathcal{S}_{\omega}(\mathbf{\Sigma})$ admits the alternative presentation with  generators the elements $U(\alpha)_i^j$, with $\alpha \in \mathbb{G}$ and $i,j = \pm$, together with the following relations: 
\begin{itemize}
\item the $q$-determinant relations $\det_q(U(\alpha))=1$, when $\alpha$ is of type $a$, and $\det_{q^2}(U(\alpha))=1$, when $\alpha$ is of type $d$; 
\item for $R=\beta_k \star \ldots \star \beta_1 \in \mathbb{RL}$ a relation, where $l$ generators $\beta_i$ are of type $a$, the trivial loop relation:
\begin{equation}\label{trivial_loop_relU}
 U(\beta_k)\ldots U(\beta_1) = A^3 \omega^l.
 \end{equation}
\item for each pair $(\alpha, \beta)$ of generators in $\mathbb{G}$, the arc exchange relations obtained from the relations in Lemma \ref{lemma_arcsrelations} by replacing $N(\alpha)$ and $N(\beta)$ by $U(\alpha)$ and $U(\beta)$ respectively.
\end{itemize}
\item The quantum gauge group coaction is characterized by the formula: 

\begin{equation}\label{eq_qcoaction2}
\Delta^{\mathcal{G}} \left( U(\alpha)_i^j \right) = \sum_{a,b =\pm} U(\alpha)_{a}^b \otimes S(x_{bj})^{(v_2)} x_{ia}^{(v_1)}, 
\end{equation}

where we used the same notations than in Equation \eqref{eq_qcoaction}.
\end{enumerate}
\end{proposition}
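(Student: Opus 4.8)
The plan is to deduce Proposition \ref{prop_presU} from Theorem \ref{theorem1} and from Equation \eqref{eq_qcoaction} by a change of generators, so that neither statement requires any new geometric input.

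\textbf{Part (1).} The first observation is that for $\alpha$ of type $a$ one has $U(\alpha) = (-1)^{w(\alpha)}\omega\,C^{-1}N(\alpha)$ and for $\alpha$ of type $d$ one has $U(\alpha) = (-1)^{w(\alpha)}N(\alpha)$; since $C^{-1}$ is an invertible scalar matrix over $\mathds{k}$, the entries of $U(\alpha)$ and of $N(\alpha)$ span the same $\mathds{k}$-submodule of $\mathcal{S}_{\omega}(\mathbf{\Sigma})$, which generates $\mathcal{S}_{\omega}(\mathbf{\Sigma})$ by Theorem \ref{theorem1}. Hence Theorem \ref{theorem1} gives a presentation on the $U(\alpha)_i^j$ as soon as its three families of relations are rewritten in the new variables. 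For the $q$-determinant relations this is a direct $2\times 2$ computation: substituting $U(\alpha) = (-1)^{w(\alpha)}\omega\,C^{-1}M(\alpha)$ (resp. $(-1)^{w(\alpha)}C^{-1}M(\alpha)$) and using the commutation $\alpha_{+-}\alpha_{-+} = \alpha_{-+}\alpha_{+-}$ together with the other relations among the $\alpha_{ij}$ obtained in the proof of Lemma \ref{lemma_qdet_rel}, the identity $\det_q(N(\alpha)) = 1$ turns into $\det_q(U(\alpha)) = 1$ and $\det_{q^2}(N(\alpha)) = 1$ into $\det_{q^2}(U(\alpha)) = 1$. For the trivial loop relation attached to $R = \beta_k\star\ldots\star\beta_1\in\mathbb{RL}$, all $\beta_i$ being of type $a$ or $d$ by the standing convention, I would substitute $C^{-1}M(\beta_i) = \epsilon_i\,U(\beta_i)$ with $\epsilon_i = (-1)^{w(\beta_i)}\omega^{-1}$ in type $a$ and $\epsilon_i = (-1)^{w(\beta_i)}$ in type $d$ into Equation \eqref{eq_trivial_loops_rel}, rewrite the leading factor as $C M(\beta_k) = C^2\epsilon_k U(\beta_k) = -A^{-3}\epsilon_k U(\beta_k)$, and use the spin-function property $\sum_{i=1}^k w(\beta_i) = 1$ to collect $\prod_i\epsilon_i = -\omega^{-l}$, where $l$ is the number of $\beta_i$ of type $a$; this gives $U(\beta_k)\cdots U(\beta_1) = A^3\omega^l\,\mathds{1}_2$, that is \eqref{trivial_loop_relU}.

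\textbf{Arc exchange relations.} In each of the ten identities of Lemma \ref{lemma_arcsrelations} the signs $(-1)^{w(\alpha)}, (-1)^{w(\beta)}$ occur on both sides and cancel, so it remains to move the matrices $C^{-1}$ and the powers of $\omega$ that sit on the type-$a$ generators past the $\tau$'s and $\mathscr{R}^{\pm 1}$'s appearing in that lemma. This uses the elementary relation $\tau(\mathds{1}_2\odot A) = (A\odot\mathds{1}_2)\tau$ and the compatibility of $C^{\pm 1}\odot C^{\pm 1}$ with the braidings, coming from the fact that $C$ realises the $U_q\mathfrak{sl}_2$-module isomorphism $V^*\cong V$. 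The outcome is exactly Lemma \ref{lemma_arcsrelations} with $N(\alpha),N(\beta)$ replaced by $U(\alpha),U(\beta)$.

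\textbf{Part (2).} The coaction formula follows by plugging the definition \eqref{eq_defU} of $U(\alpha)_i^j$ — a fixed $\mathds{k}$-linear combination of the $\alpha_{ab}$ — into the characterising identity \eqref{eq_qcoaction} for $\Delta^{\mathcal{G}}$ and re-collecting the right-hand side into the $U(\alpha)_a^b$. The antipode $S$ in \eqref{eq_qcoaction2} appears precisely because of the $C^{-1}$ in front of $M(\alpha)$, through the matrix identity $S(\mathbf{x}) = C^{-1}\,{}^t\mathbf{x}\,C$ in $\mathcal{O}_q[\SL_2]$, where $\mathbf{x} := (x_{\varepsilon\varepsilon'})_{\varepsilon,\varepsilon'}$, which is immediate from the explicit antipode; the extra scalars cancel between the two sides since they multiply $U(\alpha)_i^j$ on the left only and $\Delta^{\mathcal{G}}$ is $\mathds{k}$-linear.

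The routine part throughout is the $2\times 2$ and $4\times 4$ matrix algebra. I expect the only genuinely delicate step to be the arc exchange translation in Part (1): one has to run through all ten configurations of Lemma \ref{lemma_arcsrelations}, keep exact track of which $C^{-1}$ factors and which powers of $\omega$ are carried by the type-$a$ generator, and verify the $C$–$\mathscr{R}$ compatibility identities used to commute those factors across the braidings. Once this bookkeeping is settled, both assertions of the proposition follow.
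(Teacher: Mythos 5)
Your proof is correct and follows essentially the same route as the paper: rewrite the presentation of Theorem \ref{theorem1} in the generators $U(\alpha)_i^j$, using the $U_q\mathfrak{sl}_2$-equivariance of $C$ to pass the $C^{-1}$ factors through $\tau$ and $\mathscr{R}^{\pm 1}$ in the arc exchange relations, and the identity $S(X)=C^{-1}\,{}^tX\,C$ for the coaction formula. You moreover carry out explicitly (and correctly) the trivial-loop computation producing the factor $A^3\omega^l$ via $C^2=-A^{-3}\mathds{1}_2$ and the spin-function property, a step the paper dismisses as straightforward.
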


\begin{proof}
It is clear from Equation \eqref{eq_defU} that the matrix elements $U(\alpha)_i^j$ generate the same algebra than the elements $M(\alpha)_i^j=\alpha_{ij}$, so they generate $\mathcal{S}_{\omega}(\mathbf{\Sigma})$. We need to check that the $q$-determinant, trivial loop and arcs exchange relations for the elements $\alpha_{ij}$ are equivalent to the relations of the proposition for the elements $U(\alpha)_i^j$. When $\alpha\in \mathbb{G}$ is of type $d$, clearly the relation $\det_{q^2}(N(\alpha))=1$ is equivalent to the relation $\det_{q^2}(U(\alpha))=1$. When $\alpha\in \mathbb{G}$ is of type $a$, the equivalence $\det_q(M(\alpha))=1 \Leftrightarrow \det_q(U(\alpha)) =1$ follows from a straightforward computation (and is the reason for the $\omega$ in the expression $U(\alpha)= (-1)^{w(\alpha)} \omega C^{-1}M(\alpha)$). The equivalence between Equations \eqref{eq_trivial_loops_rel} and \eqref{trivial_loop_relU} is straightforward (and is responsible for the introduction of the spin function and for  the $(-1)^{w(\alpha)}$ factor in the definition of $U(\alpha)$). The fact that the arcs exchange relations are equivalent to the same relations with $N(\alpha), N(\beta)$ replaced by $U(\alpha), U(\beta)$ follows from the fact that $C^{-1}\odot C^{-1}$ commutes with $\tau$, $\mathscr{R}$ and $\mathscr{R}^{-1}$. Indeed, that  $C^{-1}\odot C^{-1}$ commutes with $\tau$ is obvious. Recall that $\mathscr{R}$ is the matrix of $ \tau\circ q^{H\otimes H /2} \circ (\mathds{1} + (q-q^{-1})\rho(E)\otimes \rho(F))$ and $C^{-1}\odot C^{-1}$ is the matrix of $C^{-1}\otimes C^{-1}$. That $C^{-1}\otimes C^{-1}$ commutes with $q^{H\otimes H /2} $ and $(\mathds{1} + (q-q^{-1})\rho(E)\otimes \rho(F))$ follows from the fact that $C^{-1}$ is $U_q\mathfrak{sl}_2$ equivariant, so $C^{-1}\odot C^{-1}$ commutes with $\mathscr{R}$. The arguments for $\mathscr{R}^{-1}$ is similar. 

\vspace{2mm}
\par It remains to derive the formula \eqref{eq_qcoaction2} from \eqref{eq_qcoaction}. This is done by direct computation, left to the reader, using the following fact: for the two $2\times 2$ matrices $X= \begin{pmatrix}x_{++} & x_{+-} \\ x_{-+} & x_{--} \end{pmatrix}$ and $S(X) = \begin{pmatrix} S(x_{++}) & S(x_{+-}) \\ S(x_{-+}) & S(x_{--}) \end{pmatrix}$ with coefficients in $\mathcal{O}_q[\SL_2]$, one has $S(X) = C^{-1}{}^t X C$. Figure \ref{fig_qcoactionU} illustrates Equation \eqref{eq_qcoaction2}. In Figure \ref{fig_qcoactionU}, we used a special convention: we drawn stated diagrams that go "outside" of $\Sigma_{\mathcal{P}}$ in some small  bigon neighbourhoods of the boundary arcs; it must be understood that we need to apply a boundary skein relation in those neighborhood. This convention permits to draw pictorially the matrix coefficients $(C^{-1}M(\alpha))_i^j$. Note also that in Figure  \ref{fig_qcoactionU}, we dropped the scalar factor $(-1)^{w(\alpha)}$.

\begin{figure}[!h] 
\centerline{\includegraphics[width=12cm]{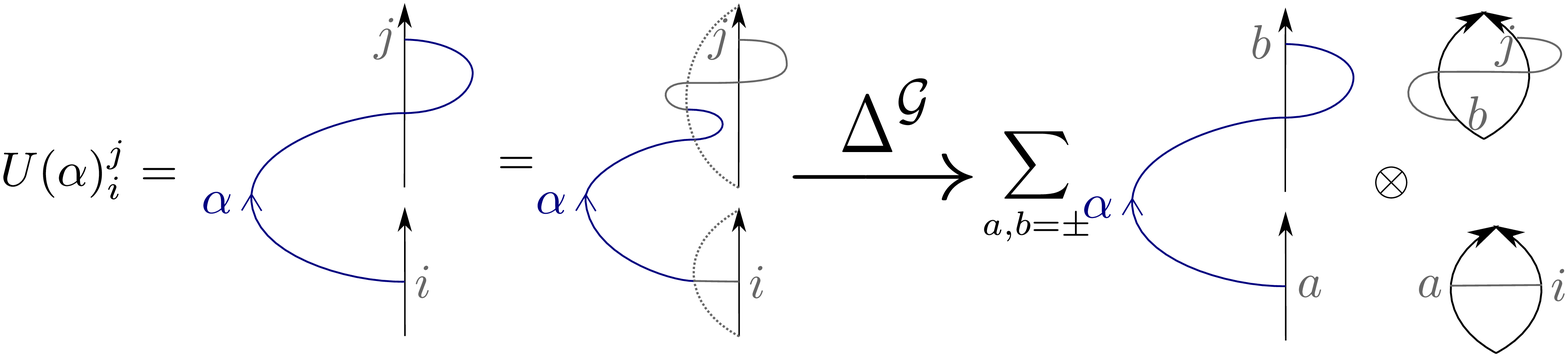} }
\caption{An illustration of Equation \eqref{eq_qcoaction2}.}
\label{fig_qcoactionU} 
\end{figure}

\end{proof}

\subsection{Relative character varieties}\label{sec_charvar}

Since the quantum moduli algebras are deformation quantizations of the (relative) character varieties studied by Fock and Rosly in \cite{FockRosly}, we briefly recall their construction and refer to \cite{Audin_Survey_FR} for a detailed survey. 
\vspace{2mm}
\par 
  Let us first consider a closed connected punctured surface $\mathbf{\Sigma}$ and denote by $\mathcal{M}_{\SL_2}(\mathbf{\Sigma})$ the set of isomorphism classes of $\SL_2(\mathbb{C})$ flat structures on $\Sigma_{\mathcal{P}}$, that is the set of isomorphism classes of  pairs $(P, \nabla)$ where $P$ is a principal $\SL_2(\mathbb{C})$-bundle and $\nabla$ a flat connection. Since any principal $\SL_2(\mathbb{C})$-bundle on a surface is trivializable, one can restrict to the classes of pairs $(P,\nabla)$ where $P=\Sigma_{\mathcal{P}} \times \SL_2(\mathbb{C})$ is trivial and $\nabla = d+A$, where $A \in \Omega^1(\Sigma_{\mathcal{P}}, \mathfrak{sl}_2)$. The flatness of $\nabla$ then translates to the equation $dA+[A\wedge A]=0$ and we denote by $\mathcal{A}_F \subset \Omega^1(\Sigma_{\mathcal{P}}, \mathfrak{sl}_2)$ the subspace of such $1$-forms. The gauge group $\mathcal{G}:=\mathrm{Aut}(P) \cong C^{\infty}(\Sigma_{\mathcal{P}}, \SL_2(\mathbb{C}))$ is the group of automorphisms and the moduli space can be identified with the quotient $\mathcal{M}_{\SL_2}(\mathbf{\Sigma}) \cong \quotient{ \mathcal{A}_F}{\mathcal{G}}$. By imposing some Sobolev regularity, as done by Atiyah-Bott in \cite{AB}, one can give to $\mathcal{A}_F$ a structure of Banach space however the gauge group action is far from been free; as a result the quotient $\quotient{ \mathcal{A}_F}{\mathcal{G}}$ does not inherit a geometric structure (it is not even Haussdorf as a topological space). One possibility to remedy this problem is to restrict to the (open dense) subspace $\mathcal{A}^0_F \subset \mathcal{A}_F$ of principal orbits and consider only the subset $\mathcal{M}^0_{\SL_2}(\mathbf{\Sigma}):= \quotient{\mathcal{A}^0_F}{\mathcal{G}} \subset \mathcal{M}_{\SL_2}(\mathbf{\Sigma})$ which can be endowed with the structure of smooth manifold. When $\mathcal{P}=\emptyset$, Atiyah-Bott defined in \cite{AB} a symplectic structure on $\mathcal{M}^0_{\SL_2}(\mathbf{\Sigma})$. A second possibility to get a geometric object out of $\mathcal{M}_{\SL_2}(\mathbf{\Sigma})$ is to consider character varieties. Fix a base point $v\in \Sigma_{\mathcal{P}}$. The Riemann-Hilbert correspondence asserts that the holonomy map induces a bijection 
  $$ \mathrm{Hol} : \mathcal{M}_{\SL_2}(\mathbf{\Sigma}) \xrightarrow{\cong} \quotient{ \Hom(\pi_1(\Sigma_{\mathcal{P}}, v), \SL_2(\mathbb{C}))}{\SL_2(\mathbb{C})}, $$
  where $\SL_2(\mathbb{C})$ acts on the set $\mathcal{R}_{\SL_2(\mathbb{C})}(\mathbf{\Sigma}) := \Hom(\pi_1(\Sigma_{\mathcal{P}}, v), \SL_2(\mathbb{C}))$ of representations by conjugacy. Now $\mathcal{R}_{\SL_2(\mathbb{C})}(\mathbf{\Sigma})$ is an affine variety and the action of (the reducible algebraic group) $\SL_2(\mathbb{C})$ is algebraic, therefore we might consider the algebraic quotient (familiar in Geometric Invariant Theory): 
  $$ \mathcal{X}_{\SL_2}(\mathbf{\Sigma}) := \mathcal{R}_{\SL_2}(\mathbf{\Sigma}) \sslash \SL_2(\mathbb{C}) $$
  named the \textit{character variety}, which is an affine (irreducible) variety. One has a surjective (so-called Reynolds) set-theoretical map $p: \mathcal{M}_{\SL_2}(\mathbf{\Sigma}) \rightarrow \mathcal{X}_{\SL_2}(\mathbf{\Sigma})$ which restricts to a  bijection $p : \mathcal{M}^0_{\SL_2}(\mathbf{\Sigma}) \xrightarrow{\cong} \mathcal{X}^0_{\SL_2}(\mathbf{\Sigma})$ on the smooth locus $\mathcal{X}^0_{\SL_2}(\mathbf{\Sigma})$. Therefore $\mathcal{X}_{\SL_2}(\mathbf{\Sigma})$ might be regarded as a good approximation of the moduli space $\mathcal{M}_{\SL_2}(\mathbf{\Sigma})$ which has a geometric structure of affine variety. Goldman showed in \cite{Goldman86} that the Atiyah-Bott symplectic structure on $\mathcal{M}^0_{\SL_2}(\mathbf{\Sigma})$ induces a Poisson structure on the algebra of regular functions of $ \mathcal{X}_{\SL_2}(\mathbf{\Sigma}) $ by giving an explicit formula for the Poisson bracket of two curve functions.
  
\vspace{2mm}
\par Now, let us consider an open punctured surface $\mathbf{\Sigma}$. We would like to change the definition of $\mathcal{M}_{\SL_2}(\mathbf{\Sigma})$ in such a way that the moduli space would have a nice behaviour for the operation of gluing two boundary arcs together, more precisely, we would like to have a surjective gluing map $\pi_{a\#b}: \mathcal{M}_{\SL_2}(\mathbf{\Sigma}) \rightarrow \mathcal{M}_{\SL_2}(\mathbf{\Sigma}_{a\#b})$. The trick is to consider the subset $\mathcal{A}_F^{\partial}  \subset \mathcal{A}_F$ of $1$-forms whose holonomy along any subarc of a boundary arc is null and to restricts to the smaller group $\mathcal{G}^{\partial} \subset \mathcal{G}$ corresponding to those maps in $C^{\infty}(\Sigma_{\mathcal{P}}, \SL_2(\mathbb{C}))$ whose restriction to $\partial \Sigma_{\mathcal{P}}$ is the constant map with value the neutral element of $\SL_2(\mathbb{C})$. We then define $\mathcal{M}_{\SL_2}(\mathbf{\Sigma}):= \quotient{\mathcal{A}_F^{\partial}}{\mathcal{G}^{\partial}}$ (which coincides with the previous definition when the punctured surface $\mathbf{\Sigma}$ is closed). If $c$ denotes the common image of $a$ and $b$ in $\mathbf{\Sigma}_{|a\#b}$, any flat connection is gauge equivalent to a connection whose restriction to subarcs of $c$ has trivial holonomy, therefore we have  a surjective gluing map $\pi_{a\#b}: \mathcal{M}_{\SL_2}(\mathbf{\Sigma}) \rightarrow \mathcal{M}_{\SL_2}(\mathbf{\Sigma}_{a\#b})$ as desired. To turn $\mathcal{M}_{\SL_2}(\mathbf{\Sigma})$ into a geometric (algebraic) object, one consider as previously the holonomy map, though we need more than one base point now. Let $\mathbb{V} \subset \Sigma_{\mathcal{P}}$ be a finite subset which intersects each boundary arc exactly once, denote by $\mathring{V}:= \mathbb{V} \cap \mathring{\Sigma}_{\mathcal{P}}$ its (possibly empty) subset of inner points and consider the discrete gauge group $\mathcal{G}_{\mathbb{V}}:= \SL_2(\mathbb{C})^{\mathring{V}}$. The holonomy map induces a bijection
$$ \mathrm{Hol} : \mathcal{M}_{\SL_2}(\mathbf{\Sigma}) \xrightarrow{\cong} \quotient{ \mathcal{R}_{\SL_2}(\mathbf{\Sigma}, \mathbb{V})}{\mathcal{G}_{\mathbb{V}}}, $$
where $\mathcal{R}_{\SL_2}(\mathbf{\Sigma}, \mathbb{V})$ is the set of functors $\rho : \Pi_1(\Sigma_{\mathcal{P}}, \mathbb{V}) \rightarrow \SL_2(\mathbb{C})$ and the discrete gauge group acts on the right by:
$$ (\rho\bullet g) (\alpha) := g(t(\alpha))^{-1} \rho(\alpha) g(s(\alpha)), \quad \mbox{ for all }\rho \in \mathcal{R}_{\SL_2}(\mathbf{\Sigma}, \mathbb{V}), g\in \mathcal{G}_{\mathbb{V}}, \alpha \in \Pi_1(\Sigma_{\mathcal{P}}, \mathbb{V}).$$

We claim that $\mathcal{R}_{\SL_2}(\mathbf{\Sigma}, \mathbb{V})$ can be given a structure of affine variety in such a way that the action of the reducible algebraic group $\mathcal{G}_{\mathbb{V}}$ is algebraic, so we can define the GIT quotient
$$  \mathcal{X}_{\SL_2}(\mathbf{\Sigma}) := \mathcal{R}_{\SL_2}(\mathbf{\Sigma}, \mathbb{V}) \sslash \mathcal{G}_{\mathbb{V}},  $$
which we call the \textit{relative character variety}. To prove the claim, consider a finite presentation $\mathbb{P}=(\mathbb{G}, \mathbb{RL})$ of $\Pi_1(\Sigma_{\mathcal{P}}, \mathbb{V}) $ and write $\mathbb{G}=(\alpha_1, \ldots, \alpha_n)$ and $\mathbb{RL}=(R_1, \ldots, R_m)$. Consider the regular map $\mathcal{R} : \SL_2(\mathbb{C})^{\mathbb{G}} \rightarrow \SL_2(\mathbb{C})^{\mathbb{RL}}$ written $\mathcal{R}=(\mathcal{R}_1, \ldots, \mathcal{R}_m)$, where the coordinate $\mathcal{R}_i$ associated to a relation $R_i = \alpha_{i_1}^{\varepsilon_1} \star \ldots \star \alpha_{i_k}^{\varepsilon_k}$ is the polynomial function 
$$ \mathcal{R}_i (g_1, \ldots, g_n) = g_{i_1}^{\varepsilon_1} \ldots g_{i_k}^{\varepsilon_k}.$$
Clearly, one has $\mathcal{R}_{\SL_2}(\mathbf{\Sigma}, \mathbb{V}) = \mathcal{R}^{-1}(\mathds{1}_2, \ldots, \mathds{1}_2)$ (where $\mathds{1}_2$ is the identity matrix), so $\mathcal{R}_{\SL_2}(\mathbf{\Sigma}, \mathbb{V}) $ is a subvariety of $\SL_2(\mathbb{C})^{\mathbb{G}}$. 

Note that  the algebra $\mathbb{C}[\mathcal{R}_{\SL_2}(\mathbf{\Sigma}, \mathbb{V})]$ of regular functions lies in the following exact sequence

\begin{equation}\label{cc}
 \mathbb{C}[\SL_2(\mathbb{C})]^{\otimes \mathbb{RL}} \xrightarrow{\mathcal{R}^* - \eta^{\otimes \mathbb{G}} \circ \epsilon^{\otimes \mathbb{RL}} }
  \mathbb{C}[\SL_2(\mathbb{C})]^{\otimes \mathbb{G}} \rightarrow  \mathbb{C}[\mathcal{R}_{\SL_2}(\mathbf{\Sigma}, \mathbb{V})]\rightarrow 0.
\end{equation}

So we have turned $\mathcal{R}_{\SL_2}(\mathbf{\Sigma}, \mathbb{V})$ into an affine variety. Now the discrete gauge group action is induced by the Hopf comodule map 
$\Delta^{\mathcal{G}} :  \mathbb{C}[\mathcal{R}_{\SL_2}(\mathbf{\Sigma}, \mathbb{V})] \rightarrow  \mathbb{C}[\mathcal{R}_{\SL_2}(\mathbf{\Sigma}, \mathbb{V})] \otimes \mathbb{C}[\mathcal{G}_{\mathbb{V}}]$,
 which is the restriction of the right co-module map $\widetilde{\Delta}_{\mathcal{G}} : \mathbb{C}[\SL_2(\mathbb{C})]^{\otimes \mathbb{G} } \rightarrow \mathbb{C}[\SL_2(\mathbb{C})]^{\otimes \mathbb{G}} \otimes \mathbb{C}[\SL_2(\mathbb{C})]^{\otimes \mathring{V}}$ defined by 
 
$$
\widetilde{\Delta}^{\mathcal{G}} \left( x^{(\alpha)}\right) = \sum {x''}^{(\alpha)} \otimes S(x''')^{(v_2)} {x'}^{(v_1)}, 
$$

for $x\in \mathcal{O}_q[\SL_2]$, $\alpha : v_1 \rightarrow v_2$ in $\mathbb{G}$ and we used Sweedler's notation $\Delta^{(2)}(x) = \sum x'\otimes x'' \otimes x'''$. In particular, when $x=x_{ij}$ with $i,j \in \{-, +\}$, the formula gives:

\begin{equation}\label{eq_qcoaction3}
\Delta^{\mathcal{G}} \left( x_{ij}^{(\alpha)}\right) = \sum_{a, b =\pm} {x}_{ab}^{(\alpha)} \otimes S(x_{bj})^{(v_2)} {x}_{ia}^{(v_1)}.
\end{equation}
Note the analogy between Equations \eqref{eq_qcoaction3} and \eqref{eq_qcoaction2}.

\vspace{2mm}
\par
Eventually, the algebra of regular functions of the relative character variety is defined as the set of coinvariant vectors for this coaction, that is by the exact sequence

\begin{equation}\label{eq_gauge_exactseq}
0 \rightarrow \mathbb{C}[\mathcal{X}_{\SL_2}(\mathbf{\Sigma})] \rightarrow \mathbb{C}[\mathcal{R}_{\SL_2}(\mathbf{\Sigma})] \xrightarrow{\Delta^{\mathcal{G}} - \id \otimes \epsilon} 
\mathbb{C}[\mathcal{R}_{\SL_2}(\mathbf{\Sigma})] \otimes \mathbb{C}[\mathcal{G}_{\mathbb{V}}].
\end{equation}

The relative character variety $\mathcal{X}_{\SL_2}(\mathbf{\Sigma})$ does not depend (up to unique isomorphism) on the choice of the triple $(\mathbb{V}, \mathbb{G}, \mathbb{RL})$ used to define it but only on $\mathbf{\Sigma}$; we refer to \cite{KojuTriangularCharVar} for a proof. Note that in the particular case where $\mathbb{V}\subset \partial \Sigma_{\mathcal{P}}$, the gauge group is trivial so $\mathcal{X}_{\SL_2}(\mathbf{\Sigma})=\mathcal{R}_{\SL_2}(\mathbf{\Sigma})$. Moreover, if the presentation $\mathbb{P}$ does not have any relation, then $\mathcal{R}_{\SL_2}(\mathbf{\Sigma})=\SL_2(\mathbb{C})^{\mathbb{G}}$. As we saw in Example \ref{exemple_pres}, such a presentation $\mathbb{P}$ always exists when $\mathbf{\Sigma}$ is a connected punctured surface with non trivial boundary, therefore in that case one has 
$$ \mathcal{X}_{\SL_2}(\mathbf{\Sigma}) = \SL_2(\mathbb{C})^{\mathbb{G}}.$$
Now consider an oriented ciliated graph $(\Gamma,c)$ and consider the associated finite presentation $(\mathbb{V}, \mathbb{G}, \mathbb{RL})$ of the groupoid $\Pi_1(\Sigma^0_{\mathcal{P}}(\Gamma,c), \mathbb{V})$ associated to the open punctured surface defined in the previous subsection. The same triple $(\mathbb{V}, \mathbb{G}, \mathbb{RL})$ gives also a finite presentation of $\Pi_1(\Sigma_{\mathcal{P}}(\Gamma), \mathbb{V})$ associated to the \textit{closed} punctured surface, where this time, all elements of $\mathbb{V}$ are inner vertices of $\Sigma_{\mathcal{P}}(\Gamma)$. Therefore one has 
$$\mathcal{X}_{\SL_2}(\mathbf{\Sigma}^0(\Gamma,c))= \mathcal{R}_{\SL_2}(\mathbf{\Sigma}(\Gamma)) = \SL_2(\mathbb{C})^{\mathcal{E}(\Gamma)}.$$
So  the exact sequence \eqref{eq_gauge_exactseq} can be rewritten as 

\begin{equation}\label{eq_gauge_exactseq2}
0 \rightarrow \mathbb{C}[\mathcal{X}_{\SL_2}(\mathbf{\Sigma}(\Gamma))] \rightarrow \mathbb{C}[\mathcal{X}_{\SL_2}(\mathbf{\Sigma}^0(\Gamma,c))] \xrightarrow{\Delta^{\mathcal{G}} - \id \otimes \epsilon} 
\mathbb{C}[\mathcal{X}_{\SL_2}(\mathbf{\Sigma}^0(\Gamma,c))] \otimes \mathbb{C}[\mathcal{G}_{\mathbb{V}}].
\end{equation}

Note the analogy with the exact sequence \eqref{eq_exact_sequence_graph}. The main achievement in the work of Fock Rosly in \cite{FockRosly} is the construction of  Poisson structures on $\mathbb{C}[\mathcal{X}_{\SL_2}(\mathbf{\Sigma}^0(\Gamma,c))] = \mathbb{C}[\SL_2]^{\otimes \mathcal{E}(\Gamma)}$ and $\mathbb{C}[\mathcal{G}_{\mathbb{V}}] = \mathbb{C}[\SL_2]^{\otimes \mathring{V}(\Gamma)}$ such that the coaction $\Delta^{\mathcal{G}}$ is a Poisson morphism. Therefore, using the exact sequence \eqref{eq_gauge_exactseq2}, the affine variety $\mathcal{X}_{\SL_2}(\mathbf{\Sigma}(\Gamma))$ received a (quotient) Poisson structure. A great deal then is to show that this Poisson structure only depends on the surface $\Sigma_{\mathcal{P}}(\Gamma)$ and not on $(\Gamma,c)$. This strategy permitted the authors of \cite{FockRosly} to extend the Atiyah-Bott-Goldman Poisson structure from unpunctured closed surfaces to closed general punctured surfaces (see also \cite{KojuTriangularCharVar} for a general treatment in the language of punctured surfaces rather than ciliated graphs and using groupoid cohomology). 

\vspace{2mm}
\par Let us conclude this subsection by the following observation. It is well known that the (stated) skein algebra $\mathcal{S}_{+1}(\mathbf{\Sigma})$ is isomorphic (though non canonically) to the algebra $\mathbb{C}[\mathcal{X}_{\SL_2}(\mathbf{\Sigma})]$ of regular functions of the (relative) character variety. For closed punctured surfaces, this was shown by Bullock \cite{Bullock} under the assumption that $\mathcal{S}_{+1}(\mathbf{\Sigma})$ is reduced; this assumption was proved in \cite{PS00} (see also \cite{ChaMa} for an alternative proof). For open punctured surface, this was proved independently in \cite[Theorem 1.3]{KojuQuesneyClassicalShadows} and \cite[Theorem 8.12]{CostantinoLe19} using triangulations of surfaces. Let us note that Theorem \ref{theorem1} gives a straightforward alternative proof of this result. 

\begin{theorem}[\cite{Bullock, PS00, KojuQuesneyClassicalShadows, CostantinoLe19}]
The algebras  $\mathcal{S}_{+1}(\mathbf{\Sigma})$ (where $\mathds{k}=\mathbb{C}$) and $\mathbb{C}[\mathcal{X}_{\SL_2}(\mathbf{\Sigma})]$ are isomorphic.
\end{theorem}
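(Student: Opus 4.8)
The plan is to reduce the theorem to the algebraic presentation of $\mathcal{S}_{\omega}(\mathbf{\Sigma})$ provided by Theorem \ref{theorem1} (or rather its reformulation in Proposition \ref{prop_presU}) and then to identify the corresponding presentation of $\mathbb{C}[\mathcal{X}_{\SL_2}(\mathbf{\Sigma})]$. First I would fix, using Example \ref{exemple_pres}, a finite presentation $\mathbb{P}=(\mathbb{G},\emptyset)$ of $\Pi_1(\Sigma_{\mathcal{P}},\mathbb{V})$ with no relations and with every generator of type $a$; this is possible because $\mathbf{\Sigma}$ is connected and open, and because we may always replace a generator by its inverse. (If one insists on treating closed surfaces as well, one embeds $\mathbf{\Sigma}$ in an open punctured surface by removing a small disc, or equivalently applies the exact sequence \eqref{eq_exact_sequence_graph}; the sign subtleties below are handled in \cite{KojuQuesneyClassicalShadows}.) With such a $\mathbb{P}$, the relative character variety is literally $\mathcal{R}_{\SL_2}(\mathbf{\Sigma},\mathbb{V})=\SL_2(\mathbb{C})^{\mathbb{G}}$ when $\mathbb{V}\subset\partial\Sigma_{\mathcal P}$ (or after taking the gauge quotient for inner vertices), so that $\mathbb{C}[\mathcal{X}_{\SL_2}(\mathbf{\Sigma})]$ is presented by the generators $T(\alpha)_i^j$, $\alpha\in\mathbb{G}$, subject to $\det(T(\alpha))=1$ and the commutativity relations among all matrix coefficients.

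Next I would specialise Proposition \ref{prop_presU} to $\omega=+1$ (so $q=1$, $A=1$), using the variables $U(\alpha)=(-1)^{w(\alpha)}C^{-1}M(\alpha)$ attached to a spin function $w:\mathbb{G}\to\mathbb{Z}/2\mathbb{Z}$. The point is that at $\omega=1$ the $R$-matrix $\mathscr{R}$ degenerates to the flip $\tau$, so every arc exchange relation of Lemma \ref{lemma_arcsrelations} collapses to a commutativity statement between the matrix coefficients of $U(\alpha)$ and $U(\beta)$ (including $\alpha=\beta$), the $q$-determinant relation becomes the ordinary $\det_{q^2}\to\det$ relation $\det(U(\alpha))=1$, and, since $\mathbb{RL}=\emptyset$, there are no trivial loop relations. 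Hence at $\omega=1$ the presentation of $\mathcal{S}_{+1}(\mathbf{\Sigma})$ given by Proposition \ref{prop_presU} is exactly the commutative algebra $\mathbb{C}[\SL_2]^{\otimes\mathbb{G}}$, i.e.\ the algebra of regular functions on $\SL_2(\mathbb{C})^{\mathbb{G}}=\mathcal{X}_{\SL_2}(\mathbf{\Sigma})$. This produces an algebra isomorphism $\mathcal{S}_{+1}(\mathbf{\Sigma})\xrightarrow{\cong}\mathbb{C}[\mathcal{X}_{\SL_2}(\mathbf{\Sigma})]$ sending $U(\alpha)_i^j$ to the coordinate function $T(\alpha)_i^j$. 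I would then remark that this isomorphism is compatible with the gauge coactions: Equation \eqref{eq_qcoaction2} at $\omega=1$ becomes exactly Equation \eqref{eq_qcoaction3}, so $\iota(\mathcal{S}_{+1}(\mathbf{\Sigma}(\Gamma)))$ is matched with $\mathbb{C}[\mathcal{X}_{\SL_2}(\mathbf{\Sigma}(\Gamma))]$ via the exact sequences \eqref{eq_exact_sequence_graph} and \eqref{eq_gauge_exactseq2}, covering the closed case as well.

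The main obstacle, and the only genuinely delicate point, is the sign issue: the naive map sending a curve (or stated arc) to its trace (or matrix-coefficient) function is \emph{not} an algebra morphism, because of the $-(A^2+A^{-2})=-2$ appearing in the skein relations at $\omega=1$ versus the $+2$ from $\tr(\mathds{1}_2)$ in $\SL_2$. This is exactly why the variables $U(\alpha)$ carry the twist $(-1)^{w(\alpha)}$ by a spin function; the property $\sum_i w(\beta_i)=1$ for simple relations is what makes the trivial loop relations \eqref{trivial_loop_relU} reduce, at $\omega=1$ and $A=1$, to $U(\beta_k)\cdots U(\beta_1)=-1\cdot(-1)^{l}$, which after unwinding the $(-1)^{w}$ factors gives precisely the group relation $g_{\beta_k}\cdots g_{\beta_1}=\mathds{1}_2$ in $\SL_2(\mathbb{C})$; since here we arranged $\mathbb{RL}=\emptyset$ this subtlety is invisible, but it must be invoked if one works with a presentation having relations or in the closed case. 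Beyond this, what remains is bookkeeping: checking that the degeneration $\mathscr{R}\rvert_{\omega=1}=\tau$ (immediate from the explicit matrix) turns each relation of Lemma \ref{lemma_arcsrelations} into commutativity, verifying that $C^{-1}{}^tXC=S(X)$ reconciles the two coaction formulas, and citing \cite{KojuQuesneyClassicalShadows} for the existence of the spin function $w$ arising from a relative spin structure. No PBW or Koszulness input is needed; Theorem \ref{theorem1} alone, specialised at $\omega=1$, does the job.
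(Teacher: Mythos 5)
Your proposal follows essentially the same route as the paper: specialise the presentation of Proposition \ref{prop_presU} at $\omega=+1$ (where $\mathscr{R}=\tau$, so every arc exchange relation collapses to commutativity and the quantum determinants become ordinary determinants), match the result with the presentation \eqref{cc} of $\mathbb{C}[\mathcal{X}_{\SL_2}(\mathbf{\Sigma})]$, and treat the closed case by equivariance of the resulting isomorphism for the gauge coactions together with the exact sequences \eqref{eq_exact_sequence_graph} and \eqref{eq_gauge_exactseq2}. One inaccuracy to fix: you cannot in general arrange every generator to be of type $a$ --- replacing a generator by its inverse only exchanges types $b\leftrightarrow c$ and $d\leftrightarrow e$, and a surface with a single boundary arc (one boundary component carrying one puncture, e.g.\ the daisy graph case) admits no type-$a$ arcs at all since a type-$a$ arc must join two distinct boundary arcs. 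The paper instead takes generators of type $a$ or $d$ (and allows $\mathbb{RL}\neq\emptyset$, the trivial loop relations on the two sides being matched via the spin function); your argument survives this correction unchanged, since at $\omega=1$ the type-$d$ relations (Rd) also degenerate to commutativity and $\det_{q^2}$ to $\det$.
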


\begin{proof}[Alternative proof using Theorem \ref{theorem1} with the additional assumption that $\mathcal{P}\neq \emptyset$]
First suppose that $\mathbf{\Sigma}$ is an open connected punctured surface, let $\mathbb{V}$ be such that each of its vertices are on the boundary (so the representation and relative character varieties are the same) and let $\mathbb{P}=(\mathbb{G}, \mathbb{RL})$ be a finite presentation of $\Pi_1(\Sigma_{\mathcal{P}}, \mathbb{V})$ whose generators are either of type $a$ or $d$ and fix a spin function $w$. By Equation \eqref{cc}, the algebra $\mathbb{C}[\mathcal{X}_{\SL_2}(\mathbf{\Sigma})]$ is presented by the generators $x_{ij}^{(\alpha)}$ for $\alpha \in \mathbb{G}$ and $i,j \in \{-, +\}$ with the following relations, where we set $X(\alpha):= \begin{pmatrix}x_{++}^{(\alpha)} & x_{+-}^{(\alpha)} \\ x_{-+}^{(\alpha)} & x_{--}^{(\alpha)} \end{pmatrix}$:
\begin{itemize}
\item[(i)] the exchange relations $x_{ij}^{(\alpha)} x_{kl}^{(\beta)} = x_{kl}^{(\beta)} x_{ij}^{(\alpha)}$ for all $\alpha, \beta\in \mathbb{G}$, $i,j \in \{-,+\}$; 
\item[(ii)] the determinant relations $\mathrm{det}(X(\alpha))=1$, for all $\alpha \in \mathbb{G}$; 
\item[(iii)] the trivial loops relations $X(\beta_k)\ldots X(\beta_1)=\mathds{1}_2$, for $R=\beta_k \star \ldots \star \beta_1 \in \mathbb{RL}$.
\end{itemize}

By comparing this presentation of $\mathbb{C}[\mathcal{X}_{\SL_2}(\mathbf{\Sigma})]$  with the presentation of $\mathcal{S}_{\omega}(\mathbf{\Sigma})$ obtained in Proposition \ref{prop_presU} by setting $\omega=+1$, we see that one has an isomorphism of algebras $\Theta : \mathcal{S}_{+1}(\mathbf{\Sigma}) \xrightarrow{\cong} \mathbb{C}[\mathcal{X}_{\SL_2}(\mathbf{\Sigma})]$ sending $U(\alpha)$ to $X(\alpha)$ (note that when $\omega=+1$, $\mathscr{R}=\tau$ so all arc exchange relations become $U(\alpha)\odot U(\beta) = \tau U(\alpha)\odot U(\beta)  \tau$ giving relations $\alpha_{ij} \beta_{kl}= \beta_{kl}\alpha_{ij}$).
Moreover, by comparing Equations \eqref{eq_qcoaction3} and \eqref{eq_qcoaction2}, we see that $\Theta$ is equivariant for the gauge group coactions. 

\vspace{2mm}
\par Now suppose that $\mathbf{\Sigma}$ is closed and connected with $\mathcal{P}\neq \emptyset$ and let $(\Gamma, c)$ be a ciliated fat graph such that $\mathbf{\Sigma}(\Gamma)=\mathbf{\Sigma}$. By the preceding case, one has an equivariant isomorphism $\Theta :  \mathcal{S}_{+1}(\mathbf{\Sigma}^0(\Gamma,c)) \xrightarrow{\cong} \mathbb{C}[\mathcal{X}_{\SL_2}(\mathbf{\Sigma}^0(\Gamma,c))]$, so one has a commutative diagram 
$$
\begin{tikzcd}
0 \arrow[r] & \mathcal{S}_{+1}(\mathbf{\Sigma}(\Gamma))] \arrow[r] \arrow[d, dotted, "\exists!", "\cong"'] & 
\mathcal{S}_{+1}(\mathbf{\Sigma}^0(\Gamma,c))] 
\arrow[r, "\Delta^{\mathcal{G}} - \id \otimes \epsilon"] \arrow[d, "\Theta", "\cong"']&
\mathcal{S}_{+1}(\mathbf{\Sigma}^0(\Gamma,c))] \otimes \mathbb{C}[\mathcal{G}_{\mathbb{V}}] \arrow[d, "\Theta \otimes \id", "\cong"']
\\
0 \arrow[r] &  \mathbb{C}[\mathcal{X}_{\SL_2}(\mathbf{\Sigma}(\Gamma))] \arrow[r] &  \mathbb{C}[\mathcal{X}_{\SL_2}(\mathbf{\Sigma}^0(\Gamma,c))] 
\arrow[r, "\Delta^{\mathcal{G}} - \id \otimes \epsilon"] &
\mathbb{C}[\mathcal{X}_{\SL_2}(\mathbf{\Sigma}^0(\Gamma,c))] \otimes \mathbb{C}[\mathcal{G}_{\mathbb{V}}].
\end{tikzcd}
$$
The fact that both lines are exact implies the existence of an isomorphism $\mathcal{S}_{+1}(\mathbf{\Sigma}(\Gamma)) \xrightarrow{\cong} \mathbb{C}[\mathcal{X}_{\SL_2}(\mathbf{\Sigma}(\Gamma))]$ obtained by restriction of $\Theta$.

\end{proof}

\subsection{Combinatorial quantizations of (relative) character varieties}

The work of Fock and Rosly suggests a natural way of quantizing character varieties. The following problem was raised and solved independently by Alekseev-Grosse-Schomerus \cite{AlekseevGrosseSchomerus_LatticeCS1,AlekseevGrosseSchomerus_LatticeCS2} and Buffenoir-Roche \cite{BuffenoirRoche} (see also \cite{BullockFrohmanKania_Survey_LGFT} for a survey): 

\begin{problem}\label{problem_quantization}
Associate to each oriented ciliated graph $(\Gamma,c)$ an (associative unital) algebra $\mathcal{L}_{\omega}(\Gamma,c)$ over the ring $\mathds{k}:=\mathbb{C}[\omega^{\pm 1}]$ satisfying the following properties:
\begin{itemize}
\item[(A1)] As a $\mathds{k}$-module, $\mathcal{L}_{\omega}(\Gamma,c)$ is just the (free) module $\mathbb{C}[\mathcal{R}_{\SL_2}(\mathbf{\Sigma}^0(\Gamma,c))] \otimes_{\mathbb{C}} \mathds{k}\cong \mathbb{C}[\SL_2]^{\otimes \mathcal{E}(\Gamma)} \otimes_{\mathbb{C}} \mathds{k}$, 
\item[(A2)] As before, write $\mathcal{O}_q[\mathcal{G}]:=  \mathcal{O}_q[\SL_2]^{\otimes V(\Gamma)}$.
The linear map $\Delta^{\mathcal{G}} : \mathcal{L}_{\omega}(\Gamma,c) \rightarrow \mathcal{L}_{\omega}(\Gamma,c) \otimes \mathcal{O}_q[\mathcal{G}]$ defined by the formulas

\begin{equation*}
\Delta^{\mathcal{G}} \left( x_{ij}^{(\alpha)}\right) = \sum_{a, b =\pm} {x}_{ab}^{(\alpha)} \otimes S(x_{bj})^{(v_2)} {x}_{ia}^{(v_1)}.
\end{equation*}

is a Hopf-comodule map. In particular, it is a morphism of algebras.
\item[(Inv)] The subalgebra $\mathcal{L}^{inv}_{\omega}(\Gamma) \subset \mathcal{L}_{\omega}(\Gamma,c)$ defined by the exact sequence
$$ 0 \rightarrow  \mathcal{L}^{inv}_{\omega}(\Gamma) \rightarrow  \mathcal{L}_{\omega}(\Gamma,c) \xrightarrow{\Delta^{\mathcal{G}}-\id\otimes \epsilon} \mathcal{L}_{\omega}(\Gamma,c) \otimes \mathcal{O}_q[\mathcal{G}],$$
 only depends (up to canonical isomorphism) on the (homeomorphism class of) surface $S(\Gamma)$.
\item[(Q)] 
Let $\mathds{k}_{\hbar}:= \mathbb{C}[[\hbar]]$ and write $\omega_{\hbar}:= \exp(-\frac{i\pi}{2\hbar}) \in \mathds{k}_{\hbar}$ so that $\mu: \mathds{k}\rightarrow \mathds{k}_{\hbar}$ defined by $\mu(\omega):=\omega_{\hbar}$ is a ring morphism. Then the $\mathds{k}_{\hbar}$ algebra  $\mathcal{L}^{inv}_{\omega}(\Gamma)\otimes_{\mu} \mathds{k}_{\hbar}$  is a deformation quantization of the Poisson algebra  $\mathbb{C}[\mathcal{X}_{\SL_2}(\mathbf{\Sigma}(\Gamma))]$ equipped with its Fock-Rosly Poisson structure.
\end{itemize}
\end{problem}

\begin{theorem}[Alekseev-Grosse-Schomerus (\cite{AlekseevGrosseSchomerus_LatticeCS1,AlekseevGrosseSchomerus_LatticeCS2, AlekseevSchomerus_RepCS}), Buffenoir-Roche (\cite{BuffenoirRoche, BuffenoirRoche2})]\label{theorem_LGFT}
Problem \ref{problem_quantization} admits the solution $\mathcal{L}_{\omega}(\Gamma, c) := \mathcal{L}_{\omega}(\mathbf{\Sigma}^0(\Gamma,c))$, where the $\mathds{k}$-module isomorphism $\mathcal{L}_{\omega}(\Gamma, c)  \cong \mathbb{C}[\mathcal{R}_{\SL_2}(\mathbf{\Sigma}^0(\Gamma,c))] \otimes_{\mathbb{C}} \mathds{k}$ is given by sending $U(\alpha)$ to $X(\alpha)$. 
\end{theorem}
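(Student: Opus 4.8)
The plan is to verify the four properties (A1), (A2), (Inv), (Q) in turn, using as the main input the presentation of $\mathcal{S}_{\omega}(\mathbf{\Sigma}^0(\Gamma,c))$ obtained in Proposition \ref{prop_presU}, the exact sequence \eqref{eq_exact_sequence_graph} relating the skein algebras of the open and closed punctured surfaces, and the comparison at $\omega=+1$ with the relative character variety established in the preceding subsection.

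First, for property (A1): since the finite presentation $\mathbb{P}(\Gamma,c)=(\mathbb{G},\emptyset)$ of $\Pi_1(\Sigma^0_{\mathcal{P}}(\Gamma,c),\mathbb{V})$ has no relations, Theorem \ref{theorem2} (or the explicit PBW basis $\underline{\mathcal{B}}^{\mathbb{G}}$ constructed in Step $2$ of Section \ref{sec_proof}) shows that $\mathcal{S}_{\omega}(\mathbf{\Sigma}^0(\Gamma,c))$ is a free $\mathds{k}$-module with a PBW basis indexed purely combinatorially by the edges of $\Gamma$. The same combinatorics indexes a monomial basis of $\mathbb{C}[\SL_2]^{\otimes\mathcal{E}(\Gamma)}$, since $\mathbb{C}[\SL_2]$ itself has a PBW-type monomial basis. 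The $\mathds{k}$-linear isomorphism sending $U(\alpha)$ to $X(\alpha)$ is then forced once one checks it matches these two bases; here I would use the presentation in Proposition \ref{prop_presU} and note that the leading (quadratic) parts of the defining relations of $\mathcal{L}_{\omega}(\mathbf{\Sigma}^0(\Gamma,c))$ in the $U(\alpha)_i^j$ degenerate, as $\omega$ varies, to those of the commutative polynomial algebra, so the rewriting rules and hence the spanning family are identical. This gives (A1).

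For property (A2): the comodule map $\Delta^{\mathcal{G}}$ on $\mathcal{S}_{\omega}(\mathbf{\Sigma}^0(\Gamma,c))$ was already constructed in Definition \ref{def_gauge_coaction} as a genuine algebra morphism (being assembled from gluing maps, which are algebra morphisms by Theorem \ref{theorem_gluing}), and Proposition \ref{prop_presU}(2), Equation \eqref{eq_qcoaction2}, gives exactly the formula required in (A2) on the generators $U(\alpha)_i^j$. So transporting $\Delta^{\mathcal{G}}$ across the isomorphism of (A1) produces the map with the stated formula, and it is automatically a Hopf-comodule algebra map. Property (Inv) is then immediate: the exact sequence \eqref{eq_exact_sequence_graph} identifies the coinvariants of $\Delta^{\mathcal{G}}$ with $\iota(\mathcal{S}_{\omega}(\mathbf{\Sigma}(\Gamma)))$, and $\mathcal{S}_{\omega}(\mathbf{\Sigma}(\Gamma))$ depends only on the closed punctured surface $\mathbf{\Sigma}(\Gamma)$, which depends only on $S(\Gamma)$ up to homeomorphism (gluing a disc with a puncture to each boundary component of $S(\Gamma)$). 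Hence $\mathcal{L}^{inv}_{\omega}(\Gamma)\cong\mathcal{S}_{\omega}(\mathbf{\Sigma}(\Gamma))$ depends only on $S(\Gamma)$.

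The remaining and, I expect, the genuinely delicate point is property (Q): that $\mathcal{L}^{inv}_{\omega}(\Gamma)\otimes_{\mu}\mathds{k}_{\hbar}\cong\mathcal{S}_{\omega}(\mathbf{\Sigma}(\Gamma))\otimes_{\mu}\mathds{k}_{\hbar}$ is a deformation quantization of $\mathbb{C}[\mathcal{X}_{\SL_2}(\mathbf{\Sigma}(\Gamma))]$ with the Fock--Rosly Poisson structure. Here I would proceed as follows. By (A1) and Remark \ref{remark_change_basis}, the family $\mathcal{S}_{\omega}(\mathbf{\Sigma}^0(\Gamma,c))\otimes_{\mu}\mathds{k}_{\hbar}$ is a flat $\mathds{k}_{\hbar}$-deformation whose classical limit at $\hbar=0$ (i.e. $\omega=+1$) is, by the isomorphism $\Theta$ of the previous subsection, the commutative algebra $\mathbb{C}[\mathcal{X}_{\SL_2}(\mathbf{\Sigma}^0(\Gamma,c))]=\mathbb{C}[\SL_2]^{\otimes\mathcal{E}(\Gamma)}$. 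One then extracts the first-order term of the commutator of two generators $U(\alpha)_i^j$, $U(\beta)_k^l$ from the arc-exchange relations of Proposition \ref{prop_presU}, expanding $\mathscr{R}=\tau+\tfrac{i\pi}{2}\hbar r+O(\hbar^2)$ where $r$ is the classical $r$-matrix; the resulting Poisson bracket on $\mathbb{C}[\SL_2]^{\otimes\mathcal{E}(\Gamma)}$ is, by inspection, precisely the Fock--Rosly bracket associated to $(\Gamma,c)$ (this is the computation for which the $\Theta$-compatibility with $\Delta^{\mathcal{G}}$ was recorded, cf. the analogy between \eqref{eq_qcoaction2} and \eqref{eq_qcoaction3}). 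Restricting to coinvariants and using that both the quantum exact sequence \eqref{eq_exact_sequence_graph} and the classical one \eqref{eq_gauge_exactseq2} remain exact after $\otimes_{\mu}\mathds{k}_{\hbar}$ (flatness), the Poisson structure descends to $\mathbb{C}[\mathcal{X}_{\SL_2}(\mathbf{\Sigma}(\Gamma))]$ as its quotient, which is by definition the Fock--Rosly Poisson structure. The main obstacle is thus a careful but essentially mechanical first-order expansion of the arc-exchange relations together with bookkeeping of signs (the spin function $w$) and the normalization constant $A^3\omega^l$ in \eqref{trivial_loop_relU}; no conceptual difficulty beyond matching conventions with \cite{FockRosly} is expected.
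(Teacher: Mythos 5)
Your outline matches what the paper actually does with this statement: Theorem \ref{theorem_LGFT} is presented as a citation of Alekseev--Grosse--Schomerus and Buffenoir--Roche, and the paper's own contribution is precisely the observation you make --- (A1) follows from the PBW basis $\underline{\mathcal{B}}^{\mathbb{G}}$ and the matching monomial basis of $\mathbb{C}[\SL_2]^{\otimes \mathcal{E}(\Gamma)}$, (A2) from Proposition \ref{prop_presU}(2), and (Inv) from the exact sequence \eqref{eq_exact_sequence_graph} together with the fact that $\mathcal{S}_{\omega}(\mathbf{\Sigma}(\Gamma))$ depends only on the homeomorphism type of $S(\Gamma)$. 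The only caveat is (Q): the paper explicitly declines to verify the Fock--Rosly compatibility (``except for the study of the Poisson structure (which could have been easily done), we did reproved Theorem \ref{theorem_LGFT}'') and defers it to the cited references; your first-order expansion of $\mathscr{R}$ in the arc-exchange relations is the standard route, but as written (``by inspection'') it is still a sketch, so that one step remains, for both you and the paper, an appeal to the original works rather than a completed argument.
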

\par The algebras  $\mathcal{L}_{\omega}(\Gamma, c)$ are the so-called \textit{quantum moduli algebras} and Theorem \ref{theorem3} is an obvious consequence of Theorem \ref{theorem1}.

More precisely, the ciliated graphs considered in \cite{BuffenoirRoche, BuffenoirRoche2} are those whose underlying graph is the $1$-skeleton of some combinatorial triangulation of a Riemann surface. By combinatorial we mean that each edges has two distinct endpoints, so every arcs is of type a and the only arcs exchange relations among distinct arcs are in configurations $(i)$ or $(ii)$ (in the notations of Lemma \ref{lemma_arcsrelations}). In \cite{AlekseevGrosseSchomerus_LatticeCS1,AlekseevGrosseSchomerus_LatticeCS2, AlekseevSchomerus_RepCS} general ciliated graphs are considered, though in \cite{AlekseevGrosseSchomerus_LatticeCS2, AlekseevSchomerus_RepCS} a special attention is given to the quantum moduli algebras of the daisy graphs defined in Example \ref{exemple_pres} (they are called \textit{standard graphs} in \cite{AlekseevGrosseSchomerus_LatticeCS2, AlekseevSchomerus_RepCS}) and have been further studied and related to stated skein algebras in \cite{Faitg_LGFT_SSkein}. In those daisy graphs, the arcs are of type $d$ and the more complicated arcs exchange relations in configurations $(viii), (ix), (x)$ appeared under the name of braid relations (see \cite[Definition $12$]{AlekseevGrosseSchomerus_LatticeCS2}).
\vspace{2mm}
\par
Note that, except for the study of the Poisson structure (which could have been easily done), we did reproved Theorem \ref{theorem_LGFT} in this paper.  In \cite{MeusburgerWise_LGFT}, Meusburger and Wise proved that the solution of Problem \ref{problem_quantization} is unique, provided that we add some natural axioms for the operation of gluing graphs together. Actually the authors of \cite{MeusburgerWise_LGFT} consider quantum moduli algebras associated to finite dimensional ribbon algebras, whereas here we consider the infinite dimensional one $U_q\mathfrak{sl}_2$, but their proof extends word-by-word to our context.

\subsection{Comparison with previous works}\label{sec_comparaison}

Let $\mathbf{\Sigma}^0$ be a connected punctured surface with one boundary component, one puncture on its boundary and eventually some inner punctures. Let $(\Gamma,c)$ be its daisy graph and $\mathbb{P}=(\mathbb{G}, \emptyset)$ be the associated finite presentation as defined in Example \ref{exemple_pres} (so $\mathbf{\Sigma}^0=\mathbf{\Sigma}^0(\Gamma,c)$). In this case, since the presentation has no relation, one can consider the spin function $w$ sending every generator to $0 \in \mathbb{Z}/2\mathbb{Z}$. Since every generator $\alpha \in \mathbb{G}$ is of type $d$, the isomorphism $\Psi : \mathcal{S}_{\omega}(\mathbf{\Sigma}^0) \xrightarrow{\cong} \mathcal{L}_{\omega}(\Gamma,c)$ sends $U(\alpha)=C^{-1}M(\alpha)$ to $X(\alpha)$. By precomposing with the reflection anti-involution $\theta$, one obtains an isomorphism 
$$\Psi': \mathcal{S}_{\omega^{-1}}(\mathbf{\Sigma}^0)^{op} \xrightarrow{\cong} \mathcal{L}_{\omega}(\Gamma,c),$$ which corresponds to Faitg's isomorphism in \cite{Faitg_LGFT_SSkein}. Let us stress that our notations are quite different from the ones in \cite{Faitg_LGFT_SSkein}; in particular:
\begin{itemize}
\item the letter $q$ in \cite{Faitg_LGFT_SSkein} is what we denoted by $A$ (so our $q$ corresponds to $q^2$ in \cite{Faitg_LGFT_SSkein}), 
\item the letter $\mathcal{R}$ in  \cite{Faitg_LGFT_SSkein}  is related to our $\mathscr{R}$ by $\mathscr{R}= \tau \circ \mathcal{R}$, 
\item Faitg actually considered $\mathcal{S}_{\omega^{-1}}(\mathbf{\Sigma}^0)^{op} $, the opposite of the stated skein algebra.
\end{itemize}

\vspace{2mm}
\par As Faitg kindly explained to the author, the existence of an isomorphism $\Psi : \mathcal{S}_{\omega}(\mathbf{\Sigma}^0) \xrightarrow{\cong} \mathcal{L}_{\omega}(\Gamma,c)$ could have been derived from the works in \cite{BenzviBrochierJordan_FactAlg1, GunninghamJordanSafranov_FinitenessConjecture} as we now briefly explain using the notations in \cite{GunninghamJordanSafranov_FinitenessConjecture} to which we refer for further details. Set $\mathds{k}=\mathbf{C}[\omega^{\pm 1}]$ and fix a structure of Riemann surface $\Sigma$. To any $\mathds{k}$-ribbon category $\mathcal{A}$, one can associate a skein category $\mathrm{SkCat}_{\mathcal{A}}(\Sigma)$ whose objects are oriented embeddings of finitely many disjoint discs $\mathbb{D}\rightarrow \Sigma$ colored by objects in $\mathcal{A}$ and morphisms are framed $\mathcal{A}$-colored ribbon graphs in $\Sigma\times [0,1]$ considered up to skein relations (see \cite[Section $4.2$]{Cooke_FactorisationHomSkein} for a precise definition). We denote by $\mathbf{1} \in \mathrm{SkCat}_{\mathcal{A}}(\Sigma^0)$ the empty set. Let $\Sigma^0$ be obtained from a connected closed oriented surface $\Sigma$ by removing an open disc. Fixing an arbitrary disc embedding $\mathbb{D}\rightarrow \Sigma^0$, one get a functor $\mathcal{P} : \mathcal{A} \rightarrow  \mathrm{SkCat}_{\mathcal{A}}(\Sigma^0)$ in an obvious way. Let $\widehat{\mathcal{A}}:= \mathrm{Fun}(\mathcal{A}^{op}, \mathrm{Vect})$ be the free cocompletion of $\mathcal{A}$ (which inherits a monoidal structure from $\mathcal{A}$). The \textit{internal skein algebra} is defined as the coend: 
$$  \mathrm{SkCat}^{int}_{\mathcal{A}}(\Sigma^0) := \int^{x \in \mathcal{A}} \mathrm{Hom}_{\mathrm{SkCat}_{\mathcal{A}}(\Sigma^0)} (\mathcal{P}(x), \mathbf{1})\otimes x \in \widehat{\mathcal{A}}.$$
The functor $\mathrm{Hom}_{\mathrm{SkCat}_{\mathcal{A}}(\Sigma^0)} (\mathcal{P}(\bullet), \mathbf{1}) : \mathcal{A}^{op} \rightarrow \mathrm{Vect}$ has a natural Lax monoidal structure, given by stacking ribbon graphs on top of each others, which endows $\mathrm{SkCat}^{int}_{\mathcal{A}}(\Sigma^0)$ with the structure of an algebra object in $\widehat{\mathcal{A}}$. If $\mathcal{A}$ is Tannakian, that is if is equipped with a fully faithful monoidal functor $\mathrm{for} : \mathcal{A}\rightarrow \mathrm{Vect}$, then 
$$ \mathcal{S}_{\mathcal{A}}(\Sigma^0) := \mathrm{for}(\mathrm{SkCat}^{int}_{\mathcal{A}}(\Sigma^0)) = \int^{x \in \mathcal{A}} \mathrm{Hom}_{\mathrm{SkCat}_{\mathcal{A}}(\Sigma^0)} (\mathcal{P}(x), \mathbf{1})\otimes \mathrm{for}(x) \in \mathrm{Vect}$$
is a unital associative algebra, that we might call the \textit{stated skein algebra} associated to $\mathcal{A}$ and $\Sigma^0$. Let us consider two Tannakian ribbon categories: the (Cauchy closure of the) Temperley Lieb category $\mathrm{TL}$ and the category of finite dimensional $U_q\mathfrak{sl}_2$ left modules $\mathrm{Rep}^{fd}_q(\SL_2)$ (recall that $q$ is generic here).  The Tannakian structure $\mathrm{forget} : \mathrm{Rep}^{fd}_q(\SL_2) \rightarrow \mathrm{Vect}$ is just the forgetful functor. It is well known that one has a monoidal braided equivalence of categories $G : \mathrm{TL} \rightarrow \mathrm{Rep}^{fd}_q(\SL_2)$ sending the one strand ribbon $[1]\in \mathrm{TL}$ to the fundamental representations $V$ of Section \ref{sec_def_basic} with basis $\{v_+, v_-\}$, thus we get a Tannakian structure $\mathrm{forget} \circ G : \mathrm{TL} \rightarrow \mathrm{Vect}$. Let us underline that $G$ \textbf{does not preserve the duality} ! This fact is what will make the present argument ambiguous.

On the one hand,  there is a natural algebra morphism 
$$ \Psi_1 : \mathcal{S}_{\omega}(\mathbf{\Sigma}^0) \rightarrow \mathcal{S}_{TL}(\Sigma^0)$$ 
sending the class $[T,s]$ of a stated tangle, where $\partial T$ has $n$ elements,  to the class of 
\\ $T\otimes v_s \in \mathrm{Hom}_{\mathrm{SkCat}_{\mathrm{TL}}(\Sigma^0)}  (\mathcal{P}([1]^{\otimes n}), \mathbf{1})\otimes V^{\otimes n}$, where $v_s \in V^{\otimes n}$ is obtained from the state $s$ by identifying the signs $+$ and $-$ with the basis vectors $v_+$ and $v_-$ of $V$. As noted in Remark \cite[Remark $2.21$]{GunninghamJordanSafranov_FinitenessConjecture}, a detailed comparison of the definitions shows that $\Psi_1$ is an isomorphism.

On the other hand, thanks to Cooke's excision theorem in \cite{Cooke_FactorisationHomSkein}, and as proved in \cite[Proposition $2.19$]{GunninghamJordanSafranov_FinitenessConjecture}, the internal skein algebra $  \mathrm{SkCat}^{int}_{\mathcal{A}}(\Sigma^0)$ is isomorphic to the so-called moduli algebra $\mathcal{A}_{\Sigma^0}=\underline{\mathrm{End}}(\mathbf{1}) \in \widehat{A}$ introduced in \cite[Definition $5.3$]{BenzviBrochierJordan_FactAlg1}. In \cite[Theorem $5.14$]{BenzviBrochierJordan_FactAlg1}, the authors defined an explicit isomorphism $\left[\mathrm{Rep}^{fd}_q(\SL_2)\right]_{\Sigma^0} \cong \mathcal{L}_{\omega}(\Gamma)$, so by composing the two isomorphisms, one get an isomorphism
$$ \Psi_2 : \mathcal{S}_{\mathrm{Rep}^{fd}_q(\SL_2)}(\Sigma^0) \xrightarrow{\cong} \mathcal{L}_{\omega}(\Gamma).$$

 \par Now, even though the equivalence of categories $G : \mathrm{TL} \rightarrow \mathrm{Rep}^{fd}_q(\SL_2)$ does not preserve the duality, one should be able to get a (non-canonical) algebra isomorphism $\Psi_3: \mathcal{S}_{\mathrm{Rep}^{fd}_q(\SL_2)}(\Sigma^0) \xrightarrow{\cong} \mathcal{S}_{\mathrm{TL}}(\Sigma^0)$. So by composing the isomorphisms $\Psi_1, \Psi_2$ and $\Psi_3$, one would get an isomorphism $\mathcal{S}_{\omega}(\mathbf{\Sigma}^0) \xrightarrow{\cong} \mathcal{L}_{\omega}(\Gamma,c)$, which would recover Faitg result and is a particular case of our Theorem \ref{theorem3}. Note that the definition of $\Psi_3$ is not very clear and certainly not canonical: it is in this definition that the non-canonical choice of a spin function should appear. 
 
 \begin{remark}
 The above construction of the isomorphism $\mathcal{S}_{\omega}(\mathbf{\Sigma}^0) \xrightarrow{\cong} \mathcal{L}_{\omega}(\Gamma,c)$ is very indirect and not so enlightening. However, it generalises the notion of stated skein algebra $\mathcal{S}_{\mathcal{C}}(\Sigma^0)$ to an arbitrary Tannakian ribbon category $\mathcal{C}$ (how to replace $\mathbf{\Sigma}^0$ to an arbitrary punctured surface is obvious), and \cite[Theorem $5.14$]{BenzviBrochierJordan_FactAlg1} seems to permit to give explicit finite presentations for $\mathcal{S}_{\mathcal{C}}(\Sigma^0)$. The detailed study of these generalized stated skein algebras will appear in a separate publication \cite{KojuTannakianSSkein_ToAppear}.
 \end{remark}

\section{Concluding remarks}\label{sec_final}

We conclude the paper by making some remarks concerning the usefulness of relating stated skein algebras and quantum moduli spaces (Theorem \ref{theorem3}). We can see the stated skein algebras as defined by a huge set of generators (all stated tangles) and a huge set of relations (isotopy and skein relations) whereas the quantum moduli algebra is defined by a finite subset of generators and by a finite subset of relations. Both presentations have their own advantages.

\vspace{2mm}
\par $(1)$ The fact that the quantum moduli algebra $\mathcal{L}^{inv}_{\omega}(\Gamma)$ only depends, up to canonical isomorphism, on the thickened surface $S(\Gamma)$ (or equivalently $\mathbf{\Sigma}(\Gamma)$) is usually proved by defining elementary moves on graphs that preserve the thickened surface and showing that those elementary moves induce isomorphism on the algebras. This strategy was pioneered by Fock and Rosly in the classical case of relative character varieties \cite{FockRosly} and latter carried on in \cite{AlekseevGrosseSchomerus_LatticeCS2, BuffenoirRoche2}  for quantum moduli algebras (see also \cite{MeusburgerWise_LGFT} for very detailed study). Thanks to the isomorphism $\mathcal{L}^{inv}_{\omega}(\Gamma)\cong \mathcal{S}_{\omega}(\mathbf{\Sigma}(\Gamma))$ (and the fact that stated skein algebra depends on surfaces rather than graphs),  this fact is also an immediate consequence of Theorem \ref{theorem3}. Also the image of a closed curve $\gamma$ through the reverse isomorphism $\Psi^{-1} : \mathbf{\Sigma}(\Gamma) \rightarrow \mathcal{L}^{inv}_{\omega}(\Gamma)$ is usually called its \textit{holonomy} $\mathrm{Hol}(\gamma)$ or \textit{Wilson loop operators} and the expression of this holonomy in terms of generators and proof of some composition properties is the subject of long and technical computations in \cite{AlekseevGrosseSchomerus_LatticeCS1,AlekseevGrosseSchomerus_LatticeCS2,BuffenoirRoche, BuffenoirRoche2,  MeusburgerWise_LGFT, Faitg_LGFT_MCG}, whereas they become easy in the skein algebra setting.
\vspace{2mm}
\par $(2)$ Since the quantum moduli algebra $\mathcal{L}_{\omega}(\Gamma,c)$ is quadratic homogeneous, we might have tried to prove that it is Koszul (so prove that $\underline{\mathcal{B}}^{\mathbb{G}}$ is free) without the help of the stated skein algebra. The standard technique to prove that the family $\mathcal{B}$ of Equation \eqref{def_PBW_basis} is a PBW basis consists in examining the set of critical monomials of the form $v_iv_jv_k$ (we use the notations of Section \ref{sec_Koszul}) where both $v_iv_j$ and $v_jv_k$ are leading terms. To such a critical monomial, we associate a finite graph (which might have the shape of a Diamond) and the Diamond Lemma implies that if each of these graph is confluent (has a terminal object) then $\mathcal{B}$ is a basis so the quadratic algebra is Koszul (see \cite[Section 4]{LodayValletteOperads} for details). In our case, due to the huge amount of different kind of relations in our presentation, this strategy would require to verify the confluence of $6578$ different graphs ! This is way too much to be handled by hand. It is thanks to the fact that stated skein algebras have a lot of relations and generators that L\^e was able to successfully use the Diamond Lemma in \cite{LeStatedSkein} to prove that $\mathcal{B}$ is basis, and our proof of the fact that  $\underline{\mathcal{B}}^{\mathbb{G}}$ is a basis is directly derived from this fact. So proving the Koszulness of $\mathcal{L}_{\omega}(\Gamma,c)$ without the help of stated skein algebra could have been a very difficult problem.
\vspace{2mm}
\par $(3)$ Even if we could find PBW bases for the algebras  $\mathcal{L}_{\omega}(\Gamma,c)$ without the help of skein algebras, finding bases for $\mathcal{L}^{inv}_{\omega}(\Gamma)$ would be extremely difficult, since it is only defined as a kernel and no presentation was known. However, skein algebras $\mathcal{S}_{\omega}(\mathbf{\Sigma}(\Gamma)) \cong \mathcal{L}^{inv}_{\omega}(\Gamma)$ have well known bases (of multi-curves).
\vspace{2mm}
\par $(4)$ As we saw in Section \ref{sec_charvar}, the fact that $\mathcal{L}_{+1}(\mathbf{\Sigma}, \mathbb{P})$ is isomorphic to the algebra of regular functions of the (relative) character variety $\mathcal{X}_{\SL_2}(\mathbf{\Sigma})$ is very easy to prove, whereas relating the (stated) skein algebra $\mathcal{S}_{+1}(\mathbf{\Sigma})$ to $\mathbb{C}[\mathcal{X}_{\SL_2}(\mathbf{\Sigma})]$ is not so obvious (see \cite{Bullock, PS00} for closed surfaces and \cite{KojuQuesneyClassicalShadows, CostantinoLe19} for open ones). 
\vspace{2mm}
\par $(5)$ In \cite{BonahonWong1}, Bonahon and Wong proved that the Kauffman-bracket skein algebra $\mathcal{S}_{+1}(\mathbf{\Sigma})$, with deforming parameter $+1$, embeds into the center of the skein algebra $\mathcal{S}_{\zeta}(\mathbf{\Sigma})$ with deforming parameter $\zeta$ a root of unity of odd order (see also \cite{Le_QTraceMuller} for an alternative proof). This result was generalized in \cite{KojuQuesneyClassicalShadows} to  stated skein algebras as well (see also the forthcoming paper \cite{BloomquistLe} for generalizations). 
In  \cite{BaseilhacRoche_LGFT1}, Baseilhac and Roche showed that the construction of this so-called Chebyshev-Frobenius morphism is much easier in the context of quantum moduli algebras (that is using the finite presentations of Theorem \ref{theorem1}). Even though their study only concerns genus $0$ surfaces, their proofs seems to generalize easily to general surfaces, providing simpler proofs for  the results in \cite{BonahonWong1, KojuQuesneyClassicalShadows}. 
\vspace{2mm}
\par $(6)$ Bullock, Frohman and Kania-Bartoszynska already proved in \cite[Theorem 10]{BullockFrohmanKania_LGFT} that $\mathcal{L}_{\omega}^{inv}(\Gamma)$ and $\mathcal{S}_{\omega}(\mathbf{\Sigma}(\Gamma))$ are isomorphic in the particular case where $\mathds{k}=\mathbb{C}[[\hbar]]$ and $\omega= -\exp(-\hbar/4)$. Their proof consists in $(1)$ defining an algebra morphism 
$\Psi : \mathcal{L}_{\omega}^{inv}(\Gamma)\rightarrow \mathcal{S}_{\omega}(\mathbf{\Sigma}(\Gamma))$ (by techniques similar to what we did in Section \ref{sec_presentation}) and $(2)$ noting that under the $\pmod{\hbar}$ identifications $\quotient{\mathcal{L}_{\omega}^{inv}(\Gamma)}{(\hbar)}\cong \mathbb{C}[\mathcal{X}_{\SL_2}(\mathbf{\Sigma})]$ and $\quotient{\mathcal{S}_{\omega}(\mathbf{\Sigma}(\Gamma))}{(\hbar)} \cong \mathcal{S}_{-1}(\mathbf{\Sigma}(\Gamma))$, the morphism $\Psi$ reduces modulo $\hbar$ to Bullock's isomorphism $\mathbb{C}[\mathcal{X}_{\SL_2}(\mathbf{\Sigma})] \cong \mathcal{S}_{-1}(\mathbf{\Sigma}(\Gamma))$. So the fact that the reduction of $\Psi$ modulo $\hbar$ is an isomorphism implies that $\Psi$ is an isomorphism. This proof does not seem (at least to the author) to generalize straightforwardly to prove the identification $\mathcal{L}_{\omega}^{inv}(\Gamma)\cong \mathcal{S}_{\omega}(\mathbf{\Sigma}(\Gamma))$ for more general rings (such as $\mathds{k}=\mathbb{C}$ and $\omega$ a root of unity) whereas our Theorem \ref{theorem3} works in full generality.

\bibliographystyle{amsalpha}
\bibliography{biblio}

\end{document}